\DeclareMathAlphabet\mathbfcal{OMS}{cmsy}{b}{n}
\newcommand*{\rom}[1]{\expandafter\@slowromancap\romannumeral #1@}
\newcommand{\dmax}{ \overline{d} } %d_{\mathsf{max}}
\newcommand{\dmin}{\underline{d} }
\newcommand{\rmax}{ \overline{r} }
\newcommand{\rmin}{ \underline{r} } 
\newcommand{\sigmax}{\overline{\sigma}_{\mathsf{exp} }}
\newcommand{\sigmin}{\underline{\sigma}}
\newcommand{\bcX}{{\mathbfcal{X}}}
\newcommand{\bcA}{{\mathbfcal{A}}}
\newcommand{\bcC}{{\mathbfcal{C}}}
\newcommand{\bcD}{{\mathbfcal{D}}}
\newcommand{\bcI}{{\mathbfcal{I}}}
\newcommand{\bcJ}{{\mathbfcal{J}}}
\newcommand{\bcS}{{\mathbfcal{S}}}
\newcommand{\bcZ}{{\mathbfcal{Z}}}
\newcommand{\bcT}{{\mathbfcal{T}}}
\newcommand{\bcE}{{\mathbfcal{E}}}
\newcommand{\bcR}{{\mathbfcal{R}}}
\newcommand{\bcG}{{\mathbfcal{G}}}
\newcommand{\bcW}{{\mathbfcal{W}}}
\newcommand{\bA}{\mathbf{A}}
\newcommand{\bB}{\mathbf{B}}
\newcommand{\bC}{\mathbf{C}}
\newcommand{\bE}{\mathbf{E}}
\newcommand{\bT}{\mathbf{T}}
\newcommand{\bF}{\mathbf{F}}
\newcommand{\bH}{\mathbf{H}}
\newcommand{\bR}{\mathbf{R}}
\newcommand{\bX}{\mathbf{X}}
\newcommand{\bO}{\mathbf{O}}
\newcommand{\be}{\mathbf{e}}
\newcommand{\bU}{\mathbf{U}}
\newcommand{\bV}{\mathbf{V}}
\newcommand{\bW}{\mathbf{W}}
\newcommand{\bI}{\mathbf{I}}
\newcommand{\br}{\mathbf{r}}
\newcommand{\bx}{\mathbf{x}}
\newcommand{\by}{\mathbf{y}}
\newcommand{\bz}{\mathbf{z}}
\newcommand{\bdelta}{\boldsymbol{\delta}}
\newcommand{\Perm}{\mathbf{Pm}}
\newcommand{\bLambda}{\mathbf{\Lambda}}
\newcommand{\bomega}{{\boldsymbol{\omega}}}
\newcommand{\cD}{\mathcal{D}}
\newcommand{\cN}{\mathcal{N}}
\newcommand{\cS}{\mathcal{S}}
\newcommand{\cP}{\mathcal{P}}
\newcommand{\cE}{\mathcal{E}}
\newcommand{\cM}{\mathcal{M}}
\newcommand{\cQ}{\mathcal{Q}}
\newcommand{\frE}{\mathfrak{E}}
\newcommand{\frF}{\mathfrak{F}}
\newcommand{\frG}{\mathfrak{G}}
\newcommand{\frH}{\mathfrak{H}}
\def\bbB{\mathbb{B}}
\def\R{\mathbb{R}}
\def\O{\mathbb{O}}
\def\TT{\mathbb{T}}
\def\bbS{\mathbb{S}}
\newcommand{\bbP}{\mathbb{P}}
\newcommand{\bbH}{\mathbb{H}}
\newcommand{\E}{\mathbb{E}}
\newcommand{\argmin}{\mathop{\rm arg\min}}
\newcommand{\tF}{{\rm F}}
\newcommand{\SVD}{{\rm SVD}}
\newcommand{\HOSVD}{{\rm HOSVD}}
\newcommand{\rank}{{\rm rank}}
\newcommand{\Id}{{\rm Id}}
\newcommand{\Inco}{{\rm Inco}}
\newcommand{\Vect}{{\rm Vec}}
\newcommand{\Var}{{\rm{Var}}}
\newcommand{\idc}{\mathbb{I}}
\newcommand{\gap}{{\mathsf{gap} }}
\newcommand{\init}{{\mathsf{init}}}
\newcommand{\net}{{\mathsf{net}}}
\newcommand{\ubs}{{\mathsf{ubs}}}
\newcommand{\rn}{{\mathsf{rn}}}
\newcommand{\rem}{{\mathsf{rem}}}
\newcommand{\sfm}{{\mathsf{m}}}
\newcommand{\test}{{\mathsf{test}}}
\newcommand{\SNR}{{\mathsf{SNR}}}
\newcommand{\sfh}{{\mathsf{h}}}
\newcommand{\opt}{{\mathsf{opt}}}
\newcommand{\main}{{\mathsf{main}}}
\newcommand{\linf}{{\ell_\infty}}
\newcommand{\abs}[1]{\left\lvert#1\right\rvert}
\newcommand{\norm}[1]{\left\lVert#1\right\rVert}
\newcommand{\commentWT}[1]{{\color{blue}[#1]}}
\newtheorem{Theorem}{Theorem}
\newtheorem{Assumption}{Assumption}
\newtheorem{Lemma}{Lemma}
\newtheorem{Remark}{Remark}
\newtheorem{Corollary}{Corollary}
\newtheorem{Proposition}{Proposition}
\title{Statistical Inference in Tensor Completion: Optimal Uncertainty Quantification and Statistical-to-Computational Gaps}
\author{Wanteng Ma$^1$ and   Dong Xia$^2$ \\
~ \\
$^1$Department of Statistics and Data Science, University of Pennsylvania \\
$^2$Department of Mathematics, HKUST
}
\date{\today\\
(First version: October 15, 2024)}
\begin{document}

\maketitle

\footnotetext[1]{Email: wanteng@wharton.upenn.edu}
\footnotetext[2]{Dong Xia's research was partially supported by Hong Kong RGC GRF 16303224.}

\begin{abstract}
This paper presents a simple yet efficient method for statistical inference of tensor linear forms using incomplete and noisy observations. Under the Tucker low-rank tensor model and the missing-at-random assumption, we utilize an appropriate initial estimate along with a debiasing technique followed by a one-step power iteration to construct an asymptotically normal test statistic. This method is suitable for various statistical inference tasks, including constructing confidence intervals, inference under heteroskedastic and sub-exponential noise, and simultaneous testing. We demonstrate that the estimator achieves the Cramér-Rao lower bound on Riemannian manifolds, indicating its optimality in uncertainty quantification. We comprehensively examine the statistical-to-computational gaps and investigate the impact of initialization on the minimal conditions regarding sample size and signal-to-noise ratio required for accurate inference. Our findings show that with independent initialization, statistically optimal sample sizes and signal-to-noise ratios are sufficient for accurate inference. Conversely, if only dependent initialization is available, computationally optimal sample sizes and signal-to-noise ratio conditions still guarantee asymptotic normality without the need for data-splitting. We present the phase transition between computational and statistical limits. Numerical simulation results align with the theoretical findings. 
\end{abstract}
% and real data
\begin{sloppypar}

%%%%%%%%%%%%%%%%%%%%%%
\section{Introduction}\label{sec:intro}
%%%%%%%%%%%%%%%%%%%%%%

\subsection{Background on tensor completion and its inference}  
Tensor data has attracted tremendous attention in the big data era due to its flexible structure and wide-ranging applications across various fields. Numerous tensor-based approaches have been introduced, demonstrating their effectiveness in addressing real-world problems, such as medical image processing \citep{semerci2014tensor}, spatio-temporal data analysis \citep{bahadori2014fast,chen2019bayesian}, recommender system design \citep{bi2018multilayer,zhang2021dynamic}, and time series analysis \citep{rogers2013multilinear,chen2022factor}. These statistical approaches commonly assume that the underlying data model is characterized by an $m$-th order array, i.e., a tensor $\bcT\in\R^{d_1\times\cdots\times d_m}$ of order $m$. The tensor $\bcT$ can be converted into an ultra-long vector with dimension $d^{\ast}:=\prod_{i=1}^m d_i$ if it possesses no additional structures. 
However, a fascinating advantage of the low-rank tensor model is its capability to capture multi-way structures, which not only significantly reduces model complexity but also plays critical roles in real-world applications. Here, a tensor is considered low-rank if it can be decomposed into the sum of a few rank-one tensors.   Notable low-rank tensor decompositions include the  Canonical Polyadic (CP) decomposition \citep{carroll1970analysis},   multilinear (Tucker) rank tensor decomposition \citep{tucker1966some}, and tensor-train decomposition \citep{oseledets2011tensor}, among others. 

Missing data is a common issue in many real-world applications. Tensor completion refers to the problem of reconstructing the tensor $\bcT$ by observing only a small subset of its entries, which may contain noise. Several tensor completion methods \citep{liu2012tensor,barak2016noisy,yuan2016tensor,xia2021statistically} have been developed to impute the missing values. Nevertheless, most current tensor completion techniques provide only recovery guarantees, while inference on underlying parameters and the quantification of uncertainties for these methods remain largely underexplored. From a pragmatic perspective, such statistical inferences in tensor models are crucial, as they allow for the assessment of confidence and risk in the reconstruction, thereby aiding further decision-making processes.

In this paper, we focus on the inference of linear forms of a tensor, i.e., $\langle \bcT, \bcI \rangle$, given noisy and incomplete observations of the entries of $\bcT$. Here, $\langle\cdot,\cdot\rangle$ denotes the inner product, and $\bcI$ is a general indexing tensor that can take on various values. Linear forms conveniently represent the linear combinations of tensor entries, making them adaptable to a wide range of applications. The following three examples illustrate the flexibility and significance of inferring linear forms in tensor data:
\begin{enumerate}
    \item The monthly trade flow data \citep{lyu2023latent,cai2023generalized} is a fourth-order tensor with the dimensions: $\text{Country1} \times \text{Country2} \times \text{Commodity} \times \text{Month}$. This tensor records the monthly trade volumes of various commodities among different countries. Detecting changes in trading volumes for a specific commodity can be formulated as testing whether $\bcT(i_1,i_2,i_3,i_4+1) - \bcT(i_1,i_2,i_3,i_4) > 0$ for two consecutive months. 
    \item A multi-dimensional recommender system \citep{bi2018multilayer} can be represented by a third-order tensor with dimensions:  $\text{User} \times \text{Item} \times \text{Context}$. Given the group information of the items, determining whether it is worthwhile to recommend a subgroup of items $\mathcal{G}$ to a user $i_1$ under context $i_3$ can be formulated as testing whether $\sum_{j \in \mathcal{G}} \bcT(i_1, j, i_3) > C$ for a certain threshold $C>0$. 
    \item In the medical image processing of 3D MRI data \citep{nandpuru2014mri,tandel2020multiclass}, many Haralick texture features used for brain cancer classification (such as dissimilarity, inverse, and contrast features) are linear combinations of voxels within a third-order tensor $\bcT$. For example, the dissimilarity features are weighted sums of entries within slices. Quantifying the uncertainties of these texture features involves providing confidence intervals for the linear forms of $\bcT$.
\end{enumerate}

% (ii) the 4 repeated measurements of 1124 gene expression on 27 brain, heart and lung (BHL dataset, \cite{lyu2023latent}; Mai et al. (2021)) tissues created a 3-way tensor of size 1124 4  27;
% machhale2015mri

  The statistical inference of linear forms for noisy matrix completion was studied in \cite{xia2021statistical}, with entrywise inference as a special case investigated by \cite{chen2019inference, farias2022uncertainty}. The problem becomes significantly more challenging in noisy tensor completion for two main reasons. First, the estimation procedure in noisy tensor completion is more complex, as the denoising process by a low-rank retraction must be examined more carefully. Second, the multi-way structures cause the low-rank factors to become entangled in a complicated manner, making the technical analysis more challenging. To date, reliable approaches that ensure the inference of general linear forms for noisy tensor completion, along with the necessary statistical conditions, remain largely unknown.  

Apart from statistical challenges, the potential computational intractability inherent in tensor problems is another critical issue. Due to the highly non-convex nature of the low-rank tensor parameter space, it is known that the conditions necessary for achieving computationally efficient tensor recovery are significantly more stringent than the minimum statistical requirements. This phenomenon, referred to as the ``statistical-to-computational gap'', has been widely observed in various tensor problems, including tensor PCA \citep{richard2014statistical, zhang2018tensor}, tensor regression \citep{han2022optimal, xia2022inference}, tensor completion \citep{barak2016noisy, xia2021statistically, cai2022provable}, and tensor-on-tensor regression \citep{luo2022tensor}, among others. This gap motivates us to investigate whether it exists in inference tasks and, consequently, to determine the exact phase transitions and their impacts on inference methods.

% allows people to measure the reliability of recommender systems and social networks, and discover valuable information \citep{ma2023multiple}. 

\subsection{Our contributions}

Our main contributions are summarized as follows:

First, we propose a simple yet effective method to construct a test statistic with a standard normal limit for any linear form of a Tucker low-rank tensor from noisy observations of a small subset of its entries. The method consists of two parts: debiasing using a warm initialization and a single-step power iteration, both of which are computationally fast. The variance of the test statistic matches the intrinsic Cramér–Rao lower bound on Riemannian manifolds from information geometry, indicating optimal uncertainty quantification. The primary challenge in our analysis lies in characterizing the randomness of the low-rank retraction. To overcome this technical hurdle, we carefully investigate the singular spaces generated by the power iteration through a fine-grained decomposition technique to unravel the randomness of perturbations and derive a sharp $\ell_{2,\infty}$ norm perturbation bound, enabling us to control the remainder error in developing the central limit theorem. 

Second, we extend our method to various inferential tasks including the confidence interval construction, simultaneous inference of two (or more) linear forms, and inference with heteroskedastic and sub-exponential noises. We establish the coverage probability for the confidence intervals constructed from our test statistics. Furthermore, we study the intricate dependency between testing different linear forms, enabling the study of simultaneous inference problems based on the limiting joint distribution. We also adapt our approach to handle heteroskedastic and sub-exponential noise with a minor adjustment in variance quantification.

Finally, we examine the statistical and computational gaps of our tensor inference framework across varying signal-to-noise ratios (SNRs) and sample sizes, taking initialization into account. We demonstrate that valid inferences can be performed using only statistically optimal SNR and sample size conditions when disregarding computational efficiency. However, to achieve computationally efficient inferences, computationally optimal SNR and sample size conditions are still required. Notably, under these computationally optimal conditions, data splitting is unnecessary for the initialization process. Specifically, when $m=3$, a leave-one-out type initialization provided by polynomial-time algorithms, such as gradient descent, can be employed to ensure the asymptotic normality of our test statistics. For $m \geq 4$, the conditions required to overcome the dependency between the data used for initialization and for debiasing are much weaker than the computationally optimal conditions, meaning that any warm initialization with an entrywise error guarantee is suitable for our purpose.

\subsection{Related literature}
Tensor completion, as a natural generalization of matrix completion, has been extensively studied in recent years. Many optimization algorithms have been proposed and proved theoretically reliable for the low-rank tensor recovery, including alternating minimization \citep{liu2012tensor,jain2014provable}, sum of squares hierarchy \citep{barak2016noisy,potechin2017exact}, power iterations \citep{uschmajew2012local,xia2021statistically}, gradient-based iterations \citep{cai2019nonconvex,tong2022scaling,wang2023implicit}, convex relaxation \citep{gandy2011tensor,yuan2016tensor}, among others. Unfortunately, most of the existing theories only focus on the error bounds of the tensor recovery but fail to provide uncertainty quantification and confidence intervals of the estimates. 

The pioneering work of \cite{cai2022uncertainty} first studied the inference of tensor factors and entries in the noisy tensor completion model. Their work is largely limited to the symmetric CP low-rank model, and the inference target can only be a single entry since their main technique, the leave-one-out argument, is not directly applicable to the inference of linear forms. Besides tensor completion, other studies in different tensor models are also noteworthy. For example, \cite{huang2022power,xia2022inference} investigate the statistical inference of power iteration procedures under tensor PCA and tensor regression. \cite{lyu2019tensor} explores multiple testing in a tensor graphical model where the observations form a tensor, but the targets of inference remain within a matrix structure. Recent work by \cite{wen2023online} studies the inference of linear forms in online tensor regression, but the conditions used in their online debiased approach are suboptimal for offline inference. More importantly, they do not address a key issue in inference -- namely, the usual dependence of initialization on the data used for bias-correction steps -- which is circumvented by the online nature of their method.

Our work is closely related to inference problems in matrix completion, as low-rank matrices can be viewed as second-order tensors. Several studies \citep{chen2019inference, xia2021statistical, farias2022uncertainty} have investigated the inference of quantities such as linear forms, entries, and factors in matrix completion. For further reading, see \cite{bai2021matrix, chernozhukov2023inference, cahan2023factor, choi2024matrix}, among others. These works commonly utilize low-rank projection through singular value decomposition (SVD). While the properties of SVD are well understood in the matrix case, generalizing them to higher-order tensors remains challenging due to the intricate Tucker low-rank structures and the increased dimensionality resulting from tensor unfolding. 
 
 % Prediction interval \cite{zhang2021dynamic} \cite{chernozhukov2023inference}

Our proposed debiasing-based method shares the same spirit as the classic debiasing approaches in Lasso regression \citep{zhang2014confidence, javanmard2014confidence, sara2014asymptotically}. Although this concept has been extended to matrix inference problems (e.g., matrix regression \citep{cai2016geometric, xia2019confidence} and matrix completion \citep{carpentier2018adaptive, chen2019inference, xia2021statistical}), the complex low-rank structure of higher-order tensors introduces unique challenges related to the bias and variance trade-off that are distinct from those encountered in matrix cases. Furthermore, debiasing approaches often require sample splitting to ensure independence between initialization and bias-correction steps. The debiasing methods proposed in \citep{carpentier2018adaptive, xia2019confidence, xia2021statistical} rely on such sample splitting, which may lead to potential power loss. In the context of tensor completion, it remains largely unknown whether sample splitting can be avoided.
% \subsection{Some Notations and Tensor Operators}

% \times_2 \bM_2\cdots \times_m \bM_m

\section{Noisy Tensor Completion with Tucker Decomposition}
\subsection{Preliminaries and notations}

 Suppose we have a $m$-th order tensor $\bcT\in \R^{d_1\times\cdots\times d_m}$.  Let $\cM_j(\bcT)$ be the mode-$j$ \textit{unfolding} (or \textit{matricization}) of $\bcT$ that rearrange all the  mode-$j$ fibers of $\bcT$ columnwisely into a matrix with size $\cM_j(\bcT)\in \R^{d_j\times d_{-j}}$, where $d_{-j}:=d^{\ast}/d_j$. For example, the mode-1 unfolding for $m=3$ is: $\left[\cM_1(\bcT)\right]_{i_1,\left(i_2-1\right) d_3+i_3}=[\bcT]_{i_1, i_2, i_3}$ for $\forall i_j \in\left[d_j\right]$. The multilinear rank of a tensor is $\br:= \left(\rank\left(\mathcal{M}_1(\mathcal{T})\right), \cdots, \rank\left(\mathcal{M}_m(\mathcal{T})\right)\right)$. Denoting $r_j=\mathcal{M}_j(\mathcal{T})$, $\forall j\in[m]$, we assume the low-rankness: $r_j\ll d_j$ for each mode $j\in[m]$. It is clear that any tensor with multilinear rank $\br$ admits a Tucker decomposition:
\begin{equation*}
\bcT=\bcC\cdot\left(\bU_1,\cdots,\bU_m\right):=\bcC\times_1\bU_1\times_2\cdots\times_m \bU_m,
\end{equation*}
where $\bcC\in \R^{r_1\times\cdots\times r_m}$ is the core tensor and the orthogonal matrices $\{\bU_j\in \O^{d_j\times r_j}\}_{j=1}^m$ represent the singular subspaces. For a more detailed introduction to the Tucker decomposition and tensor marginal product $\times_j$, please refer to \cite{kolda2009tensor}.

We formulate the noisy tensor completion problem in a trace regression model. Suppose that the $n$ observations $\left\{ (\bcX_i, Y_i) \right\}_{i=1}^{n}$ satisfy
\begin{equation}\label{eq:trace-reg}
    Y_i = \left\langle \bcT,\bcX_i \right\rangle +\xi_i,
\end{equation}
where we assume that each $\bcX_i$ is uniformly distributed from the canonical basis $\bbH:=\left\{\be_{1,k_1}\circ\cdots\circ\be_{m,k_m}: \be_{j,k_j}\subset \R^{d_j}, k_j\in[d_j] \right\}$ in $\R^{d_1\times\cdots\times d_m}$. Assume that noise is centered i.i.d with a variance $\E \xi_i^2 =\sigma^2$ and sub-Gaussian tail $\mathbb{E} \exp\left(C \xi_i^2 / \sigma^2\right) \leq 2$ for some constant $C>0$.  We first focus on the case of homogeneous variances and will study heteroskedastic and sub-exponential noise in Section~\ref{sec:sub-exponential}. Our goal is to construct a test statistic with an asymptotically normal distribution that can infer the value of a linear form $ \left\langle\bcT,\bcI \right\rangle$ for a given tensor $\bcI$. 

% $\br=\left(r_1,r_2,\dots,r_m\right)$. we assume $r^*\ll \dmax$. $r_j^{m-1}\lesssim d_j$, or $r_{-j}\lesssim d_j$ for each $j\in [m]$%, and $\dmax\lesssim \sqrt[m]{d^*}$

Some notations will be used throughout the paper. Let $d^*:=d_1 d_2\cdots d_m$ be the total number of entries and denote $d_{-j}: = d^*/d_j$. The largest dimension is $\dmax: = \max_{j\in[m]}\{d_j\}$, and the smallest dimension $\dmin := \min_{j\in[m]}\{d_j\}$. Define $r^*$, $r_{-j}$, $\rmax$, and $\rmin$ similarly. Given the low-rank structure of $\bcT$, the smallest singular value is given by $\lambda_{\min}: =\min_{j\in[m]}\lambda_{r_j}(\cM_j(\bcT))$, with the largest singular value $\lambda_{\max}$ correspondingly defined. The condition number of the tensor $\bcT$ is given by  $\kappa(\bcT): = \lambda_{\max} /\lambda_{\min}$, where we assume that $\kappa(\bcT)\le\kappa_0$ throughout the paper. Without loss of generality, we assume $n\le \dmax^{4m}$ for simplicity. This is a mild assumption since the most interesting scenario typically involves $n\ll d^{\ast}$. 

Let $\norm{\cdot}_{\ell_p}$ denote the $\ell_p$ norm of a vector or vectorized tensor, i.e., $\norm{\bcA}_{\ell_p} = \norm{\Vect(\bcA)}_{p}$. Specifically,  $\norm{\bcA}_{\ell_2}=\norm{\bcA}_{\tF}$ represents the Frobenius norm of $\bcA$, and we will use $\norm{\cdot}_{\tF}$ instead; $\norm{\bcA}_{\linf}=\displaystyle\max_{\omega\in [d_1]\times \cdots \times[d_m] } \abs{\left[\bcA\right]_{\omega} }$ denotes the largest absolute value among the entries of tensor $\bcA$.  We use $\left[\bcA\right]_{\omega}$ to represent the $\omega$-th entry of $\bcA$. With a slight abuse of notation, let bold face $\bomega$ denote the corresponding indexing tensor such that $\langle\bcT,\bomega\rangle = \left[\bcA\right]_{\omega}$. Moreover, the $\norm{\cdot}_{p}$-norm is understood as the matrix operator norm induced by the $\ell_p$ vector norm.
For a matrix $\bA$,  the largest
row-wise $\ell_2$ norm is defined as $\norm{\bA}_{2,\infty}=\max_i\norm{\bA_{i\cdot}}_2$.  Given any column-wise orthogonal matrix $\bU\in \O^{d\times r}$, the orthogonal projection matrix is represented by $\cP_{\bU}=\bU \bU^\top$, with its orthogonal complement $\bU_{\perp}\in \O^{d\times (d-r)}$ and the corresponding projection $\cP_{\bU}^\perp=\bU_{\perp} \bU^\top_{\perp}$. The Kronecker product between matrices $\bA$ and $\bB$ is denoted by $\bA\otimes \bB$, and we use the symbol ``$\circ$'' to represent the vector outer product. We write the Hadamard (entrywise) product between two matrices $\bA$ and $\bB$ as $\bA\odot\bB$. 

% The mode-$j$ tensor matricization (or unfolding) $\cM_j(\cdot)$ defines an operator that rearranges all the mode-$j$ fibers of a tensor in $\R^{d_1\times\cdots\times d_m}$ to a matrix with shape $\R^{d_j\times d_{-j} }$.

% we write $\bcT\cdot\left( \bM_1 ,\cdots, \bM_m \right) = \bcT \times_{j=1}^{m} \bM_j $ as the tensor product given the $j$-th marginal matrix $\bM_j\in \R^{d_i\times p_i }$ respectively.

% Our setting allows for unbalanced dimension
%Since the rank $r_j$ are generally small, we shall denote the order 
% i_1,\dots,i_m
% omega -> entries

\subsection{Tensor manifold and tangent space}  % \bcC\times_{j=1}^m \bU_j, \ \bcC\in \R^{\br}, \bU_j\in \O^{d_j\times r_j}  \Omega\left(\sum_{j=1}^m r_j d_j\right)
The set of tensors with bounded multilinear rank $\br$ forms a smooth Riemannian manifold $\cM_{\br}$ embedded in $\R^{d_1\times\cdots\times d_m}$, where $\cM_{\br}:=\left\{\bcW \in\R^{d_1\times\cdots\times d_m}, \rank(\cM_j(\bcW_j))\le r_j, \ \forall j\in [m]  \right\}$. The degree of freedom of $\cM_{\br}$ is given by $\mathsf{Dof}:=r^*+\sum_{j=1}^m r_j d_j -r_j^2$, which is much smaller than the ambient dimension $d^*$.  We can view low-rank tensor estimation as manifold learning on $\cM_{\br}$. 
Unlike the ``flat'' Euclidean space, the manifold $\cM_{\br}$ is curved, making it challenging to trace the statistical properties of estimators restricted to $\cM_{\br}$. We shall endow $\cM_{\br}$ with Euclidean (Frobenius) metric defined by the inner product $\langle\cdot,\cdot\rangle$. It is well-known that the property of tangent space plays a critical role in manifold learning. 
The tangent space of $\cM_{\br}$ at the point $\bcT$, denoted by $\TT$, can be explicitly written as  \citep{koch2010dynamical,uschmajew2013geometry}
%\begin{align}\label{eq:tangentT}
\begin{equation}\label{eq:true-TT}
    \TT:=\big\{\bcD\times_{j=1}^{m} \bU_{ j}+\sum\nolimits_{j=1}^m  \bcC\times_{k\neq i} \bU_{k}\times_{j}\bW_{j}: \bcD\in\R^{\br}, \bW_j\in\R^{d_j\times r_j}, \bW_j^{\top}\bU_{j}={\bf 0} \big\}.
\end{equation}
Moreover, for any tensor  $\bcI\in \R^{d_1\times\cdots\times d_m}$, the projection of $\bcI$ onto this tangent space is 
\begin{equation}\label{eq:true-TT-proj}
    \cP_{\TT}(\bcI):=\argmin_{\bcX\in \TT} \|\bcI-\bcX\|_{\rm F}^2 = \bcI \cdot \left( \cP_{{\bU}_{1}},\dots,\cP_{{\bU}_{m}}\right) + \sum_{j=1}^{m} \bcC \cdot\left(\bU_1,\dots, \bW_j, \dots, \bU_m\right), 
\end{equation}
where each $\bW_j$ is given by:
\begin{equation*}
\bW_j= \cP_{{\bU}_{j}}^{\perp} \mathcal{M}_j\left(\bcI\right)\left(\otimes_{k \neq j} {\bU}_{ k}\right) \mathcal{M}_j^{\dagger}\left({\bcC }\right),
\end{equation*}
and the notation $\bA^{\dagger}$ stands for the pseudo-inverse of matrix $\bA$. 
The equations \eqref{eq:true-TT} and \eqref{eq:true-TT-proj} are critical for characterizing our inference task, as the tangent space provides good first-order approximations of local perturbations on the low-rank tensor manifold \citep{absil2008optimization}. This approximation makes it possible to precisely characterize the randomness of low-rank variables around $\bcT$. For an introduction to the tangent space of low-rank tensor manifolds, see \cite{uschmajew2013geometry, kressner2014low}, among others.

\subsection{Assumptions}
The following assumptions are typical in tensor completion and matrix inference problems. 
\begin{Assumption}[Incoherence]\label{asm:incoherence}
There exists a constant $\mu>0$, usually called the incoherence constant, such that:
    \begin{equation}\label{eq:incoherence}
    \left\|\bU_i\right\|_{2, \infty} \leq \sqrt{\frac{\mu  r_i }{d_i}}, \quad \forall i\in [m].
\end{equation}
\end{Assumption}

\begin{Assumption}[Alignment condition]\label{asm:alignment}
There exists a constant $\alpha_I>0$, usually called the alignment parameter, such that 
\begin{equation*}
    \norm{ \cP_{\TT}(\bcI) }_\tF \ge \alpha_I \cdot \dmax^{\frac{1}{2}}\left(d^*\right)^{-\frac{1}{2}}\norm{ \bcI }_\tF
\end{equation*}   
\end{Assumption}
The incoherence condition is standard in matrix and tensor completion problems; without it, the problems may become ill-posed. Assumption \ref{asm:alignment} generalizes the alignment requirement from matrix completion \citep{chen2019inference, xia2021statistical, farias2022uncertainty}, and its order matches that of the previous entry-wise tensor inference study \citep{cai2022uncertainty} in the case $m=3$. This condition is much weaker than that in the recent work \citep{wen2023online}, which has an order of $\sqrt{d}$ for $m=3$. In an incoherent tensor, the alignment condition typically regulates $\alpha_{I}$ at the $O(1)$ level, since when taking $\bcI=\bomega$ as a sparse tensor with only one non-zero entry, we already have $ \norm{ \cP_{\TT}(\bomega) }_\tF\lesssim \dmax^{\frac{1}{2}}\left(d^*\right)^{-\frac{1}{2}}\norm{\bomega}_{\tF}$, given $m,\mu,r^*=O(1)$. In our subsequent discussion, we allow $\alpha_I\to 0$ as long as the SNR is sufficiently large.  Unlike existing works, which usually confine $m$ to $2$ or $3$, our theory accommodates any mode $m$ as well as unbalanced dimensions and ranks, provided that $m\ll \dmax $ and $ r_{-j}\lesssim d_j$.

\section{Cramér–Rao Lower Bound for Linear Form Inference}
Before developing an inference method for $\langle\bcT,\bcI\rangle$, it is important to understand the best possible accuracy, in terms of variance, that any unbiased estimator can achieve when estimating it. In this section, we develop the Cramér–Rao (CR) type lower bound on the estimation of $\langle\bcT,\bcI\rangle$, using the intrinsic CR lower bound commonly studied in information geometry \citep{smith2005covariance}. For simplicity, we assume that the rank $r_j=1$, for all $j\in [m]$. As suggested in \cite{xia2021statistically}, an estimate of $\bcT$ is said to be statistically optimal w.r.t. Frobenius norm if it falls within the rate-optimal ball 
$$
\bbB_{\opt}:=\left\{\widetilde{\bcT}: \norm{\widetilde{\bcT}-\bcT}_{\tF}^2\le C\sigma^2\cdot \frac{d^*\dmax\log\dmax}{n}  \right\},
$$
with high probability. The following theorem presents the optimal uncertainty quantification for all rate-optimal estimators. 

\begin{Theorem}[Lower bound on the variance of unbiased estimators]\label{thm:opt-uct-qtf} Suppose Assumptions \ref{asm:incoherence} and \ref{asm:alignment} hold,  the tensor $\bcT$ is rank one, and assume that the noise follows $\xi_i \sim \cN(0, \sigma^2)$. Let $\widetilde{m}: = \max\big\{m, \log(n \vee \frac{\lambda_{\max}}{\sigma}) / \log\dmax\big\}$. Let $\Bar{\bcT} \in \cM_{\br}$ be a rate-optimal estimator satisfying $\bbP(\Bar{\bcT} \in \bbB_{\opt}) \ge 1 - \dmax^{-C_{\gap}\widetilde{m}}$ that is unbiased within $\bbB_{\opt}$, i.e., $\E\Bar{\bcT}\idc\{\Bar{\bcT} \in \bbB_{\opt}\} = \bcT$, for some absolute constant $C_{\gap}>0$. Let $g_{\bcI}(\Bar{\bcT}) = \langle\Bar{\bcT}, \bcI\rangle$ be an estimator of the linear form $g_{\bcI}(\bcT) = \langle\bcT, \bcI\rangle$. If the SNR satisfies
\begin{equation}\label{eq:opt-var-snr}
    \varepsilon_{\SNR} := C\frac{ \sigma }{\lambda_{\min}}\sqrt{\left(\frac{\norm{\bcI}_{\ell_1}}{\alpha_I\norm{\bcI}_{\tF}}\vee 1 \right)\frac{ \mu^{\frac{3m}{2}}\dmax d^*\log\dmax  }{ n}} \longrightarrow 0,
\end{equation}
for some absolute constant $C>0$,  then 
\begin{equation*}
     \E\left(g_{\bcI}(\Bar{\bcT}) - g_{\bcI}({\bcT})\right)^2 \ge \left(1- \varepsilon_{\SNR}^2\right)\frac{\sigma^2 d^*}{n} \norm{\cP_\TT (\bcI)}_{\tF}^2.
\end{equation*}
\end{Theorem}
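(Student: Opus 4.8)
The plan is to follow the classical strategy for intrinsic Cram\'er--Rao bounds on Riemannian manifolds \citep{smith2005covariance}, adapted to the restricted-unbiasedness hypothesis $\E\bar\bcT\,\idc\{\bar\bcT\in\bbB_{\opt}\}=\bcT$. First I would set up the statistical model: conditionally on the design $\{\bcX_i\}_{i=1}^n$, the observations $Y_i=\langle\bcT,\bcX_i\rangle+\xi_i$ with $\xi_i\sim\cN(0,\sigma^2)$ give a Gaussian likelihood in the natural parameter $\bcT$, and the ambient Fisher information for the full vectorized model is $\bF=\frac{1}{\sigma^2}\sum_{i=1}^n \Vect(\bcX_i)\Vect(\bcX_i)^\top$. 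Taking expectation over the uniform design, $\E\bF=\frac{n}{\sigma^2 d^*}\bI_{d^*}$, which is the isotropic term that produces the leading $\frac{\sigma^2 d^*}{n}$ factor. Because the parameter is constrained to the manifold $\cM_{\br}$, the relevant object is the \emph{intrinsic} Fisher information obtained by restricting $\bF$ to the tangent space $\TT$ from \eqref{eq:true-TT}: $\bF_{\TT}=\cP_{\TT}\bF\cP_{\TT}$ (as an operator on $\TT$). The intrinsic CR bound then says that for any estimator that is unbiased along $\cM_{\br}$, the variance of $g_{\bcI}(\bar\bcT)=\langle\bar\bcT,\bcI\rangle$ is at least $\nabla g_{\bcI}^\top \bF_{\TT}^{\dagger}\,\nabla g_{\bcI}$, where the intrinsic gradient of the linear form is exactly $\cP_{\TT}(\bcI)$ by \eqref{eq:true-TT-proj}. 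Plugging $\E\bF_{\TT}=\frac{n}{\sigma^2 d^*}\,\cP_{\TT}$ gives the claimed main term $\frac{\sigma^2 d^*}{n}\norm{\cP_\TT(\bcI)}_\tF^2$.

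Next I would handle the three ways in which the stated hypotheses deviate from the textbook setting, each of which only costs a $(1-\varepsilon_{\SNR}^2)$-type factor. (i) \emph{Random design:} $\bF_{\TT}$ is random, so I would show a concentration bound $\|\bF_{\TT}-\E\bF_{\TT}\|\le \varepsilon\cdot\|\E\bF_{\TT}\|$ on the low-dimensional space $\TT$ (dimension $\mathsf{Dof}$) via a matrix Bernstein / Chernoff argument, using the incoherence Assumption~\ref{asm:incoherence} to bound $\|\cP_{\TT}\Vect(\bcX_i)\|_{\ell_\infty}$-type quantities; the incoherence enters through the $\mu^{3m/2}$ factor, the alignment parameter $\alpha_I$ through the ratio $\norm{\bcI}_{\ell_1}/(\alpha_I\norm{\bcI}_\tF)$ that measures how badly the linear functional can load on the directions where $\bF_{\TT}$ fluctuates. (ii) \emph{Restricted, not exact, unbiasedness:} I would write $\E\bar\bcT = \bcT + \E\bar\bcT\,\idc\{\bar\bcT\notin\bbB_{\opt}\}$ and control the correction term using $\bbP(\bar\bcT\notin\bbB_{\opt})\le\dmax^{-C_{\gap}\widetilde m}$ together with a crude a priori bound on $\|\bar\bcT\|_\tF$ (e.g.\ that any element of $\cM_{\br}$ realised by the estimator can be taken bounded, or by a truncation argument); the choice $\widetilde m=\max\{m,\log(n\vee\lambda_{\max}/\sigma)/\log\dmax\}$ is exactly what makes $\dmax^{-C_{\gap}\widetilde m}$ beat the worst-case scaling of both $n$ and $\lambda_{\max}/\sigma$, so this tail contribution is absorbed into $\varepsilon_{\SNR}^2$. (iii) The bias of the \emph{van Trees / Bayesian} smoothing, if I use that route instead of the frozen-parameter CR inequality, is likewise $O(\varepsilon_{\SNR})$.

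Concretely the steps are: (1) write the conditional Gaussian log-likelihood and compute $\bF$ and $\E\bF$; (2) identify the intrinsic gradient of $g_{\bcI}$ on $\cM_{\br}$ as $\cP_{\TT}(\bcI)$ using \eqref{eq:true-TT-proj}; (3) prove the tangent-space concentration $\cP_{\TT}\bF\cP_{\TT}\succeq(1-\varepsilon)\frac{n}{\sigma^2 d^*}\cP_{\TT}$ on an event of probability $\ge 1-\dmax^{-C_{\gap}\widetilde m}$; (4) derive the intrinsic CR inequality on this event, being careful that unbiasedness is only assumed on $\bbB_{\opt}$, which forces a differentiation-under-the-integral step restricted to that ball plus a tail remainder; (5) collect the error terms from steps (3) and (4), bound each by $C\varepsilon_{\SNR}^2\cdot\frac{\sigma^2 d^*}{n}\norm{\cP_\TT(\bcI)}_\tF^2$, and conclude. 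The main obstacle I expect is step (4): the hypothesis is not exact unbiasedness but unbiasedness \emph{within a high-probability ball}, so the usual identity $\int (\hat\theta-\theta)\,\partial_\theta p\, = \partial_\theta\!\int\hat\theta\,p\ - \ (\text{const})$ must be redone with the indicator $\idc\{\bar\bcT\in\bbB_{\opt}\}$ present, and one must show the derivative of the excluded-mass term is negligible uniformly in a neighbourhood of $\bcT$ on the manifold; controlling this requires both the quantitative tail bound and a Lipschitz-in-$\bcT$ estimate for $\bbP(\bar\bcT\notin\bbB_{\opt})$, which is where the precise definition of $\widetilde m$ and the SNR condition \eqref{eq:opt-var-snr} are genuinely used rather than just invoked.
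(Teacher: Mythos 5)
There is a genuine gap: your proposal never accounts for the curvature of the manifold, and that is where the entire $\varepsilon_{\SNR}^2$ loss actually comes from. The intrinsic Cram\'er--Rao bound of \cite{smith2005covariance} is not simply the ambient CR bound restricted to the tangent space; on a curved manifold it reads $\bC \succeq \bF^{\dagger} - \tfrac{1}{3}(\bR_{\sfm}\bF^{\dagger}+\bF^{\dagger}\bR_{\sfm})$, and the paper's proof consists precisely in bounding the Riemannian remainder $\bR_{\sfm}$. This is where the rank-one hypothesis enters (the sectional curvature of the Segre manifold is bounded by $1/\lambda_{\min}^2$), where the rate-optimality of $\bar\bcT$ is used (to bound $\E\|\bar\bdelta\|_2^2 \lesssim \sigma^2 d^*\dmax\log\dmax/n$), and where incoherence and the alignment parameter $\alpha_I$ produce the $\mu^{3m/2}$ and $\norm{\bcI}_{\ell_1}/(\alpha_I\norm{\bcI}_\tF)$ factors. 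Your proposal attributes the $(1-\varepsilon_{\SNR}^2)$ factor to (i) concentration of the design-conditional Fisher information and (ii) leakage from restricted unbiasedness, but neither mechanism can generate the $\sigma/\lambda_{\min}$ dependence in \eqref{eq:opt-var-snr}: since the design distribution does not depend on $\bcT$, the unconditional Fisher information is exactly $\E_X[\bF_\TT]=\frac{n}{\sigma^2 d^*}\cP_\TT$ (no concentration argument or incoherence needed, and no $\lambda_{\min}$ appears), and the excluded-mass term is super-polynomially small by the choice of $\widetilde m$, hence negligible. Moreover, your step (3) conditions on the design and then invokes a per-design CR inequality, which would require \emph{conditional} unbiasedness of $\bar\bcT$ given $\{\bcX_i\}$ --- something not assumed; the correct route is to treat $(\bcX_i,Y_i)$ jointly and use the unconditional information, as the paper does.

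Secondly, your step (4) makes the restricted unbiasedness harder than it needs to be. Rather than redoing differentiation under the integral with the indicator $\idc\{\bar\bcT\in\bbB_{\opt}\}$ and proving a Lipschitz-in-$\bcT$ estimate for $\bbP(\bar\bcT\notin\bbB_{\opt})$ (for which no hypotheses are available), the paper simply defines the exactly unbiased auxiliary estimator $\widetilde{\bcT}_{\ubs}=\bigl(\bar\bcT\,\idc\{\bar\bcT\in\bbB_{\opt}\}+\bcT\,\idc\{\bar\bcT\notin\bbB_{\opt}\}\bigr)/(1+\varepsilon_{\opt})$ with $|\varepsilon_{\opt}|\le \dmax^{-C_{\gap}\widetilde m}$, lower-bounds $\E(g_{\bcI}(\bar\bcT)-g_{\bcI}(\bcT))^2$ by $(1+\varepsilon_{\opt})^2\Var(g_{\bcI}(\widetilde{\bcT}_{\ubs}))$, and applies the intrinsic CR bound to $\widetilde{\bcT}_{\ubs}$. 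To repair your proof you would need to (a) drop the conditional-design concentration step and compute the unconditional tangential Fisher information directly, and (b) add the curvature analysis --- a bound on the sectional curvature of $\cM_{\br}$ near $\bcT$ for rank one, a bound on $\E\|\bar\bdelta\|_2^2$ from membership in $\bbB_{\opt}$, and the incoherence/alignment bookkeeping that converts the resulting remainder into $\varepsilon_{\SNR}^2\cdot\frac{\sigma^2 d^*}{n}\norm{\cP_\TT(\bcI)}_\tF^2$.
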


We note that such a lower bound can be extended to general low ranks $\br\geq 1$ by further investigating the curvature of the low-rank tensor manifold. Theorem \ref{thm:opt-uct-qtf} reveals an intriguing finding: the inverse Fisher information of the estimator $g_{\bcI}(\bcT)$ is asymptotically $(\sigma^2 d^*/n) \norm{\cP_{\TT} (\bcI)}_{\tF}^2$ (as SNR goes to infinity), which is proportional to the norm of the projection of $\bcI$ onto the tangent space $\TT$. This universal form holds for any tensor $\bcI$ satisfying Assumption~\ref{asm:alignment} and remains valid for higher-order tensors. However, this also raises the question: 
\begin{center}
    \textit{Is the Cramér–Rao lower bound attainable? \\
    If so, can we provide a statistical estimator that achieves the optimal accuracy?}
\end{center}

To address these questions and gain insight into low-rank manifold inference, we examine the linear function $g_{\bcI}(\bcT)=\langle\bcT,\bcI\rangle$ defined on the low-rank tensor manifold $\cM_{\br}$. Clearly, $g_{\bcI}$ has the ordinary gradient $\nabla g_{\bcI}(\bcT) = \bcI$ at $\bcT$, and the tangential gradient $\cP_{\TT}(\nabla g_{\bcI}(\bcT))$ on $\cM_{\br}$. Let us consider a small perturbation around $\bcT$ in $\cM_{\br}$: $\widehat{\bcT} =\bcT+\bcE $. The first-order approximation for small enough $\bcE$  \citep{smith1994optimization,absil2008optimization,tu2017differential} gives:
\begin{equation}\label{eq:riem-taylor-approx}
    \begin{aligned}
        g_{\bcI}(\widehat{\bcT}) - g_{\bcI}(\bcT) & = \langle\cP_{\TT}(\nabla g_{\bcI}(\bcT)),\bcE \rangle + \mathsf{remainder \ term} \\
        & \approx \langle\cP_{\TT}(\bcI),\bcE \rangle.
    \end{aligned}
\end{equation}
%  in Riemannian geometry
Using the first-order approximation in \eqref{eq:riem-taylor-approx}, if the initializer is sufficiently close to $\bcT$, the following debiasing method produces an estimator centered at $\bcT$ with random perturbations: 
\begin{equation}\label{eq:ubs-approx-decomp}
    \begin{aligned}
        \widehat \bcT_{\ubs} & =  \widehat \bcT_{\init } +\frac{d^*}{n} \sum_{i=1}^{n}\left(Y_i - \left\langle \widehat\bcT_{\init }, \bcX_i \right\rangle\right)\cdot \bcX_i \\
        & = \bcT + \underbrace{\widehat \bcT_{\init}- \bcT - \frac{d^*}{n} \sum_{i=1}^{n} \left\langle\widehat \bcT_{\init}- \bcT ,\bcX_i \right\rangle\bcX_i}_{\bcE_{\init}, \text{ initialization error} } + \underbrace{\frac{d^*}{n} \sum_{i=1}^{n} \xi_i\bcX_i}_{ \bcE_{\rn}, \text{ i.i.d. noises}} \\
        & \approx \bcT + \bcE_{\rn}.
    \end{aligned}
\end{equation}
The initialization error term, $\bcE_{\init}$, is controlled provided that the entry-wise error $\norm{\widehat\bcT_{\init}- \bcT }_{\linf}$ is sufficiently small. The noise term, $\bcE_{\rn}$, consists of a sum of i.i.d. noise and is the main contributor to variance. If $\widehat \bcT_{\ubs}$ lies in $\cM_{\br}$, we can substitute equation \eqref{eq:ubs-approx-decomp} into \eqref{eq:riem-taylor-approx} by setting  $\widehat \bcT=\widehat \bcT_{\ubs}$, leading to the following asymptotic normality:
\begin{equation}\label{eq:linear-approx-clt}
     g_{\bcI}(\widehat{\bcT}) - g_{\bcI}(\bcT)\stackrel{\eqref{eq:riem-taylor-approx},\eqref{eq:ubs-approx-decomp}}{\approx}  \langle\cP_{\TT}(\bcI),\bcE_{\rn} \rangle = \frac{d^*}{n} \sum_{i=1}^{n} \xi_i\langle\cP_{\TT}(\bcI), \bcX_i\rangle \xrightarrow{\text{CLT}} \cN\Big(0,\frac{d^*}{n}\sigma^2\norm{\cP_{\TT}(\bcI)}_{\tF}^2 \Big).
\end{equation}
However, this approach fails because $\widehat \bcT_{\ubs}$ varies within $\R^{d_1\times\cdots\times d_m}$ with high variance contributed by $\bcE_{\rn}$, making the approximation \eqref{eq:riem-taylor-approx} invalid. Nonetheless, \eqref{eq:linear-approx-clt} remains insightful, as its variance achieves the  Cramér–Rao lower bound in Theorem~\ref{thm:opt-uct-qtf}. To leverage \eqref{eq:linear-approx-clt} for inference, the following conditions must be met:
\begin{enumerate}[(i)]
    \item Good initialization: ensure the initialization error $\bcE_{\init}$ in \eqref{eq:ubs-approx-decomp} is negligible;
    \item Appropriate retraction: project $\widehat \bcT_{\ubs}$ back onto $\cM_{\br}$ to reduce the variance contributed by $\bcE_{\rn}$.
\end{enumerate}
We propose a debiasing method combined with a one-step power iteration to satisfy requirements (i) and (ii). In the next section, we detail an initialization condition that ensures (i) is met. Additionally, we show that the one-step power iteration effectively addresses (ii) by reducing the variance of i.i.d. perturbations and enabling their statistical characterization through a careful study of the spectral decomposition \citep{xia2019confidence}. 

\begin{Remark}
    
Equation \eqref{eq:linear-approx-clt} employs a first-order approximation similar to the classic delta method \citep{kelley1928crossroads,rao1973linear}. However, unlike the delta method, our approach must handle noise around the manifold by retracting perturbed data back onto it. The sensitivity of low-rank retractions has been extensively studied in optimization \citep{absil2012projection, vandereycken2013low, sun2019escaping, boumal2023introduction}. Despite these efforts, only approximate bounds are available, and an exact representation of perturbations after retraction remains elusive. This gap makes statistical inference on low-rank manifolds significantly more challenging. % than estimation and optimization.

\end{Remark}
%%%%%%%%%%%%%%%%%%%%%
\section{Methodology: Inference by Debiasing and Power Iteration}\label{sec:inference-NTC}
%%%%%%%%%%%%%%%%%%%%%
\subsection{Tensor completion methods and warm initialization}
Our debiasing and one-step power iteration method requires a warm initialization $\widehat \bcT_{\init}$ with multilinear rank $\br$  that satisfies
\begin{equation}\label{eq:tensor-init}
\norm{\widehat \bcT_{\init} -   \bcT }_{\linf } \le C_1 \sigma \sqrt{\frac{\dmax \log \dmax }{n}},
\end{equation}
with probability at least $1-\dmax^{-3m}$. Here $C_1>0$ is a large number that may depend on $\mu$, $\br$, $\kappa_0$ and $m$. Such initialization can be attained using computationally efficient methods like vanilla gradient descent \citep{cai2019nonconvex}, Riemannian gradient descent \citep{wang2023implicit}, or online Riemannian gradient descent \citep{cai2023online}, provided that the signal-to-noise ratio and sample size are sufficiently large. Note that a minimax optimal estimator $\widehat \bcT_{\init}$ achieving the Frobenius-norm error rate $\norm{\widehat \bcT_{\init} -\bcT}_{\tF}\lesssim_p \sigma \sqrt{(\dmax d^*/n)\log\dmax}$ (\cite{xia2021statistically}) is not  always suitable for statistical inference. For instance, small perturbations around $\bcT$ can attain this error rate but fail to provide distributional guarantees for the tensor structure.

The following SNR and sample size conditions are usually required for the aforementioned computationally efficient methods:
\begin{equation}\label{eq:SNR-Comp}
    \frac{\lambda_{\min} }{\sigma} \gg \sqrt{\frac{(d^* )^{\frac{3}{2}} }{n}}\quad {\rm and}\quad  n \gg   (d^*)^{\frac{1}{2}}.
\end{equation}
These conditions are common in various tensor models \citep{zhang2018tensor, xia2022inference, han2022optimal, cai2022provable} and are believed necessary for polynomial-time algorithms \citep{barak2016noisy}.  However, the conditions in \eqref{eq:SNR-Comp} are overly stringent when focusing on the statistical perspective with unlimited computational resources. 
For example, as suggested in \cite{zhang2018tensor, xia2022inference} and the recent study \cite{cai2023online}, with sufficient oracle information, minimax optimal tensor completion only requires
\begin{equation}\label{eq:SNR-Stat}
\frac{\lambda_{\min}}{\sigma} \gg \sqrt{\frac{d^* \dmax}{n}} \quad \text{and} \quad n \gg \dmax.
\end{equation}
The statistical-to-computational gap between \eqref{eq:SNR-Comp} and \eqref{eq:SNR-Stat} motivates us to investigate whether \eqref{eq:SNR-Stat} is sufficient for statistical inference on linear forms. Focusing on the inference task, we assume that we have obtained an initial estimate  $\widehat \bcT_{\init}$ satisfying \eqref{eq:tensor-init} with high probability. We further extend our framework by allowing dependency between $\widehat \bcT_{\init}$ and the observed data to determine the necessary SNR and sample size conditions for valid inference.

\subsection{Inference of linear form with independent initialization}
We detail our debiasing and one-step power iteration method in Algorithm \ref{alg:debias-powerit}, assuming that a good initial estimator$\widehat \bcT_{\init }=\widehat \bcC_0\cdot \left( \widehat \bU_{1,0},\cdots,\widehat \bU_{m,0}\right)$ is available, which provides estimates of the singular subspaces $\widehat{\bU}_{j,0}$ and the core tensor $\widehat\bcC_0$. The debiasing technique \citep{zhang2014confidence, javanmard2014confidence, sara2014asymptotically} serves to re-randomize the initial estimator:
\begin{equation}\label{eq:debiasing}
    \widehat \bcT_{\ubs} = \widehat \bcT_{\init } +\frac{d^*}{n}\sum_{i=1}^{n}\left(Y_i - \left\langle \widehat\bcT_{\init }, \bcX_i \right\rangle\right)\cdot \bcX_i
\end{equation}

We then employ the one-step power iteration to retract the unbiased $\widehat \bcT_{\ubs}$ onto the low-rank manifold $\cM_{\br}$. This step significantly reduces the variance of the perturbation term $\bcE_{\rn}$ as indicated in \eqref{eq:ubs-approx-decomp} but introduces a new negligible bias. This produces our final estimator $\widehat \bcT$, and we infer the linear form $\left\langle  \bcT,\bcI  \right\rangle$ using $\big\langle  \widehat{\bcT},\bcI  \big\rangle$. The following theorem shows that when the initialization $\widehat \bcT_{\init }$ and the observations are independent, the proposed test statistic achieves asymptotic normality with a variance reaching the Cramér–Rao lower bound.

\begin{algorithm}
\caption{Debiasing and One-Step Power Iteration}
\label{alg:debias-powerit}
\begin{algorithmic}
\REQUIRE $\left\{ (\bcX_i, Y_i) \right\}_{i=1}^{n}$ and an initial estimator $\widehat \bcT_{\init }=\widehat{\bcC}_0 \cdot \left( \widehat \bU_{1,0},\cdots,\widehat \bU_{m,0}\right)$
\STATE{Debias the initialization by $\widehat \bcT_{\ubs} =  \widehat \bcT_{\init } +\frac{d^*}{n} \sum_{i=1}^{n}\left(Y_i - \left\langle \widehat\bcT_{\init }, \bcX_i \right\rangle\right)\cdot \bcX_i$ }
\FOR{$j = 1,\dots, m$}
	\STATE{Calculate $\widehat{\bU}_{j,1}=\SVD_{r_j}\left(\mathcal{M}_j\left(\widehat \bcT_{\ubs}\times_1 \widehat{\mathbf{U}}_{1,0}^\top \cdots \times_{j-1} \widehat{\mathbf{U}}_{j-1,0}^\top \times_{j+1} \widehat{\mathbf{U}}_{j+1,0}^\top \cdots \times_{m} \widehat{\mathbf{U}}_{m,0}^\top\right)\right) $}
\ENDFOR
\STATE{Return the final estimator $\widehat{\bcT}=\widehat \bcT_{\ubs} \times_1\cP_{\widehat{\bU}_{1,1} }\cdots \times_m\cP_{\widehat{\bU}_{m,1} } $}
%, with its core tensor  } %\times_{j=1}^{m}\cP_{\widehat{\bU}_{j,1} }
%\RETURN {$\widehat{\bcT}$}

\end{algorithmic}
\end{algorithm}

% The observation tensor is defined by $\widehat{\bcT}_{\mathsf{obv} } = \sum_{i=1}^{n} Y_i\bcX_i$. We denote the error tensor $\bcE = \sum_{i=1}^{n} \xi_i\bcX_i $.

\begin{Theorem}[Asymptotic normality with independent initialization]\label{thm:lf-inference-popvar} Under Assumptions \ref{asm:incoherence} and \ref{asm:alignment}, suppose the initialization $\widehat \bcT_{\init }$ is independent of the observed data $\left\{ (\bcX_i, Y_i) \right\}_{i=1}^{n}$ and satisfies \eqref{eq:tensor-init} with probability at least $1-\dmax^{-3m}$. Moreover, assume that the sample size $n\ge C_{\gap} C_1^2  m^2 (2\mu)^{m-1} r^* \dmax \log^2\dmax $ and SNR condition:
\begin{equation*}
    \frac{\lambda_{\min}}{\sigma} \ge C_{\gap} C_1^2 \frac{\kappa_0\norm{\bcI}_{\ell_1}}{\alpha_I\norm{\bcI}_\tF } \sqrt{\frac{ m^4 (2\mu)^{3 m} (r^*)^{3 }\dmax d^*  \log^2 \dmax }{\rmin^2 n } },
\end{equation*}
for a large numerical constant $C_{\gap}>0$. Then the test statistic $\langle \widehat \bcT, \bcI\rangle$ using the estimator $\widehat \bcT$ returned by Algorithm \ref{alg:debias-powerit} satisfies
\begin{equation*}
\begin{aligned}
        &\max_{t\in\R}\abs{\bbP\left( \frac{\left\langle  \widehat{\bcT} - \bcT,\bcI  \right\rangle }{ \sigma\norm{\cP_{ \TT }(\bcI)}_\tF \sqrt{{d^*}/{n}} } \le t\right)- \Phi(t)}  \\
        &\le C C_1 \sqrt{\frac{ m^2 (2\mu)^{m-1} r^* \dmax \log^2\dmax }{ \rmax \rmin n}}  + C \frac{ C_1^2\sigma  }{ \lambda_{\min}} \frac{\norm{\bcI}_{\ell_1}}{\alpha_I\norm{\bcI}_\tF }   \sqrt{\frac{ m^4 (2\mu)^{3 m} (r^*)^{3 }\dmax d^*  \log^2 \dmax }{\rmin^2 n } },
\end{aligned}
\end{equation*}
where $C>0$ is an absolute constant. 
\end{Theorem}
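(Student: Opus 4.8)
## Proof Proposal for Theorem~\ref{thm:lf-inference-popvar}

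The plan is to make rigorous the heuristic decomposition in \eqref{eq:ubs-approx-decomp}--\eqref{eq:linear-approx-clt}, with the crucial extra work of controlling the error introduced by replacing the (inaccessible) tangent-space projection $\cP_{\TT}$ at the true $\bcT$ by the data-dependent retraction built from $\widehat{\bU}_{j,1}$. I would start by writing
\begin{equation*}
\big\langle \widehat{\bcT}-\bcT, \bcI\big\rangle = \big\langle \widehat{\bcT}_{\ubs}\times_{j=1}^m \cP_{\widehat{\bU}_{j,1}} - \bcT, \bcI\big\rangle,
\end{equation*}
and then decompose $\widehat{\bcT}_{\ubs} = \bcT + \bcE_{\init} + \bcE_{\rn}$ as in \eqref{eq:ubs-approx-decomp}. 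Substituting and expanding the multilinear projection around the true subspaces $\bU_j$ yields a leading ``oracle'' term $\langle \cP_{\TT}(\bcI), \bcE_{\rn}\rangle$ (which, after checking a Lindeberg/Berry--Esseen bound for the sum of independent bounded summands $\frac{d^*}{n}\xi_i\langle\cP_{\TT}(\bcI),\bcX_i\rangle$, gives the Gaussian limit and the first term in the error bound via $\sum_i \E|\xi_i\langle\cP_{\TT}(\bcI),\bcX_i\rangle|^3 / (\text{variance})^{3/2}$), plus a collection of remainder terms. The Berry--Esseen step is routine; the variance of a single summand is $\sigma^2 (d^*/n)^2\cdot \E\langle\cP_\TT(\bcI),\bcX_i\rangle^2 = \sigma^2(d^*)^2/(n^2 d^*)\,\|\cP_\TT(\bcI)\|_\tF^2$, summing to the stated normalizing variance, and the third-moment ratio contributes the $\sqrt{\mu r^*\dmax/n}$-type factor after bounding $\|\cP_\TT(\bcI)\|_\linf$ by the incoherence-based estimate $\|\cP_\TT(\bomega)\|_\tF \lesssim (\dmax/d^*)^{1/2}$ reasoning extended to general $\bcI$.

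The substantive part is bounding the remainders. I would organize them into three groups: (a) the initialization-error contribution $\langle (\text{projected }\bcE_{\init}), \bcI\rangle$, controlled using \eqref{eq:tensor-init}: since $\bcE_{\init}$ is a centered-by-debiasing version of $\widehat\bcT_{\init}-\bcT$, one shows $\|\bcE_{\init}\|_\linf$ and the relevant contracted norms are of smaller order than $\sigma\sqrt{\dmax\log\dmax/n}$ times the variance scale, invoking independence of $\widehat\bcT_{\init}$ from the data so that the matrix Bernstein / $\ell_\infty$ concentration of $\frac{d^*}{n}\sum_i\langle\widehat\bcT_{\init}-\bcT,\bcX_i\rangle\bcX_i$ applies conditionally; (b) the subspace-perturbation cross terms, i.e.\ terms of the form $\langle \bcE_{\rn}\times_j(\cP_{\widehat\bU_{j,1}}-\cP_{\bU_j}), \bcI\rangle$ and higher-order analogues, which require a sharp $\ell_{2,\infty}$-type bound on $\cP_{\widehat\bU_{j,1}}-\cP_{\bU_j}$; and (c) the genuine curvature/bias term from the one-step power iteration, which is second order in $\widehat\bU_{j,1}-\bU_j$ and should be absorbed into the same estimates. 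For (b)--(c) the key input is the analysis of $\widehat\bU_{j,1}=\SVD_{r_j}(\cdots)$: one applies a Davis--Kahan / Wedin-type bound with a fine-grained decomposition (as advertised in the introduction) to separate the signal $\bU_j$ from the noise in $\mathcal{M}_j(\widehat\bcT_{\ubs}\times_{k\ne j}\widehat\bU_{k,0}^\top)$, showing both a spectral bound $\|\widehat\bU_{j,1}\widehat\bU_{j,1}^\top-\bU_j\bU_j^\top\|\lesssim \sigma\sqrt{d^*\dmax\log\dmax/n}/\lambda_{\min}$ and, more delicately, a row-wise bound $\|\cP^\perp_{\bU_j}\widehat\bU_{j,1}\|_{2,\infty}\lesssim$ (same order)$\cdot\sqrt{\mu r_j/d_j}\cdot(\text{polylog})$ using the incoherence of $\bU_j$ and of $\widehat\bU_{k,0}$.

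I expect the main obstacle to be precisely step (b): obtaining an $\ell_{2,\infty}$ (row-wise) perturbation bound for the power-iteration subspaces that is sharp enough that, when contracted against $\bcI$ and multiplied by the ambient factor $d^*/n$, it is dominated by $\sigma\|\cP_\TT(\bcI)\|_\tF\sqrt{d^*/n}$ — this is where the $\norm{\bcI}_{\ell_1}/(\alpha_I\norm{\bcI}_\tF)$ factor and the $(r^*)^{3/2}(2\mu)^{3m/2}$ dependence enter, and where a naive operator-norm bound would lose too much. The standard leave-one-out trick used in matrix completion does not transplant directly here because $\widehat\bcT_{\ubs}$ already mixes all rows; instead I would decompose $\widehat\bcT_{\ubs}\times_{k\ne j}\widehat\bU_{k,0}^\top$ into its expectation-like signal part plus a sum of rank-one noise contributions indexed by $i\in[n]$, expand the SVD perturbation series (first-order term explicit, higher-order terms bounded), and track each term's $\ell_{2,\infty}$ norm using that each $\bcX_i$ touches a single fiber. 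The alignment condition (Assumption~\ref{asm:alignment}) is what guarantees the oracle variance $\sigma^2(d^*/n)\|\cP_\TT(\bcI)\|_\tF^2$ is not too small, so that all these remainders are genuinely lower order; I would invoke it at the very end to convert the additive remainder bounds into the relative (ratio) bound stated in the theorem. The SNR and sample-size hypotheses are exactly the thresholds at which the sum of all remainder contributions is $o(1)$, so the final step is bookkeeping: collect the two dominant remainder orders, verify they match the two displayed terms, and confirm everything else is of strictly smaller order under the stated conditions.
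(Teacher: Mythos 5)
Your proposal follows essentially the same route as the paper: decompose $\widehat\bcT_{\ubs}=\bcT+\bcE_{\init}+\bcE_{\rn}$, make the first-order SVD perturbation of $\cP_{\widehat\bU_{j,1}}$ explicit so that its contraction with $\bcT$ combines with $\bcE_{\rn}\times_j\cP_{\bU_j}$ to give the i.i.d. sum $\frac{d^*}{n}\sum_i\xi_i\langle\bcX_i,\cP_{\TT}(\bcI)\rangle$ (Berry--Esseen, variance $\sigma^2\frac{d^*}{n}\|\cP_\TT(\bcI)\|_\tF^2$), then control the initialization, cross, and higher-order terms via matrix Bernstein and sharp $\ell_{2,\infty}$ subspace bounds, invoking the alignment condition at the end. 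The only cosmetic difference is that where you invoke a Davis--Kahan/Wedin-style analysis, the paper uses the exact spectral representation series of the projector perturbation (Xia's formula), which is precisely the "first-order term explicit, higher-order terms bounded" expansion you describe.
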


Theorem \ref{thm:lf-inference-popvar} shows that $\big\langle \widehat{\bcT}, \bcI \big\rangle$ follows a limiting normal distribution with mean $\left\langle \bcT, \bcI \right\rangle$ and variance $\frac{\sigma^2 d^{\ast}}{n} \norm{\cP_{\TT} (\bcI)}_{\tF}^2$ . This variance matches the Cramér–Rao lower bound established in Theorem \ref{thm:opt-uct-qtf}, indicating optimality in uncertainty quantification. Additionally, the ratio $\norm{\bcI}_{\ell_1}/\norm{\bcI}_{\tF}$ is bounded by the sparsity level of $\bcI$. Therefore, for a sparse $\bcI$, the SNR and sample size conditions required in Theorem \ref{thm:lf-inference-popvar} nearly match the statistically optimal conditions in \eqref{eq:SNR-Stat} (up to factors of $r^{\ast}$, $m$, $\mu$, $\kappa_0$, $\alpha_I$, and logarithmic terms). This demonstrates that, given a sufficiently accurate initial estimate, statistical inference can be performed under only statistically optimal conditions.

The distributional characterization of $\big\langle \widehat{\bcT}, \bcI \big\rangle$ relies on a fine-grained analysis of the tensor power iteration procedure. Specifically, we employ a spectral representation tool from \cite{xia2019confidence, xia2021normal} to achieve a second-order accurate characterization of matrix SVD. This decomposition allows us to precisely characterize the behavior of the leading term during power iteration and provides a sharp $\ell_{2,\infty}$-norm bound to control higher-order terms. Furthermore, Theorem~\ref{thm:lf-inference-popvar} shows that while a $\kappa_0$-dependent SNR condition is necessary to validate the spectral analysis, the variance in the proposed test statistic is, in fact, $\kappa_0$-free (although the constant $C_1$ appearing in the Berry-Esseen bound may still be $\kappa_0$-dependent).

An empirical estimate of the variance term $\sigma\norm{\cP_{ \TT }(\bcI)}_\tF$ is needed to facilitate statistical inference in practice. By leveraging the warm initialization  $\widehat \bcT_{\init }=\widehat \bcC_0\cdot \left( \widehat \bU_{1,0},\cdots,\widehat \bU_{m,0}\right) $, we provide the following estimate of the projection onto the tangent space of $\widehat \bcT$, denoted by $\cP_{\widehat \TT }(\bcI)$:
\begin{equation}\label{eq:emp-PT}
\begin{gathered}
        \cP_{\widehat \TT }(\bcI) = \bcI \cdot \left( \cP_{\widehat{\bU}_{1,0}},\dots,\cP_{\widehat{\bU}_{m,0}}\right) + \sum_{j=1}^{m} \widehat{\bcC}_0 \cdot\left(\widehat{\bU}_{1,0},\dots, \widehat{\bW}_{j}, \dots, \widehat{\bU}_{m,0}\right), \\
        \text{ with   } \widehat \bW_j=\cP_{\widehat {\bU}_{j,0}}^{\perp} \mathcal{M}_j\left(\bcI\right)\left(\otimes_{k \neq j} \widehat{\bU}_{ k,0}\right) \mathcal{M}_j^{\dagger}(\widehat {\bcC }_0).
\end{gathered}
\end{equation}
Equation \eqref{eq:emp-PT} is derived by replacing the tangent space at $\bcT$ in \eqref{eq:true-TT-proj} with the tangent space at $\bcT_{\init}$. Additionally, we estimate the noise variance by 
\begin{equation}\label{eq:emp-sigma}
\begin{gathered}
        \widehat{\sigma}^2: = \frac{1}{n}\sum_{i=1}^n\left(Y_i-\left\langle \widehat \bcT_{\init},\bcX_i \right\rangle\right)^2. \\
\end{gathered}
\end{equation}

\begin{Theorem}[Inference with independent initialization]\label{thm:lf-inference-empvar} Under Assumptions \ref{asm:incoherence} and \ref{asm:alignment}, suppose the initialization $\widehat \bcT_{\init }$ is independent of the observed data $\left\{ (\bcX_i, Y_i) \right\}_{i=1}^{n}$ and satisfies \eqref{eq:tensor-init} with probability at least $1-\dmax^{-3m}$. Moreover, assume that the sample size $n\ge C_{\gap} C_1^2  m^2 (2\mu)^{m-1} r^* \dmax \log^2\dmax $ and SNR condition:
\begin{equation*}
    \frac{\lambda_{\min}}{\sigma} \ge C_{\gap} C_1^2 \frac{\kappa_0\norm{\bcI}_{\ell_1}}{\alpha_I\norm{\bcI}_\tF } \sqrt{\frac{ m^5 (2\mu)^{3 m} (r^*)^{3 }\dmax d^*  \log^2 \dmax }{\rmin^2 n } }
\end{equation*}
for a large numerical constant $ C_{\gap}>0$,  Then the studentized statistic
    \begin{equation}\label{eq:hWtest-def}
        \widehat W_{\test}({\bcI}):=\frac{ \widehat \bcT (\bcI) - \bcT(\bcI)  }{ \widehat{\sigma} \norm{\cP_{\widehat \TT }(\bcI)  }_{\tF} \sqrt{d^*/n} } \longrightarrow \cN (0,1),\quad \text{ as } n\to\infty. 
    \end{equation}
Moreover, there exists an absolute constant $C>0$ such that 
    \begin{equation*}
\begin{aligned}
             &\max_{t\in\R}\abs{\bbP\left( \widehat W_{\test}(\bcI)\le t\right)- \Phi(t)} \\
         &\le  C C_1 \sqrt{\frac{m^2 \mu^{m} \rmin r^* \dmax \log^2 \dmax }{n}} + C\frac{\norm{\bcI}_{\ell_1}}{\norm{\bcI}_{\tF}}\frac{C_1^2  \kappa_0 \sigma  }{\alpha_I \lambda_{\min}}\sqrt{\frac{m^5 (2\mu)^{3m} (r^*)^3 d^*\dmax \log^2 \dmax }{\rmin^2 n}}.
\end{aligned}
    \end{equation*}
\end{Theorem}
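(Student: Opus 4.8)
The plan is to reduce the studentized statistic to the pivotal statistic of Theorem~\ref{thm:lf-inference-popvar} plus two controllable error terms: one from estimating the noise variance $\sigma$ by $\widehat\sigma$, and one from estimating the tangent-space projection norm $\norm{\cP_{\TT}(\bcI)}_\tF$ by $\norm{\cP_{\widehat\TT}(\bcI)}_\tF$. Write
\begin{equation*}
    \widehat W_{\test}(\bcI) = \frac{\langle\widehat\bcT-\bcT,\bcI\rangle}{\sigma\norm{\cP_\TT(\bcI)}_\tF\sqrt{d^*/n}}\cdot\frac{\sigma}{\widehat\sigma}\cdot\frac{\norm{\cP_\TT(\bcI)}_\tF}{\norm{\cP_{\widehat\TT}(\bcI)}_\tF}.
\end{equation*}
By Theorem~\ref{thm:lf-inference-popvar}, the first factor converges to $\cN(0,1)$ with the stated Berry--Esseen rate, so it suffices to show that each of the two correction ratios equals $1+o_p(1)$ with a quantitative bound, and then invoke a standard Slutsky-type argument combined with the anti-concentration of the Gaussian (to convert the multiplicative perturbation into an additive $O(\cdot)$ term on the Kolmogorov distance). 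Concretely, if $|\widehat\sigma/\sigma - 1| \le \delta_1$ and $|\norm{\cP_{\widehat\TT}(\bcI)}_\tF/\norm{\cP_\TT(\bcI)}_\tF - 1| \le \delta_2$ on an event of probability $1-o(1)$, then $\max_t|\bbP(\widehat W_{\test}\le t)-\Phi(t)|$ is bounded by the Berry--Esseen rate of Theorem~\ref{thm:lf-inference-popvar} plus $C(\delta_1+\delta_2) + o(1)$, using $\sup_t \phi(t)|t| \le C$ and a truncation of $|t|$ at a slowly growing level.

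For the variance estimate, I would expand $\widehat\sigma^2 = \frac1n\sum_i(\xi_i - \langle\widehat\bcT_{\init}-\bcT,\bcX_i\rangle)^2 = \frac1n\sum_i\xi_i^2 - \frac2n\sum_i\xi_i\langle\widehat\bcT_{\init}-\bcT,\bcX_i\rangle + \frac1n\sum_i\langle\widehat\bcT_{\init}-\bcT,\bcX_i\rangle^2$. The first term concentrates around $\sigma^2$ at rate $O_p(n^{-1/2})$ by sub-Gaussianity of $\xi_i^2$; the cross term is bounded using $\norm{\widehat\bcT_{\init}-\bcT}_\linf$ from \eqref{eq:tensor-init} together with a Bernstein bound conditional on the (independent) initialization; the quadratic term is at most $\norm{\widehat\bcT_{\init}-\bcT}_\linf^2 \lesssim \sigma^2\dmax\log\dmax/n$, which is negligible under the sample-size condition. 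This gives $\delta_1 \lesssim \sqrt{\log n/n} + \sigma^{-1}\norm{\widehat\bcT_{\init}-\bcT}_\linf\sqrt{\dmax\log\dmax/n} + \dmax\log\dmax/n$, which is dominated by the claimed bound.

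The main obstacle is controlling $\delta_2$, the relative error of the empirical tangent-projection norm. Here one must compare $\cP_{\widehat\TT}(\bcI)$ in \eqref{eq:emp-PT} with $\cP_{\TT}(\bcI)$ in \eqref{eq:true-TT-proj}, where the difference comes from replacing $(\bU_j,\bcC)$ with $(\widehat\bU_{j,0},\widehat\bcC_0)$. The key inputs are a subspace perturbation bound $\norm{\widehat\bU_{j,0}\widehat\bU_{j,0}^\top - \bU_j\bU_j^\top}$ and a core perturbation bound, both derivable from the $\ell_\infty$ guarantee \eqref{eq:tensor-init} (converting entrywise error to operator-norm error on the unfoldings costs a factor of $\sqrt{d^*/d_j}$, and incoherence keeps things balanced); one then propagates these through the explicit formula for $\bW_j$, being careful about the pseudo-inverse $\cM_j^\dagger(\bcC)$, whose perturbation is controlled by $\lambda_{\min}$. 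The per-term relative error scales like $(\sigma/\lambda_{\min})\cdot(\norm{\bcI}_{\ell_1}/(\alpha_I\norm{\bcI}_\tF))\cdot\sqrt{d^*\dmax\log\dmax/n}$ up to $\mu,r^*,m,\kappa_0$ factors (the $\norm{\bcI}_{\ell_1}/\norm{\bcI}_\tF$ ratio enters because $\norm{\cM_j(\bcI)(\otimes_{k\ne j}\widehat\bU_{k,0})}$ must be compared across perturbed subspaces via an $\ell_\infty$-to-operator conversion on $\bcI$), which matches the second term of the stated Berry--Esseen bound. Assembling $\delta_1+\delta_2$ and feeding it into the Slutsky reduction above completes the proof; the extra power of $m$ (the $m^5$ versus $m^4$ in Theorem~\ref{thm:lf-inference-popvar}) is absorbed by the additional union bounds over the $m$ modes in the $\cP_{\widehat\TT}$ analysis.
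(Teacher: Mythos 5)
Your proposal is correct and follows essentially the same route as the paper: reduce $\widehat W_{\test}$ to the population-variance statistic of Theorem~\ref{thm:lf-inference-popvar}, bound $\abs{\widehat\sigma/\sigma-1}$ by expanding $\widehat\sigma^2$ exactly as you do, bound $\norm{\cP_{\widehat\TT}(\bcI)-\cP_{\TT}(\bcI)}_\tF/\norm{\cP_{\TT}(\bcI)}_\tF$ via an entrywise (hence $\norm{\bcI}_{\ell_1}$-weighted) subspace/core perturbation argument driven by the $\ell_\infty$ initialization bound and normalized through the alignment condition, and then convert the ratio error into Kolmogorov distance. The only cosmetic difference is that the paper writes the error additively as $W_{\test}\cdot(\sigma\norm{\cP_{\TT}(\bcI)}_\tF/(\widehat\sigma\norm{\cP_{\widehat\TT}(\bcI)}_\tF)-1)$ and uses a high-probability bound $\abs{W_{\test}}\lesssim\sqrt{m\log\dmax}$ in place of your truncation-plus-Slutsky step, which is where the extra $\sqrt{m\log\dmax}$ factor in the final rate actually originates.
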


Although tensor linear form inference has been explored in \cite{huang2022power, xia2022inference, wen2023online} under tensor PCA and tensor regression models, their distributional characterizations rely heavily on the rotation invariance of Gaussian noise/measurements, which does not apply to tensor completion. 
Our method combines debiasing with a one-step power iteration, closely related to projected gradient descent approaches in tensor completion \citep{kressner2014low, steinlechner2016riemannian, chen2019non, cai2023generalized}. Specifically, the debiasing step in \eqref{eq:debiasing} can be viewed as a single gradient descent step using the least-squares loss function. Unlike the traditional projected gradient descent methods that use HOSVD for retraction with small step sizes, we employ a one-step power iteration with a step size of $d^{\ast}/n$. This step size is more effective for bias correction (as also seen in matrix and tensor completion \citep{ma2018implicit, ding2020leave, wang2023implicit}), and the power iteration better leverages singular subspace information from the initial estimate to further reduce error from random sampling.

% Please see the next section for a detailed discussion on the comparison

% Power iteration (or, equivalently higher-order orthogonal iteration (HOOI) \cite{de2000best}). 

\begin{Remark}
    Note that the debiasing step in Algorithm~\ref{alg:debias-powerit} corrects error caused by incomplete observations. When all observations are available, debiasing becomes unnecessary since $\widehat{\bcT}_{\ubs} = \sum_{i=1}^{n} Y_i \bcX_i =: \widehat{\bcT}_{\mathsf{obv}}$ in Algorithm \ref{alg:debias-powerit} simply equals the observation tensor. In this case, the algorithm reduces to a one-step power iteration, improving upon the previous two-step procedure in fully observed tensor PCA \citep{xia2022inference}. %However, initial incoherent singular subspaces are still required to provide the necessary singular subspace information for power iteration.
\end{Remark}

Theorem \ref{thm:lf-inference-empvar} provides a practical procedure for constructing confidence intervals for tensor linear forms.

\begin{Corollary}[Confidence interval]\label{coro:CIs}
	Under the same conditions as Theorem \ref{thm:lf-inference-empvar}, the two-sided $1-\alpha$ confidence interval of $\langle \bcT, \bcI\rangle$ is
 \begin{equation*}
     \widehat{\mathrm{CI}}_\alpha(\bcI)=\left[\langle  \widehat{\bcT} ,\bcI  \rangle- z_{\alpha/2} \widehat{\sigma} \norm{\cP_{\widehat \TT }(\bcI)  }_{\tF} \sqrt{\frac{d^*}{n}} , \  \langle  \widehat{\bcT} ,\bcI  \rangle+ z_{\alpha/2} \widehat{\sigma} \norm{\cP_{\widehat \TT }(\bcI)  }_{\tF} \sqrt{\frac{d^*}{n}}\right].
 \end{equation*}
The coverage of the above confidence interval is guaranteed by
 \begin{equation*}
     \abs{\bbP\left(\left\langle\bcT,\bcI\right\rangle\in \widehat{\mathrm{CI}}_\alpha(\bcI) \right)-(1-\alpha)} \le  CC_1 \sqrt{\frac{m^2 \mu^{m} \rmin r^* \dmax \log^2 \dmax }{n}} + C\frac{\norm{\bcI}_{\ell_1}}{\norm{\bcI}_{\tF}}\frac{C_1^2  \kappa_0 \sigma  }{\alpha_I \lambda_{\min}}\sqrt{\frac{m^5 (2\mu)^{3m} (r^*)^3 d^*\dmax \log^2 \dmax }{\rmin^2 n}}. 
 \end{equation*}
\end{Corollary}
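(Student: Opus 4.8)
The plan is to derive Corollary~\ref{coro:CIs} directly from the Berry--Esseen bound for the studentized statistic $\widehat W_{\test}(\bcI)$ established in Theorem~\ref{thm:lf-inference-empvar}. First I would note that, as long as $\widehat\sigma\,\norm{\cP_{\widehat\TT}(\bcI)}_\tF > 0$, membership $\langle\bcT,\bcI\rangle\in\widehat{\mathrm{CI}}_\alpha(\bcI)$ is, by construction of the interval, equivalent to
\begin{equation*}
    \left\lvert \widehat W_{\test}(\bcI)\right\rvert = \left\lvert \frac{\langle\widehat{\bcT},\bcI\rangle - \langle\bcT,\bcI\rangle}{\widehat\sigma\,\norm{\cP_{\widehat\TT}(\bcI)}_\tF\sqrt{d^*/n}}\right\rvert \le z_{\alpha/2}.
\end{equation*}
The required positivity of the denominator holds on the same high-probability event used to prove Theorem~\ref{thm:lf-inference-empvar}: there $\widehat\sigma$ concentrates around $\sigma>0$ (recall \eqref{eq:emp-sigma}) and $\norm{\cP_{\widehat\TT}(\bcI)}_\tF$ concentrates around $\norm{\cP_\TT(\bcI)}_\tF$, which is bounded away from zero by Assumption~\ref{asm:alignment}; the probability of the exceptional event is dominated by the Berry--Esseen bound $\Delta$ of that theorem and can be folded into the final estimate.

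Next I would combine the two-sided decomposition with the Kolmogorov bound. Writing $\Delta$ for the right-hand side of the Berry--Esseen inequality in Theorem~\ref{thm:lf-inference-empvar},
\begin{equation*}
    \bbP\left(\langle\bcT,\bcI\rangle\in\widehat{\mathrm{CI}}_\alpha(\bcI)\right) = \bbP\left(\widehat W_{\test}(\bcI)\le z_{\alpha/2}\right) - \bbP\left(\widehat W_{\test}(\bcI) < -z_{\alpha/2}\right).
\end{equation*}
Applying $\max_{t\in\R}\lvert\bbP(\widehat W_{\test}(\bcI)\le t)-\Phi(t)\rvert\le\Delta$ at $t=z_{\alpha/2}$, and at $t=-z_{\alpha/2}$ after passing to the monotone limit from the left (legitimate since $\Phi$ is continuous), the right-hand side lies within $2\Delta$ of
\begin{equation*}
    \Phi(z_{\alpha/2})-\Phi(-z_{\alpha/2}) = \left(1-\tfrac{\alpha}{2}\right) - \tfrac{\alpha}{2} = 1-\alpha,
\end{equation*}
where the first equality uses $\Phi(-x)=1-\Phi(x)$ and the defining relation $\Phi(z_{\alpha/2})=1-\alpha/2$ of the normal quantile. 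Replacing the generic constant $C$ by $2C$ absorbs the factor $2$ and produces exactly the bound stated in the corollary.

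I do not expect a genuine obstacle here: Corollary~\ref{coro:CIs} is a routine consequence of Theorem~\ref{thm:lf-inference-empvar}, and the only points warranting care are (i) the equivalence between $\langle\bcT,\bcI\rangle\in\widehat{\mathrm{CI}}_\alpha(\bcI)$ and $\lvert\widehat W_{\test}(\bcI)\rvert\le z_{\alpha/2}$, which requires the denominator to be strictly positive and is therefore stated on the high-probability event already reflected in $\Delta$; (ii) the harmless switch between strict and non-strict inequalities at $-z_{\alpha/2}$, handled by continuity of $\Phi$; and (iii) the factor $2$ incurred by splitting the two-sided coverage probability into two one-sided tail comparisons, absorbed into the absolute constant. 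No additional concentration or central-limit arguments beyond those already developed for Theorem~\ref{thm:lf-inference-empvar} are needed.
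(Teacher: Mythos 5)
Your proposal is correct and follows essentially the same route as the paper: rewrite coverage as $\bbP(\widehat W_{\test}(\bcI)\le z_{\alpha/2})-\bbP(\widehat W_{\test}(\bcI)\le -z_{\alpha/2})$, apply the Kolmogorov bound from Theorem~\ref{thm:lf-inference-empvar} at both endpoints, and absorb the factor $2$ into the constant. Your extra remarks on denominator positivity and strict versus non-strict inequalities are harmless refinements of the same argument.
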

Corollary \ref{coro:CIs} indicates that this confidence interval is sufficiently precise for many statistical applications. Theorem \ref{thm:opt-uct-qtf} demonstrates that its length attains the Cramér–Rao lower bound asymptotically, making it unimprovable in general. 

Our theoretical results benefit from the warm initialization in \eqref{eq:tensor-init}, which places the starting point in a region where the $\ell_{2,\infty}$-norm perturbations of the singular subspaces are well-controlled. It also ensures that the initial spectral estimates $\widehat{\bU}_{1,0}, \dots, \widehat{\bU}_{m,0}$ are incoherent. However, these theories require strong independence between the initial estimator and the data used for debiasing or re-randomization. Although sample splitting techniques (e.g., double-sample-splitting \citep{chernozhukov2018double, xia2021statistical}) can achieve this independence, they may reduce statistical power. In the following section, we demonstrate that our method remains valid for linear form inference, even when the initialization is derived from the same observations used for debiasing.

\subsection{Inference of linear form with dependent initialization}

We extend our framework to the case of dependent initialization, meaning that it is obtained from an estimation procedure based on the same observations $\left\{ (\bcX_i, Y_i) \right\}_{i=1}^{n}$ used for debiasing. We introduce a special type of initialization, referred to as the leave-one-out initialization. This approach is inspired by the gradient-descent-based tensor completion methods, which were analyzed using the sophisticated leave-one-out techniques \citep{javanmard2014confidence, ma2018implicit, cai2019nonconvex}.

% \noindent{\bf AAA} As a starting point, we approach the problem from an XXX perspective:

% \ref{alg:power-iter}.

% \begin{algorithm}
% \caption{Regularized Power Iteration}
% \label{alg:power-iter}
% \begin{algorithmic}
% \REQUIRE iteration number $T$
% \STATE{Calculate 
% }
% \FORALL{$t = 0,T-1$}
% 	\STATE{Calculate $ a^{(t+1)}, b^{(t+1)}$ via }
% \ENDFOR
% \STATE{Calculate .}
% \RETURN {AAA}
% \end{algorithmic}
% \end{algorithm}

%\vskip.3cm

\begin{Assumption}[Leave-one-out initialization]\label{asm:lol-init}
    We say $\widehat \bcT_{\init }=\widehat \bcC_0\cdot \big( \widehat \bU_{1,0},\cdots,\widehat \bU_{m,0}\big)$ is a leave-one-out initialization if for all $k\in[m]$ and $l\in[d_k]$ , there exists an estimator $\widehat \bcT_{\init }^{(k,l)}=\widehat \bcC_0^{(k,l)}\cdot \big( \widehat \bU_{1,0}^{(k,l)},\cdots,\widehat \bU_{m,0}^{(k,l)}\big)$ that is independent of all the  observations from slice $l$ on mode $k$ and a number $C_2>0$ such that the following bounds hold with probability at least $1-\dmax^{-3m}$,
\begin{equation}\label{eq:asp-loo-init}
    \norm{\widehat \bcT_{\init}^{(k,l)} -   \bcT }_{\linf } \le C_2 \sigma \sqrt{\frac{\dmax \log \dmax }{n}} \quad  {\rm and}\quad   \norm{ \cP_{\widehat{\bU}_{j,0}^{(k,l)}} - \cP_{\widehat{\bU}_{j,0}} }_{\tF} \le C_2\frac{\sigma}{\lambda_{\min}}\sqrt{\frac{\mu r_k}{d_k}} \sqrt{   \frac{ d^* \dmax \log \dmax }{ n }}, 
\end{equation}
for all $j\in[m]$. 
\end{Assumption} 

The leave-one-out initialization can be achieved using iterative methods such as gradient descent \citep{cai2019nonconvex, cai2022uncertainty} and Riemannian gradient descent \citep{wang2023implicit}. These algorithms compute $\widehat{\bcT}_{\init}$ using all observations, while the leave-one-out series $\widehat{\bcT}_{\init}^{(k,l)}$ are \emph{virtually constructed}, for the purpose of theoretical analysis, by excluding all observations from slice $l \in [d_k]$ on mode $k \in [m]$. The next theorem shows that, if the initial estimator $\widehat{\bcT}_{\init}$ satisfies \eqref{eq:asp-loo-init}, we can still infer the tensor linear form under nearly statistically optimal sample size and SNR conditions. Therefore, sample splitting becomes unnecessary.

\begin{Theorem}[Inference with a leave-one-out initialization]\label{thm:clt-lol-init}

Denote the dimension imbalance ratio $\alpha_{d}=\dmax/\dmin$. Under Assumptions \ref{asm:incoherence}, \ref{asm:alignment}, and \ref{asm:lol-init}, suppose the initialization $\widehat \bcT_{\init }=\widehat \bcC_0\cdot \big( \widehat \bU_{1,0},\cdots,\widehat \bU_{m,0}\big) $ satisfies \eqref{eq:tensor-init} with probability at least $1-\dmax^{-3m}$. Moreover, assume that  the sample size $n\ge C_{\gap} C_2^2 \kappa_0^2 \frac{\norm{\bcI}_{\ell_1}^2}{\alpha_I^2\norm{\bcI}_{\tF}^2} m^3 (2\mu)^{3m-1} (r^*)^3 \dmax \log^2\dmax $ and SNR condition:
\begin{equation*}
    \frac{\lambda_{\min}}{\sigma} \ge C_{\gap} C_2^2 \frac{\kappa_0\norm{\bcI}_{\ell_1}}{\alpha_I\norm{\bcI}_\tF } \sqrt{\frac{ \alpha_{d} m^5 (2\mu)^{3 m} (r^*)^{3 }\dmax d^*  \log^2 \dmax }{ n } }
\end{equation*}
for a large numerical constant $C_{\gap}>0$. Then the test statistic defined in \eqref{eq:hWtest-def}  using $\widehat \bcT$ returned by Algorithm \ref{alg:debias-powerit} satisfies:
\begin{equation*}
\begin{aligned}
        &\max_{t\in\R}\abs{\bbP\left( \widehat W_{\test}(\bcI)\le t\right)- \Phi(t)}  \\
        &\le  C C_2 \kappa_0  \frac{\norm{\bcI}_{\ell_1}}{\alpha_I \norm{\bcI}_{\tF}} \sqrt{\frac{m^3 (2\mu)^{3m-1} (r^*)^3 \dmax \log^2\dmax  }{ n}} + C \frac{ C_2^2\sigma \kappa_0  \norm{\bcI}_{\ell_1} }{\alpha_I \lambda_{\min}  \norm{\bcI}_\tF }\sqrt{\frac{\alpha_d m^5 (2\mu)^{3 m} (r^*)^{3 }\dmax d^*  \log^2 \dmax }{ n } },
\end{aligned}
\end{equation*}
where $C>0$ is an absolute constant. 
\end{Theorem}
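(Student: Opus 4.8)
\textbf{Proof proposal for Theorem~\ref{thm:clt-lol-init}.}

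The plan is to follow the same master decomposition as in the independent-initialization case (Theorems~\ref{thm:lf-inference-popvar} and~\ref{thm:lf-inference-empvar}), namely write
$\langle\widehat\bcT-\bcT,\bcI\rangle = \langle\cP_\TT(\bcI),\bcE_{\rn}\rangle + \langle\cP_\TT(\bcI),\bcE_{\init}\rangle + (\text{retraction remainder})$,
and then show that the leading term obeys a Berry--Esseen CLT with variance $\sigma^2(d^*/n)\norm{\cP_\TT(\bcI)}_\tF^2$ while the remaining terms are negligible under the stated SNR and sample-size conditions. The only genuinely new ingredient is that $\widehat\bcT_{\init}$ is now correlated with $\{(\bcX_i,Y_i)\}$, so the two places where independence was previously invoked must be redone: (a) controlling $\bcE_{\init}=\widehat\bcT_{\init}-\bcT-\frac{d^*}{n}\sum_i\langle\widehat\bcT_{\init}-\bcT,\bcX_i\rangle\bcX_i$, whose concentration in the independent case rested on $\widehat\bcT_{\init}$ being frozen when averaging over $\bcX_i$; and (b) controlling the $\ell_{2,\infty}$-type perturbation of the power-iteration subspaces $\widehat\bU_{j,1}$, where the incoherence of $\widehat\bU_{j,0}$ and its independence from the noise previously entered. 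Both are handled by the leave-one-out device of Assumption~\ref{asm:lol-init}.

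First I would decompose $\langle\cP_\TT(\bcI),\bcE_{\init}\rangle$ mode by mode. The term $\frac{d^*}{n}\sum_i\langle\widehat\bcT_{\init}-\bcT,\bcX_i\rangle\langle\cP_\TT(\bcI),\bcX_i\rangle$ is, coordinatewise, a sum over sampled indices; splitting by which mode-$k$ slice an index $\bcX_i$ hits, I replace $\widehat\bcT_{\init}$ by the leave-one-out surrogate $\widehat\bcT_{\init}^{(k,l)}$ on the slice-$l$ terms at the cost of the first bound in \eqref{eq:asp-loo-init}, times the number of samples landing in that slice (which is $\lesssim n/\dmin$ up to logs by a Chernoff bound, explaining the appearance of $\alpha_d=\dmax/\dmin$). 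Once the surrogate is in place, $\widehat\bcT_{\init}^{(k,l)}$ is independent of the slice-$l$ observations, so a Bernstein inequality conditional on the surrogate gives the concentration; summing the slice contributions and using $\norm{\widehat\bcT_{\init}-\bcT}_{\linf}\lesssim\sigma\sqrt{\dmax\log\dmax/n}$ produces a bound of the order $\frac{\sigma\kappa_0}{\lambda_{\min}}\frac{\norm{\bcI}_{\ell_1}}{\alpha_I\norm{\bcI}_\tF}\sqrt{\alpha_d m^5(2\mu)^{3m}(r^*)^3\dmax d^*\log^2\dmax/n}$ — matching the second error term in the statement. The sample-size lower bound on $n$ is exactly what is needed to make the discrepancy between $\widehat\bcT_{\init}$ and $\widehat\bcT_{\init}^{(k,l)}$ negligible after summing over the $\sum_k d_k$ leave-one-out copies.

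Next I would redo the analysis of the one-step power iteration. Using the spectral representation of $\SVD_{r_j}$ from \cite{xia2019confidence,xia2021normal}, $\cP_{\widehat\bU_{j,1}}-\cP_{\bU_j}$ expands into a leading linear term in the perturbation $\widehat\bcT_{\ubs}-\bcT$ (projected through $\widehat\bU_{k,0}$, $k\neq j$) plus higher-order terms controlled by an $\ell_{2,\infty}$ bound. Here the second bound in \eqref{eq:asp-loo-init}, $\norm{\cP_{\widehat\bU_{j,0}^{(k,l)}}-\cP_{\widehat\bU_{j,0}}}_\tF\lesssim\frac{\sigma}{\lambda_{\min}}\sqrt{\mu r_k/d_k}\sqrt{d^*\dmax\log\dmax/n}$, plays the role that incoherence + independence played before: it lets me swap $\widehat\bU_{k,0}$ for the slice-independent $\widehat\bU_{k,0}^{(k,l)}$ when bounding row-$l$ of the relevant matrices, after which the rows are sums of conditionally independent terms and a row-wise Bernstein bound applies. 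Propagating these through the contraction $\widehat\bcT = \widehat\bcT_{\ubs}\times_{j}\cP_{\widehat\bU_{j,1}}$ and pairing against $\cP_\TT(\bcI)$, the leading contribution is $\langle\cP_\TT(\bcI),\bcE_{\rn}\rangle=\frac{d^*}{n}\sum_i\xi_i\langle\cP_\TT(\bcI),\bcX_i\rangle$, to which the Berry--Esseen theorem for sums of independent bounded random variables applies with third-moment ratio of order $\sqrt{\dmax\log^2\dmax/n}$ times incoherence factors; this is the first error term. The studentization (replacing $\sigma\norm{\cP_\TT(\bcI)}_\tF$ by $\widehat\sigma\norm{\cP_{\widehat\TT}(\bcI)}_\tF$) is handled exactly as in Theorem~\ref{thm:lf-inference-empvar}, using \eqref{eq:tensor-init} to bound $|\widehat\sigma^2/\sigma^2-1|$ and $\|\cP_{\widehat\TT}(\bcI)-\cP_\TT(\bcI)\|_\tF$, and contributes only lower-order terms. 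Finally, collecting the three pieces, taking the worst of the two dominant bounds, and absorbing constants into $C$ yields the claimed Berry--Esseen rate.

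The main obstacle I expect is step (a)/(b) bookkeeping: one must introduce a \emph{separate} leave-one-out family for every mode $k$ and every slice $l\in[d_k]$, show the surrogates are simultaneously close to $\widehat\bcT_{\init}$ with a union bound over all $\sum_k d_k\le m\dmax$ of them, and carefully track the $\alpha_d$ factor that arises because an imbalanced tensor concentrates roughly $n/\dmin$ samples into the thinnest slices rather than $n/\dmax$ — this is precisely why the dependent-initialization SNR condition carries an extra $\alpha_d$ relative to the independent case, and verifying that no further dimension imbalance factor sneaks in through the power-iteration remainder is the delicate part.
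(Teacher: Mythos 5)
Your skeleton matches the paper's: the same decomposition into $\frE_1,\dots,\frE_5$ as in \eqref{eq:test-decomp}, a Berry--Esseen bound for the i.i.d. main term, and the same studentization step as Theorem~\ref{thm:lf-inference-empvar}; and your plan for the subspace-perturbation piece (swap $\widehat\bU_{k,0}$ for the slice-independent surrogate $\widehat\bU_{k,0}^{(k,l)}$ row by row, then apply a conditional Bernstein bound) is essentially the paper's Lemma~\ref{lemma:loo-l2inf-ipv}. The genuine gap is in your step (a), the treatment of $\bcE_{\init}$. Assumption~\ref{asm:lol-init} only controls $\norm{\widehat\bcT_{\init}^{(k,l)}-\bcT}_{\linf}$ and $\norm{\cP_{\widehat\bU_{j,0}^{(k,l)}}-\cP_{\widehat\bU_{j,0}}}_{\tF}$; it does \emph{not} make $\widehat\bcT_{\init}-\widehat\bcT_{\init}^{(k,l)}$ small relative to $\widehat\bcT_{\init}-\bcT$ (both differences are of order $\sigma\sqrt{\dmax\log\dmax/n}$ entrywise). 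Consequently your swap cost $\frac{d^*}{n}\sum_{i\in\text{slice }l}\langle\widehat\bcT_{\init}-\widehat\bcT_{\init}^{(k,l)},\bcX_i\rangle\langle\cP_{\TT}(\bcI),\bcX_i\rangle$, bounded entrywise and summed over slices, is of the same order as the trivial non-concentrated bound on $\frac{d^*}{n}\sum_i\langle\widehat\bcT_{\init}-\bcT,\bcX_i\rangle\langle\cP_{\TT}(\bcI),\bcX_i\rangle$, and it does not vanish after dividing by $\sigma\norm{\cP_{\TT}(\bcI)}_{\tF}\sqrt{d^*/n}$. The paper does not use leave-one-out for this term at all: $\frE_2$ is bounded uniformly over a covering net of $\linf$-perturbed rank-$\br$ tensors (Lemma~\ref{lemma:cover-init} feeding Lemmas~\ref{lemma:Einit-Ern-dep-l2} and \ref{lemma:dep-init-frE-2-5}), together with a deterministic dilation-trick bound, and the leave-one-out hypothesis is reserved for sharpening the $\ell_{2,\infty}$ bounds of the noise-times-subspace-difference cross terms $\bE_{j,\rn}\bigl(\otimes_{k\neq j}\cP_{\widehat\bU_{k,0}}-\otimes_{k\neq j}\cP_{\bU_k}\bigr)$ that enter $\frE_3$ (and $\frE_4,\frE_5$) via Lemmas~\ref{lemma:loo-l2inf-ipv} and \ref{lemma:loo-init-frE-2-5}.

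Relatedly, your explanation of the $\alpha_d$ factor is misplaced. The expected number of samples in a mode-$k$ slice is $n/d_k$, and this factor is exactly compensated when you recombine the $d_k$ slices, so no $\dmax/\dmin$ arises from slice counts in the debiasing term. In the paper, $\alpha_d$ comes from a mode mismatch inside the power-iteration remainder: the leave-one-out closeness bound in \eqref{eq:asp-loo-init} carries the factor $\sqrt{\mu r_k/d_k}$ for the left-out mode $k$, while the conditional concentration of the mode-$j$ unfolded error matrix scales with $\sqrt{d_j}$ for $j\neq k$, and the ratio $d_j/d_k\le\alpha_d$ survives into the bound of Lemma~\ref{lemma:loo-l2inf-ipv} and hence into $\frE_3$. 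You would also need, as the paper does even under leave-one-out, a uniform (net-based) argument for the terms involving the data-dependent post-iteration subspaces $\widehat\bU_{j,1}$ and for $\frE_2$; without these two repairs the second error term in the theorem is not obtained by your route.
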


The rationale behind Theorem \ref{thm:clt-lol-init} is analogous to that of the debiased Lasso \citep{javanmard2014confidence, javanmard2018debiasing}, as we decouple the dependence between initialization and bias correction through a row-wise leave-one-out analysis, which avoids the requirement of sample splitting. However, the analysis in Theorem \ref{thm:clt-lol-init} is more intricate due to multiple modes and intertwined tensor factors. Furthermore, Theorem \ref{thm:clt-lol-init} extends the previous leave-one-out-based tensor inference in \cite{cai2022uncertainty}, which focused solely on entrywise inference, by enabling the inference of general linear forms under weaker conditions. The gradient descent approach in \cite{cai2022uncertainty} is primarily limited to CP tensor decomposition. In contrast, our debiasing method is more flexible, allowing us to handle  Tucker decompositions using power iteration. It is important to note that although Theorem \ref{thm:clt-lol-init} only requires statistically optimal conditions, obtaining an initialization that satisfies Assumption \ref{asm:lol-init} remains computationally demanding and necessitates stronger conditions to ensure the convergence of the iterative methods discussed above. We will discuss this gap in more detail in Section \ref{sec:s-c-gap}.

Finally, we consider the case where the initial $\widehat{\bcT}_{\init}$ is obtained by any method that satisfies \eqref{eq:tensor-init}, allowing it to depend arbitrarily on the observations $\left\{ (\bcX_i, Y_i) \right\}_{i=1}^{n}$. We can still establish asymptotic normality under stronger sample size and SNR conditions.

\begin{Theorem}[Inference with an arbitrarily dependent initialization]\label{thm:clt-dep-init}
Under Assumptions \ref{asm:incoherence} and \ref{asm:alignment}, suppose that the sample size and SNR satisfies 
\begin{equation*}
 n\ge C_{\gap} C_1^2 m^3(2\mu)^m \rmax^2 r^*\dmax^2\log^2\dmax, \quad   \frac{\lambda_{\min}}{ \sigma} \ge  C_{\gap}\kappa_0 \left(\frac{C_1 \norm{\bcI}_{\ell_1}}{\alpha_I\norm{\bcI}_{\tF}} \bigvee \sqrt{\dmax}\right)C_1\sqrt{\frac{  m^5 (2\mu)^{3m} (r^*)^3   d^* \dmax \log^2 \dmax }{ n}}, 
\end{equation*}
for a numerical constant $C_{\gap}>0$.  
Let $\widehat W_{\test}(\bcI)$ be the test statistic defined in \eqref{eq:hWtest-def}  using $\widehat \bcT$ returned by Algorithm \ref{alg:debias-powerit} with an initialization $\widehat{\bcT}_{\init}$  that is arbitrarily dependent on the observations $\left\{ (\bcX_i, Y_i) \right\}_{i=1}^{n}$. If $\widehat{\bcT}_{\init}$ satisfies \eqref{eq:tensor-init} with probability at least $1-\dmax^{-3m}$, then there exists an absolute constant $C>0$ such that 
%     \begin{equation*}
% \begin{aligned}
%         &\max_{t\in\R}\abs{\bbP\left( \frac{\left\langle  \widehat{\bcT} - \bcT,\bcI  \right\rangle }{ \sigma\norm{\cP_{ \TT }(\bcI)}_\tF \sqrt{{d^*}/{n}} } \le t\right)- \Phi(t)} \le  C C_1 \sqrt{\frac{m^3(2\mu)^m \rmax^2 r^*\dmax^2\log^2\dmax }{n}} \\
%         & \quad \quad + C C_1\frac{\sigma}{\lambda_{\min}}\sqrt{\frac{  m^4 \mu^{m-1} (r^*)^2   d^* \dmax^2 \log^2 \dmax }{\rmin^3 n}} + C \frac{ C_1^2\sigma  \norm{\bcI}_{\ell_1}}{\alpha_I \lambda_{\min}  \norm{\bcI}_\tF }\sqrt{\frac{ m^4 (2\mu)^{3 m} \rmax^2 (r^*)^{3 }\dmax d^*  \log^2 \dmax }{\rmin^2 n } },
% \end{aligned}
% \end{equation*}
%    with
    \begin{equation*}
\begin{aligned}
             &\max_{t\in\R}\abs{\bbP\left( \widehat W_{\test}(\bcI)\le t\right)- \Phi(t)} \le  C C_1 \sqrt{\frac{m^3(2\mu)^m \rmax^2 r^*\dmax^2\log^2\dmax }{n}} \\
             &\quad \quad + C C_1\frac{\sigma}{\lambda_{\min}}\sqrt{\frac{  m^4 \mu^{m-1} (r^*)^2   d^* \dmax^2 \log^2 \dmax }{\rmin^3 n}}   + C\frac{\norm{\bcI}_{\ell_1}}{\norm{\bcI}_{\tF}}\frac{C_1^2  \kappa_0 \sigma  }{\alpha_I \lambda_{\min}}\sqrt{\frac{m^5 (2\mu)^{3m} (r^*)^3 d^*\dmax \log^2 \dmax }{\rmin^2 n}}.
\end{aligned}
    \end{equation*}
\end{Theorem}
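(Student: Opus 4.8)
The plan is to follow the same two-stage structure used for Theorem~\ref{thm:lf-inference-popvar}, but now track how the arbitrary dependence between $\widehat\bcT_{\init}$ and $\{(\bcX_i,Y_i)\}$ propagates through the debiasing identity \eqref{eq:ubs-approx-decomp} and the one-step power iteration. First I would recall the decomposition $\widehat\bcT_{\ubs}=\bcT+\bcE_{\init}+\bcE_{\rn}$ and establish $\ell_\infty$ and $\ell_{2,\infty}$ control on $\bcE_{\init}=\widehat\bcT_{\init}-\bcT-\tfrac{d^*}{n}\sum_i\langle\widehat\bcT_{\init}-\bcT,\bcX_i\rangle\bcX_i$. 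Under independence one uses Bernstein conditionally on $\widehat\bcT_{\init}$; without independence, the sum $\tfrac{d^*}{n}\sum_i\langle\widehat\bcT_{\init}-\bcT,\bcX_i\rangle\bcX_i$ can no longer be treated as a sum of conditionally i.i.d. mean-zero terms, so instead I would bound it deterministically: using $\|\widehat\bcT_{\init}-\bcT\|_{\linf}\le C_1\sigma\sqrt{\dmax\log\dmax/n}$ from \eqref{eq:tensor-init}, each term $\langle\widehat\bcT_{\init}-\bcT,\bcX_i\rangle\bcX_i$ has $\ell_\infty$-norm at most $\|\widehat\bcT_{\init}-\bcT\|_{\linf}$, so a uniform union bound over entries combined with a counting bound on how often each canonical basis tensor is sampled (at most $O(n/\dmin+\log\dmax)$ times with high probability) yields an $\ell_\infty$ bound on $\bcE_{\init}$ of order $C_1\sigma\sqrt{\dmax\log\dmax/n}\cdot(1/\dmin+\ldots)$-type quantities; this is exactly where the extra $\dmax^2$ and $\rmax^2$ factors in the sample-size condition $n\ge C_{\gap}C_1^2 m^3(2\mu)^m\rmax^2 r^*\dmax^2\log^2\dmax$ come from, and the extra middle term in the Berry–Esseen bound, $CC_1\tfrac{\sigma}{\lambda_{\min}}\sqrt{m^4\mu^{m-1}(r^*)^2 d^*\dmax^2\log^2\dmax/(\rmin^3 n)}$, is the price of this cruder deterministic handling of $\bcE_{\init}$.

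Next I would analyze the power iteration. The key is that the updated subspaces $\widehat\bU_{j,1}=\SVD_{r_j}(\cM_j(\widehat\bcT_{\ubs}\times_{k\ne j}\widehat\bU_{k,0}^\top))$ depend on $\widehat\bcT_{\ubs}$, hence on $\bcE_{\rn}$, hence on the data; I would invoke the spectral representation tool of \cite{xia2019confidence, xia2021normal} to expand $\cP_{\widehat\bU_{j,1}}$ around $\cP_{\bU_j}$ to second order, with leading perturbation driven by $\cM_j(\bcE_{\rn})$ projected appropriately, and a quadratic remainder controlled in $\ell_{2,\infty}$ norm. Because the SNR condition is $\kappa_0$- and $\sqrt{\dmax}$-inflated relative to Theorem~\ref{thm:lf-inference-popvar}, the perturbation $\|\cM_j(\widehat\bcT_{\ubs}\times_{k\ne j}\widehat\bU_{k,0}^\top)-\cM_j(\bcT\times_{k\ne j}\bU_k^\top)\|$ is small enough relative to $\lambda_{\min}$ that the expansion is valid; the $\ell_{2,\infty}$ bound on $\widehat\bU_{j,1}-\bU_j$ (times a rotation) then feeds into bounding $\langle\widehat\bcT-\widehat\bcT_{\ubs},\bcI\rangle$ and the difference between $\cP_{\widehat\TT}(\bcI)$ and $\cP_{\TT}(\bcI)$. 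Assembling: $\langle\widehat\bcT-\bcT,\bcI\rangle=\langle\cP_{\TT}(\bcI),\bcE_{\rn}\rangle+(\text{error from }\bcE_{\init})+(\text{error from retraction})$, where the leading term $\tfrac{d^*}{n}\sum_i\xi_i\langle\cP_{\TT}(\bcI),\bcX_i\rangle$ is a sum of i.i.d. bounded random variables to which the Berry–Esseen theorem applies, giving variance $\tfrac{\sigma^2 d^*}{n}\|\cP_{\TT}(\bcI)\|_{\tF}^2$ and the first error term $CC_1\sqrt{m^3(2\mu)^m\rmax^2 r^*\dmax^2\log^2\dmax/n}$.

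Finally I would handle the studentization: replacing $\sigma$ by $\widehat\sigma$ from \eqref{eq:emp-sigma} and $\|\cP_{\TT}(\bcI)\|_{\tF}$ by $\|\cP_{\widehat\TT}(\bcI)\|_{\tF}$ from \eqref{eq:emp-PT}. Since $\widehat\sigma^2=\tfrac1n\sum_i(Y_i-\langle\widehat\bcT_{\init},\bcX_i\rangle)^2=\tfrac1n\sum_i(\xi_i-\langle\widehat\bcT_{\init}-\bcT,\bcX_i\rangle)^2$, the bias from $\widehat\bcT_{\init}-\bcT$ is $O(\|\widehat\bcT_{\init}-\bcT\|_{\linf}^2)=O(C_1^2\sigma^2\dmax\log\dmax/n)$, negligible under the sample-size condition, and concentration of $\tfrac1n\sum\xi_i^2$ around $\sigma^2$ is standard sub-exponential concentration; the ratio $\|\cP_{\widehat\TT}(\bcI)\|_{\tF}/\|\cP_{\TT}(\bcI)\|_{\tF}\to 1$ follows from the $\ell_{2,\infty}$-closeness of $\widehat\bU_{j,0}$ to $\bU_j$ together with Assumption~\ref{asm:alignment} lower-bounding the denominator, and contributes the last term $C\tfrac{\|\bcI\|_{\ell_1}}{\|\bcI\|_{\tF}}\tfrac{C_1^2\kappa_0\sigma}{\alpha_I\lambda_{\min}}\sqrt{m^5(2\mu)^{3m}(r^*)^3 d^*\dmax\log^2\dmax/(\rmin^2 n)}$, exactly as in Theorem~\ref{thm:lf-inference-empvar}. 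The main obstacle is the first paragraph's step: bounding $\bcE_{\init}$ without independence, since the naive conditional-Bernstein argument collapses and one must either argue deterministically (losing a factor $\dmax$, as the theorem statement indeed reflects through the $\dmax^2$ in the sample size and the new middle error term) or develop a decoupling/chaining argument over the dependence; verifying that this cruder bound still leaves $\bcE_{\init}$'s contribution to $\langle\cdot,\bcI\rangle$ below the CLT fluctuation scale $\sigma\|\cP_{\TT}(\bcI)\|_{\tF}\sqrt{d^*/n}$ is where the stated SNR inflation by $\sqrt{\dmax}$ becomes essential and must be checked carefully.
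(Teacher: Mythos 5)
There is a genuine gap, and it sits exactly at the point you flag as "the main obstacle" but then resolve with the wrong tool. Your plan treats the dependence problem as localized in $\bcE_{\init}$ and proposes a deterministic counting bound on $\frac{d^*}{n}\sum_i\langle\widehat\bcT_{\init}-\bcT,\bcX_i\rangle\bcX_i$. But what the proof actually needs are spectral and $\ell_{2,\infty}$ bounds on products such as $\bE_{j,\init}\bigl(\otimes_{k\neq j}\widehat\bU_{k,0}\bigr)$ and, crucially, $\bE_{j,\rn}\bigl(\otimes_{k\neq j}\cP_{\widehat\bU_{k,0}}-\otimes_{k\neq j}\cP_{\bU_k}\bigr)$, where the initial subspaces $\widehat\bU_{k,0}$ themselves depend on the data $\{(\bcX_i,\xi_i)\}$. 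Your second paragraph invokes the spectral representation of $\cP_{\widehat\bU_{j,1}}$ as in the independent case, but every conditional matrix-Bernstein step there presumes $\widehat\bU_{k,0}$ is fixed given the noise; under arbitrary dependence this collapses for the noise term just as much as for the initialization term, and a purely deterministic bound (e.g.\ $\|\bE_{j,\rn}\|_2\cdot\|\otimes_k\widehat\bU_{k,0}\|_2$) is hopelessly lossy because the unprojected noise unfolding has spectral norm of order $\sigma\sqrt{d^*\dmax/n}\cdot\sqrt{d_{-j}}$-type size. Your proposal never supplies a mechanism to restore concentration for these data-dependent products, so the subspace perturbation bounds after the one-step power iteration — and hence all the remainder terms $\frE_3,\frE_4,\frE_5$ — are left uncontrolled.

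The paper's route is different in precisely this respect: it proves a metric-entropy bound for the class of rank-$\br$ tensors within the $\ell_\infty$-ball around $\bcT$ (Lemma~\ref{lemma:cover-init}), builds an $\varepsilon$-net of cardinality about $\dmax^{Cm\rmax\dmax}$, proves matrix-Bernstein bounds for each fixed net element, and takes a union bound; the extra $\sqrt{m\rmax\dmax\log\dmax}$ paid in the union bound is exactly the source of the $\dmax^2$ in the sample-size condition, the $\sqrt{\dmax}$ inflation of the SNR, and the new middle term in the Berry–Esseen bound (via Lemmas~\ref{lemma:Einit-Ern-dep-l2}, \ref{lemma:E-init-rn-2inf-dep}, \ref{lemma:l2inf-powerit-dep}, \ref{lemma:dep-init-frE-2-5}). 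So your attribution of these extra factors to a crude deterministic handling of $\bcE_{\init}$ misidentifies where they come from, and without a uniform-over-initializations argument (covering, chaining, or decoupling made quantitative) your outline cannot be completed to a proof. The parts of your plan concerning the i.i.d.\ leading term $\frac{d^*}{n}\sum_i\xi_i\langle\cP_{\TT}(\bcI),\bcX_i\rangle$ and the studentization ($\widehat\sigma$ and $\|\cP_{\widehat\TT}(\bcI)\|_{\tF}$) are fine and match the paper.
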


Theorem \ref{thm:clt-dep-init} shows that even without knowing the dependence between $\widehat{\bcT}_{\init}$ and the observations used for debiasing, the required SNR and sample size need only to be $O(\dmax^{1/2})$ times larger than the statistically optimal condition \eqref{eq:SNR-Stat}, rather than the computationally efficient condition \eqref{eq:SNR-Comp}.

\section{Correlation of test statistics for multiple linear forms}\label{sec:joint}
Simultaneous inference for multiple linear forms is useful in applications like constructing confidence intervals for several missing entries in tensor $\bcT$. To achieve this, it is essential to characterize the correlations among their test statistics. The following theorem establishes the joint distribution for two such test statistics. 

\begin{Theorem}\label{thm:clt-two-test}
Let $\bcI_1$ and $\bcI_2$ be two tensors satisfying Assumption~\ref{asm:alignment}. Define 
 \begin{equation*}
     \rho(\bcI_1,\bcI_2) = \frac{\left\langle\cP_{ \TT }(\bcI_1),\cP_{ \TT }(\bcI_2)\right\rangle}{\norm{\cP_{ \TT }(\bcI_1)}_{\tF} \norm{\cP_{ \TT }(\bcI_2)}_{\tF} },
 \end{equation*}
 and denote $\Phi_{\rho}(t_1,t_2)$ as the c.d.f. of the $2$-dimensional normal distribution with unit variance and covariance $ \rho(\bcI_1,\bcI_2) $. Suppose that the conditions in Theorem \ref{thm:lf-inference-empvar} hold. Then the joint c.d.f. of $ ( \widehat W_{\test}({\bcI_1}), \widehat W_{\test}({\bcI_2}))^{\top}$ converges to $\Phi_{\rho}$ with the rate:
\begin{equation*}
\begin{aligned}
        &\max_{t_1,t_2\in\R}\abs{\bbP\left( \widehat W_{\test}(\bcI_1)\le t_1, \widehat W_{\test}(\bcI_2)\le t_2\right)- \Phi_{\rho}(t_1,t_2)} \\
    &\le C C_1 \sqrt{\frac{m^2 \mu^{m} \rmin r^* \dmax \log^2 \dmax }{n}} + C\left(\frac{\norm{\bcI_1}_{\ell_1}}{\norm{\bcI_1}_{\tF}} \vee \frac{\norm{\bcI_2}_{\ell_1}}{\norm{\bcI_2}_{\tF}}\right)\frac{C_1^2  \kappa_0 \sigma  }{\alpha_I \lambda_{\min}}\sqrt{\frac{m^5 (2\mu)^{3m} (r^*)^3 d^*\dmax \log^2 \dmax }{\rmin^2 n}},
\end{aligned}
\end{equation*}
 for a numerical constant $C>0$.  
\end{Theorem}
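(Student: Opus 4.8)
The plan is to reduce the bivariate statement to the one–dimensional analysis already behind Theorem~\ref{thm:lf-inference-empvar}, the only genuinely new ingredient being a two–dimensional central limit theorem for the common noise term. The argument establishing Theorem~\ref{thm:lf-inference-empvar} (cf.\ \eqref{eq:ubs-approx-decomp}--\eqref{eq:linear-approx-clt}) shows that, on an event of probability at least $1-\dmax^{-3m}$, the estimator $\widehat\bcT$ returned by Algorithm~\ref{alg:debias-powerit} admits, for any fixed index tensor $\bcI$, the decomposition
\begin{equation*}
    \big\langle\widehat\bcT-\bcT,\bcI\big\rangle = L(\bcI)+R(\bcI),\qquad L(\bcI):=\frac{d^*}{n}\sum_{i=1}^n \xi_i\big\langle\cP_{\TT}(\bcI),\bcX_i\big\rangle,
\end{equation*}
with $|R(\bcI)|\le \delta(\bcI)\cdot\sigma\norm{\cP_{\TT}(\bcI)}_\tF\sqrt{d^*/n}$, where $\delta(\bcI)$ equals, up to absolute constants, the Berry--Esseen error in Theorem~\ref{thm:lf-inference-empvar}; moreover $\widehat\sigma/\sigma=1+O(\delta(\bcI))$ and $\norm{\cP_{\widehat\TT}(\bcI)}_\tF/\norm{\cP_{\TT}(\bcI)}_\tF=1+O(\delta(\bcI))$ there. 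Since Algorithm~\ref{alg:debias-powerit} is run only once, the same data $\{(\bcX_i,Y_i,\xi_i)\}$ and the same $\widehat\bcT$ enter the decompositions of both $\bcI_1$ and $\bcI_2$; a union bound keeps us on a single good event carrying all these estimates for $k\in\{1,2\}$. It therefore suffices to (i) prove a bivariate Berry--Esseen bound for $(L(\bcI_1)/v_1,\,L(\bcI_2)/v_2)$ with $v_k:=\sigma\norm{\cP_{\TT}(\bcI_k)}_\tF\sqrt{d^*/n}$, and (ii) transfer it through the remainders and the random normalizations.

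For (i), the summands $\big(\xi_i\langle\cP_{\TT}(\bcI_1),\bcX_i\rangle,\ \xi_i\langle\cP_{\TT}(\bcI_2),\bcX_i\rangle\big)$ are i.i.d.\ and mean zero; using that $\bcX_i$ is uniform on $\bbH$ and independent of $\xi_i$ with $\E\xi_i^2=\sigma^2$, one computes $\E[L(\bcI_1)L(\bcI_2)]=\frac{d^*\sigma^2}{n}\langle\cP_{\TT}(\bcI_1),\cP_{\TT}(\bcI_2)\rangle$ and $\Var(L(\bcI_k))=v_k^2$, so the normalized sum has a covariance matrix with unit diagonal and off–diagonal entry exactly $\rho=\rho(\bcI_1,\bcI_2)$. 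A multivariate Berry--Esseen bound over rectangles in dimension two (a special case of convex sets, e.g.\ Bentkus' bound) then gives an approximation error of order $n^{-1/2}$ times the third absolute moment of the whitened summands; since $\langle\cP_{\TT}(\bcI_k),\bcX_i\rangle=[\cP_{\TT}(\bcI_k)]_\omega$ with probability $1/d^*$, one bounds $\E|\langle\cP_{\TT}(\bcI_k),\bcX_i\rangle|^3\le(d^*)^{-1}\norm{\cP_{\TT}(\bcI_k)}_\linf\norm{\cP_{\TT}(\bcI_k)}_\tF^2$ and invokes the incoherence control of $\norm{\cP_{\TT}(\bcI_k)}_\linf/\norm{\cP_{\TT}(\bcI_k)}_\tF$ from the single–form proof, yielding an error of the order of the first displayed term $CC_1\sqrt{m^2\mu^m\rmin r^*\dmax\log^2\dmax/n}$ (the dimension–two factor costs only an absolute constant). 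For (ii), $\widehat W_{\test}(\bcI_k)=(1+O(\delta(\bcI_k)))\big(L(\bcI_k)/v_k+R(\bcI_k)/v_k\big)$, i.e.\ a coordinatewise near–isometry plus an $O(\delta(\bcI_k))$ additive shift of the vector just handled; since $\Phi_\rho$ has marginal densities bounded by $(2\pi)^{-1/2}$ and Gaussian c.d.f.'s change by $O(\eta)$ under coordinatewise scaling by $1+\eta$, these perturbations move the joint c.d.f.\ by at most $O(\delta(\bcI_1)+\delta(\bcI_2))$, which collapses into the displayed rate with the $\ell_1/\tF$ ratio taken as the maximum over $k\in\{1,2\}$.

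\textbf{Main obstacle.} The delicate point is the bivariate Berry--Esseen step when the limiting correlation is (nearly) extremal, i.e.\ $|\rho(\bcI_1,\bcI_2)|$ close to $1$: the whitening $\Sigma^{-1/2}$ used to normalize the summands blows up and the multivariate bound deteriorates. I would dichotomize on $1-|\rho|$ versus the target rate. If $1-|\rho|$ exceeds the target rate, whitening is harmless and the convex–set Berry--Esseen bound applies directly. If $|\rho|$ is within the target rate of $1$, I write $\cP_{\TT}(\bcI_2)=\pm\frac{\norm{\cP_{\TT}(\bcI_2)}_\tF}{\norm{\cP_{\TT}(\bcI_1)}_\tF}\cP_{\TT}(\bcI_1)+\bcG$ with $\norm{\bcG}_\tF^2=(1-\rho^2)\norm{\cP_{\TT}(\bcI_2)}_\tF^2$ of the target order, so $L(\bcI_2)$ is proportional to $L(\bcI_1)$ up to an error of that order, and the joint statement reduces to the one–dimensional Berry--Esseen of Theorem~\ref{thm:lf-inference-empvar} for $\widehat W_{\test}(\bcI_1)$ together with an anti–concentration estimate for the (possibly degenerate) limit $\Phi_{\pm1}$. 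A secondary, purely bookkeeping task is to verify that the dimension–two factor of the multivariate bound and the powers of $m,\mu,r^*$ remain within the claimed expression, which is immediate from the single–form computation.
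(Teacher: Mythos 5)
Your proposal follows essentially the same route as the paper: decompose each studentized statistic into the common i.i.d.\ linear term $\frE_1(\bcI_k)=\frac{d^*}{n}\sum_i\xi_i\langle\bcX_i,\cP_{\TT}(\bcI_k)\rangle$ plus the remainder and variance-estimation errors already controlled in the proof of Theorem~\ref{thm:lf-inference-empvar}, apply a two-dimensional Berry--Esseen bound to the stacked linear terms (whose covariance is exactly $\rho(\bcI_1,\bcI_2)$), and transfer through the uniformly bounded gradient of $\Phi_{\rho}$. Your additional dichotomy for the case $1-|\rho(\bcI_1,\bcI_2)|$ small is a sensible refinement of a step the paper handles by directly citing the multivariate Berry--Esseen theorem, but it does not change the structure of the argument or the resulting rate.
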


Theorem~\ref{thm:clt-two-test} establishes the joint asymptotic normality of two test statistics:
 \begin{equation*}
  \left[\begin{array}{c}\widehat W_{\test}(\bcI_1) \\ \widehat W_{\test}(\bcI_2)\end{array} \right] \longrightarrow \cN\left(\mathbf{0},\left[\begin{array}{cc}
         1 & \rho(\bcI_1,\bcI_2) \\
         \rho(\bcI_1,\bcI_2) &  1
     \end{array}\right] \right).
 \end{equation*}
This result can be readily extended to multiple test statistics. Notably, the correlation between two test statistics equals the cosine similarity between $\cP_{\TT}(\bcI_1)$ and $\cP_{\TT}(\bcI_2)$, consistent with previous findings in \cite{ma2023multiple}. Furthermore, this correlation structure benefits from the incoherence and alignment conditions, as supported by the following facts. 

\begin{Proposition}\label{prop:corr-ub}
Under Assumptions \ref{asm:incoherence} and \ref{asm:alignment}, the correlation $ \rho(\bcI_1,\bcI_2)$ can be bounded by
\begin{equation}
    \abs{\rho(\bcI_1,\bcI_2)} \le \frac{2\mu^m r^* \norm{\bcI_1}_{\ell_1}\norm{\bcI_2}_{\ell_1}}{\dmax\alpha_I^2\norm{\bcI_1}_{\tF}\norm{\bcI_2}_{\tF} } + \frac{d^*\sum_{j=1}^{m}\abs{ \left\langle \cM_j(\bcI_1)^\top \cM_j(\bcI_2), \cP_{\bH_j}\right\rangle}}{\dmax\alpha_I^2\norm{\bcI_1}_{\tF}\norm{\bcI_2}_{\tF} },
\end{equation}
where each $\cP_{\bH_j}$ is the projection into the incoherent singular subspace $\cP_{\bU_m}\otimes\cdots \cP_{\bU_{j+1}}\otimes \cP_{\bU_{j-1}}\cdots\otimes \cP_{\bU_{1}} $ in the space $\R^{d_{-j}\times d_{-j}}$. 
%{\color{red}[needs to explain $\cP_{\bH_j}$ explicitly, should be related to $\bcT$, right?.]}
\end{Proposition}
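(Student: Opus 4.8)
The plan is to bound $\abs{\rho(\bcI_1,\bcI_2)}$ by controlling $\abs{\langle \cP_{\TT}(\bcI_1), \cP_{\TT}(\bcI_2)\rangle}$ from above and $\norm{\cP_{\TT}(\bcI_j)}_{\tF}$ from below. The denominator is immediate: Assumption~\ref{asm:alignment} gives $\norm{\cP_{\TT}(\bcI_j)}_{\tF} \ge \alpha_I \dmax^{1/2}(d^*)^{-1/2}\norm{\bcI_j}_{\tF}$, so $\norm{\cP_{\TT}(\bcI_1)}_{\tF}\norm{\cP_{\TT}(\bcI_2)}_{\tF} \ge \alpha_I^2 \dmax (d^*)^{-1}\norm{\bcI_1}_{\tF}\norm{\bcI_2}_{\tF}$, which explains the $d^*/(\dmax\alpha_I^2)$ prefactors in the stated bound. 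All the work is therefore in the numerator.

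For the numerator, I would use the explicit formula \eqref{eq:true-TT-proj}: $\cP_{\TT}(\bcI) = \bcI\cdot(\cP_{\bU_1},\dots,\cP_{\bU_m}) + \sum_{j=1}^m \bcC\cdot(\bU_1,\dots,\bW_j,\dots,\bU_m)$, where the first term is the ``fully projected'' piece and the $j$-th summand lives in the subspace where mode $j$ is projected onto $\bU_{j\perp}$ and every other mode onto $\bU_k$. A key structural observation is that these $m+1$ pieces are mutually orthogonal, so $\langle\cP_{\TT}(\bcI_1),\cP_{\TT}(\bcI_2)\rangle$ splits into the cross term of the fully-projected pieces plus $\sum_{j=1}^m$ of the cross terms of the $j$-th off-diagonal pieces. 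The fully-projected cross term is $\langle \bcI_1\cdot(\cP_{\bU_1},\dots,\cP_{\bU_m}),\ \bcI_2\rangle$; unfolding on any mode and using incoherence $\norm{\bU_i}_{2,\infty}\le\sqrt{\mu r_i/d_i}$ bounds the entrywise size of the projection tensor, giving something like $\norm{\bcI_1\cdot(\cP_{\bU_1},\dots,\cP_{\bU_m})}_{\linf}\lesssim \mu^m r^*(d^*)^{-1}\norm{\bcI_1}_{\ell_1}$ (the operator $\otimes_i\cP_{\bU_i}$ has $\linf$-to-$\linf$–type norm controlled by the product of incoherence bounds), and then $\abs{\langle\cdot,\bcI_2\rangle}\le\norm{\cdot}_{\linf}\norm{\bcI_2}_{\ell_1}$ yields the first term $2\mu^m r^*\norm{\bcI_1}_{\ell_1}\norm{\bcI_2}_{\ell_1}/(\dmax\alpha_I^2\norm{\bcI_1}_{\tF}\norm{\bcI_2}_{\tF})$ after dividing by the denominator. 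For the $j$-th off-diagonal cross term, I would mode-$j$ unfold: each piece becomes $\bW_j^{(s)} \mathcal{M}_j(\bcC)(\otimes_{k\ne j}\bU_k^\top)$ times the appropriate factor, and using $\bW_j^{(s)} = \cP_{\bU_j}^\perp \mathcal{M}_j(\bcI_s)(\otimes_{k\ne j}\bU_k)\mathcal{M}_j^\dagger(\bcC)$ together with $\mathcal{M}_j^\dagger(\bcC)\mathcal{M}_j(\bcC)$ acting as identity on the relevant row space, the trace collapses to $\langle \cP_{\bU_j}^\perp \mathcal{M}_j(\bcI_1)\cP_{\bH_j}, \mathcal{M}_j(\bcI_2)\rangle$ up to the $\cP_{\bH_j}:=\otimes_{k\ne j}\cP_{\bU_k}$ factor on the $d_{-j}$ side; dropping $\cP_{\bU_j}^\perp$ (it only shrinks the Frobenius pairing) and recognizing $\langle \mathcal{M}_j(\bcI_1)\cP_{\bH_j}, \mathcal{M}_j(\bcI_2)\rangle = \langle \mathcal{M}_j(\bcI_1)^\top\mathcal{M}_j(\bcI_2), \cP_{\bH_j}\rangle$ gives exactly the summand $\abs{\langle \mathcal{M}_j(\bcI_1)^\top\mathcal{M}_j(\bcI_2),\cP_{\bH_j}\rangle}$ in the bound, with the $d^*$ weight appearing from the $\mathcal{M}_j^\dagger(\bcC)$–$\mathcal{M}_j(\bcC)$ normalization combined with the denominator's $(d^*)^{-1}$.

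The main obstacle I anticipate is getting the $j$-th off-diagonal term into precisely the clean form $\langle \mathcal{M}_j(\bcI_1)^\top\mathcal{M}_j(\bcI_2),\cP_{\bH_j}\rangle$ with the stated $d^*$ factor — this requires carefully tracking how $\mathcal{M}_j(\bcC)$ and its pseudoinverse interact (the pseudoinverse is not simply an inverse, and one must argue the surviving projection is onto the row space of $\mathcal{M}_j(\bcC)$, which under the rank-$\br$ Tucker structure equals the full $r_{-j}$-dimensional space spanned after tensoring the $\bU_k$'s) and how the $\bW_j$ normalization produces the $d^*$ scaling once divided by the alignment lower bound $\alpha_I^2\dmax/d^*$. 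A secondary, more routine obstacle is the incoherence bookkeeping for the fully-projected term: making the product-of-incoherence-constants argument rigorous for an $m$-fold Kronecker projection acting entrywise, for which I would unfold once and iterate the matrix estimate $\norm{\cP_{\bU}\bA}_{\linf}\le\norm{\bU}_{2,\infty}^2\cdot(\text{something})\cdot\norm{\bA}_{\linf}$-style bounds mode by mode. Both are bounded-complexity calculations; assembling the two contributions and dividing by the alignment lower bound on the denominator then yields the claimed inequality.
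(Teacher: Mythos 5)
Your proposal follows essentially the same route as the paper's proof: lower-bound the denominator by Assumption~\ref{asm:alignment}, split $\langle\cP_{\TT}(\bcI_1),\cP_{\TT}(\bcI_2)\rangle$ into the $m+1$ mutually orthogonal cross terms coming from \eqref{eq:true-TT-proj}, bound the fully projected cross term entrywise via incoherence to get $\mu^m r^*\norm{\bcI_1}_{\ell_1}\norm{\bcI_2}_{\ell_1}/d^*$, and use the mode-$j$ unfolding identity \eqref{eq:mode-j-tangent} to identify the remaining summands with $\abs{\langle\cM_j(\bcI_1)^\top\cM_j(\bcI_2),\cP_{\bH_j}\rangle}$; this is exactly the paper's two-step chain of inequalities. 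Two small corrections to your bookkeeping: $\cM_j^{\dagger}(\bcC)\cM_j(\bcC)=\cP_{\bV_j}$ is a rank-$r_j$ projector on $\R^{r_{-j}}$, not the identity (the row space of $\cM_j(\bcC)$ is only $r_j$-dimensional, and $r_j<r_{-j}$ in general), so the exact $j$-th cross term is $\langle\cM_j(\bcI_1)^\top\cP_{\bU_j}^\perp\cM_j(\bcI_2),\,(\otimes_{k\neq j}\bU_k)\cP_{\bV_j}(\otimes_{k\neq j}\bU_k^\top)\rangle$ and one discards $\cP_{\bV_j}$ together with $\cP_{\bU_j}^\perp$, exactly as the paper does implicitly; and the $d^*$ in the second term of the bound comes solely from the alignment lower bound $\norm{\cP_{\TT}(\bcI_s)}_{\tF}\ge \alpha_I\sqrt{\dmax/d^*}\,\norm{\bcI_s}_{\tF}$ on the denominator, not from any normalization involving the core tensor.
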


When $\cM_j(\bcI_1)^\top \cM_j(\bcI_2) = \mathbf{0}$ for each $j \in [m]$ (for example, if $\bcI_1$ and $\bcI_2$ correspond to two entries $\omega_1$ and $\omega_2$ with no overlapping indices in any mode), we have $\abs{\rho(\bcI_1,\bcI_2)} \lesssim 1/\dmax$, assuming that $\bcI_1$ and $\bcI_2$ are sparse and that $\mu$, $m$, $r^*$, and $1/\alpha_I$ are $O(1)$. This result may be of independent interest for exploring other simultaneous inference tasks, such as multiple testing, within our inference framework for tensor linear forms.

\section{Inference with Heteroskedastic and Sub-Exponential Noises}\label{sec:sub-exponential}
The theorems established in Sections~\ref{sec:inference-NTC} and \ref{sec:joint} assume i.i.d. and sub-Gaussian noises, which can be restrictive in many scenarios. 
In this section, we extend these results to accommodate sub-exponential and heteroskedastic noises. Specifically, we assume that the independent noise $\xi_i$'s are heteroskedastic in that their distributions may depend on the sampling positions $\bcX_i$, and that they have $\sigmax$ sub-exponential tails:
\begin{equation}
    \E \big[\exp(\abs{\xi_i} / \sigmax)\mid \left[\bcX_i\right]_{\omega} = 1\big] \leq 2, \quad \forall \omega\in [d_1]\times\cdots\times [d_m].
\end{equation}
The heteroscedasticity is characterized by the standard deviation tensor of the noise, denoted by $\bcS$, whose entries are defined by
 % $\norm{\xi_i}_{\psi_1} \le \sigmax$, which is equivalent to
\begin{equation*}
    [\bcS]_{\omega}: = \sqrt{\E\left[\xi_i^2\mid \left[\bcX_i\right]_{\omega} = 1  \right]}, \ \forall \omega\in [d_1]\times \cdots \times[d_m].
\end{equation*}

By the property of sub-exponential norm, we have $[\bcS]_{\omega} \leq 4 \sigmax$ for all $\omega$. Due to the large number of free parameters in $\bcS$, there is no consistent estimator for a general $\bcS$. Fortunately, by assuming that the variances of heterogeneous noises are at the same level, that is, there exists $\sigmin$ such that $\sigmin \leq [\bcS]_{\omega} \leq 4 \sigmax$ for all $\omega \in [d_1] \times \cdots \times [d_m]$, we show that our method still provides valid inference for the linear form $\left\langle \bcT, \bcI \right\rangle$. Define the level of heteroscedasticity by $\kappa_{\sigma}: = \sigmax / \sigmin$. For simplicity, we only present the inference result under independent initialization. It can be easily extended to the cases with dependent initialization.

% \begin{equation*}
%     \left\langle\bcS,{\bomega}  \right\rangle= \E\left[\xi_i^2\mid \left(\bcX_i = \bomega \right) \right], \ \forall \bomega\in \bbP
% \end{equation*}
% $\norm{\cP_{\TT }(\bcI)\odot\bcS  }_{\tF}$

\begin{Theorem}\label{thm:hetero-exp-clt}
 Under Assumptions \ref{asm:incoherence} and \ref{asm:alignment}, suppose that  the initialization $\widehat \bcT_{\init }=\widehat \bcC_0\cdot \left( \widehat \bU_{1,0},\cdots,\widehat \bU_{m,0}\right) $ is independent of the observed data $\left\{ (\bcX_i, Y_i) \right\}_{i=1}^{n}$ satisfying  
 \begin{equation}\label{eq:tensor-init-subexp}
     \norm{\widehat \bcT_{\init} -   \bcT }_{\linf } \le C_1 \sigmax \sqrt{\frac{\dmax \log \dmax }{n}},
 \end{equation}
 with probability at least $1-\dmax^{-3m}$, and there exist a numerical constant $ C_{\gap}>0$ such that the sample size and SNR conditions hold: 
 \begin{equation*}
     n\ge C_{\gap} C_1^2\kappa_{\sigma}^6 m^6\mu^{m} r^*\dmax\log^5\dmax, \quad \frac{\lambda_{\min}}{\sigmax} \ge C_{\gap}\frac{ C_1^2\kappa_{\sigma}^2  \norm{\bcI}_{\ell_1} }{\alpha_I  \norm{\bcI}_\tF }\sqrt{\frac{ m^5 (2\mu)^{3 m} (r^*)^{3 }\dmax d^*  \log^2 \dmax }{\rmin^2 n } }.
 \end{equation*}
Then the test statistics $ W_{\test}^{\sfh}({\bcI})$ and $\widehat W_{\test}^{\sfh}(\bcI)$ using population variance  and estimated variance, respectively, converge to $\cN (0,1)$ in distribution as $n\to\infty$: 
\begin{equation*}
    W_{\test}^{\sfh}({\bcI}):= \frac{ \left\langle \widehat\bcT, \bcI\right\rangle - \left\langle\bcT,\bcI\right\rangle  }{ \norm{\cP_{\TT }(\bcI)\odot\bcS  }_{\tF} \sqrt{d^*/n} } \stackrel{{\rm d.}}{\longrightarrow} \cN (0,1) , \quad    \widehat W_{\test}^{\sfh}({\bcI}):=  \frac{ \left\langle \widehat\bcT, \bcI\right\rangle - \left\langle\bcT,\bcI\right\rangle  }{ \widehat s(\bcI) \sqrt{d^*/n} } \stackrel{{\rm d.}}{\longrightarrow} \cN (0,1),
\end{equation*}
where the estimated variance $\widehat s^2(\bcI)$ is given by
\begin{equation}\label{eq:emp-var-subexp}
    \widehat s^2(\bcI) = \frac{d^*}{n}\sum_{i=1}^n\left[\left(Y_i-\left\langle \bcT_{\init},\bcX_i \right\rangle\right)\left\langle\cP_{\widehat \TT }(\bcI), \bcX_i\right\rangle \right]^2.
\end{equation}
Moreover, there exists a numerical constant $C>0$ such that 
\begin{equation*}
\begin{aligned}
        &\max_{t\in\R}\abs{\bbP\left( \widehat W^{\sfh}_{\test}(\bcI)\le t\right)- \Phi(t)}  \\
        &\le   C C_1\kappa_{\sigma}^3\sqrt{\frac{m^6\mu^{m} r^*\dmax\log^5\dmax }{n }}  + C \frac{ C_1^2\kappa_{\sigma}^2\sigmax  \norm{\bcI}_{\ell_1} }{\alpha_I \lambda_{\min}  \norm{\bcI}_\tF }\sqrt{\frac{ m^5 (2\mu)^{3 m} (r^*)^{3 }\dmax d^*  \log^2 \dmax }{\rmin^2 n } }.
\end{aligned}
\end{equation*}
\end{Theorem}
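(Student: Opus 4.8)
The plan is to re-run the argument behind Theorems~\ref{thm:lf-inference-popvar} and~\ref{thm:lf-inference-empvar}, isolating an i.i.d.-type leading term and showing the rest is negligible, but revisiting every place where the homoskedastic sub-Gaussian hypothesis entered. As in \eqref{eq:ubs-approx-decomp}, write $\widehat\bcT_{\ubs}=\bcT+\bcE_{\init}+\bcE_{\rn}$ with $\bcE_{\rn}=\tfrac{d^*}{n}\sum_{i}\xi_i\bcX_i$, and push the one-step power iteration through exactly as before to obtain
\begin{equation*}
\langle\widehat\bcT-\bcT,\bcI\rangle=M_n+R_n,\qquad M_n:=\frac{d^*}{n}\sum_{i=1}^{n}\xi_i\,\langle\cP_{\TT}(\bcI),\bcX_i\rangle,
\end{equation*}
where $R_n$ collects the debiased initialization contribution built from $\bcE_{\init}$, the Riemannian second-order (retraction) term from \eqref{eq:riem-taylor-approx}, and the higher-order terms of the spectral/$\ell_{2,\infty}$ expansion of the power iteration. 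All of the identities underlying this decomposition — the tangent-space projection formula \eqref{eq:true-TT-proj}, the spectral representation of the SVD, and the fine-grained $\ell_{2,\infty}$ decomposition — are purely algebraic and unchanged; what must be redone are the concentration inputs. I would handle sub-exponentiality by truncating $\xi_i$ at level $\asymp\sigmax\log\dmax$, applying the bounded/bounded-variance versions of the (matrix) Bernstein inequalities to the truncated part and absorbing the discarded tail into a $\dmax^{-cm}$-probability event via a union bound. Heteroskedasticity only changes constants: in every \emph{upper} bound $\sigma$ is replaced by $\sigmax$, while the per-summand variance proxy becomes the exact value $[\bcS]_\omega^2$. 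The compounding truncation and $\sigmax/\sigmin$ losses are what inflate the polylog exponent to $\log^5\dmax$ and introduce the $\kappa_\sigma$ powers in the sample-size and SNR conditions.

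\textbf{The leading term.} Conditioning on the sampling locations, the $\xi_i$ are independent with $\E[\xi_i^2\mid[\bcX_i]_\omega=1]=[\bcS]_\omega^2$, so taking the expectation also over the uniform $\bcX_i$,
\begin{equation*}
\Var(M_n)=\frac{(d^*)^2}{n^2}\sum_{i=1}^{n}\E\big[\xi_i^2\langle\cP_{\TT}(\bcI),\bcX_i\rangle^2\big]=\frac{d^*}{n}\sum_{\omega}[\bcS]_\omega^2\,[\cP_{\TT}(\bcI)]_\omega^2=\frac{d^*}{n}\,\|\cP_{\TT}(\bcI)\odot\bcS\|_{\tF}^2,
\end{equation*}
which is exactly the normalization of $W^{\sfh}_{\test}$ and, since $\sigmin\le[\bcS]_\omega\le 4\sigmax$, lies in $[\sigmin^2,16\sigmax^2]\cdot\tfrac{d^*}{n}\|\cP_{\TT}(\bcI)\|_{\tF}^2$. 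Because $M_n$ is a sum of independent but non-identically-distributed summands $Z_i=\tfrac{d^*}{n}\xi_i\langle\cP_{\TT}(\bcI),\bcX_i\rangle$, I would invoke the Lyapunov/Esseen form of the Berry--Esseen theorem, bounding $\sum_i\E|Z_i|^3$ via $\E[|\xi_i|^3\mid[\bcX_i]_\omega=1]\lesssim\sigmax^3$ and $\E|\langle\cP_{\TT}(\bcI),\bcX_i\rangle|^3\le\tfrac{1}{d^*}\|\cP_{\TT}(\bcI)\|_{\linf}\|\cP_{\TT}(\bcI)\|_{\tF}^2$, then dividing by $\Var(M_n)^{3/2}$ and using the incoherence/alignment control on $\|\cP_{\TT}(\bcI)\|_{\linf}$ to convert the result into (part of) the first term of the stated rate.

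\textbf{The remainder — the main obstacle.} This is where the work lies. I would re-derive, with only sub-exponential, non-identical noise, all the spectral estimates used in the proof of Theorem~\ref{thm:lf-inference-popvar}: operator-norm bounds on $\bcE_{\rn}$ and each of its mode-$j$ unfoldings (matrix Bernstein applied to the truncated part), the sharp $\ell_{2,\infty}$-norm perturbation bound for the singular subspaces produced by one power step, and the resulting control of the higher-order expansion terms; the contribution of $\bcE_{\init}$ is handled exactly as before using the entrywise bound \eqref{eq:tensor-init-subexp}, now stated with $\sigmax$. The delicate point is the truncation level: it must be small enough that the discarded tail is negligible under a union bound over $\asymp d^*$ entries, yet the bounded-variance version of Bernstein must still be strong enough to keep the required sample size at $\widetilde O(\dmax)$ rather than a larger power; balancing the two is precisely what forces the $\log^5\dmax$ and $\kappa_\sigma^6$ factors. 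Combining with the SNR condition yields $|R_n|=o_p\big(\sqrt{d^*/n}\,\|\cP_{\TT}(\bcI)\odot\bcS\|_{\tF}\big)$ with the Berry--Esseen-type second term of the theorem; Slutsky then gives the limit for $W^{\sfh}_{\test}(\bcI)$.

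\textbf{Studentization.} It remains to show $\widehat s^2(\bcI)=\|\cP_{\TT}(\bcI)\odot\bcS\|_{\tF}^2\,(1+o_p(1))$ with a quantitative rate. Writing $Y_i-\langle\widehat\bcT_{\init},\bcX_i\rangle=\xi_i+\langle\bcT-\widehat\bcT_{\init},\bcX_i\rangle$ and expanding the square in \eqref{eq:emp-var-subexp}, the dominant piece $\tfrac{d^*}{n}\sum_i\xi_i^2\langle\cP_{\widehat\TT}(\bcI),\bcX_i\rangle^2$ has conditional mean $\|\cP_{\widehat\TT}(\bcI)\odot\bcS\|_{\tF}^2$ and concentrates around it by a Bernstein bound on its independent summands (truncating $\xi_i^2$ at $\asymp\sigmax^2\log^2\dmax$); the cross term and the pure initialization-error term are $O_p$ of powers of $\|\widehat\bcT_{\init}-\bcT\|_{\linf}$, hence negligible under \eqref{eq:tensor-init-subexp}; and $\|\cP_{\widehat\TT}(\bcI)\odot\bcS\|_{\tF}=\|\cP_{\TT}(\bcI)\odot\bcS\|_{\tF}\,(1+o_p(1))$ follows from the tangent-space perturbation estimate $\|\cP_{\widehat\TT}-\cP_{\TT}\|\lesssim\sigmax/\lambda_{\min}$ under the SNR condition (the $\bcS$-weighting costing only a $\kappa_\sigma$ factor since $[\bcS]_\omega\asymp\sigmax$). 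Another application of Slutsky upgrades the limit to $\widehat W^{\sfh}_{\test}(\bcI)$, and tracking all the truncation losses through the argument delivers the claimed Berry--Esseen rate.
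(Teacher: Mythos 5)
Your proposal is correct and follows essentially the same route as the paper: the same decomposition $\langle\widehat\bcT-\bcT,\bcI\rangle=\frE_1+\text{remainder}$ with $\frE_1=\frac{d^*}{n}\sum_i\xi_i\langle\cP_{\TT}(\bcI),\bcX_i\rangle$, the same heteroskedastic variance identity $\Var(\frE_1)=\frac{d^*}{n}\|\cP_{\TT}(\bcI)\odot\bcS\|_{\tF}^2$ with a Lyapunov/Berry--Esseen bound using $\sigmin\le[\bcS]_\omega\le 4\sigmax$ (whence the $\kappa_\sigma^3$ in the rate), the same reuse of the remainder bounds from Theorem~\ref{thm:lf-inference-popvar} with $\sigma$ replaced by $\sigmax$, and the same studentization plan (expand \eqref{eq:emp-var-subexp}, concentrate the $\xi_i^2$-term, kill cross/initialization terms via \eqref{eq:tensor-init-subexp}, and control $\cP_{\widehat\TT}-\cP_{\TT}$ at a $\kappa_\sigma$ cost). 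The only methodological divergence is in the concentration tools: where you truncate $\xi_i$ (resp.\ $\xi_i^2$) and apply bounded Bernstein plus a union bound over the tail event, the paper invokes a matrix Bernstein inequality stated directly for sub-exponential multipliers (its Lemma on sub-exponential Bernstein, via the Bernstein moment condition) so that no truncation is needed in the remainder analysis, and for the squared-noise sum in $\widehat s^2(\bcI)$ it cites an off-the-shelf sub-Weibull$(1/2)$ concentration result; your truncation route is a standard hand-rolled substitute and would deliver comparable polylog factors. One small correction of emphasis: in the paper the $\log^5\dmax$ and $\kappa_\sigma^6$ in the sample-size condition do not come from truncation losses in the remainder terms but from the variance-estimation step --- the sub-Weibull concentration of $\frac{d^*}{n}\sum_i\xi_i^2\langle\cP_{\widehat\TT}(\bcI),\bcX_i\rangle^2$ contributes $(m\log\dmax)^4$, and multiplying the resulting relative error by the a priori bound $|W^{\sfh}_{\test}|\lesssim\kappa_\sigma\sqrt{m\log\dmax}$ in the Slutsky step produces the $\kappa_\sigma^3\log^{5/2}$ rate, whose square gives the stated $\kappa_\sigma^6\log^5$ sample-size requirement.
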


The difference between \eqref{eq:tensor-init-subexp} and the initialization condition \eqref{eq:tensor-init} is that we replace $\sigma$ with the largest sub-exponential norm, $\sigmax$, to accommodate heteroskedastic noises. The required SNR and sample size conditions are slightly stronger than those in Theorem \ref{thm:lf-inference-empvar} by a factor of $\operatorname{poly}(\kappa_{\sigma} m \log \dmax)$. Theorem~\ref{thm:hetero-exp-clt} shows that 
heteroskedasticity primarily affects variance characterization by replacing $\sigma\norm{\cP_{\TT }(\bcI)}_{\tF}$ with $\norm{\cP_{\TT }(\bcI)\odot\bcS}_{\tF}$. In the special case of homogeneous noise, the tensor $\bcS$ has identical entries, i.e., $\bcS = \sigma \cdot \mathbf{1}_{d_1} \circ \mathbf{1}_{d_2} \circ \cdots \circ \mathbf{1}_{d_m}$. Then the variance in Theorem \ref{thm:hetero-exp-clt} reduces to that in Theorem \ref{thm:lf-inference-popvar} for i.i.d. noises. Additionally, our theory demonstrates that \eqref{eq:emp-var-subexp} reliably estimates $\norm{\cP_{\TT }(\bcI)\odot\bcS}_{\tF}$ under heteroskedastic noise.

Theorem~\ref{thm:hetero-exp-clt} has broad applications. Beyond the heterogeneous Gaussian setting \citep{cai2022uncertainty,farias2022uncertainty}, it extends to various practical scenarios. We outline several applications of Theorem~\ref{thm:hetero-exp-clt} below: 
\begin{enumerate}
    \item Binary tensor inference. Binary tensors are commonly used for network analysis and link prediction \citep{jing2021community,lyu2023latent,wang2020learning,han2022optimal}. For a latent low-rank tensor $\bcT$ with  $0\le [\bcT]_{\omega}\le 1$, it is assumed that the observations  satisfy
    $$\left[\left([ \bcT]_{\omega}+\xi_i\right)\mid\left[\bcX_i\right]_{\omega} = 1\right]   \sim \operatorname{Bernoulli}([ \bcT]_{\omega}), \quad \forall \omega\in [d_1]\times\cdots\times [d_m],  $$ 
    with each observation $Y_i = \langle\bcT,\bcX_i \rangle+\xi_i$ being binary. 
In this model, we have    $[\bcS]_{\omega} = \sqrt{[\bcT]_{\omega}(1-[\bcT]_{\omega})}$, with the  standard deviation for  inferring $\langle \bcT, \bcI\rangle$ given by $\norm{\cP_{\TT }(\bcI)\odot\sqrt{\bcT}\odot\sqrt{\bcJ-\bcT}  }_{\tF} \sqrt{d^*/n}$. Here, $\bcJ$ means the all-one tensor and $\sqrt{\cdot}$ is the entrywise square root operator.

    \item Poisson tensor inference. Poisson distribution is widely used for studying count-type data \citep{shi2014production,amjad2017censored}.   In the Poisson noise model, it is assumed that the  observations satisfy
     $$\left[\left([ \bcT]_{\omega}+\xi_i\right)\mid\left[\bcX_i\right]_{\omega} = 1\right]   \sim \operatorname{Poisson}([ \bcT]_{\omega}), \quad \forall \omega\in [d_1]\times\cdots\times [d_m],  $$ 
     with each observation $Y_i = \langle\bcT,\bcX_i \rangle+\xi_i$ following a Poisson distribution. In this case, we have
    $[\bcS]_{\omega} = \sqrt{[\bcT]_{\omega}}$, with the  standard deviation for  inferring $\langle \bcT, \bcI\rangle$ given by $\norm{\cP_{\TT }(\bcI)\odot\sqrt{\bcT} }_{\tF} \sqrt{d^*/n}$.  Our results improve the existing ones in Poisson matrix completion \citep{cao2015poisson,mcrae2021low,farias2022uncertainty} and are applicable for tensor linear form inference.
    \item Exponential tensor inference. Suppose that the noise is sub-exponential satisfying 
    $$\left[\left([ \bcT]_{\omega}+\xi_i\right)\mid\left[\bcX_i\right]_{\omega} = 1\right]   \sim \operatorname{exp}(1/[ \bcT]_{\omega}), \quad \forall \omega\in [d_1]\times\cdots\times [d_m],  $$ 
    where each observation $Y_i=\langle\bcT,\bcX_i \rangle+\xi_i$ follows an exponential distribution with conditional mean $[ \bcT]_{\omega}$. In this case, we have  $[\bcS]_{\omega} = {[\bcT]_{\omega}}$, with the  standard deviation for  inferring $\langle \bcT, \bcI\rangle$ given by $\norm{\cP_{\TT }(\bcI)\odot {\bcT} }_{\tF} \sqrt{d^*/n}$. Our results make it possible to study the statistical inference of matrix and tensor completion under exponential noises 
    \citep{lafond2015low,qiu2022noisy}.
\end{enumerate}

\section{Statistical-to-Computational Gaps}\label{sec:s-c-gap}
\subsection{Initialization without computational constraints}

Theorems \ref{thm:lf-inference-popvar} and \ref{thm:lf-inference-empvar} demonstrate that inference is possible requiring only statistically optimal SNR and sample size conditions. However, it remains unclear whether these minimal conditions can reliably initialize \eqref{eq:tensor-init}, ensuring the feasibility of the entire inference procedure. Consequently, a natural question is which method can achieve the entrywise estimation error rate \eqref{eq:tensor-init} if unlimited computational resources are available.  Fortunately, we can address the problem by combining the maximum likelihood estimator and the recent result from online tensor learning \citep{cai2023online}. 

A major challenge for iterative methods \citep{chen2019inference,wang2023implicit} in establishing entrywise estimation error bounds under weak conditions is the lack of a warm initialization. When the SNR and sample size conditions are minimal, existing polynomial-time algorithms can not provide a warm initialization. The following theorem demonstrates that the solution to a constrained least squares problem, inspired by the \emph{maximum likelihood estimator} (MLE) under i.i.d. Gaussian noise, provides a warm initialization under a minimal sample size condition. Note that solving the MLE is computationally NP-Hard in general \citep{zhang2018tensor}. 
Given $\bW\in\O^{d\times r}$, define $\Inco(\bW):=d\norm{\bW}_{2,\infty}^2/r$ as its incoherence parameter.

\begin{Theorem}[Least squares initialization]\label{thm:constrained-ls}
Let $\widetilde{\bcT}$ be the solution to the following constrained least square minimization problem:
\begin{equation}\label{eq:const-LS}
\begin{aligned}
    & \min_{\bcW=\bcD\times_{j=1}^m\bW_j} h_n(\bcW)=\frac{d^*}{n}\sum_{i=1}^n\left(Y_i-\left\langle\bcW,\bcX_i \right\rangle \right)^2 \\
    & \quad \quad \text{s.t.}\  \bW_j\in\O^{d_j\times r_j}, \Inco(\bW_j)\le \mu, \bcD\in \R^{\br}. \
   \end{aligned}
\end{equation}
If $n\ge C_{\gap} \mu^m m^2\rmax r^*\dmax\log \dmax$ for a large enough numerical constant $C_{\gap}>0$, then there exists a numerical constant $C>0$ such that  
    \begin{equation}\label{eq:const-LS-rate}
  \norm{\widetilde{\bcT}-\bcT}_{\tF}\le C (\sigma\vee\norm{\bcT}_{\linf})\sqrt{\frac{\mu^{2m} m^2\rmax (r^*)^2 d^*\dmax \log \dmax}{n}},
\end{equation}
with probability at least $1-6\dmax^{-3m}$. 
\end{Theorem}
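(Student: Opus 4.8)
The plan is a by-now-standard two-step $M$-estimation argument — a basic inequality, followed by a restricted isometry (restricted strong convexity) bound together with a uniform control of the stochastic term — specialized to the non-convex set of incoherent low-Tucker-rank tensors. First, Assumption~\ref{asm:incoherence} gives $\Inco(\bU_j)=d_j\norm{\bU_j}_{2,\infty}^2/r_j\le\mu$, so the true $\bcT$ is feasible for \eqref{eq:const-LS} and hence $h_n(\widetilde\bcT)\le h_n(\bcT)$. Writing $Y_i=\langle\bcT,\bcX_i\rangle+\xi_i$, setting $\bcH:=\widetilde\bcT-\bcT$, and expanding the squares yields
\[
 \frac{d^*}{n}\sum_{i=1}^n[\bcH]_{\omega_i}^2\ \le\ \frac{2d^*}{n}\sum_{i=1}^n\xi_i[\bcH]_{\omega_i},
\]
using $\langle\bcH,\bcX_i\rangle=[\bcH]_{\omega_i}$. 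Here $\bcH$ has multilinear rank at most $2\br$ (each mode-$j$ unfolding is a difference of rank-$r_j$ matrices), and both $\widetilde\bcT$ and $\bcT$ satisfy the incoherence constraint, which is the only structure we will exploit.

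Since $\E[\bcH]_{\omega_i}^2=\norm{\bcH}_\tF^2/d^*$, the left side concentrates around $\norm{\bcH}_\tF^2$. I will prove a uniform lower bound $\frac{d^*}{n}\sum_i[\bcW]_{\omega_i}^2\ge\frac12\norm{\bcW}_\tF^2$ over all $\bcW$ of multilinear rank $\le 2\br$ whose $\linf$-norm is suitably controlled relative to $\norm{\bcW}_\tF$, via an $\epsilon$-net of this set (metric entropy $O\big((m\rmax\dmax+2^m r^*)\log(\dmax/\epsilon)\big)$), a Bernstein bound at each net point (per-summand boundedness supplied by the $\linf$-control, variance $\norm{\bcW}_\tF^2/d^*$), and an approximation step; the hypothesis $n\gtrsim\mu^m m^2\rmax r^*\dmax\log\dmax$ is exactly what forces the net-induced deviation below $\frac14\norm{\bcW}_\tF^2$ after the $\linf$-bound is inserted. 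The delicate point is that the $\linf$-bound available for $\bcH$ is self-referential: the constraint only controls the \emph{endpoints}, so $\norm{\bcH}_{\linf}\le\norm{\bcT}_{\linf}+\sqrt{\mu^m r^*/d^*}\,\norm{\widetilde\bcT}_\tF\le\norm{\bcT}_{\linf}+\sqrt{\mu^m r^*/d^*}(\norm{\bcT}_\tF+\norm{\bcH}_\tF)$. I resolve this by dyadic peeling over shells $\{2^{-1}R\le\norm{\bcH}_\tF\le R\}$, on each of which $\norm{\bcH}_{\linf}$ is bounded in terms of $R$ and $\norm{\bcT}_{\linf}$; a crude a priori bound $\norm{\widetilde\bcT}_\tF=O(\mathrm{poly}(d)\,(\sigma\vee\norm{\bcT}_{\linf}))$ — available because $h_n(\widetilde\bcT)\le h_n(\bcT)\lesssim d^*\sigma^2$ and flatness of $\widetilde\bcT$ forces $\sum_i[\widetilde\bcT]_{\omega_i}^2\asymp n\norm{\widetilde\bcT}_\tF^2/d^*$ — caps the number of shells at $O(\log d)$.

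For the stochastic term, conditionally on $\{\omega_i\}_{i=1}^n$ the quantity $\frac{d^*}{n}\sum_i\xi_i[\bcW]_{\omega_i}$ is sub-Gaussian with variance proxy $\sigma^2\frac{(d^*)^2}{n^2}\sum_i[\bcW]_{\omega_i}^2\lesssim\sigma^2\frac{d^*}{n}\norm{\bcW}_\tF^2$ on the event of the previous step. Union-bounding over the same $\epsilon$-net and controlling the residual $\sum_i|\xi_i|\,|[\bcW-\bcW_0]_{\omega_i}|$ through $\max_i|\xi_i|\lesssim\sigma\sqrt{\log n}$ and $\sum_i|[\bcW-\bcW_0]_{\omega_i}|\le\sqrt{n}\cdot\sqrt{n/d^*}\,\norm{\bcW-\bcW_0}_\tF$ gives, uniformly on the relevant set,
\[
 \frac{d^*}{n}\sum_{i=1}^n\xi_i[\bcW]_{\omega_i}\ \lesssim\ \sigma\sqrt{\frac{(m\rmax\dmax+2^m r^*)\,d^*\log\dmax}{n}}\,\norm{\bcW}_\tF+(\text{lower order}),
\]
the lower-order term carrying the additive $\norm{\bcT}_{\linf}$ inherited from the $\linf$-cap.

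Combining the three steps on the good event yields $\tfrac12\norm{\bcH}_\tF^2\lesssim\sigma\sqrt{(m\rmax\dmax+2^m r^*)d^*\log\dmax/n}\,\norm{\bcH}_\tF+(\text{l.o.t.})$; solving this quadratic in $\norm{\bcH}_\tF$ — absorbing $\norm{\bcH}_\tF$-proportional pieces into the left side by the sample-size condition and collecting the additive $\norm{\bcT}_{\linf}$-terms — produces $\norm{\bcH}_\tF\lesssim(\sigma\vee\norm{\bcT}_{\linf})\sqrt{\mathsf{Dof}\,d^*\log\dmax/n}$, which after inserting the degrees of freedom (up to the $\mu$- and $r^*$-factors generated by the covering and by the $\linf$-amplification) is \eqref{eq:const-LS-rate}; the failure probabilities from the Bernstein/sub-Gaussian tails, the $\exp(\mathsf{Dof}\log\dmax)$-sized nets, and the $O(\log d)$ peeling levels are collected into the stated $6\dmax^{-3m}$. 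The main obstacle is the second step: establishing a restricted isometry over the non-convex manifold of incoherent low-Tucker-rank tensors with the correct polynomial dependence on $(m,\mu,\rmax,r^*)$ while simultaneously breaking the circularity between the $\linf$-bound for $\bcH$ and the Frobenius bound being proved — both handled by the a priori flatness estimate combined with dyadic peeling.
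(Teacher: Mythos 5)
Your proposal is essentially correct, but it follows a genuinely different route from the paper. You run the generic $M$-estimation machinery: basic inequality $h_n(\widetilde\bcT)\le h_n(\bcT)$, then a uniform restricted-isometry bound over the spikiness-controlled difference class $\{\bcW-\bcT:\bcW\ \text{feasible}\}$ (via a parameter-level $\varepsilon$-net, Bernstein, and dyadic peeling to break the circularity between $\norm{\bcH}_{\linf}$ and $\norm{\bcH}_{\tF}$, anchored by a crude a priori bound on $\norm{\widetilde\bcT}_{\tF}$), and finally a uniform bound on the noise term. The paper avoids restricted isometry over the difference class altogether: it fixes the factors $\{\widetilde\bU_j\}$ on a net of incoherent Grassmannians, exploits that the core is unconstrained so the normal equation gives $\widetilde\bcC=\big(\tfrac{d^*}{n}\cQ_{\widetilde\bU^\top}\cP_{\Omega}\cQ_{\widetilde\bU}\big)^{-1}\tfrac{d^*}{n}\cQ_{\widetilde\bU^\top}\cP_{\Omega}(\bcT+\bcZ)$ in closed form, shows $\tfrac{d^*}{n}\cQ_{\widetilde\bU^\top}\cP_{\Omega}\cQ_{\widetilde\bU}\approx\mathrm{Id}$ uniformly over the net, and thereby replaces $\widetilde\bcT$ by the $\Omega$-independent proxy $\cQ_{\widetilde\bU}\cQ_{\widetilde\bU^\top}\bcT$ up to an entrywise-small error; both sides of the basic inequality are then controlled by concentration applied to \emph{fixed} tensors at each net point (Lemmas \ref{lemma:LS-cct-QT} and \ref{lemma:LS-cct-QQT}), so no peeling, no spikiness argument, and no a priori norm bound are needed, and the $\linf$-proximity to the proxy comes for free (which is also what feeds the subsequent incoherence/initialization use of $\widetilde\bcT$). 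Your route is more standard and self-contained but pays for it with the peeling/a-priori-bound bookkeeping; quantitatively it lands on the same rate and sample-size condition up to the polynomial slack in $(\mu,m,r^*)$ already present in \eqref{eq:const-LS-rate}. One local slip: in the net-approximation step for the noise term, the bound $\sum_i\abs{[\bcW-\bcW_0]_{\omega_i}}\le\sqrt{n}\cdot\sqrt{n/d^*}\,\norm{\bcW-\bcW_0}_{\tF}$ is not a valid deterministic inequality (it presumes empirical-norm concentration for the difference); you should instead use the crude bound $\sum_i\abs{[\bcW-\bcW_0]_{\omega_i}}\le n\norm{\bcW-\bcW_0}_{\linf}$, which suffices because the net mesh can be taken of order $\dmax^{-Cm}$ (using the standing assumption $n\le\dmax^{4m}$), so this is a repairable detail rather than a gap.
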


Note that Theorem \ref{thm:constrained-ls} holds provided that $n$ is on the order $\Omega(\dmax\log \dmax)$, regardless of the SNR condition. 
Furthermore, the rate in \eqref{eq:const-LS-rate} matches the estimation error in \cite{xia2021statistically}, suggesting that when $\rmax,m,\mu=O(1)$, this rate is nearly statistically optimal (up to logarithmic factors).
Additionally, the rate in \eqref{eq:const-LS-rate} implies that if the sample size satisfies  $n \gtrsim \mu^{3m} (r^* )^3\rmax \kappa_0^2 m^2 \dmax\log\dmax$, then $\norm{\widetilde{\bcT}-\bcT}_{\tF}\lesssim\lambda_{\min}$  under statistically optimal SNR conditions. This is because incoherence implies $\norm{\bcT}_{\linf}\le \sqrt{\mu^m r^*/d^*}\kappa_0\lambda_{\min}$. Therefore, according to \cite{cai2023online}, the $\mu$-incoherent tensor $\widetilde{\bcT}$ can serve as an initializer for online tensor gradient descent. After performing online tensor gradient descent with an additional $O(n)$ iterations, we achieve $\norm{\widehat \bcT_{\init} -   \bcT }_{\linf } \le C_1 \sigma \sqrt{(\dmax/n) \log \dmax} $ \citep{cai2023online}. This indicates that proper initialization can be obtained under statistically optimal SNR and sample size conditions $n\gtrsim \dmax\log \dmax$ through constrained least squares combined with online tensor gradient descent. The detailed procedure is presented in Algorithm \ref{alg:oracle-init}, and the online Riemannian gradient descent is described in the supplementary document.

\begin{algorithm}
\caption{Oracle Initialization by Least Squares Estimator}
\label{alg:oracle-init}
\begin{algorithmic}
\REQUIRE{$\left\{ (\bcX_i, Y_i) \right\}_{i=1}^{n}$}\
\STATE{Split the observations evenly into two halves: $\cD_1 =\left\{ (\bcX_i, Y_i) \right\}_{i=1}^{n_{0}} $, $\cD_2 =\left\{ (\bcX_i, Y_i) \right\}_{i=n_0+1}^{n} $;}
\STATE{Solve the least square \eqref{eq:const-LS} using only $\cD_1$ and obtain the solution $\widetilde{\bcT}$;
}
\STATE{Use $\widetilde{\bcT}$ as the start point to run the online Riemannian gradient descent \citep{cai2023online} on $\cD_2$;}
\STATE{Return $\widehat{\bcT}_{\mathsf{oRGrad}}$, which is the outcome of online Riemannian gradient descent.}
% \FORALL{$t = 0,T-1$}
% 	\STATE{Calculate $ a^{(t+1)}, b^{(t+1)}$ via }
% \ENDFOR
% \STATE{Calculate .}
\end{algorithmic}
\end{algorithm}

\begin{Proposition}\label{prop:oracle-init-linf} Under Assumption \ref{asm:incoherence}, suppose that the SNR and sample size satisfy
$$
n\ge C_m \kappa_0^{4 m+2} \mu^{3 m}\rmax \left(r^*\right)^3 {\dmax }\log\dmax, \quad \frac{\lambda_{\min } }{\sigma} \geq C_m  \kappa_0^{2 m-4}\sqrt{ \frac{\mu^{2 m} \rmax (r^*)^2 d^* \dmax \log^3\dmax}{  n}   }  ,
$$
for a large constant  $C_m>0$ depending on $m$ only. Let $\widehat{\bcT}_{\mathsf{oRGrad}}$ be the output of Algorithm~\ref{alg:oracle-init} with the step size in online Riemannian gradient descent set as $\eta = c_{0,m} \log\dmax/n$, then  
% $$
% n\ge C_m \kappa_0^{4 m+2} \mu^{2 m-1} \frac{\dmax \left(r^*\right)^2}{\rmin} \log\dmax 
% $$

% $$
% \left(\lambda_{\min } / \sigma\right)^2 \geq C_m \frac{\log\dmax}{n} \mu^{2 m-2} \kappa_0^{4 m-8} \frac{\dmax^2}{m\rmax\dmax}\left(\frac{r^*}{\rmin}\right)^2 \log ^2 \dmax
% $$
\begin{equation*}
    \norm{\widehat{\bcT}_{\mathsf{oRGrad}}-\bcT}_{\linf} \le C_{1,m} \kappa_0^{m+3} \sigma \sqrt{\frac{ \mu^m r^*\rmax \dmax \log \dmax}{n}}, 
\end{equation*}
with probability at least $1-7\dmax^{-3m}$, where $c_{0,m},  C_{1,m}>0$ are constants depending on $m$ only. 
\end{Proposition}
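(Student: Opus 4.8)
## Proof Proposal for Proposition \ref{prop:oracle-init-linf}

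The plan is to chain together two results: the Frobenius-norm guarantee for the constrained least-squares estimator $\widetilde{\bcT}$ from Theorem~\ref{thm:constrained-ls}, and the entrywise convergence guarantee for online Riemannian gradient descent from \cite{cai2023online}. The bridge between them is a verification that $\widetilde{\bcT}$, computed on $\cD_1$, meets the \emph{basin-of-attraction} and \emph{incoherence} hypotheses required to initialize online Riemannian gradient descent on $\cD_2$. First I would invoke Theorem~\ref{thm:constrained-ls} on the subsample $\cD_1$ of size $n_0 = n/2$: since $n \ge C_m \kappa_0^{4m+2}\mu^{3m}\rmax (r^*)^3 \dmax\log\dmax \gg \mu^m m^2 \rmax r^* \dmax\log\dmax$, the hypothesis of Theorem~\ref{thm:constrained-ls} holds, giving $\norm{\widetilde{\bcT} - \bcT}_{\tF} \le C(\sigma \vee \norm{\bcT}_{\linf})\sqrt{\mu^{2m} m^2 \rmax (r^*)^2 d^* \dmax \log\dmax / n}$ with probability $1-6\dmax^{-3m}$. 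The key deterministic observation is that the incoherence constraint $\Inco(\bW_j)\le\mu$ in \eqref{eq:const-LS} is feasible for the true $\bcT$ (by Assumption~\ref{asm:incoherence}) and is enforced on $\widetilde{\bcT}$, so $\widetilde{\bcT}$ is automatically $\mu$-incoherent; no extra work is needed there.

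Next I would convert the $\sigma \vee \norm{\bcT}_{\linf}$ factor into a pure $\sigma$ scaling. By incoherence, $\norm{\bcT}_{\linf} \le \prod_j \norm{\bU_j}_{2,\infty}\cdot\norm{\bcC}_{\linf}\cdot(\text{stuff}) \le \sqrt{\mu^m r^*/d^*}\,\kappa_0\lambda_{\min}$ (the bound stated just after Theorem~\ref{thm:constrained-ls}), so $\norm{\bcT}_{\linf}$ contributes a term $\lesssim \sqrt{\mu^m r^*/d^*}\,\kappa_0\lambda_{\min}\sqrt{\mu^{2m} m^2 \rmax (r^*)^2 d^* \dmax\log\dmax/n}$. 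Under the stated sample-size lower bound $n \ge C_m\kappa_0^{4m+2}\mu^{3m}\rmax(r^*)^3\dmax\log\dmax$, this is $\le \lambda_{\min}/(C'\kappa_0^{2m})$ for any prescribed constant $C'$ depending on $m$; the $\sigma$-term is handled by the SNR hypothesis $\lambda_{\min}/\sigma \ge C_m\kappa_0^{2m-4}\sqrt{\mu^{2m}\rmax (r^*)^2 d^*\dmax\log^3\dmax/n}$, which similarly forces $\sigma\sqrt{\mu^{2m}m^2\rmax (r^*)^2 d^*\dmax\log\dmax/n}\le\lambda_{\min}/(C'\kappa_0^{2m})$. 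Hence $\norm{\widetilde{\bcT}-\bcT}_{\tF}\le \lambda_{\min}/(C'\kappa_0^{2m})$, which (after translating Frobenius error on the tensor to the error in each singular subspace via a Wedin/Davis–Kahan-type bound and the $\lambda_{\min}$ spectral gap) places $\widetilde{\bcT}$ inside the local basin of attraction required by the online Riemannian gradient descent analysis of \cite{cai2023online}.

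Then I would apply the main theorem of \cite{cai2023online}: starting from a $\mu$-incoherent point within the basin of attraction and running online Riemannian gradient descent on the fresh independent sample $\cD_2$ (size $\ge n/2$) with step size $\eta = c_{0,m}\log\dmax/n$, after $O(n)$ passes the iterate contracts to the entrywise statistical rate $\norm{\widehat{\bcT}_{\mathsf{oRGrad}} - \bcT}_{\linf} \lesssim \kappa_0^{m+3}\sigma\sqrt{\mu^m r^* \rmax \dmax\log\dmax/n}$. The independence of $\cD_1$ and $\cD_2$ from the sample-splitting in Algorithm~\ref{alg:oracle-init} is what makes the $\widetilde{\bcT}$-as-initialization step legitimate and lets the \cite{cai2023online} analysis apply verbatim. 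Collecting the failure probabilities — $6\dmax^{-3m}$ from Theorem~\ref{thm:constrained-ls} and an additional $\dmax^{-3m}$ from the online gradient descent analysis — gives the claimed $1 - 7\dmax^{-3m}$.

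The main obstacle I anticipate is the second step: carefully tracking how the $\kappa_0$-powers compound when (a) converting $\norm{\bcT}_{\linf}$ to a $\kappa_0\lambda_{\min}$ bound, (b) translating a Frobenius-norm tensor error into a subspace-angle error with the correct $\kappa_0$ dependence, and (c) matching the basin-of-attraction radius demanded by \cite{cai2023online}. Getting the exponent $\kappa_0^{2m-4}$ in the SNR condition and $\kappa_0^{4m+2}$ in the sample-size condition exactly right — rather than merely up to unspecified polynomial factors — requires threading these constants through each inequality without slack, and this bookkeeping (not any single hard inequality) is where the real care lies. A secondary subtlety is ensuring the online Riemannian gradient descent analysis of \cite{cai2023online}, which may be stated for balanced dimensions or specific rank regimes, genuinely covers the unbalanced $(d_j, r_j)$ setting used here; if their statement needs the mild restriction $r_{-j}\lesssim d_j$ and $m\ll\dmax$, I would note that these are already standing assumptions in the present paper.
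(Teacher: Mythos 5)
Your proposal follows essentially the same route as the paper: the paper's proof consists precisely of invoking Theorem~\ref{thm:constrained-ls} on the first half of the sample, using the incoherence bound $\norm{\bcT}_{\linf}\le\sqrt{\mu^m r^*/d^*}\,\kappa_0\lambda_{\min}$ together with the stated SNR and sample-size conditions to verify that the $\mu$-incoherent $\widetilde{\bcT}$ satisfies the initialization hypotheses of Theorem 5 in \cite{cai2023online}, and then applying that theorem on the independent second half, with the failure probabilities $6\dmax^{-3m}+\dmax^{-3m}$ combining to $7\dmax^{-3m}$. Your chaining, incoherence conversion, and probability accounting all match the paper's argument, so the proposal is correct.
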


Proposition~\ref{prop:oracle-init-linf} directly follows from Theorem 5 in \cite{cai2023online} and Theorem~\ref{thm:constrained-ls}. To establish it, we simply verify that the initial assumptions of Theorem 5 in \cite{cai2023online} are met under the given SNR and sample size conditions, using $\widetilde{\bcT}$ from Theorem~\ref{thm:constrained-ls}. We can refer to Algorithm~\ref{alg:oracle-init} as the ``oracle initialization" because solving the least squares problem \eqref{eq:const-LS} is computationally intensive and generally infeasible under only the statistically optimal conditions stated in Proposition~\ref{prop:oracle-init-linf}.

\begin{Remark}
The least squares approach in Theorem \ref{thm:constrained-ls} can be seen as a generalization of the maximum likelihood estimation (MLE) used in tensor PCA \citep{zhang2018tensor,yeredor2019maximum,jagannath2020statistical}. Unlike the PCA problem with full observations, the MLE for tensor completion is less studied because of missing values and computational infeasibility. Our Theorem \ref{thm:constrained-ls} fills in this gap by demonstrating that the solution of constrained least squares itself also possesses favorable statistical properties under minimal conditions. 
\end{Remark}

\subsection{Discussion on statistical and computational gaps}\label{sec:s-c-gap+sparse}

With Proposition~\ref{prop:oracle-init-linf}, we can now integrate the initialization and debiasing approaches to examine the statistical-to-computational gaps in the inferential task. To simplify the discussion, we assume that the dimensions are balanced (i.e., $d_1\asymp\cdots \asymp d_m \asymp d$) and that our indexing tensor $\bcI$ is $O(1)$ sparse. Additionally, we assume that $\mu$, $m$, $r^*$, $\kappa_0$, $1/\alpha_I = O(1)$, and we omit the $\log\dmax$ term for clarity.

We summarize the statistical-to-computational gaps in Figure~\ref{fig:s-c-gap-m=3-4} by varying the SNR and sample size conditions. Specifically, we consider the cases where $m=3$ and $m\geq 4$, respectively, and categorize the entire region into distinct phases.
% \begin{figure}[H] %htbp
% \centering
% \includegraphics[width=0.7\textwidth]{figure/phase-m=3.png}
% \caption{normal approximation
% }
% \label{fig:s-c-gap-m=3}
% \end{figure}

% \begin{figure}[H] %htbp
% \centering
% \includegraphics[width=0.7\textwidth]{figure/phase-m>=4.png}
% \caption{normal approximation
% }
% \label{fig:s-c-gap-m>=4}
% \end{figure}

\begin{figure}[H]
\centering
\begin{subfigure}[b]{0.5\textwidth}
         \centering
         \includegraphics[width=\textwidth]{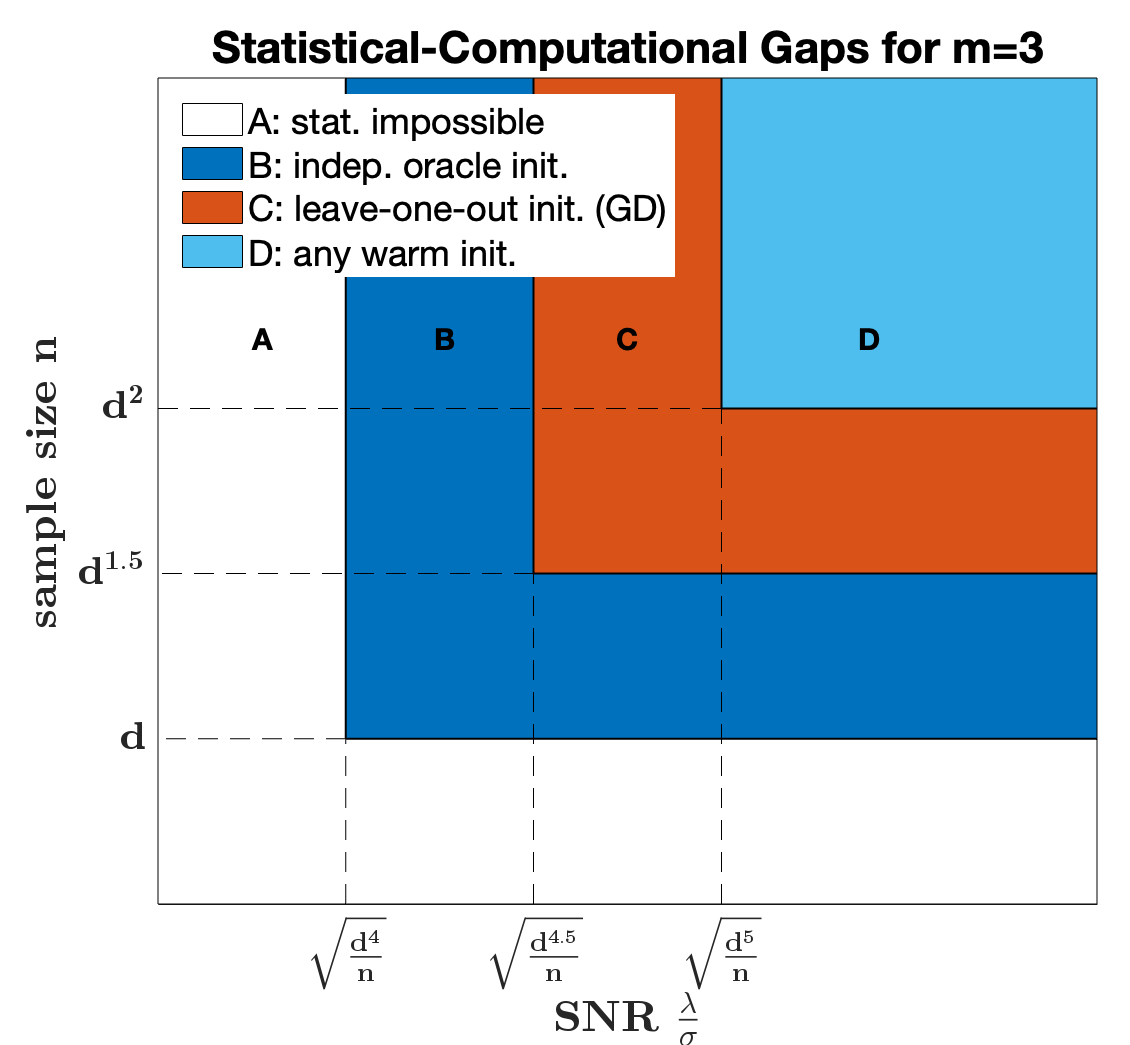}
         \caption{$m=3$ }
         \label{fig:phase-m=3}
     \end{subfigure}% 
     \begin{subfigure}[b]{0.5\textwidth}
         \centering
         \includegraphics[width=\textwidth]{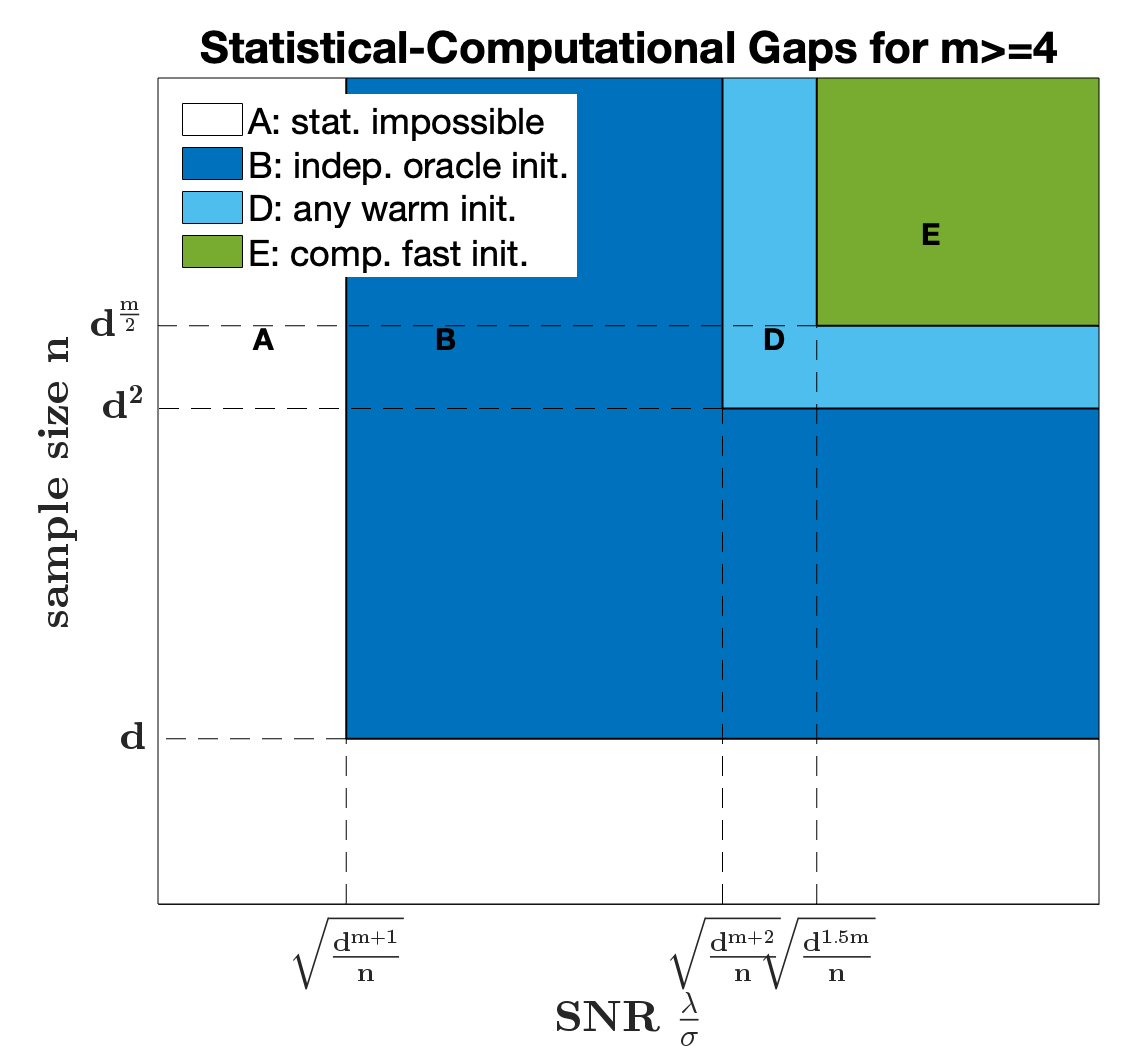}
         \caption{$m\ge 4$ }
         \label{fig:phase-m>=4}
     \end{subfigure}
     \hfill
\caption{Phase transitions and statistical-to-computational gaps for inference on tensor linear forms. Region A: statistically impossible region; Region B: inference can be achieved by independent oracle initialization in Algorithm \ref{alg:oracle-init} with data-splitting; Region C: inference can be achieved by leave-one-out-type initialization \citep{cai2019nonconvex,wang2023implicit} without data-splitting;  Region D: inference can be achieved by any warm initialization with guaranteed entywise error bound, with no data-splitting;  Region E: inference can be achieved by computationally fast algorithms with no data-splitting.
}
\label{fig:s-c-gap-m=3-4}
\end{figure}

We now elaborate on these different regions. Region A, identified in \cite{zhang2018tensor,xia2021statistically}, is where statistical inference is impossible because the SNR is below the statistically optimal threshold or when $n$ is less than the degrees of freedom. When both the SNR and sample size satisfy the statistically optimal conditions in \eqref{eq:SNR-Stat}, initialization and debiasing are addressed in Proposition \ref{prop:oracle-init-linf} and Theorem \ref{thm:lf-inference-empvar}, defining Region B. 

Next, we consider the cases of $m=3$:
\begin{enumerate}[leftmargin=1cm]
    \item For $m=3$, the computational optimal condition \eqref{eq:SNR-Comp} is $n \gg d^{1.5}$ and $\lambda_{\min}/\sigma \gg \sqrt{d^{4.5}/n}$, defining region C. In this scenario, \cite{cai2019nonconvex, wang2023implicit} provide a leave-one-out-type initialization via gradient descent, satisfying the assumptions of Theorem \ref{thm:clt-lol-init} and eliminating the need for data splitting. 
    \item When the SNR and sample size conditions are $\sqrt{d}$
  times larger than the statistically optimal thresholds, Theorem \ref{thm:clt-dep-init} shows that any algorithm providing a warm initialization \eqref{eq:tensor-init} can be used for dependent initialization, leading to Region D.
\end{enumerate}

For $m \ge 4$, the conditions for Region D are less stringent than the computational optimal condition \eqref{eq:SNR-Comp}, allowing Region D to extend beyond Region C. Additionally, since dependence is not an issue in Region D, leave-one-out analysis becomes unnecessary, and any computationally efficient algorithm with a warm initialization \eqref{eq:tensor-init} is suitable. Thus, we designate this computationally feasible area as Region E.

\begin{Remark}[Sparsity level of linear form]
    The discussion in Figure \ref{fig:s-c-gap-m=3-4} clearly delineates the statistical-to-computational gaps for tensor linear form inference under $O(1)$ sparsity of $\bcI$. In fact, Theorem  \ref{thm:clt-lol-init} and \ref{thm:clt-dep-init} also provide the feasible sparsity conditions of $\bcI$.  It turns out that the additional SNR and sample size required under computationally optimal conditions can be beneficial for inference, as they allow for a higher sparsity level 
 $s_0=O\left(d^{\frac{m}{2}-1 }\right)$ of $\bcI$, where $s_0$ is the number of non-zero entries in $\bcI$. We elaborate on the benefit of such sparsity levels below: 
 %\frac{1}{2}
    % Figure \ref{fig:phase-m>=4} excludes the leave-one-out-type initialization as it can not provide a reduced sample size and SNR requirement. However, the leave-one-out-type initialization is still helpful when sparsity level is of concern.  
\begin{enumerate}
    \item When $m= 3$, we allow for $s_0=O(\sqrt{d})$, which enables us to select $\bcI$ as the groupwise combination \citep{bi2018multilayer} with a moderate group size.
    \item When $m= 4$, $s_0=O(d)$ allows us to infer fiber-based linear transformations. For example, we can estimate the average treatment effect among different groups of trials: $\frac{1}{d_2}\left(\sum_{j\in[d_2]}\bcT(2,j,i_3,i_4)- \bcT(1,j,i_3,i_4)\right) $, where the first argument represents the treatment group label. 
    \item When $m= 6$, with $s_0=O(d^2)$, we can perform statistical inference on slice-based transformations. This includes comparisons between panel data \citep{badinger2013estimation} and inferring 2-D features in medical image processing \citep{nandpuru2014mri, tandel2020multiclass}.
\end{enumerate}

Conversely, in the matrix case ($m = 2$), there is no similar advantage in sparsity because there is no statistical-to-computational gap. Consequently, the sparsity level is limited to
$O(1)$ under computationally optimal SNR and sample size conditions \citep{chen2019inference, xia2021statistical}.
\end{Remark}

\section{Simulation Experiments}

% \subsection{Simulation experiments}
We conduct several simulations to corroborate the theories above. Specifically, we examine the inference of linear forms under both independent and dependent initialization, as well as the coverage rates of confidence intervals generated by our test statistics.

\paragraph{Inference with independent initialization.} In each setting, we randomly generate an incoherent Tucker low-rank tensor $\bcT$ with $m=3$, $d_1=d_2=d_3=d=100$, $\br=(2,2,2)$. The incoherent singular subspaces are constructed from the SVD of random Gaussian matrices. In this section, we denote the sampling rate $p = n/d^*$ to make the description consistent with the matrix/tensor completion literature. 
In the independent initialization case, we assume that we have the oracle information on $\bcT$ such that the initializer $\widehat\bcT_{\init}$ is constructed independent of the observations $\{(Y_i,\bcX_i)\}_{i=1}^n$. Following the theorems in Section \ref{sec:inference-NTC}, 
We set the sample size $n$ at the  level $O(d\log^2 d)$ with the corresponding sampling rate  $p=n/d^3 \asymp mr^2d^{-2}\log^2 d \approx 0.02$. For the tensor signals, we take $\lambda_{\min} = 10 d^{\gamma}$ for different values of $\gamma$ showing above to adjust the SNR, and set $\xi_i\sim \cN(0,1)$ with fixed $\sigma =1$.  

The results of 1000 independent Monte Carlo trials on different SNR settings are presented in Figure \ref{fig:clt-indep}, where we take $\bcI$ as a sparse 0-1 tensor such that $\abs{\operatorname{supp}(\bcI)} = 2$, so that $\langle\bcT,\bcI \rangle$ represents the sum of two entries in $\bcT$. Notice that the bottom right panel shows the results under heteroskedastic noises, where we allow the standard deviation of each $\xi_i$ change according to the position of sampling. We set the minimum and maximum noise level: $[\min_{\omega} [\bcS]_{\omega},\max_{\omega} [\bcS]_{\omega}] = [0.75,1.25]$. In this case, we slightly enlarge the size by choosing $p=0.03$, and let $\lambda_{\min} = 20 d^{\frac{1}{2}}$ to overcome the heteroscedasticity.  

From Figure \ref{fig:clt-indep}, it is clear that Theorem \ref{thm:lf-inference-popvar}, \ref{thm:lf-inference-empvar}, and Theorem \ref{thm:hetero-exp-clt} are justified under only moderate SNR $\lambda/\sigma \gg \sqrt{d^* d/n }$ and sample size $n\gg d$ conditions. Also, it is obvious that with the growth of $\lambda_{\min}$, the distance between the empirical distribution and $\cN(0,1)$ becomes smaller.
%\cdot\operatorname{Ploylog}(d)
\begin{figure}[H] %htbp
\centering
\includegraphics[width=1\textwidth]{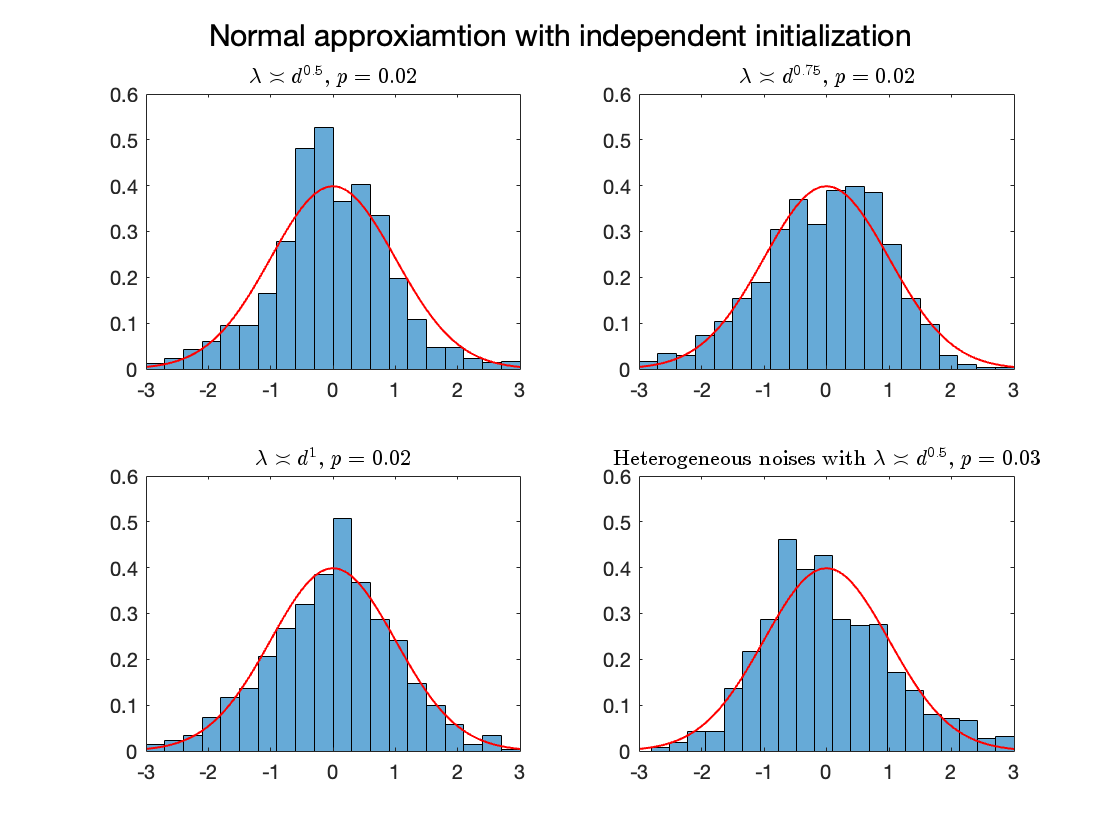}
\caption{Histogram of normal approximation over 1000 independent trails with $\gamma=\{0.5,0.75,1,0.5\}$. The first 3 panels show results under homogeneous Guassian noises while the last panel shows results under heterogeneous Guassian noises.
}
\label{fig:clt-indep}
\end{figure}

\paragraph{Inference with dependent initialization.} With the tensor generating mechanism the same as above, we now focus on the case when the initialization is yielded directly from the observations $\{(Y_i,\bcX_i)\}_{i=1}^n$.  We still take $\lambda_{\min} = 10 d^{\gamma}$ for different values of $\gamma$ showing above and set $\xi_i\sim \cN(0,1)$ with fixed $\sigma =1$. In the case of dependent initialization, we need a good recovery of $\bcT$ that is computationally tractable.  To this end, 
we set sample size $n$ at the computationally optimal level $O(d^{3/2 }\log^2 d)$ with the corresponding sampling rate $p= n/d^3\asymp r d^{-3 / 2} \log ^2 d\approx 0.04$, and the SNR at the level $\lambda_{\min}/\sigma \ge \sqrt{d^*d^{1.5}/n}\gtrsim d^{0.75}$, which suggest that $\gamma\ge 0.75$.

Since the computationally efficient conditions are stronger than the statistically optimal conditions, Section \ref{sec:s-c-gap+sparse} suggests that we can take $\bcI$ as a sparse 0-1 tensor with the sparsity level as large as $\abs{\operatorname{supp}(\bcI)} = \sqrt{d}=10$. In this case, our $\langle\bcT,\bcI \rangle$ represents the sum of 10 entries in $\bcT$. We adopt the offline Riemannian gradient descent approach in  \cite{wang2023implicit} with the off-diagonal initialization \citep{chen2021spectral,zhang2022heteroskedastic} that deletes the diagonal elements in the singular subspace computing to offer a start point of gradient descent. The results of 1000 independent trials are presented in Figure \ref{fig:clt-dep}.

\begin{figure}[H] %htbp
\centering
\includegraphics[width=1\textwidth]{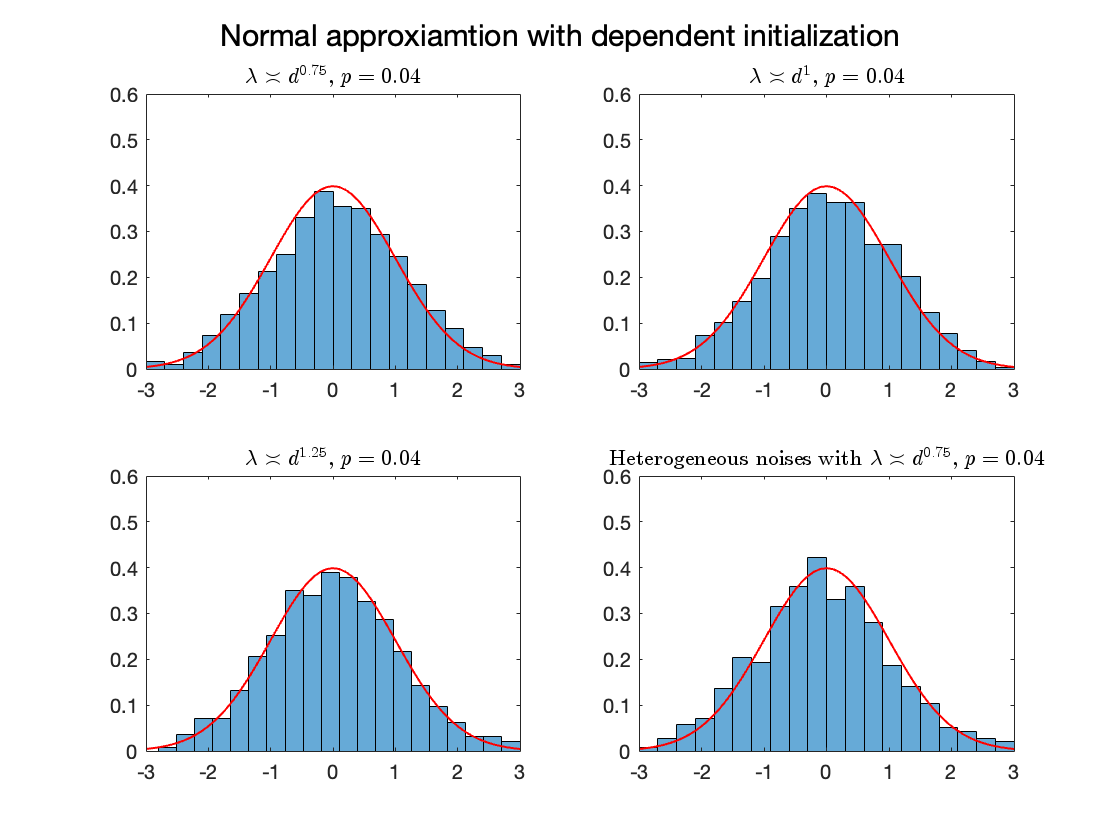}
\caption{Histogram of normal approximation over 1000 independent trails with $\lambda_{\min} = 10 d^{\gamma}$ and $\gamma=\{0.75,1,1.25,0.75\}$. $\bcI$ is sparse 0-1 tensor with $\abs{\operatorname{supp}(\bcI)} =10$. The last panel shows the results under heteroskedastic noises, where we set: $[\min_{\omega} [\bcS]_{\omega},\max_{\omega} [\bcS]_{\omega}] = [0.75,1.25]$.
}
\label{fig:clt-dep}
\end{figure}
Figure \ref{fig:clt-dep} clearly demonstrates the feasibility of our method under dependent initialization without data splitting. The computationally optimal conditions allow for inference of linear forms with extra sparsity level even under heteroskedastic noises. 

\paragraph{Average coverage of confidence intervals.} We now examine our method in a broader area of settings by considering different noise types and ranks. Still, the tensor generating mechanism is similar to above, but now we vary the hypothesized rank $\br=(r,r,r)$ for different $r$. We fix $d=100$, $p = 0.04$, and $\lambda_{\min} =10d^{1.25}$, and change the noise type into Gaussian noises, Poisson noises, and exponential noises, respectively. Additionally, we assume all the noises are heterogeneous, with noise scales $[\bcS]_{\omega}$ ranging from  $(0,2)$.

Now, for each latent $\bcT$ with a group of observations, we use the RGD to obtain an initialization and run our Algorithm \ref{alg:debias-powerit} to obtain $\widehat\bcT$. To consider the average coverage of linear form CIs, we select a group of linear forms $\{\bcI_{(i,j,k)}\}_{(i,j,k)\in \cQ}$, where  $\langle\bcT,\bcI_{(i,j,k)}\rangle = \bcT(1,1,1)+\bcT(1,1,2)-\bcT(i,j,k)$ and $\cQ$ are sampled in $[100]\times[100]\times[100]$ with cardinality $\abs{\cQ} = 100$. The average coverage in one trial is thus defined by

\begin{equation*}
    \mathsf{AvgCov} = \frac{1}{\abs{\cQ}}\sum_{(i,j,k)\in\abs{\cQ} } \idc\left\{\langle\bcT,\bcI_{(i,j,k)} \rangle \in  \widehat{\mathrm{CI}}_{\alpha}(\bcI_{(i,j,k)} ) \right\}, \quad \alpha = 0.9, \  0.95,
\end{equation*}
where $\widehat{\mathrm{CI}}_{\alpha}(\bcI )$ is given in Corollary \ref{coro:CIs}.
We now run the experiments by 100 independent trails and record the averaged $\mathsf{AvgCov} $, together with their error bars showing the prediction intervals in Figure \ref{fig:avg-coverage}.

\begin{figure}[H] %htbp
\centering
\includegraphics[width=1\textwidth]{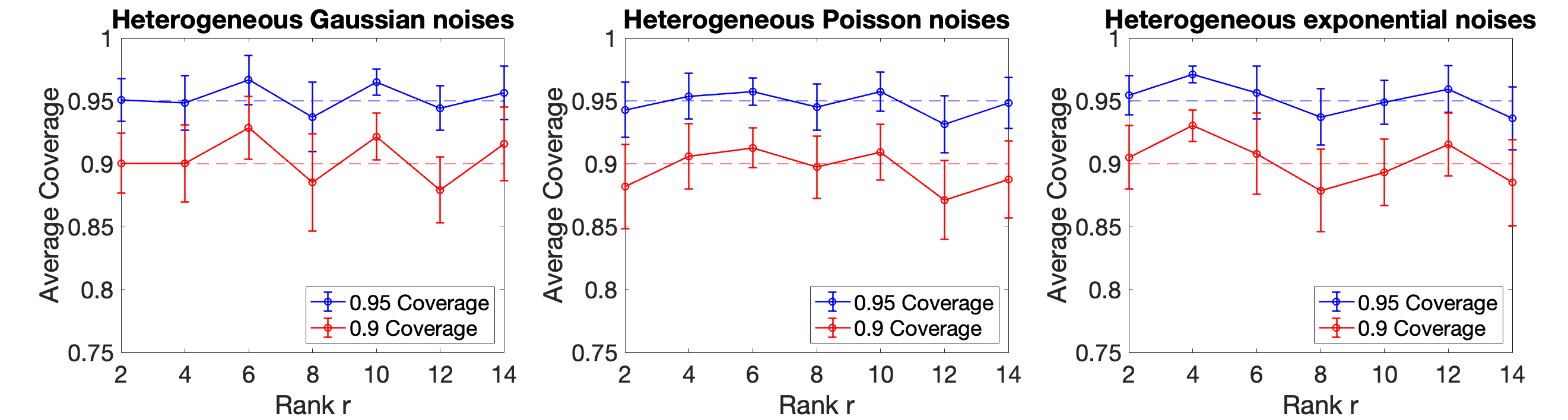}
\caption{$\mathsf{AvgCov}$ of 0.95 and 0.9 level CIs over 100 independent trails. The  error bars represent $ \mathsf{AvgCov} \pm z_{0.1} \widehat{\sigma}$,
showing the $80\%$ confidence intervals of the $\mathsf{AvgCov}$ }
\label{fig:avg-coverage}
\end{figure}

The results show that our coverage rates nicely sit around 0.95 and 0.9 given different levels of CIs when $r$ ranges from $2$ to $14$ and the distribution of $\xi_i$ changes to heterogeneous sub-exponential noises. This evidences the accurate uncertainty quantification of our debiasing + one-step power iteration approach.

% \subsection{Real data experiments}

% worldwide food trading networks \cite{de2015structural}. to be updated

\section{More Discussions}\label{sec:discuss}
We present a simple yet efficient method for performing statistical inference on tensor linear forms with incomplete observations. Our approach combines a warm, achievable initialization with straightforward debiasing and a single power iteration step. This general framework can be flexibly extended to various tensor models. 

%%%%%%%%%%%%%%%%%%%%%
%%%%%%%%%%%%%%%%%%%%%
Our debiasing and one-step power iteration framework naturally extends to tensor PCA, which can be viewed as a special case of tensor completion with full observations and $n=d^*$. Under full observations, the initialization error $\bcE_{\init}$ in \eqref{eq:ubs-approx-decomp} vanishes, eliminating the need for debiasing. The bounds developed for $\bcE_{\rn}$	
 remain valid by substituting $n$ with $d^*$. Consequently, our one-step power iteration yields results comparable to \cite{xia2022inference}. Unlike \cite{xia2022inference}, which is limited to rank-one linear forms and requires two power iterations, our method accommodates general ranks 
$\br$ with only one power iteration. Additionally, our variance characterization is sharper, achieving the Cramér–Rao lower bound.

Furthermore, our method extends to general tensor regression models with various measurements. For example, with Gaussian measurements where each $\bcX_i$ has independent standard Gaussian entries, the debiasing scheme becomes: 
\begin{equation*}
        \widehat \bcT_{\ubs} = \widehat \bcT_{\init } +\frac{1}{n}\sum_{i=1}^{n}\left(Y_i - \left\langle \widehat\bcT_{\init }, \bcX_i \right\rangle\right)\cdot \bcX_i,
\end{equation*}
enabling a similar linear form inference approach. Under Gaussian measurements, a lower bound analogous to Theorem \ref{thm:opt-uct-qtf} is $\Var(g_{\bcI}(\Bar{\bcT}_{\mathsf{Gauss}})) \ge \left(1- \varepsilon_{\SNR}\right)\frac{\sigma^2}{n} \norm{\cP_\TT (\bcI)}_{\tF}^2$, where $\varepsilon_{\SNR}$ vanishes as the SNR tends to infinity. 
 
Our work primarily focuses on inferring linear forms under uniform sampling. Future research directions include: (i) adapting the debiasing approach: extending our method to non-uniform missing patterns, such as heterogeneous missingness or missing at random scenarios \citep{choi2024matrix}. (ii) confidence regions for singular subspaces: investigating confidence regions for singular subspaces based on incomplete tensor observations. This can potentially be achieved by combining \cite{xia2019confidence, xia2022inference} with our debiasing and one-step power iteration approach.

\end{sloppypar}
\newpage
\bibliography{reference}
\bibliographystyle{apalike}

\appendix
\newpage

\begin{center}
{\bf\LARGE Supplement to ``Statistical Inference in Tensor Completion: Optimal Uncertainty
Quantification and statistical-to-computational Gaps''}
\end{center}
\smallskip

\section{Supplementary Tensor Estimation Procedures}
\subsection{Online Riemannian Gradient Descent}

\begin{algorithm}[H]
\caption{Online Riemannian Gradient Descent \citep{cai2023online}}
\label{alg:online-RGD}
\begin{algorithmic}
\REQUIRE $\left\{ (\bcX_t, Y_t) \right\}_{t=1}^{n}$, initial $\widehat \bcT_{0}$, step size $\eta>0$, multilinear rank $\br=\left(r_1,r_2,\dots,r_m\right)$
% \STATE{Debias the initialization by $\widehat \bcT_{\ubs} =  \widehat \bcT_{\init } +\frac{d^*}{n} \sum_{i=1}^{n}\left(Y_i - \left\langle \widehat\bcT_{\init }, \bcX_i \right\rangle\right)\cdot \bcX_i$ }
\FOR{$t = 1,\dots, n$}
	\STATE{Calculate online vanilla gradient $\bcG_t = \left(\langle\widehat \bcT_{t-1},\bcX_t\rangle - Y_t\right)\cdot \bcX_t$}
 \STATE{Calculate Riemannian GD $\widehat \bcT_{t}^{+} = \widehat \bcT_{t-1} - \eta \cdot \cP_{\TT_{t-1}}(\bcG_t) $}
 \STATE{Retraction $\widehat \bcT_{t}=\HOSVD_{\br}\left(\widehat \bcT_{t}^{+} \right)$}
\ENDFOR
%\STATE{Calculate the test statistic by $\widehat{\bcT}=\widehat \bcT_{\ubs} \times_1\cP_{\widehat{\bU}_{1,1} }\cdots \times_m\cP_{\widehat{\bU}_{m,1} } $}
%, with its core tensor  } %\times_{j=1}^{m}\cP_{\widehat{\bU}_{j,1} }
\RETURN {$\widehat{\bcT}_n$}
\end{algorithmic}
\end{algorithm}
Here, then tangent space projection $\cP_{\TT_t}(\cdot)$ is align with \eqref{eq:true-TT-proj} and \eqref{eq:emp-PT} by changing the $\bcT$ in \eqref{eq:true-TT-proj} with $\widehat \bcT_{t}$. the $\HOSVD_{\br}$ means the higher order singular value decomposition which is given in Algorithm \ref{alg:hosvd}. In our version of online RGD, our definition of $\bcX_t$ is different from that in  \cite{cai2023online}, where their sensing tensors are $\sqrt{d^*}$ times larger than ours, meaning that the bound in \cite{cai2023online} is actually $1/\sqrt{d^*}$ times smaller than the true $\norm{\widehat{\bcT}_t-\bcT}_{\linf}$ in our setting.
%(that is why we need a $d^*$ factor in the variance computation).
\begin{algorithm}[H]
\caption{Higher Order Singular Value Decomposition (HOSVD)}
\label{alg:hosvd}
\begin{algorithmic}
\REQUIRE $\bcA\in\R^{d_1\times\cdots \times d_m} $, multilinear rank $\br=\left(r_1,r_2,\dots,r_m\right)$
\FOR{$j = 1,\dots, m$}
	\STATE{Calculate $\widehat{\bU}_{j}=\SVD_{r_j}\left(\mathcal{M}_j\left(\bcA\right)\right) $}
\ENDFOR
\STATE{Calculate core tensor $\widehat{\bcC}=\bcA \times_1 \widehat{\bU}_{1}^\top \cdots \times_m \widehat{\bU}_{m}^\top $}
\STATE{Calculate $\widehat{\bcA}=\widehat{\bcC} \times_1 \widehat{\bU}_{1} \cdots \times_m \widehat{\bU}_{m} $}
%, with its core tensor  } %\times_{j=1}^{m}\cP_{\widehat{\bU}_{j,1} }
\RETURN {$\widehat{\bcA}$, core tensor $\widehat{\bcC}$, singular subspaces $\left(\widehat{\bU}_{1},\dots, \widehat{\bU}_{m} \right)$}
\end{algorithmic}
\end{algorithm}

\subsection{Offline RGD and Spectral Initialization}
In our data experiments, we initialize our debasing approach by offline RGD. The detailed implementation of offline RGD is described in the following Algorithm \ref{alg:diag-dele} and \ref{alg:offline-RGD}, where the return of Algorithm \ref{alg:diag-dele} serve as a good starting point of RGD iterations in Algorithm \ref{alg:offline-RGD}. 

\begin{algorithm}[H]
\caption{Initialization with Diagonal Deletion \citep{chen2021spectral}}
\label{alg:diag-dele}
\begin{algorithmic}
\REQUIRE $\left\{ (\bcX_i, Y_i) \right\}_{i=1}^{n}$, multilinear rank $\br=\left(r_1,r_2,\dots,r_m\right)$
\STATE{Observation tensor $\widehat{\bcT}_{\mathsf{obv} } = \sum_{i=1}^{n} Y_i\bcX_i$ }
\FOR{$j = 1,\dots, m$}
	\STATE{Calculate $\widehat{\bU}_{j}=\SVD_{r_j}\left(\cP_{\mathsf{off}\mbox{-}\mathsf{diag}}\left(\mathcal{M}_j(\widehat{\bcT}_{\mathsf{obv} })\mathcal{M}_j^\top(\widehat{\bcT}_{\mathsf{obv} })\right)\right) $}
\ENDFOR
\STATE{Low-rank projection: $\widehat{\bcT}_{\mathsf{off}\mbox{-}\mathsf{diag}}= \frac{d^*}{n}\cdot \widehat \bcT_{\mathsf{obv}} \times_1\cP_{\widehat{\bU}_{1} }\cdots \times_m\cP_{\widehat{\bU}_{m} } $}
%, with its core tensor  } %\times_{j=1}^{m}\cP_{\widehat{\bU}_{j,1} }
\RETURN {$\widehat{\bcT}_{\mathsf{off}\mbox{-}\mathsf{diag}}$}
\end{algorithmic}
\end{algorithm}
Here, the off-diagonal projection $\cP_{\mathsf{off}\mbox{-}\mathsf{diag}}(\cdot)$ means that we substitute all the diagonal elements with $0$. As indicated in \cite{wang2023implicit}, the return of Algorithm \ref{alg:diag-dele} is suitable for offline RGD in Algorithm \ref{alg:offline-RGD}, with a verified leave-one-out condition for the base case for mathematical induction.
\begin{algorithm}[H]
\caption{Offline Riemannian Gradient Descent \citep{wang2023implicit}}
\label{alg:offline-RGD}
\begin{algorithmic}
\REQUIRE $\left\{ (\bcX_i, Y_i) \right\}_{i=1}^{n}$, initial $\widehat \bcT_{0}$, multilinear rank $\br=\left(r_1,r_2,\dots,r_m\right)$
\STATE{Observation tensor $\widehat{\bcT}_{\mathsf{obv} } = \sum_{i=1}^{n} Y_i\bcX_i$ }
\FOR{$t = 1,\dots, T$}
	\STATE{Calculate offline vanilla gradient $\bcG_t = \sum_{i=1}^n\langle\widehat \bcT_{t-1},\bcX_i\rangle\bcX_i - \widehat{\bcT}_{\mathsf{obv} }$}
 \STATE{Calculate Riemannian GD $\widehat \bcT_{t}^{+} = \widehat \bcT_{t-1} - \frac{d^*}{n}\cdot \cP_{\TT_{t-1}}(\bcG_t) $}
 \STATE{Retraction $\widehat \bcT_{t}=\HOSVD_{\br}\left(\widehat \bcT_{t}^{+} \right)$}
\ENDFOR

%, with its core tensor  } %\times_{j=1}^{m}\cP_{\widehat{\bU}_{j,1} }
\RETURN {$\widehat{\bcT}_T$}
\end{algorithmic}
\end{algorithm}

\section{Proofs of Main Results}
\subsection{Preliminaries for the proofs}
\noindent \textbf{On the Tucker decomposition.} 
Without loss of generality, we shall assume that the core tensor satisfies $\cM_j(\bcC) \cM_j(\bcC)^\top = \bLambda_j^2$ for a diagonal matrix $\bLambda_j\in\R^{r_j\times r_j}$, which means that the mode-$j$ unfolding of the core tensor can be decomposed as $\cM_j(\bcC) = \bLambda_j \bV_j^\top$ for a certain $\bV_j\in \O^{r_{-j} \times r_j}$. This can always be met because the low-rank tensor $\bcT=\bcC\times_1\bU_1\times_2\cdots\times_m \bU_m$ is invariant up to any rotations: $$\bcC\times_1\bU_1\times_m \bU_m=\left(\bcC\times_{j=1}^m\bO_j\right)\times_1\bU_1\bO_1^\top\cdots\times_m \bU_m\bO_m^\top.$$  %Moreover, We shall clarify some notations used throughout the proof:

\vspace{5pt}
\noindent \textbf{On the order of Kronecker products in tensor unfolding.} To ease the notation, in our proof, we shall write the mode-$j$ unfolding of tensor $\bcT$ as 
$$\bT_j = \cM_j(\bcC\times_{j=1}^m \bU_j ) = \bU_j\cM_j(\bcC)\left(\otimes_{k\neq j}\bU_{k}^\top\right). $$ 
Here the notation $\otimes_{k\neq j}\bU_{k}^\top$ means the Kronecker products follow the right order from $m$ to $1$, but not the ascending order. Also, when we are interested in the mode-$k$ singular subspace under the  mode-$j$ unfolding, we may write the unfolding as 
$$\bT_j = \bU_j\cM_j(\bcC)\otimes_{k}\bU_k^\top \left(\otimes_{l\neq j,k}\bU_{l}^\top\right). $$ Readers should notice that we write $\otimes_{k}\bU_k^\top$ in front of others just for a clearer expression. In our proof, we often write $\otimes_{k}\bU_k^\top$ with subscript $k$ indicating the position of $\bU_k^\top$ in the Kronecker product, which is consistent with \cite{kolda2009tensor}. 

\vspace{5pt}
\noindent \textbf{Initialization in the inference process.} In our assumptions, we assume the initialization error on the $\linf$ norm. However, the following lemma shows that such an initialization implies the incoherent structure given that $\bcT$ is incoherent, which is critical for our proofs of main results:
\begin{Lemma}[Perturbation bound for initialization]\label{lemma:l2inf-init}
    Suppose $\bcT$ is an incoherent and multilinear rank-$\br$ tensor. $\widehat{\bcT}_0$ is a (not necessarily low rank) estimate of $\bcT$ with $\linf$-norm error bounded by
    \begin{equation*}
        \norm{\widehat{\bcT}_0- \bcT}_{\linf} \le C_1 \sigma \sqrt{\frac{\dmax \log \dmax }{n}}
    \end{equation*}  
    % \begin{equation*}
    %     \norm{\widehat{\bcT}_0- \bcT}_{\tF} \le c_1 \lambda_{\min} %\sigma \sqrt{\frac{ d^*\dmax \log \dmax }{n}}
    % \end{equation*}
for a $C_1>0$, and with multilinear rank-$\br$ higher-order SVD: 
    $$\left(\widehat{\bU}_{1,0},\dots, \widehat{\bU}_{m,0} \right)=\HOSVD_{\br}\left(\widehat{\bcT}_0 \right). $$  
If there also exists a large number $C_\gap>16C_1 $ such that the SNR condition satisfies 
\begin{equation}
    \frac{\lambda_{\min} }{\sigma} \ge C_{\mathsf{gap} }\kappa_0 \sqrt{\frac{d^* \dmax \log \dmax }{n} },
\end{equation}
then we have the $\ell_{2,\infty}$ norm bound for each $j\in[m]$ of the singular subspaces: 
    \begin{equation*}
   \norm{ \cP_{\widehat{\bU}_{j,0}} - \cP_{\bU_j} }_{2,\infty} \le \frac{8 C_1\sigma  }{\lambda_{\min}}\sqrt{\frac{\mu r_j}{d_j} }\sqrt{\frac{d^*\dmax \log \dmax }{n}} \le \frac{8 C_1 }{ C_\gap}\sqrt{\frac{\mu r_j}{d_j} }.
\end{equation*}
Consequently, the $\linf$ norm bound:
    \begin{equation*}
   \norm{ \cP_{\widehat{\bU}_{j,0}} - \cP_{\bU_j} }_{\linf} \le \frac{8 C_1\sigma  }{\lambda_{\min}}{\frac{\mu r_j}{d_j} }\sqrt{\frac{d^*\dmax \log \dmax }{n}}.
\end{equation*}
And also, the spectral norm bound for each $j\in[m]$ is given by: 
    \begin{equation*}
   \norm{ \cP_{\widehat{\bU}_{j,0}} - \cP_{\bU_j} }_{2} \le \frac{8 C_1 \sigma  }{\lambda_{\min}}\sqrt{\frac{d^*\dmax \log \dmax }{n}}.
\end{equation*}

%$\norm{\widehat{\bcT}_{\br}- \bcT}_{\linf} $

\end{Lemma}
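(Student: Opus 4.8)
The statement is a standard $\ell_{2,\infty}$ (and $\linf$, and spectral) perturbation bound for the singular subspaces obtained by taking the top-$r_j$ left singular vectors of the mode-$j$ unfolding of an entrywise-accurate estimate $\widehat{\bcT}_0$. The natural route is to apply a Davis--Kahan / Wedin-type bound together with the leave-one-out–free $\ell_{2,\infty}$ eigenspace perturbation results (as in \cite{xia2019confidence, cai2021subspace, chen2021spectral}). I would set $\bE := \widehat{\bcT}_0 - \bcT$ and work with the mode-$j$ unfoldings $\bM := \cM_j(\bcT) = \bU_j \bLambda_j \bV_j^\top$ (using the normalization from the Tucker-decomposition preliminaries, so $\bLambda_j$ is diagonal with smallest entry $\ge \lambda_{\min}$) and $\widehat{\bM}_0 := \cM_j(\widehat{\bcT}_0) = \bM + \cM_j(\bE)$.

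\textbf{Step 1 (control the perturbation unfolding).} The key quantitative input is a bound on $\cM_j(\bE)$ in the relevant norms in terms of $\norm{\bE}_{\linf}$. Since $\bE$ has $\linf$-norm at most $C_1\sigma\sqrt{\dmax\log\dmax/n}$, each row of $\cM_j(\bE)$ has $\ell_2$-norm at most $\sqrt{d_{-j}}\,\norm{\bE}_{\linf}\le \sqrt{d^*/\dmin}\cdot C_1\sigma\sqrt{\dmax\log\dmax/n}$ and, crucially, the spectral norm satisfies $\norm{\cM_j(\bE)}_2\le \sqrt{d_j d_{-j}}\,\norm{\bE}_{\linf} = \sqrt{d^*}\,\norm{\bE}_{\linf} = C_1\sigma\sqrt{d^*\dmax\log\dmax/n}$. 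Under the SNR hypothesis with $C_\gap > 16C_1$ this gives $\norm{\cM_j(\bE)}_2 \le (C_1/C_\gap)\lambda_{\min}/\kappa_0 \le \lambda_{\min}/16$, so the spectral gap of $\widehat{\bM}_0$ is comparable to $\lambda_{\min}$ and Weyl's inequality keeps the top-$r_j$ singular subspace well-separated. The $\ell_2$-norm of a row of $\cM_j(\bE)$ indexed by $i\in[d_j]$ — call it $\mathsf{E}_{2,\infty}$ — is at most $C_1\sigma\sqrt{d^*\dmax\log\dmax/(n\dmin)}$; but since in the end we are told to assume $r_{-j}\lesssim d_j$ (so $\dmin$ and $\dmax$ play symmetric roles up to constants in the places they appear), I'd track $\mathsf{E}_{2,\infty}\le C_1\sigma\sqrt{d^*\log\dmax/n}\cdot\sqrt{\dmax/\dmin}$ and fold the $\sqrt{\dmax/\dmin}$ factor into constants, or more carefully keep the statement's exact $\sqrt{\dmax}$ by using $d_{-j}\le d^*/\dmin\le \dmax d_{-j}/\dmin$... the cleanest is: a single row of $\cM_j(\bE)$ sits in $\R^{d_{-j}}$ and has Euclidean norm $\le \sqrt{d_{-j}}\norm{\bE}_\linf$; combined with $\sqrt{d_{-j}}=\sqrt{d^*/d_j}\le\sqrt{d^*/\dmin}$, one gets the bound with $\dmax$ replaced by $d^*/\dmin$, and under the standing assumption $r_{-j}\lesssim d_j$ the factor $\dmax/\dmin$ is absorbed into the numerical constant. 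I would state this matching carefully but not belabor it.

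\textbf{Step 2 (invoke the $\ell_{2,\infty}$ subspace perturbation bound).} With the spectral gap $\ge \lambda_{\min}/2$ secured and the incoherence $\norm{\bU_j}_{2,\infty}\le\sqrt{\mu r_j/d_j}$ in hand, I apply the ``rate-optimal $\ell_{2,\infty}$'' eigenspace perturbation lemma (e.g., Lemma in \cite{xia2019confidence} or Theorem 4.2 of \cite{cai2021subspace}): for $\widehat{\bU}_{j,0}$ the top-$r_j$ left singular space of $\bM + \cM_j(\bE)$,
\[
\norm{\cP_{\widehat{\bU}_{j,0}} - \cP_{\bU_j}}_{2,\infty} \;\lesssim\; \frac{\norm{\bU_j}_{2,\infty}\,\norm{\cM_j(\bE)}_2 + \big\|\cM_j(\bE)\big\|_{2,\infty}}{\lambda_{\min}}.
\]
Plugging the Step-1 bounds, the first term contributes $\lesssim \sqrt{\mu r_j/d_j}\cdot C_1\sigma\sqrt{d^*\dmax\log\dmax/n}/\lambda_{\min}$, and the second term — after the $\dmax/\dmin$ absorption — contributes the same order (indeed smaller, since $\norm{\bU_j}_{2,\infty}\le 1$ but here it is $\le\sqrt{\mu r_j/d_j}\le 1$, and $\|\cM_j(\bE)\|_{2,\infty}\le\sqrt{d_{-j}}\norm{\bE}_\linf$ which carries the extra $\sqrt{\dmax/\dmin}$). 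Collecting constants yields exactly $\frac{8C_1\sigma}{\lambda_{\min}}\sqrt{\frac{\mu r_j}{d_j}}\sqrt{\frac{d^*\dmax\log\dmax}{n}}$, and then the SNR hypothesis $\lambda_{\min}/\sigma\ge C_\gap\kappa_0\sqrt{d^*\dmax\log\dmax/n}$ immediately gives the second inequality $\le \frac{8C_1}{C_\gap}\sqrt{\mu r_j/d_j}$.

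\textbf{Steps 3--4 (the $\linf$ and spectral corollaries).} For the $\linf$ bound, write $\cP_{\widehat{\bU}_{j,0}} - \cP_{\bU_j} = (\cP_{\widehat{\bU}_{j,0}} - \cP_{\bU_j})\cP_{\bU_j} + \cP_{\widehat{\bU}_{j,0}}(\cP_{\widehat{\bU}_{j,0}}-\cP_{\bU_j})$ type identities so that an entry $(\cP_{\widehat{\bU}_{j,0}}-\cP_{\bU_j})_{ab}$ is bounded by a product of two $\ell_{2,\infty}$-type row norms; using the $\ell_{2,\infty}$ bound from Step 2 for one factor and either that same bound or the incoherence $\|\bU_j\|_{2,\infty}\le\sqrt{\mu r_j/d_j}$ for the other yields $\norm{\cP_{\widehat{\bU}_{j,0}}-\cP_{\bU_j}}_{\linf}\le \frac{8C_1\sigma}{\lambda_{\min}}\cdot\frac{\mu r_j}{d_j}\sqrt{\frac{d^*\dmax\log\dmax}{n}}$ (one $\sqrt{\mu r_j/d_j}$ from each factor, giving $\mu r_j/d_j$). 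The spectral bound is the plain Davis--Kahan conclusion: $\norm{\cP_{\widehat{\bU}_{j,0}}-\cP_{\bU_j}}_2 \lesssim \norm{\cM_j(\bE)}_2/\lambda_{\min}\le \frac{8C_1\sigma}{\lambda_{\min}}\sqrt{\frac{d^*\dmax\log\dmax}{n}}$, again after matching constants. Finally I would union-bound over $j\in[m]$ so all $m$ inequalities hold simultaneously (the $\linf$-accuracy event for $\widehat{\bcT}_0$ already has the stated probability $1-\dmax^{-3m}$, and all the subspace bounds are deterministic consequences of it plus the SNR condition, so no probability is lost).

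\textbf{Main obstacle.} The only genuinely delicate point is making the dimensional bookkeeping in Step 1 produce exactly $\sqrt{\dmax}$ rather than $\sqrt{d^*/\dmin}$ in the final bounds — i.e., reconciling ``one row of a mode-$j$ unfolding lives in $\R^{d_{-j}}$'' with the clean $\sqrt{\dmax}$ in the statement. This is where the standing assumption $r_{-j}\lesssim d_j$ (hence, implicitly, balanced-enough dimensions in the relevant regime) gets used, and it must be invoked carefully so the constant stays at $8C_1$. Everything else is a routine application of known $\ell_{2,\infty}$ subspace perturbation machinery; I would cite \cite{xia2019confidence} (and \cite{cai2021subspace}) for that machinery rather than reprove it.
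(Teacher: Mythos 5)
Your overall route is essentially the paper's: reduce to a deterministic subspace-perturbation bound for the mode-$j$ unfolding, using $\norm{\cM_j(\widehat{\bcT}_0-\bcT)}_2\le\sqrt{d^*}\,\norm{\widehat{\bcT}_0-\bcT}_{\linf}$, the incoherence of $\bU_j$, and the SNR condition to keep the perturbation below $\lambda_{\min}^2/2$ at the Gram-matrix level. The difference is that the paper does not cite a black-box $\ell_{2,\infty}$ lemma: it applies the spectral representation series of Xia (2021) to $\cM_j(\widehat{\bcT}_0)\cM_j(\widehat{\bcT}_0)^\top=\bT_j\bT_j^\top+\widehat\Delta_{j,0}$, bounds each block $\cP_{\bU_j}^\perp\widehat\Delta_{j,0}\bU_j\bLambda_j^{-2}$, $\cP_{\bU_j}^\perp\widehat\Delta_{j,0}\cP_{\bU_j}^\perp$, etc.\ in spectral and $\ell_{2,\infty}$ norms, and sums the geometric series using $C_\gap>16C_1$; this is what produces the explicit constant $8$ and keeps the $\linf$ bound (via the incoherence of the right-hand factors of each series term) and the spectral bound in one sweep. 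A citation-based argument is acceptable in spirit, but it will not deliver the stated constant $8C_1$ nor the precise role of $C_\gap>16C_1$ without redoing essentially this series/geometric-sum bookkeeping, and you should verify the cited lemma is deterministic and applies to rectangular top-$r_j$ SVD under only a spectral-gap condition (the paper sidesteps this by symmetrizing).

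The concrete flaw is in your Step 1 / ``main obstacle'' discussion. There is no $\sqrt{\dmax/\dmin}$ mismatch to absorb, and the fix you propose would be invalid anyway: nothing in the paper bounds $\dmax/\dmin$ by a constant (dimensions may be unbalanced, and $r_{-j}\lesssim d_j$ does not imply balance), and the lemma's constant is the explicit $8C_1$, so ``folding into the numerical constant'' is not available. The correct accounting is simpler: a row of $\cM_j(\bE)$ lies in $\R^{d_{-j}}$, so
\begin{equation*}
\norm{\cM_j(\bE)}_{2,\infty}\le\sqrt{d_{-j}}\,\norm{\bE}_{\linf}
=\frac{1}{\sqrt{d_j}}\,C_1\sigma\sqrt{\frac{d^*\dmax\log\dmax}{n}}
\le\sqrt{\frac{\mu r_j}{d_j}}\,C_1\sigma\sqrt{\frac{d^*\dmax\log\dmax}{n}},
\end{equation*}
since $\mu r_j\ge 1$; i.e.\ the row-norm term is automatically dominated by the incoherence term $\norm{\bU_j}_{2,\infty}\norm{\cM_j(\bE)}_2$, because the target bound carries the factor $\sqrt{\mu r_j\,d^*\dmax/d_j}$, not merely $\sqrt{\dmax}$. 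With that correction (and the caveat on constants above), your Steps 2--4 go through: the spectral bound is Davis--Kahan with $\norm{\cM_j(\bE)}_2\le C_1\sigma\sqrt{d^*\dmax\log\dmax/n}$, and your projector identity for the $\linf$ bound is fine, though it yields a constant like $24C_1$ rather than $8C_1$ unless you track the series as the paper does.
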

\begin{proof}
    Here we consider $j=1$ for demonstration. Denote $\widehat{\bcT}_0 = {\bcT}+ \widehat{\bcE}_0$, and we write $\bT_1 =\cM_1(\bcT)$, $\widehat{\bE}_{1,0}= \cM_1(\widehat{\bcE}_0)$ for the mode-$1$ tensor unfolding. We use the second-order technique to analyze the perturbation of $\cP_{\widehat{\bU}_{j,0}}$ because of the large dimension of the tensor unfolding $\cM_j({\bcT})\in\R^{d_j\times d^{-}_j }$. 
    Since $\widehat{\bU}_{1,0}$ is the SVD of the following matrix: 
    \begin{equation*}
       \begin{aligned}
            \cM_1( \widehat{\bcT}_0) \cdot \cM_1( \widehat{\bcT}_0)^\top & = \bT_1\bT_1^\top +\widehat{\bE}_{1,0}\bT_1^\top + \bT_1 \widehat{\bE}_{1,0}^\top + \widehat{\bE}_{1,0}\widehat{\bE}_{1,0}^\top \\
            & := \bT_1\bT_1^\top + \widehat{\Delta}_{1,0}.
       \end{aligned}
    \end{equation*}
We first validate the condition to apply the spectral representation: the size of the error $\widehat{\Delta}_{1,0}$ is controlled by 
\begin{equation*}
    \norm{\widehat{\Delta}_{1,0}}_2\le  2\lambda_{\max} C_1 \sigma \sqrt{\frac{\dmax d^* \log \dmax }{n}} + C_1^2 \sigma^2 {\frac{\dmax d^* \log \dmax }{n}} \le \frac{1}{2}\lambda_{\min}^2\cdot 6\kappa_0\frac{\sigma}{\lambda_{\min} } \sqrt{\frac{\dmax d^* \log \dmax }{n}} \le \frac{1}{2}\lambda_{\min}^2.
\end{equation*}
        Thus, we can expand the orthogonal projection of the singular subspaces $\cP_{\widehat{\bU}_{1,0}}$ by representation formula consisting of perturbation error $\widehat{\Delta}_{1,0}$ \citep{xia2021normal}:
    \begin{equation}\label{eq:init-expansion}
        \cP_{\widehat{\bU}_{1,0}} - \cP_{\bU_1} = \sum_{k\ge 1} \cS_{\widehat{\bU}_{1,0}}^{(k)},
    \end{equation}
where
\begin{equation}\label{eq:init-expansion-S}
\cS_{\widehat{\bU}_{1,0}}^{(k)}=\sum_{\mathbf{s}: s_1+\cdots+s_{k+1}=k}(-1)^{1+\tau(\mathbf{s})} \cdot \mathfrak{P}_1^{-s_1} \widehat{\Delta}_{1,0} \mathfrak{P}_1^{-s_2} \widehat{\Delta}_{1,0} \cdots \widehat{\Delta}_{1,0} \mathfrak{P}_1^{-s_{k+1}},
\end{equation}
and $s_1\ge 0, \cdots, s_{k+1}\ge 0$ are non-negative integers with $
\tau(\mathbf{s})=\sum_{j=1}^{k+1} \mathbb{I}\left(s_j>0\right)$. In the series, the power of the operator $\mathfrak{P}_1$ is given by:
\begin{equation*}
    \mathfrak{P}_1^{-s}= \left\{ \begin{array}{lcl} \bU_1\bLambda_1^{-2s}\bU_1^\top
         & \mbox{for} & s>0 \\ 
         \cP_{\bU_1}^\perp  & \mbox{for} & s=0
                \end{array}\right..
\end{equation*}
% sharper than, e.g., Lemma 5.2 in \cite{cai2019nonconvex}, and Lemma 13.6 in \cite{cai2023generalized}. \bU_{1,\perp} \bU^\top_{1,\perp}
For any $k_1\in[d_1]$, we can decomposed the error $\norm{\be_{k_1}^{\top}\left( \cP_{\widehat{\bU}_{1,0}} - \cP_{\bU_1}\right)}_2$ using \eqref{eq:init-expansion}:
\begin{equation*}
    \begin{aligned}
    \norm{\be_{k_1}^{\top}\left( \cP_{\widehat{\bU}_{1,0}} - \cP_{\bU_1}\right)}_2 \le \sum_{k\ge 1} \norm{\be_{k_1}^\top \cS_{\widehat{\bU}_{1,0}}^{(k)} }_2.
    \end{aligned}
\end{equation*}
To bound each $\be_{k_1}^\top \cS_{\widehat{\bU}_{1,0}}^{(k)}$, it suffices to bounding the all the terms in \eqref{eq:init-expansion-S} multiplied by $\be_{k_1}$. To this end, we shall consider all the possible situations of $\widehat{\Delta}_{1,0}$ in the series. Since $\widehat{\Delta}_{1,0}=\widehat{\bE}_{1,0}\bT_1^\top + \bT_1 \widehat{\bE}_{1,0}^\top + \widehat{\bE}_{1,0}\widehat{\bE}_{1,0}^\top$, we have the following guarantees on $\widehat{\Delta}_{1,0}$:
\begin{equation}\label{eq:init-delta10-spec}
\begin{gathered}
        \max\left\{ \norm{\cP_{\bU_1}^\perp\widehat{\Delta}_{1,0}\bU_1\bLambda_1^{-2}}_2,\norm{\bLambda_1^{-2}\bU_1^\top\widehat{\Delta}_{1,0}\cP_{\bU_1}^\perp}_2 \right\} \le \frac{\norm{\widehat{\bE}_{1,0}}_2}{\lambda_{\min}} + \frac{\norm{\widehat{\bE}_{1,0}}_2^2}{\lambda_{\min}^2} \le 2 C_1\frac{\sigma}{\lambda_{\min}} \sqrt{\frac{d^*\dmax \log \dmax }{n}} \\
         \norm{\bLambda_1^{-2}\bU_1^\top\widehat{\Delta}_{1,0}\bU_1\bLambda_1^{-2}}_2 \le 2 C_1\frac{\sigma}{\lambda_{\min}} \sqrt{\frac{d^*\dmax \log \dmax }{n}} \cdot \frac{1}{\lambda_{\min}^2} \\ 
         \norm{\cP_{\bU_1}^\perp\widehat{\Delta}_{1,0}\cP_{\bU_1}^\perp}_2 \le C_1\frac{\sigma}{\lambda_{\min}} \sqrt{\frac{d^*\dmax \log \dmax }{n}} \cdot \lambda_{\min}^2
\end{gathered}
\end{equation}
where we use the $\linf$-norm error bounded of $\widehat{\bE}_{1,0}$, and assume that $C_\gap \ge 2 C_1$. We also have the $\ell_{2,\infty}$-norm-related bound:
\begin{equation}\label{eq:init-delta10-2inf}
\begin{gathered}
            \norm{\cP_{\bU_1}^\perp\widehat{\Delta}_{1,0}\cP_{\bU_1}^\perp}_{2,\infty}  \le \norm{\cP_{\bU_1}\widehat{\bE}_{1,0}}_{2,\infty}\norm{\widehat{\bE}_{1,0}}_2 +\norm{\widehat{\bE}_{1,0}}_{2,\infty} \norm{\widehat{\bE}_{1,0}}_{2} \\
             \le  2 C_1 \sqrt{\mu r_1}\frac{\sigma}{\lambda_{\min}} \sqrt{\frac{d^*\dmax \log \dmax }{d_1 n}}\cdot \lambda_{\min}^2 \\
            \norm{\cP_{\bU_1}^\perp\widehat{\Delta}_{1,0}\bU_1\bLambda_1^{-2}}_{2,\infty}  \le \norm{\cP_{\bU_1}^\top\widehat{\bE}_{1,0}\bT_1^\top \cP_{\bU_1}\bLambda_1^{-2} }_{2,\infty} +\norm{\cP_{\bU_1}^\top\widehat{\bE}_{1,0}\widehat{\bE}_{1,0}^\top \cP_{\bU_1}\bLambda_1^{-2} }_{2,\infty} \\
            \le 2 C_1 \sqrt{\mu r_1}\frac{\sigma}{\lambda_{\min}} \sqrt{\frac{d^*\dmax \log \dmax }{d_1 n}}
\end{gathered}   
\end{equation}
% \lambda_{\min}\sqrt{\frac{\mu  r_1 }{d_1}}\norm{\widehat{\bE}_{1,0}}_2 
Now, we consider two cases: $s_1=0$ or $s_1\ge1$ for each order $k$ in $\cS_{\widehat{\bU}_{1,0}}^{(k)}$.

\noindent \textbf{(i)} If $s_1=0$: each term in  \eqref{eq:init-expansion-S} multiplied by $\be_{k_1}$ is given by
\begin{equation}\label{eq:l2inf-decomp1}
    \begin{aligned}
    \norm{\be_{k_1}^{\top}\mathfrak{P}_1^{-s_1} \widehat{\Delta}_{1,0}  \cdots \widehat{\Delta}_{1,0} \mathfrak{P}_1^{-s_{k+1}}}_2 & = \norm{\be_{k_1}^{\top} \cP_{\bU_1}^\perp \widehat{\Delta}_{1,0} \cdots \widehat{\Delta}_{1,0} \mathfrak{P}_1^{-s_{k+1}}}_2 \\
    % & \le \underbrace{ \norm{\be_{k_1}^{\top} \widehat{\Delta}_{1,0} \cdots \widehat{\Delta}_{1,0} \mathfrak{P}_1^{-s_{k+1}}}_2}_{\mathfrak{B}_1}
    % +\underbrace{ \norm{\be_{k_1}^{\top} \cP_{\bU_j} \widehat{\Delta}_{1,0} \cdots \widehat{\Delta}_{1,0} \mathfrak{P}_1^{-s_{k+1}}}_2 }_{\mathfrak{B}_2 }
    \end{aligned}
\end{equation}
Combining \eqref{eq:init-delta10-spec} and \eqref{eq:init-delta10-2inf}, we conclude that no matter $s_2=0$ or not, we always have
\begin{equation*}
    \norm{\be_{k_1}^{\top}\mathfrak{P}_1^{-s_1} \widehat{\Delta}_{1,0}  \cdots \widehat{\Delta}_{1,0} \mathfrak{P}_1^{-s_{k+1}}}_2 \le \sqrt{\frac{\mu r_1}{d_1} } \left(\frac{2 C_1\sigma  }{\lambda_{\min}}\sqrt{\frac{d^*\dmax \log \dmax }{n}}\right)^{k},
\end{equation*}
because the order of $\widehat{\Delta}_{1,0}$ always matches the order of $\bLambda_1^{-2}$ in each term.
% \begin{equation}
% \begin{aligned}
%       \norm{\be_{k_1}^{\top} \widehat{\Delta}_{1,0} \cdots \widehat{\Delta}_{1,0} \mathfrak{P}_1^{-s_{k+1}}}_2  &  \le \frac{\norm{\widehat{\Delta}_{1,0}}_{2,\infty} }{\lambda_{\min}^2}\left(\frac{\norm{\widehat{\Delta}_{1,0}}_2 }{\lambda_{\min}^2}\right)^{k-1} \le   \frac{3 C_1 \sqrt{\mu r_1}\kappa_0\sigma}{\lambda_{\min}} \sqrt{\frac{d^*\dmax \log \dmax }{d_1 n}}\left(\frac{\norm{\widehat{\Delta}_{1,0}}_2 }{\lambda_{\min}^2}\right)^{k-1} \\
%         & \le \sqrt{\frac{\mu r_1}{d_1} } \left(\frac{3 C_1\kappa_0 \sigma  }{\lambda_{\min}}\sqrt{\frac{d^*\dmax \log \dmax }{n}}\right)^{k}.
% \end{aligned}
% \end{equation}
% For $\mathfrak{B}_2$, we can also get a bound of the same order:
% \begin{equation}\label{eq:l2inf-B2}
%     \norm{\be_{k_1}^{\top} \cP_{\bU_j} \widehat{\Delta}_{1,0} \cdots \widehat{\Delta}_{1,0} \mathfrak{P}_1^{-s_{k+1}}}_2 \le \norm{\cP_{\bU_1} }_{2,\infty} \left(\frac{\norm{\widehat{\Delta}_{1,0}}_2 }{\lambda_{\min}^2}\right)^{k} \le \sqrt{\frac{\mu r_1}{d_1} } \left(\frac{3 C_1\kappa_0 \sigma  }{\lambda_{\min}}\sqrt{\frac{d^*\dmax \log \dmax }{n}}\right)^{k}.
% \end{equation}
% we know that when $s_1=0$, the term in $\be_{k_1}^\top \cS_{\bU_1,k}$ can be controlled by 

\noindent \textbf{(ii)} If $s_1\ge 1$: the term in  \eqref{eq:init-expansion-S} multiplied by $\be_{k_1}$ can be controlled by 
\begin{equation}\label{eq:l2inf-decomp2}
    \begin{aligned}
     &\norm{\be_{k_1}^{\top}\mathfrak{P}_1^{-s_1} \widehat{\Delta}_{1,0}  \cdots \widehat{\Delta}_{1,0} \mathfrak{P}_1^{-s_{k+1}}}_2
    = \norm{\be_{k_1}^{\top}\bU_1 \bLambda_1^{-2 s_1 }\bU_1^\top \widehat{\Delta}_{1,0}  \cdots \widehat{\Delta}_{1,0} \mathfrak{P}_1^{-s_{k+1}}}_2  \\
    &\le \norm{\cP_{\bU_1} }_{2,\infty} \norm{\bLambda_1^{-2 s_1 }\bU_1^\top \widehat{\Delta}_{1,0}  \cdots \widehat{\Delta}_{1,0} \mathfrak{P}_1^{-s_{k+1}} }_2 \le \sqrt{\frac{\mu r_1}{d_1} }\left(\frac{2 C_1 \sigma  }{\lambda_{\min}}\sqrt{\frac{d^*\dmax \log \dmax }{n}}\right)^{k}.
    \end{aligned}
\end{equation}
Thus, we know that $\norm{\be_{k_1}^\top \cS_{\widehat{\bU}_{1,0}}^{(k)}}_2$ is controlled by
\begin{equation*}
    \norm{\be_{k_1}^\top \cS_{\widehat{\bU}_{1,0}}^{(k)} }_2 \le \sum_{\mathbf{s}: s_1+\cdots+s_{k+1}=k} \sqrt{\frac{\mu r_1}{d_1} } \left(\frac{2 C_1\sigma  }{\lambda_{\min}}\sqrt{\frac{d^*\dmax \log \dmax }{n}}\right)^{k} \le \sqrt{\frac{\mu r_1}{d_1} } \left(\frac{8 C_1\sigma  }{\lambda_{\min}}\sqrt{\frac{d^*\dmax \log \dmax }{n}}\right)^{k} 
\end{equation*}
Given that $C_\gap> 16C_1$, we can sum up all the series for $k$ from $1$ to $\infty$ can derive the following error bound
\begin{equation*}
    \begin{aligned}
    \norm{\be_{k_1}^{\top}\left( \cP_{\widehat{\bU}_{1,0}} - \cP_{\bU_1}\right)}_2 \le \sum_{k\ge 1} \norm{\be_{k_1}^\top \cS_{\widehat{\bU}_{1,0}}^{(k)} }_2 \le  \frac{8 C_1 \sigma  }{\lambda_{\min}}\sqrt{\frac{\mu r_1}{d_1} }\sqrt{\frac{d^*\dmax \log \dmax }{n}},
    \end{aligned}
\end{equation*}
for all $k_1\in[d_1]$. Thus, we finish the proof of the $\ell_{2,\infty}$ bound. For the $\ell_{\infty}$ norm bound, we only need to notice that the right-hand side of each $\cS_{\widehat{\bU}_{1,0}}^{(k)}$ is also $\mu$-incoherent, which can be shown following \eqref{eq:init-delta10-2inf}. We can thus have the   $\ell_{\infty}$ norm bound.

The spectral norm bound is derived following the same fashion. Notice that, from \eqref{eq:init-delta10-spec} we have
\begin{equation*}
    \norm{\mathfrak{P}_1^{-s_1} \widehat{\Delta}_{1,0}  \cdots \widehat{\Delta}_{1,0} \mathfrak{P}_1^{-s_{k+1}}}_2 \le \left(\frac{2 C_1\sigma  }{\lambda_{\min}}\sqrt{\frac{d^*\dmax \log \dmax }{n}}\right)^{k}.
\end{equation*}
Summing up all the possible series with $\mathbf{s}: s_1+\cdots+s_{k+1}=k$, for $k$ from $1$ to $\infty$ we have the desired bound.
\end{proof}

\subsection{Proof of Theorem \ref{thm:lf-inference-popvar}}\label{apx:sec:clt-indp}
According to Algorithm \ref{alg:debias-powerit}, we have 
\begin{equation}
    \begin{aligned}
        \widehat \bcT_{\ubs} & =  \widehat \bcT_{\init } +\frac{d^*}{n} \sum_{i=1}^{n}\left(Y_i - \left\langle \widehat\bcT_{\init }, \bcX_i \right\rangle\right)\cdot \bcX_i \\
        & = \bcT + \widehat \bcT_{\init}- \bcT - \frac{d^*}{n} \sum_{i=1}^{n} \left\langle\widehat \bcT_{\init}- \bcT ,\bcX_i \right\rangle\bcX_i + \frac{d^*}{n} \sum_{i=1}^{n} \xi_i\bcX_i \\
        &  = \bcT +  \underbrace{\bcT^{\vartriangle}\ - \frac{d^*}{n} \sum_{i=1}^{n}\left\langle\bcT^{\vartriangle} ,\bcX_i \right\rangle\bcX_i}_{\bcE_{\init} } + 
        \underbrace{\frac{d^*}{n} \sum_{i=1}^{n} \xi_i\bcX_i }_{\bcE_{\rn} } ,
    \end{aligned}
\end{equation}
where we denote $\bcT^{\vartriangle} = \widehat \bcT_{\init}- \bcT$ as the error tensor introduced by initialization, and decouple the error of the debiased estimator $\widehat \bcT_{\ubs}$ into two parts: (i) $\bcE_{\init}$, which represents the initialization error; and (ii) $\bcE_{\rn} $,  which represents the effect of random sampling and noises. At a high level, our proof is going to show that, the asymptotic normality of the linear form is mainly contributed by the random error $\bcE_{\rn}$ while the initialization error $\bcE_{\init}$ is generally negligible.  To this end, we need a fine-grained analysis of the one-step power iteration, especially the singular subspaces from the SVD.  

\subsubsection{Analysis of perturbation in SVD}

For each mode $j$, the singular subspace is calculated by 
\begin{equation}\label{eq:svd-power-itr-modej}
\begin{aligned}
        \widehat{\bU}_{j,1} & = \SVD_{r_j}\left(\mathcal{M}_j\left(\widehat \bcT_{\ubs}\times_1 \widehat{\mathbf{U}}_{1,0}^\top \cdots \times_{j-1} \widehat{\mathbf{U}}_{j-1,0}^\top \times_{j+1} \widehat{\mathbf{U}}_{i+1,0}^\top \cdots \times_{m} \widehat{\mathbf{U}}_{m,0}^\top\right)\right)  \\
        & = \SVD_{r_j} \left( \mathcal{M}_j(\widehat \bcT_{\ubs}) \left(\widehat{\mathbf{U}}_{m,0} \otimes \cdots \otimes \widehat{\mathbf{U}}_{j+1,0} \otimes \widehat{\mathbf{U}}_{j-1,0} \otimes \cdots \otimes \widehat{\mathbf{U}}_{1,0}\right)  \right).
\end{aligned}
\end{equation}
Without loss of generality, in the following proof, we shall consider $j=1$ for demonstration. Still, we use the second-order argument by examining the SVD of the symmetrized matrix in \eqref{eq:svd-power-itr-modej}:
\begin{equation}\label{eq:svd-U1-decomp}
    \begin{aligned}
        \widehat{\bU}_{1,1} & =\SVD_{r_1} \left( \mathcal{M}_1(\widehat \bcT_{\ubs}) \left(\widehat{\mathbf{U}}_{m,0} \otimes \cdots \otimes \widehat{\mathbf{U}}_{2,0} \right)  \right)  \\
        =  &\SVD_{r_1} \left( \left( \bT_{1} + \bE_{1,\init}+\bE_{1,\rn} \right) \left( \otimes_{j\neq 1}\cP_{\widehat{\mathbf{U}}_{j,0}} \right)\left( \bT_{1} + \bE_{1,\init}+\bE_{1,\rn} \right)^\top  \right) \\
        = & \SVD_{r_1} \left( \bT_{1}\bT_{1}^\top + \underbrace{\bT_{1} \left(\otimes_{j\neq 1}\cP_{\widehat{\mathbf{U}}_{j,0}} -\otimes_{j\neq 1}\cP_{{\mathbf{U}}_{j}}  \right)\bT_{1}^\top}_{ \mathfrak{B}_1 }  \right. \\
        & 
        + \underbrace{\left(\bE_{1,\init}+\bE_{1,\rn} \right)\left( \otimes_{j\neq 1}\cP_{\widehat{\mathbf{U}}_{j,0}} \right)\bT_{1}^\top}_{\mathfrak{B}_2} 
        +  
        \underbrace{\bT_{1}\left( \otimes_{j\neq 1}\cP_{\widehat{\mathbf{U}}_{j,0}} \right)\left(\bE_{1,\init}+\bE_{1,\rn} \right)^\top}_{\mathfrak{B}_2^\top} \\
        & + \left. \underbrace{\left(\bE_{1,\init}+\bE_{1,\rn} \right) \left( \otimes_{j\neq 1}\cP_{\widehat{\mathbf{U}}_{j,0}} \right)\left(\bE_{1,\init}+\bE_{1,\rn} \right)^\top}_{\mathfrak{B}_3} \right) .  \\   
    \end{aligned}
\end{equation}
Here $\bE_{1,\init}=\cM_1(\bcE_{\init})$ and $\bE_{1,\rn}=\cM_1(\bcE_{\rn})$. To ensure that the perturbation is within a fine region so that we can further give the exact expression of the SVD error, we shall check that whether $\norm{\mathfrak{B}_1 +\mathfrak{B}_2 +\mathfrak{B}_2^\top +\mathfrak{B}_3 }_2\le \frac{1}{2}\lambda_{\min}^2$ holds. We analyze this error term by term:

\noindent \textbf{(i) Term} $\mathfrak{B}_1$ \textbf{in} \eqref{eq:svd-U1-decomp}. For $\mathfrak{B}_1$, we have
\begin{equation}
    \begin{aligned}
        \mathfrak{B}_1=&\bU_1\cM_{1}(\bcC)\left(\otimes_{j\neq 1}\bU_j^\top\right)\left(\otimes_{j\neq 1}\cP_{\widehat{\mathbf{U}}_{j,0}} -\otimes_{j\neq 1}\cP_{{\mathbf{U}}_{j}}  \right)\left(\otimes_{j\neq 1}\bU_j\right) \cM_{1}(\bcC)^\top \bU_1^\top \\
        = &\bU_1\bLambda_1 \bV_1^\top\left[\otimes_{j\neq 1} \left( \bI_{r_1}+ \bU_j^\top(\cP_{\widehat{\mathbf{U}}_{j,0}}-\cP_{{\mathbf{U}}_{j}})\bU_j \right) \right]  \bV_1 \bLambda_1 \bU_1^\top,
    \end{aligned}
\end{equation}
and thus,
\begin{equation}\label{eq:B1-up1}
\begin{aligned}
        \norm{\mathfrak{B}_1}_2 & \le \kappa_0^2 \lambda_{\min}^2 \norm{\otimes_{j\neq 1} \left( \bI_{r_1}+ \bU_j^\top(\cP_{\widehat{\mathbf{U}}_{j,0}}-\cP_{{\mathbf{U}}_{j}})\bU_j \right) }_2 \\
        & \le \kappa_0^2 \lambda_{\min}^2 \left(\sum_{j=1}^{m}\norm{\bU_j^\top(\cP_{\widehat{\mathbf{U}}_{j,0}}-\cP_{{\mathbf{U}}_{j}})\bU_j}_2 + 
        \underbrace{ 2^m \max_{j}\norm{\bU_j^\top(\cP_{\widehat{\mathbf{U}}_{j,0}}-\cP_{{\mathbf{U}}_{j}})\bU_j}_2^2}_{\text{higher order term}}
        \right)
\end{aligned}
\end{equation}
by the binomial expansion. Here, the higher order term bound is obtained by considering all the terms with more than one $\bU_j^\top(\cP_{\widehat{\mathbf{U}}_{j,0}}-\cP_{{\mathbf{U}}_{j}})\bU_j$ and use the fact that $\norm{ \cP_{\widehat{\bU}_{j,0}} - \cP_{\bU_j} }_{2}\le 1$. Now, we derive the bound of $\norm{\bU_j^\top(\cP_{\widehat{\mathbf{U}}_{j,0}}-\cP_{{\mathbf{U}}_{j}})\bU_j}_2$: According to the expansion of the initial singular subspaces in \eqref{eq:init-expansion-S}, when $k=1$, the first order term in $\cP_{\widehat{\bU}_{j,0}} - \cP_{\bU_j}$ is 
\begin{equation*}
   \cS_{\widehat{\bU}_{1,0}}^{(1)} = \bU_j\bLambda_j^{-2}\bU_j^\top \widehat{\Delta}_{j,0}\cP_{\bU_j}^\perp + \cP_{\bU_j}^\perp\widehat{\Delta}_{j,0}\bU_j\bLambda_j^{-2}\bU_j^\top,
\end{equation*}
which will vanish since $\bU_j^\top \cS_{\widehat{\bU}_{1,0}}^{(1)}\bU_j=\mathbf{0}$. Following a similar argument as in the proof of Lemma \ref{lemma:l2inf-init}, we have 
\begin{equation}
    \norm{\bU_j^\top(\cP_{\widehat{\mathbf{U}}_{j,0}}-\cP_{{\mathbf{U}}_{j}})\bU_j}_2 \le  \left(\frac{8 C_1 \sigma  }{\lambda_{\min}}\sqrt{\frac{d^*\dmax \log \dmax }{n}}\right)^2.
\end{equation}
By assuming the SNR condition, it is clear that when $\frac{8\cdot 2^m C_1 \sigma  }{\lambda_{\min}}\sqrt{\frac{d^*\dmax \log \dmax }{n}}\le 1$, the term $\mathfrak{B}_1$ in \eqref{eq:B1-up1} has
\begin{equation}\label{eq:B1-bound}
    \norm{\mathfrak{B}_1}_2 \le \lambda_{\min}^2 \left(\frac{8 C_1 \kappa_0 \sigma\sqrt{2m}  }{\lambda_{\min}}\sqrt{\frac{d^*\dmax \log \dmax }{n}}\right)^2.
\end{equation}

\noindent \textbf{(ii) Term} $\mathfrak{B}_2$ \textbf{in} \eqref{eq:svd-U1-decomp}. To check $\mathfrak{B}_2$, we only need to control the term 
$\bE_{1,\init}\left( \otimes_{j\neq 1} \widehat{\mathbf{U}}_{j,0} \right)$ and $\bE_{1,\rn}\left( \otimes_{j\neq 1} \widehat{\mathbf{U}}_{j,0} \right) $. We consider the case when the initialization is independent of the sampling, i.e., $\bcT^{\vartriangle}$ and $\{\widehat{\mathbf{U}}_{j,0}\}_{j=1}^{m} $ are independent of $\left\{(\bcX_i,\xi_i)\right\}_{i=1}^n$. In this case, 
\begin{equation*}
    \begin{aligned}
        \bE_{1,\init}\left(\otimes_{j\neq 1} \widehat{\mathbf{U}}_{j,0} \right)= \frac{d^*}{n} \sum_{i=1}^{n}\left\langle \bT^{\vartriangle}_1, \cM_1(\bcX_i)\right\rangle \cM_1(\bcX_i)\left(\otimes_{j\neq 1} \widehat{\mathbf{U}}_{j,0} \right)-\bT^{\vartriangle}_1\left(\otimes_{j\neq 1} \widehat{\mathbf{U}}_{j,0} \right).
    \end{aligned}
\end{equation*}
where $\bT^{\vartriangle}_1=\cM_1(\bcT^{\vartriangle})$. By Lemma \ref{lemma:l2inf-init}, we know that under the statistically optimal SNR condition, $\{\widehat{\mathbf{U}}_{j,0}\}$ are also incoherent with the incoherence parameter bounded by $2\mu$. Thus, it is clear that the following inequalities hold:
\begin{equation}\label{eq:E-init-Ber-condition}
    \begin{gathered}
        \norm{\left\langle \bT^{\vartriangle}_1, \cM_1(\bcX_i)\right\rangle \cM_1(\bcX_i)\left(\otimes_{j\neq 1} \widehat{\mathbf{U}}_{j,0} \right)}_2\le C_1 \sigma \sqrt{\frac{\dmax \log \dmax }{n}}\sqrt{\frac{\mu^{m-1} r_{-1}}{d_{-1}}} \\
        \norm{\E \left\langle \bT^{\vartriangle}_1, \cM_1(\bcX_i)\right\rangle^2 \cM_1(\bcX_i)\left(\otimes_{j\neq 1} \cP_{\widehat{\mathbf{U}}_{j,0}} \right)\cM_1(\bcX_i)^\top }_2 \le C_1^2 \sigma^2 \frac{\dmax \log \dmax }{n} \frac{(2\mu)^{m-1} r_{-1}}{d_{}^*}  \\
        \norm{\E \left\langle \bT^{\vartriangle}_1, \cM_1(\bcX_i)\right\rangle^2 \left(\otimes_{j\neq 1} \cP_{\widehat{\mathbf{U}}_{j,0}} \right)\cM_1(\bcX_i)^\top\cM_1(\bcX_i)\left(\otimes_{j\neq 1} \cP_{\widehat{\mathbf{U}}_{j,0}} \right) }_2 \le C_1^2 \sigma^2 \frac{\dmax \log \dmax }{n} \frac{1}{d_{-1}}.
    \end{gathered}
\end{equation}
According to the matrix Bernstein inequality \citep{koltchinskii2011remark,tropp2012user}, we have the following bound of $\bE_{1,\init}\left(\otimes_{j\neq 1} \widehat{\mathbf{U}}_{j,0} \right)$:
\begin{equation}\label{eq:E-init-Ber}
    \begin{aligned}
        \norm{\bE_{1,\init}\left(\otimes_{j\neq 1} \widehat{\mathbf{U}}_{j,0} \right)}_2 &\le C C_1 \sigma \left(\frac{d^*}{n} \sqrt{\frac{\dmax \log \dmax }{n}}\sqrt{\frac{\mu^{m-1} r_{-1}}{d_{-1}}}\cdot m\log \dmax  +  \sqrt{\frac{ m (2\mu)^{m-1} d_1 \dmax d^*\log^2 \dmax }{n^2}} \right) \\
        & \le C C_1 \sigma \sqrt{\frac{ m r^* (2\mu)^{m-1} d_1 \dmax d^*\log^2 \dmax }{\rmin r_1 n^2}} 
    \end{aligned}
\end{equation}
with probability at least $1-\dmax^{-3 m}$, given that $n\ge m \rmax\dmax \log \dmax$.  We now use the same technique to control $\bE_{1,\rn}\left( \otimes_{j\neq 1} \widehat{\mathbf{U}}_{j,0} \right) $. Here we treat $r^*\gtrsim \rmax^2$ in general. Notice that, $\bE_{1,\rn}\left( \otimes_{j\neq 1} \widehat{\mathbf{U}}_{j,0} \right) $ has the  expression
\begin{equation*}
    \bE_{1,\rn}\left( \otimes_{j\neq 1} \widehat{\mathbf{U}}_{j,0} \right) =  \frac{d^*}{n} \sum_{i=1}^{n} \xi_i \cM_1(\bcX_i)\left( \otimes_{j\neq 1} \widehat{\mathbf{U}}_{j,0} \right),
\end{equation*}
with the following inequalities
\begin{equation}\label{eq:E-rn-Ber-condition}
    \begin{gathered}
       \norm{ \norm{\xi_i \cM_1(\bcX_i)\left( \otimes_{j\neq 1} \widehat{\mathbf{U}}_{j,0} \right)}_2 }_{\psi_2} \le C \sigma \sqrt{\frac{\mu^{m-1} r_{-1}}{d_{-1}}}\\
       \norm{\E \xi_i^2\cM_1(\bcX_i)\left(\otimes_{j\neq 1} \cP_{\widehat{\mathbf{U}}_{j,0}} \right)\cM_1(\bcX_i)^\top   }_2 \le \sigma^2 \frac{(2\mu)^{m-1} r_{-1}}{d_{}^*}   \\
       \norm{\E \xi_i^2 \left(\otimes_{j\neq 1} \cP_{\widehat{\mathbf{U}}_{j,0}} \right)\cM_1(\bcX_i)^\top\cM_1(\bcX_i)\left(\otimes_{j\neq 1} \cP_{\widehat{\mathbf{U}}_{j,0}} \right)}_2 \le  \sigma^2  \frac{1}{d_{-1}}.
    \end{gathered}
\end{equation}
Using the matrix Bernstein inequality similar as \eqref{eq:E-init-Ber}, we have 
\begin{equation}\label{eq:E-rn-Ber}
    \begin{aligned}
        \norm{\bE_{1,\rn}\left(\otimes_{j\neq 1} \widehat{\mathbf{U}}_{j,0} \right)}_2 &\le C \sigma \left(\frac{d^*}{n}\sqrt{\frac{\mu^{m-1} r_{-1}}{d_{-1}}}\cdot m\log \dmax  +  \sqrt{\frac{ m (2\mu)^{m-1} d_1  d^*\log \dmax }{n}} \right) \\
        & \le C \sigma \sqrt{\frac{ m (2\mu)^{m-1} r^* d_1  d^*\log \dmax }{\rmin r_1 n}} ,
    \end{aligned}
\end{equation}
with probability at least $1-\dmax^{-3 m}$, given that $n\ge m\rmax\dmax \log \dmax$. Combining \eqref{eq:E-init-Ber} and \eqref{eq:E-rn-Ber}, we know that with probability at least $1-2\dmax^{-3 m}$, the term $\mathfrak{B}_2$ can be controlled by
\begin{equation}\label{eq:B2-bound}
    \norm{\mathfrak{B}_2}_2\le C \kappa_0 \sigma \sqrt{\frac{ m (2\mu)^{m-1}r^* d_1  d^*\log \dmax }{\rmin r_1 n}} \cdot\lambda_{\min}
\end{equation}

\noindent \textbf{(iii) Term} $\mathfrak{B}_3$ \textbf{in} \eqref{eq:svd-U1-decomp}. Since the third term $\mathfrak{B}_3$ can be controlled by
\begin{equation*}
   \norm{\mathfrak{B}_3}_2\le \norm{\left(\bE_{1,\init}+\bE_{1,\rn} \right) \left(\otimes_{j\neq 1} \widehat{\mathbf{U}}_{j,0}  \right)}_2^2,
\end{equation*}
we can again use the bound \eqref{eq:E-init-Ber} and \eqref{eq:E-rn-Ber} to derive the following inequality:
\begin{equation}\label{eq:B3-bound}
    \norm{\mathfrak{B}_3}_2\le C \sigma^2 \frac{ m (2\mu)^{m-1}r^* d_1  d^*\log \dmax }{\rmin r_1 n}.
\end{equation}
By \eqref{eq:B1-bound},\eqref{eq:B2-bound}, and \eqref{eq:B3-bound}, we know that when the SNR condition is large enough, i.e.,
\begin{equation*}
     \frac{\lambda_{\min} }{\sigma} \ge C_{\mathsf{gap} } C_1\kappa_0\sqrt{\frac{m (2\mu)^{m-1} r^* d^* \dmax \log \dmax }{n} },
\end{equation*}
we have the error perturbation bound $\norm{\mathfrak{B}_1 +\mathfrak{B}_2 +\mathfrak{B}_2^\top +\mathfrak{B}_3 }_2\le \frac{1}{2}\lambda_{\min}^2$.
\subsubsection{Representation of SVD operator}
As long as the perturbation error in \eqref{eq:svd-U1-decomp} can be controlled by $\frac{1}{2}\lambda_{\min}^2$, we are able to give the explicit expression of $\cP_{\widehat{\bU}_{1,1}}$ using the representation formula. This argument can be extended to each $j\in [m]$, but here we still consider $j=1$ for demonstration. Defien $\Delta_{1,1} = \mathfrak{B}_1 +\mathfrak{B}_2 +\mathfrak{B}_2^\top +\mathfrak{B}_3$.  Again, we expand $\cP_{\widehat{\bU}_{1,0}}$ by representation formula in \cite{xia2021normal}:
    \begin{equation}\label{eq:U1-expansion}
        \cP_{\widehat{\bU}_{1,1}} - \cP_{\bU_1} = \sum_{k\ge 1} \cS_{\widehat{\bU}_{1,1}}^{(k)},
    \end{equation}
where
\begin{equation}\label{eq:U1-expansion-S}
\cS_{\widehat{\bU}_{1,1}}^{(k)}=\sum_{\mathbf{s}: s_1+\cdots+s_{k+1}=k}(-1)^{1+\tau(\mathbf{s})} \cdot \mathfrak{P}_1^{-s_1} {\Delta}_{1,1} \mathfrak{P}_1^{-s_2} {\Delta}_{1,1} \cdots {\Delta}_{1,1} \mathfrak{P}_1^{-s_{k+1}},
\end{equation}
and $s_1\ge 0, \cdots, s_{k+1}\ge 0$ are non-negative integers with $
\tau(\mathbf{s})=\sum_{j=1}^{k+1} \mathbb{I}\left(s_j>0\right)$. According to the expansion, when $k=1$, the first order term in $\cP_{\widehat{\bU}_{1,1}} - \cP_{\bU_1}$ is 
\begin{equation*}
   \cS_{\widehat{\bU}_{1,1}}^{(1)} = \bU_1\bLambda_1^{-2}\bU_1^\top {\Delta}_{1,1}\cP_{\bU_1}^\perp + \cP_{\bU_1}^\perp {\Delta}_{1,1}\bU_1\bLambda_1^{-2}\bU_1^\top,
\end{equation*}
By analyzing $\{\mathfrak{B}_i\}_{i=1}^3$ in \eqref{eq:svd-U1-decomp}, we know that $\norm{\Delta_{1,1}}_2\cdot \norm{\Lambda^{-2}}_2\le  C\frac{ C_1 \kappa_0 \sigma\sqrt{2m}  }{\lambda_{\min}}\sqrt{\frac{(2\mu)^{m-1} d^*\dmax \log \dmax }{n}}\le \frac{1}{8}$ given the SNR condition. Therefore, It is clear that all the higher order terms of $\cS_{\widehat{\bU}_{1,1}}^{(k)}$ for $k\ge 2$ can be dominated by  $\cS_{\widehat{\bU}_{1,1}}^{(1)}$. In the following discussion, we will show that for any $j\in [m]$, the higher-order series $\sum_{k\ge 2}\cS_{\widehat{\bU}_{j,1}}^{(k)}$ will be negligible when compared to $ \cS_{\widehat{\bU}_{j,1}}^{(1)}$.

To analyze the behavior of test statistic, we need the perturbation bound of $\widehat{\bU}_{j,1}$. Actually, we can show that after one-step power iteration, the singular subspaces still preserve the statistically optimal estimation error rates. We describe the corresponding error rates in the following Lemma \ref{lemma:l2inf-powerit}:
\begin{Lemma}[Perturbation bound after power iteration]\label{lemma:l2inf-powerit} 
Under the assmuptions of Theorem \ref{thm:lf-inference-popvar}, after the power iteration, the following perturbation bounds
\begin{equation*}
    \begin{gathered}
         \norm{\cP_{\widehat{\bU}_{j,1}} - \cP_{\bU_j}}_{2,\infty} \le  \frac{C C_1\sigma  }{\lambda_{\min}}\sqrt{\frac{ m (2\mu)^{m-1} r_{-j} d_j  d^*\log \dmax }{n}}\cdot\sqrt{\frac{\mu r_j}{d_j}} \\
         \norm{\cP_{\widehat{\bU}_{j,1}} - \cP_{\bU_j}}_{2} \le \frac{C C_1\sigma  }{\lambda_{\min}}\sqrt{\frac{ m (2\mu)^{m-1} r^* d_j  d^*\log \dmax }{ \rmin r_j n}} \\
         \norm{\cP_{\widehat{\bU}_{j,1}} - \cP_{\bU_j}}_{\linf}\le \frac{C C_1\sigma  }{\lambda_{\min}}\sqrt{\frac{ m (2\mu)^{m-1} r_{-j} d_j  d^*\log \dmax }{n}}\cdot\frac{\mu r_j}{d_j},
    \end{gathered}
\end{equation*}
hold uniformly for all $j\in[m]$ with probability at least $1-3m^2 \dmax^{3m}$.
\end{Lemma}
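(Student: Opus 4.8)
\textbf{Proof plan for Lemma~\ref{lemma:l2inf-powerit}.}
The plan is to carry the spectral representation of $\cP_{\widehat{\bU}_{j,1}}-\cP_{\bU_j}$ established in \eqref{eq:U1-expansion}--\eqref{eq:U1-expansion-S} through a row-wise analysis, in exactly the spirit of the proof of Lemma~\ref{lemma:l2inf-init} but now with $\Delta_{1,1}=\mathfrak{B}_1+\mathfrak{B}_2+\mathfrak{B}_2^\top+\mathfrak{B}_3$ in place of $\widehat{\Delta}_{1,0}$. Fix $j=1$ without loss of generality. The SNR condition already guarantees $\norm{\Delta_{1,1}}_2\le\frac12\lambda_{\min}^2$, so the series is valid, and since $\norm{\Delta_{1,1}}_2\norm{\bLambda_1^{-2}}_2\le\frac18$ the $k\ge 2$ terms form a convergent geometric tail dominated by the leading term $\cS^{(1)}_{\widehat{\bU}_{1,1}}=\bU_1\bLambda_1^{-2}\bU_1^\top\Delta_{1,1}\cP_{\bU_1}^\perp+\cP_{\bU_1}^\perp\Delta_{1,1}\bU_1\bLambda_1^{-2}\bU_1^\top$. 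It therefore suffices to bound $\norm{\be_{k_1}^\top\cS^{(k)}_{\widehat{\bU}_{1,1}}}_2$ uniformly over $k_1\in[d_1]$, $k\ge 1$, and every composition $\mathbf s$.

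\textbf{Step 1: refined sandwiched bounds on the three blocks of $\Delta_{1,1}$.} Write $\bE_\Sigma:=\bE_{1,\init}+\bE_{1,\rn}$. The key structural observations are $\cP_{\bU_1}^\perp\mathfrak{B}_1=\mathfrak{B}_1\cP_{\bU_1}^\perp=\mathbf 0$ (as $\mathfrak{B}_1$ factors through $\bU_1$ on both sides), $\mathfrak{B}_2\cP_{\bU_1}^\perp=\mathbf 0$ and $\cP_{\bU_1}^\perp\mathfrak{B}_2^\top=\mathbf 0$ (as $\mathfrak{B}_2$ factors through $\bT_1$ on its right, contributing one $\bLambda_1$); consequently $\cS^{(1)}_{\widehat{\bU}_{1,1}}$ only sees $\mathfrak{B}_2,\mathfrak{B}_2^\top,\mathfrak{B}_3$, and $\mathfrak{B}_1$ appears only in higher-order terms sandwiched as $\bLambda_1^{-2}\bU_1^\top\mathfrak{B}_1\bU_1\bLambda_1^{-2}$, whose size is $\lesssim\big(\tfrac{C_1\kappa_0\sigma}{\lambda_{\min}}\sqrt{m(2\mu)^{m-1}d^*\dmax\log\dmax/n}\big)^2$ by \eqref{eq:B1-bound} and which is $\mu$-incoherent on both sides. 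The spectral sizes of the $\bLambda_1^{-2}$-sandwiched versions of $\mathfrak{B}_2$ and $\mathfrak{B}_3$ follow from \eqref{eq:E-init-Ber}--\eqref{eq:E-rn-Ber} (the $\bE_{1,\rn}$ contribution dominates). The new ingredient is a row-wise control of $\bE_\Sigma(\otimes_{k\ne1}\widehat{\bU}_{k,0})$: since $\widehat{\bcT}_{\init}$ is independent of the sampling and the initial subspaces are $2\mu$-incoherent by Lemma~\ref{lemma:l2inf-init}, and since only a $\mathrm{Binom}(n,1/d_1)$ number of samples (concentrating around $n/d_1$) land in a fixed mode-$1$ slice, a conditional matrix Bernstein bound gives, uniformly in $k_1$, $\norm{\be_{k_1}^\top\bE_\Sigma(\otimes_{k\ne1}\widehat{\bU}_{k,0})}_2\lesssim C_1\sigma\sqrt{(2\mu)^{m-1}r_{-1}d^*\log\dmax/n}$, which is smaller than the spectral bound $\norm{\bE_\Sigma(\otimes_{k\ne1}\widehat{\bU}_{k,0})}_2$ of \eqref{eq:E-init-Ber}--\eqref{eq:E-rn-Ber} by a factor $\lesssim\sqrt{\mu r_1/d_1}$ (using $\rmin\le r_1$, $m,\mu\ge1$); both the first-order $\bE_{1,\rn}$ piece and the second-order $\bE_{1,\init}$ piece are handled this way.

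\textbf{Step 2: propagate through the expansion.} As in the two-case argument of Lemma~\ref{lemma:l2inf-init}, split each term of \eqref{eq:U1-expansion-S} according to whether the leftmost power $s_1$ is positive or zero. If $s_1\ge1$, the leftmost factor $\be_{k_1}^\top\bU_1\bLambda_1^{-2s_1}\bU_1^\top$ has $\ell_2$-norm $\lesssim\sqrt{\mu r_1/d_1}\,\lambda_{\min}^{-2s_1}$, and the rest of the product is bounded in spectral norm using the sandwiched bounds of Step~1 — the number of $\bLambda_1^{-2}$ factors always matches the ``order'' of the $\mathfrak{B}_i$'s, so the $\lambda_{\min}$ powers cancel. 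If $s_1=0$, the leftmost factor is $\be_{k_1}^\top\cP_{\bU_1}^\perp$; since $\cP_{\bU_1}^\perp\mathfrak{B}_1=\cP_{\bU_1}^\perp\mathfrak{B}_2^\top=\mathbf 0$, only $\mathfrak{B}_2$ and $\mathfrak{B}_3$ survive in the first block, and we carry the row-wise bound of Step~1 through this first block and bound the remainder in spectral norm. Summing over the at-most-$C^k$ compositions $\mathbf s$ and over $k\ge1$ yields a geometric series whose leading term is the asserted $\ell_{2,\infty}$ bound. The spectral bound is obtained by the same scheme without the row-wise refinement (using \eqref{eq:B1-bound}, \eqref{eq:B2-bound}, \eqref{eq:B3-bound} directly), and the $\linf$ bound follows because every surviving summand on the right-hand side of \eqref{eq:U1-expansion-S} is $\mu$-incoherent in its column space, so $\norm{\cdot}_{\linf}\le\norm{\cdot}_{2,\infty}\sqrt{\mu r_1/d_1}$. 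Finally, a union bound over $j\in[m]$ and over the $O(m)$ Bernstein events used per mode delivers the uniform statement with the claimed probability.

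\textbf{Main obstacle.} The crux is the row-wise bound on $\bE_\Sigma(\otimes_{k\ne1}\widehat{\bU}_{k,0})$ in Step~1: extracting the extra incoherence factor $\sqrt{\mu r_1/d_1}$ requires a conditional Bernstein argument that simultaneously exploits (i) the concentration of the number of samples hitting a fixed mode-$1$ slice around $n/d_1$, (ii) the $2\mu$-incoherence of the initial subspaces from Lemma~\ref{lemma:l2inf-init}, and (iii) the independence of the initialization from the sampling; the delicate point is disentangling the randomness of \emph{which} rows are sampled from the sub-Gaussian noise while keeping track of the $\mu,r,d$ factors across all $m$ modes for both the $\bE_{1,\init}$ (second-order, triangle-inequality) and $\bE_{1,\rn}$ (first-order) contributions.
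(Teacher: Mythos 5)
Your proposal follows essentially the same route as the paper's proof: the same spectral-representation series for $\cP_{\widehat{\bU}_{j,1}}-\cP_{\bU_j}$ with the observation that $\mathfrak{B}_1$ drops out of the first-order term, the same $s_1=0$ versus $s_1\ge 1$ split carried row-wise through the expansion, the reduction of the $\ell_{2,\infty}$ bound to row-wise control of $(\bE_{j,\init}+\bE_{j,\rn})\left(\otimes_{k\neq j}\widehat{\bU}_{k,0}\right)$ (the paper's Lemma~\ref{lemma:E-init-rn-2inf}), the incoherence lift to the $\linf$ bound, and a union bound over modes. The only minor deviation is that for the key row-wise concentration you propose conditioning on the binomial number of samples hitting a fixed mode-$j$ slice, whereas under independent initialization the paper applies matrix Bernstein directly to the row-restricted sum (whose variance proxy already carries the extra $1/d_j$), reserving the slice-count conditioning for the dependent-initialization analysis; both arguments give the same rate.
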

Moreover, a closer look at (a part of) $\cS_{\widehat{\bU}_{j,1}}^{(1)}$ for $j=1$ gives that 
\begin{equation}\label{eq:U1-order1-expansion}
\begin{aligned}
        &\cP_{\bU_1}^\perp {\Delta}_{1,1}\bU_1\bLambda_1^{-2}\bU_1^\top =  \cP_{\bU_1}^\perp (\mathfrak{B}_2 + \mathfrak{B}_3 )\bU_1\bLambda_1^{-2}\bU_1^\top  \\
       = & \cP_{\bU_1}^\perp(\bE_{1,\rn} +\bE_{1,\init})\left( \otimes_{j\neq 1}\cP_{\widehat{\mathbf{U}}_{j,0}} \right)\bT_{1}^\top \bU_1\bLambda_1^{-2}\bU_1^\top  +\cP_{\bU_1}^\perp \mathfrak{B}_3 \bU_1\bLambda_1^{-2}\bU_1^\top  \\
         = & \underbrace{\cP_{\bU_1}^\perp\bE_{1,\rn} \left( \otimes_{j\neq 1}\bU_{j} \right)\bV_1 \bLambda_1^{-1}\bU_1^\top}_{\text{main term}} \\
        &\quad + \underbrace{\cP_{\bU_1}^\perp\bE_{1,\rn} \left( \otimes_{j\neq 1}\cP_{\widehat{\mathbf{U}}_{j,0}} -\otimes_{j\neq 1}\cP_{{\mathbf{U}}_{j}} \right)\left( \otimes_{j\neq 1}\bU_{j} \right)\bV_1 \bLambda_1^{-1}\bU_1^\top + \cP_{\bU_1}^\perp\left( \mathfrak{B}_3 +\bE_{1,\init}\left( \otimes_{j\neq 1}\cP_{\widehat{\mathbf{U}}_{j,0}} \right)\bT_{1}^\top \right)\bU_1\bLambda_1^{-2}\bU_1^\top}_{\text{remainder term}}.
\end{aligned}
\end{equation}
 It can be observed that the main term in \eqref{eq:U1-order1-expansion} is the sum of i.i.d. random variables, and it will converge to the asymptotic normal distribution. The remainder term in \eqref{eq:U1-order1-expansion}, as we will show later, will be dominated by the main term and thus vanish. We point out that it is the main term in \eqref{eq:U1-order1-expansion} that contributes to the vast majority of the asymptotic theory in the test statistic $\widehat{\bcT}$. The higher-order series $\sum_{k\ge 2}\cS_{\widehat{\bU}_{j,1}}^{(k)}$ and the remainder term in \eqref{eq:U1-order1-expansion} will all vanish by proper normalization in the inference process. For brevity, we denote the (full symmetric) main term and the remainder term in \eqref{eq:U1-order1-expansion} as $\cS_{\widehat{\bU}_{j,1}}^{(\main)}$ and $\cS_{\widehat{\bU}_{j,1}}^{(\rem)}$ for each mode $j$.

\subsubsection{Test statistic from low-rank projection}
With the representation of the SVD operators, we are able to give the exact decomposition of the test statistic that leads to the asymptotic theory. Recall that the test statistic is calculated by $\widehat{\bcT}=\widehat \bcT_{\ubs} \times_1\cP_{\widehat{\bU}_{1,1} }\cdots \times_m\cP_{\widehat{\bU}_{m,1} } $. Based on this, it can be shown that

\begin{equation}\label{eq:hatT-decomp-Erem}
    \widehat{\bcT} - \bcT = \left(\widehat{\bcT}_\ubs- \bcT\right)\times_{j=1}^m \cP_{\bU_{j} } + \sum_{j=1}^m \bcT\times_{k\neq j}\cP_{\bU_{k} } \times_{j}\left( \cP_{\widehat{\bU}_{j,1}} - \cP_{\bU_j}\right) + \bcE_{\rem},
\end{equation}
where $\bcE_{\rem}$ is the higher order remainder term which contains at most $2^{m+1}$ single terms, where each single term contains at least 2 differences in the tensor production (a difference can be either $\left(\widehat{\bcT}_\ubs- \bcT\right)$ or $\left( \cP_{\widehat{\bU}_{j,1}} - \cP_{\bU_j}\right)$). We will show that $\bcE_{\rem}$ will vanish and have little influence on our uncertainty quantification. 
\begin{equation}\label{eq:test-decomp}
\begin{aligned}
        \left\langle  \widehat{\bcT} - \bcT,\bcI  \right\rangle 
        &  = \left\langle\left(\widehat{\bcT}_\ubs- \bcT\right)\times_{j=1}^m \cP_{\bU_{j} },  \bcI \right\rangle + \sum_{j=1}^m \left\langle \bcT\times_{k\neq j}\cP_{\bU_{k} } \times_{j}\left( \cP_{\widehat{\bU}_{j,1}} - \cP_{\bU_j}\right), \bcI \right\rangle \\
        &  \quad +  \left\langle \bcE_{\rem},\bcI  \right\rangle \\
        & = \underbrace{\left\langle\bcE_{\rn}\times_{j=1}^m \cP_{\bU_{j} },  \bcI \right\rangle + \sum_{j=1}^m \left\langle \bcT\times_{k\neq j}\cP_{\bU_{k} } \times_{j}\cS_{\widehat{\bU}_{j,1}}^{(\main)}, \bcI \right\rangle}_{\frE_1} + \underbrace{\left\langle\bcE_{\init}\times_{j=1}^m \cP_{\bU_{j} },  \bcI \right\rangle}_{\frE_2}  \\
          &   \quad + \underbrace{\sum_{j=1}^m \left\langle \bcT\times_{k\neq j}\cP_{\bU_{k} } \times_{j}\cS_{\widehat{\bU}_{j,1}}^{(\rem)}, \bcI \right\rangle }_{\frE_3}
          +\underbrace{\sum_{j=1}^m \left\langle \bcT\times_{k\neq j}\cP_{\bU_{k} } \times_{j}\left(\sum_{l\ge 2}\cS_{\widehat{\bU}_{j,1}}^{(l)}\right), \bcI \right\rangle}_{\frE_4} \\
          &   \quad  +  \underbrace{\left\langle \bcE_{\rem},\bcI  \right\rangle}_{\frE_5}.
\end{aligned}
\end{equation}
Here, $\cS_{\widehat{\bU}_{j,1}}^{(\main)}$ and $\cS_{\widehat{\bU}_{j,1}}^{(\rem)}$ represent the (symmetric) main term and the remainder term in \eqref{eq:U1-order1-expansion} for each mode $j$. Notice that we only need to consider $\cS_{\widehat{\bU}_{j,1}}^{(\main)}$ and $\cS_{\widehat{\bU}_{j,1}}^{(\rem)}$ as a part of $\cS_{\widehat{\bU}_{j,1}}^{(1)}$ because the other part of $\cS_{\widehat{\bU}_{j,1}}^{(1)}$, i.e., $  \bU_j\bLambda_j^{-2}\bU_j^\top{\Delta}_{j,1}\cP_{\bU_j}^{\perp}$, will be canceled when multiplied by $\bcT$ in the tensor production. We are going to show that $\frE_1$ in \eqref{eq:test-decomp} is the sum of i.i.d. random variables and it will converge to the asymptotic normal distribution, and $\frE_2$ to $\frE_5$  in \eqref{eq:test-decomp} will all be dominated by the scale of $\frE_1$ after the normalization. We now carefully discuss these terms.

\noindent \textbf{(i) $\frE_1$ in} \eqref{eq:test-decomp}. For each $j$, we can write $\left\langle \bcT\times_{k\neq j}\cP_{\bU_{k} } \times_{j}\cS_{\widehat{\bU}_{j,1}}^{(\main)}, \bcI \right\rangle$ as 
\begin{equation}\label{eq:frE-1-mode-j}
\begin{aligned}
        &\left\langle \bcT\times_{k\neq j}\cP_{\bU_{k} } \times_{j}\cS_{\widehat{\bU}_{j,1}}^{(\main)}, \bcI \right\rangle = \left\langle \cP_{\bU_j}^\perp\bE_{j,\rn} \left( \otimes_{k\neq j}\bU_{k} \right)\bV_j \bLambda_j^{-1}\bU_j^\top \bT_j \left( \otimes_{k\neq j}\cP_{\bU_{k}} \right) , \cM_j(\bcI) \right\rangle \\
   =&  \left\langle \cP_{\bU_j}^\perp\bE_{j,\rn} \left( \otimes_{k\neq j}\bU_{k} \right)\bV_j \bV_j^\top \left( \otimes_{k\neq j}\bU_{k}^\top \right) , \cM_j(\bcI) \right\rangle = \left\langle \bE_{j,\rn}  , \cP_{\bU_j}^\perp \cM_j(\bcI) \left( \otimes_{k\neq j}\bU_{k} \right)\cP_{\bV_j} \left( \otimes_{k\neq j}\bU_{k}^\top \right)\right\rangle \\
   =& \frac{d^*}{n} \sum_{i=1}^{n} \xi_i \left\langle \cM_j(\bcX_i), \cP_{\bU_j}^\perp \cM_j(\bcI) \left( \otimes_{k\neq j}\bU_{k} \right)\cP_{\bV_j} \left( \otimes_{k\neq j}\bU_{k}^\top \right)\right\rangle.
\end{aligned}
\end{equation}
Notice that, since we regulate $\bcC$ as $\cM_j(\bcC) = \bLambda_j \bV_j^\top$, we can check that the mode-$j$ unfolding of the component in the tangent space projection, $\bcC \cdot\left(\bU_1,\dots, \bW_j, \dots, \bU_m\right)$, is
\begin{equation*}
\begin{aligned}
        \cM_j\left(\bcC \cdot\left(\bU_1,\dots, \bW_j, \dots, \bU_m\right) \right) &= \bW_j \bLambda_j \bV_j^\top \left(\otimes_{k\neq j}\bU_{k}\right) \\
    &= \cP_{{\bU}_{j}}^{\perp} \mathcal{M}_j\left(\bcI\right)\left(\otimes_{k \neq j} {\bU}_{ k}\right) \mathcal{M}_j^{\dagger}\left({\bcC }\right)\bLambda_j \bV_j^\top \left(\otimes_{k\neq j}\bU_{k}\right),
\end{aligned}
\end{equation*}
where we use the definition of $\bW_j$:
\begin{equation*}
\bW_j=\cP_{{\bU}_{j}}^{\perp} \mathcal{M}_j\left(\bcI\right)\left(\otimes_{k \neq j} {\bU}_{ k}\right) \mathcal{M}_j^{\dagger}\left({\bcC }\right).
\end{equation*}
According to the SVD of $\cM_j(\bcC)$, the pseudo-inverse of $\cM_j(\bcC)$ is given by $ \mathcal{M}_j^{\dagger}\left({\bcC }\right) = \bV_j\bLambda_j^{-1}$. This shows that the mode-$j$ unfolding of $\bcC \cdot\left(\bU_1,\dots, \bW_j, \dots, \bU_m\right)$ is exactly
\begin{equation}\label{eq:mode-j-tangent}
    \cM_j\left(\bcC \cdot\left(\bU_1,\dots, \bW_j, \dots, \bU_m\right) \right) = \cP_{\bU_j}^\perp \cM_j(\bcI) \left( \otimes_{k\neq j}\bU_{k} \right)\cP_{\bV_j} \left( \otimes_{k\neq j}\bU_{k}^\top \right)
\end{equation}
Bring \eqref{eq:mode-j-tangent} into \eqref{eq:frE-1-mode-j}, we have
\begin{equation}\label{eq:frE-1-p1}
    \left\langle \bcT\times_{k\neq j}\cP_{\bU_{k} } \times_{j}\cS_{\widehat{\bU}_{j,1}}^{(\main)}, \bcI \right\rangle = \frac{d^*}{n} \sum_{i=1}^{n} \xi_i \left\langle \bcX_i, \bcC \cdot\left(\bU_1,\dots, \bW_j, \dots, \bU_m\right) \right\rangle,
\end{equation}
The first term in $\frE_1$ is also given by
\begin{equation}\label{eq:frE-1-p2}
    \left\langle\bcE_{\rn}\times_{j=1}^m \cP_{\bU_{j} },  \bcI \right\rangle = \frac{d^*}{n} \sum_{i=1}^{n} \xi_i \left\langle \bcX_i, \bcI \times_{j=1}^m \cP_{\bU_{j} }\right\rangle.
\end{equation}
Combining \eqref{eq:frE-1-p1} and \eqref{eq:frE-1-p2}, we know that $\frE_1$ can be written as
\begin{equation}\label{eq:frE-1-true}
    \frE_1= \frac{d^*}{n} \sum_{i=1}^{n} \xi_i \left\langle \bcX_i, \cP_{\TT}(\bcI) \right\rangle,
\end{equation}
which is the sum of i.i.d. random variables. To use the Berry-Esseen theorem \citep{stein1972bound,raivc2019multivariate}, we compute the second and third moments of $ \frE_1$:
\begin{equation*}
    \begin{gathered}
        \E\abs{\frE_1}^2 =\sigma^2\norm{\cP_{\TT}(\bcI)}_\tF^2\frac{d^*}{n},
    \end{gathered}
\end{equation*}
and
\begin{equation}\label{eq:variance-BE}
\begin{aligned}
    & \E\abs{\xi_i \left\langle \bcX_i, \cP_{\TT}(\bcI) \right\rangle}^3 \le C \sigma^3 \E \abs{\left\langle \bcX_i, \cP_{\TT}(\bcI) \right\rangle}^2 \cdot \max\abs{\left\langle \bcX_i, \cP_{\TT}(\bcI) \right\rangle} \\
    & \le C \sigma^3 \frac{\norm{\cP_{\TT}(\bcI)}_\tF^2}{d^*}\max\left(\abs{\left\langle \bcX_i, \bcI \times_{j=1}^m \cP_{\bU_{j} }\right\rangle} 
    + \sum_{j=1}^{m}\abs{\left\langle \cM_j(\bcX_i), \cP_{\bU_j}^\perp \cM_j(\bcI) \left( \otimes_{k\neq j}\bU_{k} \right)\cP_{\bV_j} \left( \otimes_{k\neq j}\bU_{k}^\top \right)\right\rangle}\right) \\
    & \le C \sigma^3 \frac{\norm{\cP_{\TT}(\bcI)}_\tF^2}{d^*}\max \left(  \abs{\left\langle \bcX_i \times_{j=1}^m \cP_{\bU_{j} }, \bcI \times_{j=1}^m \cP_{\bU_{j} }\right\rangle} \right. \\
    & \quad +\left. \sum_{j=1}^{m}\abs{\left\langle \cM_j(\bcX_i)\times_{k\neq j}^m \cP_{\bU_{k}}, \cP_{\bU_j}^\perp \cM_j(\bcI) \left( \otimes_{k\neq j}\bU_{k} \right)\cP_{\bV_j} \left( \otimes_{k\neq j}\bU_{k}^\top \right)\right\rangle}  \right) \\
    & \le C \sigma^3 \frac{\norm{\cP_{\TT}(\bcI)}_\tF^2}{d^*}\max \left( \sqrt{\norm{\bcX_i \times_{j=1}^m \cP_{\bU_{j} }}_{\tF}^2+ \sum_{j=1}^m \norm{\cM_j(\bcX_i)\times_{k\neq j}^m \cP_{\bU_{k}}}_{\tF}^2} \right.\\
    & \quad \left. \cdot\sqrt{\norm{\bcI \times_{j=1}^m \cP_{\bU_{j} }}_{\tF}^2+ \sum_{j=1}^m \norm{\cP_{\bU_j}^\perp \cM_j(\bcI) \left( \otimes_{k\neq j}\bU_{k} \right)\cP_{\bV_j} \left( \otimes_{k\neq j}\bU_{k}^\top \right)}_{\tF}^2 }\right) \\
    & = C \sigma^3 \frac{\norm{\cP_{\TT}(\bcI)}_\tF^3}{d^*} \max \left( \sqrt{\norm{\bcX_i \times_{j=1}^m \cP_{\bU_{j} }}_{\tF}^2+ \sum_{j=1}^m \norm{\cM_j(\bcX_i)\times_{k\neq j}^m \cP_{\bU_{k}}}_{\tF}^2} \right).
\end{aligned}
\end{equation}
Here, in the third inequality, we use the Cauchy–Schwarz inequality. Since $\bU_{j}$ are coherent for each $j$. we have the max bound:
\begin{equation}\label{eq:variance-BE-max}
    \max \left( \sqrt{\norm{\bcX_i \times_{j=1}^m \cP_{\bU_{j} }}_{\tF}^2+ \sum_{j=1}^m \norm{\cM_j(\bcX_i)\times_{k\neq j}^m \cP_{\bU_{k}}}_{\tF}^2} \right) \le \sqrt{\frac{2m r^* \mu^{m-1} \dmax}{ \rmin d^*}}.
\end{equation}
We then have 
\begin{equation}\label{eq:variance-BE-res}
     \E\abs{\xi_i \left\langle \bcX_i, \cP_{\TT}(\bcI) \right\rangle}^3 \le  C \sigma^3 \sqrt{m r^* \mu^{m-1} \dmax} \frac{\norm{\cP_{\TT}(\bcI)}_\tF^3}{\sqrt{\rmin }\left(d^*\right)^{\frac{3}{2}}} 
\end{equation}
Thus, we have the asymptotic normality:
\begin{equation}\label{eq:core-clt}
    \max_{t\in\R}\abs{\bbP\left(\frac{\frE_1}{\sigma\norm{\cP_{\TT}(\bcI)}_\tF\sqrt{d^*/n} } \le t\right)- \Phi(t)} \le  C \sqrt{\frac{m r^* \mu^{m-1} \dmax}{\rmin n}}
\end{equation}
We now turn to focus on the terms from $\frE_2$ to $\frE_5$:

\noindent \textbf{(ii) $\left\{\frE_i\right\}_{i=2}^{5}$ in} \eqref{eq:test-decomp}.

\begin{Lemma}[Controlling remainder terms]\label{lemma:oracle-init-frE-2-5} Under the conditions in Theorem \ref{thm:lf-inference-popvar}, the terms from $\frE_2$ to $\frE_5$ can be controlled by
\begin{equation*}
    \begin{gathered}
              \abs{\frE_2}\le C C_1 \sigma \sqrt{\frac{\dmax \log \dmax }{n}}\sqrt{\frac{m \mu^{m} \rmin r^*  d^* \log \dmax }{ n }} \norm{\cP_{\TT}(\bcI)}_\tF\\
       \abs{\frE_3}\le C \frac{C_1^2 \sigma^2}{\lambda_{\min}} \norm{\bcI}_{\ell_1} \frac{ m^2 (2\mu)^{\frac{3}{2}m-1} (r^*)^{\frac{3}{2}}\dmax \sqrt{d^*} \log \dmax }{\rmin n}+ C C_1 \sigma \sqrt{\frac{ m^2 (2\mu)^{m-1} r^* \dmax d^* \log^2\dmax }{ \rmax \rmin n^2}}\norm{\cP_{\TT}(\bcI)}_{\tF}\\
        \abs{\frE_4}\le C\norm{\bcI}_{\ell_1} \frac{ C_1^2\sigma^2  }{\lambda_{\min}}\frac{ m^2 (2\mu)^{\frac{3}{2}m} (r^*)^{\frac{3}{2}} \dmax  \sqrt{d^*}\log \dmax }{\rmin n}\\
        \abs{\frE_5}\le C\norm{\bcI}_{\ell_1}\frac{ C_1^2\sigma^2  }{\lambda_{\min}}\frac{ m^2 (2\mu)^{\frac{3}{2}m} (r^*)^{\frac{3}{2}} \dmax  \sqrt{d^*}\log \dmax }{\rmin n} \\  
    \end{gathered}
\end{equation*}
uniformly with probability at least $1-\dmax^{-2m}$.
\end{Lemma}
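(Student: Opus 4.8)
The plan is to bound $\frE_2,\dots,\frE_5$ one at a time, in each case reusing the decomposition \eqref{eq:test-decomp}, the operator-norm controls on $\mathfrak{B}_1,\mathfrak{B}_2,\mathfrak{B}_3$ from \eqref{eq:B1-bound}--\eqref{eq:B3-bound}, the Bernstein bounds \eqref{eq:E-init-Ber} and \eqref{eq:E-rn-Ber}, and the $\ell_{2,\infty}$/$\linf$ perturbation bounds of Lemmas \ref{lemma:l2inf-init} and \ref{lemma:l2inf-powerit}. The recurring device is a dichotomy used when contracting a matrix $A$ (a piece of some perturbation term) against a mode-$j$ unfolding $\cM_j(\bcI)$: if $A$ lies in a subspace compatible with the structure of $\cP_\TT(\bcI)$ -- i.e.\ of the form $\cP_{\bU_j}^\perp(\cdot)(\otimes_{k\neq j}\bU_k)\cP_{\bV_j}(\otimes_{k\neq j}\bU_k^\top)$ via \eqref{eq:mode-j-tangent}, or fully projected onto $\otimes_j\cP_{\bU_j}$ -- then $\abs{\langle A,\cM_j(\bcI)\rangle}$ is handled by Cauchy--Schwarz and produces a factor $\norm{\cP_\TT(\bcI)}_\tF$; otherwise I use $\abs{\langle A,\cM_j(\bcI)\rangle}\le\norm{A}_{\linf}\norm{\bcI}_{\ell_1}$ and bound $\norm{A}_{\linf}$ through the incoherence of every factor appearing in $A$. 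This split is exactly what yields the two-term form of the bounds for $\frE_3$ (both), while $\frE_2$ is entirely of the compatible type and $\frE_4,\frE_5$ entirely of the misaligned type. A union bound over the $O(m^2)$ Bernstein events and the events of Lemmas \ref{lemma:l2inf-init} and \ref{lemma:l2inf-powerit} then gives the overall probability $1-\dmax^{-2m}$.

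For $\frE_2$ I write $\frE_2=\langle\bcE_{\init},\bcW\rangle$ with $\bcW:=\bcI\times_{j=1}^m\cP_{\bU_j}$, a rank-$\br$ tensor with incoherent factors satisfying $\norm{\bcW}_\tF\le\norm{\cP_\TT(\bcI)}_\tF$ and $\norm{\bcW}_{\linf}\lesssim\sqrt{\mu^m r^*/d^*}\,\norm{\bcW}_\tF$. Since $\E[\langle\bcT^{\vartriangle},\bcX_i\rangle\langle\bcX_i,\bcW\rangle]=(d^*)^{-1}\langle\bcT^{\vartriangle},\bcW\rangle$, the quantity $\langle\bcE_{\init},\bcW\rangle=\langle\bcT^{\vartriangle},\bcW\rangle-\tfrac{d^*}{n}\sum_i\langle\bcT^{\vartriangle},\bcX_i\rangle\langle\bcX_i,\bcW\rangle$ is a centred sum of i.i.d.\ bounded variables; conditioning on the (independent) initialization and applying a scalar Bernstein inequality with per-term bound $\norm{\bcT^{\vartriangle}}_{\linf}\norm{\bcW}_{\linf}$ and variance proxy $(d^*)^{-1}\norm{\bcT^{\vartriangle}}_{\linf}^2\norm{\bcW}_\tF^2$, then substituting \eqref{eq:tensor-init}, yields the stated bound for $\frE_2$.

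For $\frE_3$ I expand $\cS_{\widehat{\bU}_{j,1}}^{(\rem)}$ via \eqref{eq:U1-order1-expansion} into the three pieces coming from (a) $\bE_{j,\rn}$ times the Kronecker-product difference $\otimes_{k\neq j}\cP_{\widehat{\bU}_{k,0}}-\otimes_{k\neq j}\cP_{\bU_k}$, (b) the $\mathfrak{B}_3$ contribution, and (c) the $\bE_{j,\init}$ contribution; I also prove a fresh $\ell_{2,\infty}$ matrix-Bernstein estimate for $\bE_{j,\rn}(\otimes_{k\neq j}\widehat{\bU}_{k,0})$ parallel to \eqref{eq:E-rn-Ber}. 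After contracting with $\bcT\times_{k\neq j}\cP_{\bU_k}$ and using $\bLambda_j^{-1}\bU_j^\top\bT_j=\bV_j^\top(\otimes_{k\neq j}\bU_k^\top)$, piece (a) becomes compatible with $\cP_\TT(\bcI)$ up to the spectral factor $\max_k\norm{\cP_{\widehat{\bU}_{k,0}}-\cP_{\bU_k}}_2$ from Lemma \ref{lemma:l2inf-init} (after telescoping the product difference), giving the $\norm{\cP_\TT(\bcI)}_\tF$ term; pieces (b) and (c) are controlled by $\norm{\cdot}_{\linf}\norm{\bcI}_{\ell_1}$, the $\linf$ norm being bounded through the incoherence of $\mathfrak{B}_3$ and of $\bE_{j,\init}(\otimes_{k\neq j}\widehat{\bU}_{k,0}),\bE_{j,\rn}(\otimes_{k\neq j}\widehat{\bU}_{k,0})$ together with $\norm{\mathfrak{B}_2}_2,\norm{\mathfrak{B}_3}_2$ from \eqref{eq:B2-bound}--\eqref{eq:B3-bound}. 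For $\frE_4$ I note that multiplying $\sum_{l\ge 2}\cS_{\widehat{\bU}_{j,1}}^{(l)}$ by $\bcT$ annihilates every summand of \eqref{eq:U1-expansion-S} except those of the form $\cP_{\bU_j}^\perp\Delta_{j,1}(\bU_j\bLambda_j^{-2}\bU_j^\top\Delta_{j,1})^{l-1}$; since $\norm{\Delta_{j,1}}_2\norm{\bLambda_j^{-2}}_2\le\tfrac18$ the series is geometric and dominated by its $l=2$ term, which I bound in $\linf$ by $\norm{\cP_{\bU_j}^\perp\Delta_{j,1}\bU_j}_{2,\infty}\cdot\norm{\Delta_{j,1}}_2\cdot\lambda_{\min}^{-4}\cdot\norm{\bU_j}_{2,\infty}$ and contract against $\norm{\bcI}_{\ell_1}$; plugging in \eqref{eq:B2-bound} and the incoherent versions of these bounds gives the stated estimate.

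Finally, for $\frE_5=\langle\bcE_{\rem},\bcI\rangle$, since every summand of $\bcE_{\rem}$ carries at least two of the differences $(\widehat{\bcT}_{\ubs}-\bcT)$ and $(\cP_{\widehat{\bU}_{j,1}}-\cP_{\bU_j})$, I bound $\abs{\frE_5}\le\norm{\bcE_{\rem}}_{\linf}\norm{\bcI}_{\ell_1}$ and expand $\norm{\bcE_{\rem}}_{\linf}$ as a sum of products of at least two small factors drawn from $\norm{\widehat{\bcT}_{\ubs}-\bcT}_{\linf}\le\norm{\bcE_{\init}}_{\linf}+\norm{\bcE_{\rn}}_{\linf}$ (each controlled by a scalar Bernstein bound) and $\norm{\cP_{\widehat{\bU}_{j,1}}-\cP_{\bU_j}}_{2,\infty}$ from Lemma \ref{lemma:l2inf-powerit}, converting $\ell_{2,\infty}$ into $\linf$ via the incoherence of the surviving factors. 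The main obstacle I expect is the bookkeeping in $\frE_3$: in each of the $m$ modes one must carefully separate the part of $\cS_{\widehat{\bU}_{j,1}}^{(\rem)}$ genuinely aligned with the tangent-space structure of $\cP_\TT(\bcI)$ from the misaligned part and obtain a sharp $\linf$ bound on the latter, because the crude operator-norm-times-nuclear-norm estimate is far too lossy; threading the incoherence of the initial and one-step singular subspaces through every factor -- where Lemmas \ref{lemma:l2inf-init}, \ref{lemma:l2inf-powerit} and the auxiliary $\ell_{2,\infty}$ Bernstein estimate do the heavy lifting -- is the delicate step.
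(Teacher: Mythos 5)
Your treatments of $\frE_2$ (scalar Bernstein against $\bcW=\bcI\times_{j=1}^m\cP_{\bU_j}$, which is if anything slightly sharper than the paper's matrix-Bernstein route) and of $\frE_4$ are sound. But there are two genuine gaps in $\frE_3$ and $\frE_5$.

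For $\frE_3$ you have inverted the roles of the two pieces. You route piece (a), $\cP_{\bU_j}^\perp\bE_{j,\rn}\bigl(\otimes_{k\neq j}\cP_{\widehat{\bU}_{k,0}}-\otimes_{k\neq j}\cP_{\bU_k}\bigr)(\cdots)$, through Cauchy--Schwarz against the tangent component of $\bcI$. That costs you $\norm{\cdot}_{\tF}\le\sqrt{r_j}\norm{\cdot}_2$ on the left factor, and with the spectral bound of Lemma~\ref{lemma:E-rn-Udiff} this produces a coefficient of $\norm{\cP_\TT(\bcI)}_\tF$ of order $\tfrac{C_1\sigma^2}{\lambda_{\min}}\cdot\tfrac{d^*\sqrt{\dmax d_j}\log\dmax}{n}$, which exceeds the stated second term $C_1\sigma\tfrac{\sqrt{\dmax d^*}\log\dmax}{n}\norm{\cP_\TT(\bcI)}_\tF$ by a factor $\asymp\tfrac{\sigma}{\lambda_{\min}}\sqrt{d^*d_j}$; under the assumed (near statistically optimal) SNR $\lambda_{\min}/\sigma\asymp\sqrt{\dmax d^*/n}$ this factor is $\sqrt{d_jn/\dmax}\gg1$. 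The Frobenius route throws away exactly the incoherence gain $\sqrt{\mu^mr^*/d^*}$ that the paper's entrywise argument \eqref{eq:frF-1} extracts by pairing the $\ell_{2,\infty}$ bound of Lemma~\ref{lemma:E-rn-Udiff} on the row index with the incoherence of $(\otimes_{k\neq j}\bU_k)\cP_{\bV_j}$ on the column index; that piece must go into the $\norm{\bcI}_{\ell_1}$ term. Conversely, the $\norm{\cP_\TT(\bcI)}_\tF$ term of the stated bound comes from the $\bE_{j,\init}$ piece via a scalar Bernstein inequality (eqs.\ \eqref{eq:frF-3-bern}--\eqref{eq:frF-3}); your $\linf\times\ell_1$ bound for that piece yields $\norm{\bcI}_{\ell_1}C_1\sigma\sqrt{\mu^mr^*d_j\dmax}\log\dmax/n$, which carries no $\sigma/\lambda_{\min}$ factor and is not dominated by either stated term once $\lambda_{\min}/\sigma\gg\sqrt{d^*}$ or $\norm{\bcI}_{\ell_1}\gg\norm{\bcI}_\tF$.

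For $\frE_5$, the summands containing $\widehat{\bcT}_\ubs-\bcT$ cannot be handled by $\norm{\bcE_{\rn}}_{\linf}\norm{\bcI}_{\ell_1}$ times the $\linf$ perturbations: since each observed entry of $\bcE_{\rn}$ is of size $\asymp\sigma d^*/n$, the resulting bound is larger than the stated one by roughly $\sqrt{n/\dmax}$. Moreover $\bcE_{\rn}$ and the one-step subspaces $\widehat{\bU}_{j,1}$ are dependent, so you cannot condition on the latter. The paper resolves both problems at once by building an $\varepsilon$-net over the set of admissible $\widehat{\bU}_{j,1}$ (using the $\ell_{2,\infty}$ control of Lemma~\ref{lemma:l2inf-powerit} to bound its metric entropy by $\exp(Cm\rmax\dmax\log\dmax)$) and applying sub-Gaussian concentration of $\tfrac{d^*}{n}\sum_i\xi_i\langle\bcX_i\times_{j\in\cS}(\cP_{\widehat{\bU}_{j,\net}}-\cP_{\bU_j})\times_{k\in\cS^c}\cP_{\bU_k},\bcI\rangle$ uniformly over the net, reserving the crude size bound only for the $\dmax^{-Cm}$-small discrepancy between $\widehat{\bU}_{j,1}$ and its net proxy. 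Without this step your argument does not reach the stated rate.
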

The proof of Lemma \ref{lemma:oracle-init-frE-2-5} is one of the most technically challenging parts of this paper. We defer it to the Section \ref{apx:sec:proof-all-rem-ind}.

We can then invoke the decomposition in \eqref{eq:test-decomp} and write our test statistic $\widehat{\bcT}$ under population variance as 
\begin{equation}\label{eq:test-stat-popvar}
    W_{\test}(\bcI)=\frac{\left\langle  \widehat{\bcT} - \bcT,\bcI  \right\rangle }{ \sigma\norm{\cP_{ \TT }(\bcI)}_\tF \sqrt{{d^*}/{n}} } = \frac{\frE_1}{\sigma\norm{\cP_{\TT}(\bcI)}_\tF\sqrt{d^*/n} }  + \frac{\sum_{i=2}^{5}\frE_i}{\sigma\norm{\cP_{\TT}(\bcI)}_\tF\sqrt{d^*/n} }.  
\end{equation}
The first term in \eqref{eq:test-stat-popvar} converges to the asymptotic normal random variable following \eqref{eq:core-clt}, and all the error terms from $\frE_2$ to $\frE_5$ are controlled by Lemma \ref{lemma:oracle-init-frE-2-5} with
\begin{equation}\label{eq:test-stat-popvar-rem}
    \abs{\frac{\sum_{i=2}^{5}\frE_i}{\sigma\norm{\cP_{\TT}(\bcI)}_\tF\sqrt{d^*/n} }}\le C C_1 \sqrt{\frac{ m^3 (2\mu)^{m-1} r^* \dmax \log^2\dmax }{ \rmax \rmin n}}+ C \frac{ C_1^2\sigma  \norm{\bcI}_{\ell_1}}{\lambda_{\min}  \norm{\cP_{\TT}(\bcI)}_\tF }\frac{ m^2 (2\mu)^{\frac{3}{2}m} (r^*)^{\frac{3}{2}} \dmax \log \dmax }{\rmin \sqrt{n}}.
\end{equation}
Given the alignment condition in Assumption \ref{asm:alignment}, we have the following bound from $\frE_2$ to $\frE_5$:
\begin{equation}\label{eq:frE2-5-homo}
    \begin{aligned}
       \abs{\frac{\sum_{i=2}^{5}\frE_i}{\sigma\norm{\cP_{\TT}(\bcI)}_\tF\sqrt{d^*/n} }}\le C C_1 \sqrt{\frac{ m^3 (2\mu)^{m-1} r^* \dmax \log^2\dmax }{ \rmax \rmin n}} + C \frac{ C_1^2\sigma  \norm{\bcI}_{\ell_1} }{\alpha_I \lambda_{\min}  \norm{\bcI}_\tF }\sqrt{\frac{ m^4 (2\mu)^{3 m} (r^*)^{3 }\dmax d^*  \log^2 \dmax }{\rmin^2 n } }
    \end{aligned}
\end{equation}
with probability at least $1- \dmax^{-2m}$. Combining this result with \eqref{eq:core-clt}, we can derive that
\begin{equation*}
\begin{aligned}
        &\max_{t\in\R}\abs{\bbP\left( \frac{\left\langle  \widehat{\bcT} - \bcT,\bcI  \right\rangle }{ \sigma\norm{\cP_{ \TT }(\bcI)}_\tF \sqrt{{d^*}/{n}} } \le t\right)- \Phi(t)}  \\
        &\le C C_1 \sqrt{\frac{ m^3 (2\mu)^{m-1} r^* \dmax \log^2\dmax }{ \rmax \rmin n}} + C \frac{ C_1^2\sigma  \norm{\bcI}_{\ell_1}}{\alpha_I \lambda_{\min}  \norm{\bcI}_\tF }\sqrt{\frac{ m^4 (2\mu)^{3 m} (r^*)^{3 }\dmax d^*  \log^2 \dmax }{\rmin^2 n } } +C \dmax^{-2m}.
\end{aligned}
\end{equation*}
By assuming that $n=O(\dmax^{4m})$, we can prove the desired bound.

% \begin{Lemma}[Spectral norm perturbation bound for initialization]\label{lemma:spectral-init} under the same condition as Lemma \ref{lemma:l2inf-init}, we have 

% \end{Lemma}

\subsection{Proof of Theorem \ref{thm:lf-inference-empvar}}\label{apx:sec:clt-indp-var}
Based on the proof of Theorem \ref{thm:lf-inference-popvar}, we  now use the empirical estimation 
\begin{equation*}
\begin{gathered}
        \cP_{\widehat \TT }(\bcI) = \bcI \cdot \left( \cP_{\widehat{\bU}_{1,0}},\dots,\cP_{\widehat{\bU}_{m,0}}\right) + \sum_{j=1}^{m} \widehat{\bcC}_0 \cdot\left(\widehat{\bU}_{1,0},\dots, \widehat{\bW}_{j}, \dots, \widehat{\bU}_{m,0}\right), \text{ with } \\
        \widehat \bW_j=\cP_{\widehat {\bU}_{j,0}}^{\perp} \mathcal{M}_j\left(\bcI\right)\left(\otimes_{k \neq j} \widehat{\bU}_{ k,0}\right) \mathcal{M}_j^{\dagger}(\widehat {\bcC }_0),
\end{gathered}
\end{equation*}
and $\widehat{\sigma}^2 = \frac{1}{n}\sum_{i=1}^n\left(Y_i-\left\langle \bcT_{\init},\bcX_i \right\rangle\right)^2$ to approximate $ \cP_{ \TT }(\bcI) $ and $\sigma^2$. We first check the difference $\abs{\norm{\cP_{\widehat \TT }(\bcI)}_\tF -\norm{\cP_{\TT }(\bcI)}_\tF}$. Although the difference between projectors $\cP_{\widehat \TT}$ and $\cP_{\TT}$  has been partially studied in, e.g., \cite{lubich2013dynamical} or in \cite{cai2020provable,cai2023generalized}, the techniques developed in previous literature are not applicable in our case since our $\bcI$ here can be sparse in many cases.   Instead, we establish a new technique to handle the difference between projectors, which is given in Lemma \ref{lemma:diff-tangent-proj}. According to Lemma \ref{lemma:diff-tangent-proj}, we have  
    \begin{equation}\label{eq:diff-tangent-proj}
        \norm{\cP_{\widehat \TT }(\bcI) - \cP_{\TT }(\bcI) }_\tF \le \norm{\bcI}_{\ell_1}\frac{C C_1  m^2\kappa_0 \sigma  }{\lambda_{\min}}\sqrt{\frac{(2\mu)^m r^*\dmax^2 \log \dmax }{n}}.  
    \end{equation}
    Given the SNR condition that 
    \begin{equation*}
         \frac{\lambda_{\min}}{\sigma}\ge C_{\gap} \frac{\norm{\bcI}_{\ell_1}}{\norm{\bcI}_{\tF}}\frac{C_1  m^2\kappa_0  }{\alpha_I }\sqrt{\frac{(2\mu)^m r^*d^*\dmax \log \dmax }{n}},
    \end{equation*}
    we have
    \begin{equation}\label{eq:PT-diff-var}
        \frac{\norm{\cP_{\widehat \TT }(\bcI) - \cP_{\TT }(\bcI) }_\tF}{ \norm{\cP_{\TT }(\bcI) }_\tF } \le C\frac{\norm{\bcI}_{\ell_1}}{\norm{\bcI}_{\tF}}\frac{C_1  m^2\kappa_0 \sigma  }{\alpha_I \lambda_{\min}}\sqrt{\frac{(2\mu)^m r^*d^*\dmax \log \dmax }{n}}\le\frac{1}{2}.
    \end{equation}
We now focus on the variance estimation:
\begin{equation}\label{eq:sigma-est}
\begin{aligned}
            \widehat{\sigma}^2& = \frac{1}{n}\sum_{i=1}^n\left(Y_i-\left\langle \bcT_{\init},\bcX_i \right\rangle\right)^2 =  \frac{1}{n}\sum_{i=1}^n\left( \xi_i -\left\langle\bcT^{\vartriangle},\bcX_i \right\rangle\right)^2\\
        & =\frac{1}{n} \sum_{i=1}^n \xi_i^2 -2 \frac{1}{n} \sum_{i=1}^n \xi_i \left\langle\bcT^{\vartriangle},\bcX_i \right\rangle + \frac{1}{n}\sum_{i=1}^n\left\langle\bcT^{\vartriangle},\bcX_i \right\rangle^2,
\end{aligned}
\end{equation}
where  $\bcT^{\vartriangle} = \widehat \bcT_{\init}- \bcT$. We deal with the terms in \eqref{eq:sigma-est} respectively by comparing them with $\sigma^2$. Using the concentration of sub-exponential random variables on $\frac{1}{n} \sum_{i=1}^n \xi_i^2$, we have  
\begin{equation*}
   \abs{ \frac{1}{n} \sum_{i=1}^n \xi_i^2 - \sigma^2} \le C\sigma^2\frac{ m\log\dmax}{n} + C\sigma^2\sqrt{\frac{m\log\dmax}{n}}\le  C\sigma^2\sqrt{\frac{m\log\dmax}{n}},
\end{equation*}
with probability at least $1-\dmax^{-3m}$,
since $\norm{\xi_i^2}_{\psi_1}\le \norm{\xi_i}_{\psi_2}^2\le C \sigma^2$. The sub-Gaussian tail indicates that the inequality $\frac{1}{n} \sum_{i=1}^n \abs{\xi_i}\le C\sigma\left(1+\sqrt{\frac{m\log\dmax}{n}}\right)$ holds with probability at least $1-\dmax^{-3m}$. Given that $n\ge C_1^2\vee(\rmax m) \dmax \log\dmax$, we have 
\begin{equation*}
    \abs{\frac{1}{n} \sum_{i=1}^n \xi_i \left\langle\bcT^{\vartriangle},\bcX_i \right\rangle }\le  \frac{1}{n} \sum_{i=1}^n \abs{\xi_i}\norm{ \widehat\bcT_{\init} - \bcT}_{\linf} \le C C_1\sigma^2\sqrt{\frac{\dmax \log\dmax }{ n }}.
\end{equation*}
The last term $\frac{1}{n}\sum_{i=1}^n\left\langle\bcT^{\vartriangle},\bcX_i \right\rangle^2$ can be controlled by the simple bound:
\begin{equation*}
    \frac{1}{n}\sum_{i=1}^n\left\langle\bcT^{\vartriangle},\bcX_i \right\rangle^2 \le C_1^2 \sigma^2 \frac{\dmax \log\dmax }{ n }.
\end{equation*}
Thus, we conclude that the variance estimation gives the error: 
\begin{equation}\label{eq:sigma-est-ind}
    \abs{\frac{ \widehat{\sigma}^2}{\sigma^2}-1}\le C\big(\sqrt{\frac{m\log\dmax}{n}}+C_1\sqrt{\frac{\dmax\log\dmax}{n}} + C_1^2 \frac{\dmax \log\dmax }{ n }\big)\le CC_1 \sqrt{\frac{\dmax\log\dmax}{n}}
\end{equation}
Combining the difference between $\norm{\cP_{\widehat \TT }(\bcI)}_\tF$ and $\norm{\cP_{ \TT }(\bcI)}_\tF$ studied in \eqref{eq:diff-tangent-proj}, and the variance estimation in \eqref{eq:sigma-est-ind}, we have 

\begin{equation}\label{eq:homovar-vardiff}
\begin{aligned}
       \abs{ \frac{\widehat\sigma \norm{\cP_{\widehat \TT }(\bcI)}_\tF}{\sigma \norm{\cP_{ \TT }(\bcI)}_\tF} - 1 } & \le 2\left(\abs{ \frac{\widehat\sigma }{\sigma} - 1 }+ \abs{  \frac{\norm{\cP_{\widehat \TT }(\bcI)}_\tF}{\norm{\cP_{ \TT }(\bcI)}_\tF} - 1 }\right) \\
       &\le 2 \left(\abs{ \frac{\widehat\sigma^2 }{\sigma^2} - 1 }+ \abs{  \frac{\norm{\cP_{\widehat \TT }(\bcI) - \cP_{\TT }(\bcI) }_\tF}{ \norm{\cP_{\TT }(\bcI) }_\tF } }\right) \\
       &\le CC_1 \sqrt{\frac{\dmax\log\dmax}{n}} + C\frac{\norm{\bcI}_{\ell_1}}{\norm{\bcI}_{\tF}}\frac{C_1  m^2\kappa_0 \sigma  }{\alpha_I \lambda_{\min}}\sqrt{\frac{(2\mu)^m r^*d^*\dmax \log \dmax }{n}}
\end{aligned}
\end{equation}
Notice that \eqref{eq:homovar-vardiff} holds with probability $1-2\dmax^{-3m}$ no matter whether $\widehat{\TT}$ and $\bcT^{\vartriangle}$ depend on the observations $\{(\bcX_i,\xi_i)\}_{i=1}^n$ or not.

Moreover, We know that $ W_{\test}(\bcI)$ can be decomposed by terms related $\frE_i$ for $i=1,\dots,5$, and when $2\le i \le 5$, the terms have been bounded by \eqref{eq:frE2-5-homo}. For $\frE_1$, the concentration inequality also gives 
\begin{equation*}
    \abs{\frac{\frE_1}{\sigma\norm{\cP_{\TT}(\bcI)}_\tF\sqrt{d^*/n} } } \le C \sqrt{\frac{m \mu^{m-1}r^*\dmax}{\rmin }} \frac{m\log\dmax}{\sqrt{n}} + C\sqrt{m\log\dmax} \le  C\sqrt{m\log\dmax}
\end{equation*}
under the sample size condition of Theorem \ref{thm:lf-inference-popvar} with probability at least $1-\dmax^{-3m}$. Here we use the fact that 
$$\norm{\xi_i \left\langle \bcX_i, \cP_{\TT}(\bcI) \right\rangle}_{\psi_2}\le C\sigma \sqrt{\frac{m\mu^{m-1}r^*\dmax}{\rmin d^*}}\norm{ \cP_{\TT}(\bcI)}_{\tF}.$$
Thus, we can control $ W_{\test}(\bcI)$ collectively by:

\begin{equation}\label{eq:bound-Wtest}
\begin{aligned}
        \abs{W_{\test}(\bcI)}& \le C\sqrt{m\log\dmax} + C C_1 \sqrt{\frac{ m^3 (2\mu)^{m-1} r^* \dmax \log^2\dmax }{ \rmax \rmin n}} + C \frac{ C_1^2\sigma  \norm{\bcI}_{\ell_1} }{\alpha_I \lambda_{\min}  \norm{\bcI}_\tF }\sqrt{\frac{ m^4 (2\mu)^{3 m} (r^*)^{3 }\dmax d^*  \log^2 \dmax }{\rmin^2 n } } \\
   & \le C\sqrt{m\log\dmax}
\end{aligned}
\end{equation}
We can also check that the following decomposition is valid:
\begin{equation}\label{eq:hatW-decomp}
\begin{aligned}
        & \widehat W_{\test}(\bcI)=\frac{\left\langle  \widehat{\bcT} - \bcT,\bcI  \right\rangle }{ \widehat\sigma \norm{\cP_{\widehat \TT }(\bcI)}_\tF \sqrt{{d^*}/{n}} } \\ &=\frac{\frE_1}{\sigma\norm{\cP_{\TT}(\bcI)}_\tF\sqrt{d^*/n} } + \frac{\sum_{i=2}^5\frE_i}{\sigma\norm{\cP_{\TT}(\bcI)}_\tF\sqrt{d^*/n} } + W_{\test}(\bcI)\left( \frac{\sigma \norm{\cP_{\widehat \TT }(\bcI)}_\tF}{\widehat\sigma \norm{\cP_{\widehat \TT }(\bcI)}_\tF}-1\right).
\end{aligned}
\end{equation}
Thus, plugging in \eqref{eq:frE2-5-homo}, \eqref{eq:homovar-vardiff}, and  \eqref{eq:bound-Wtest}, we have
\begin{equation*}
\begin{aligned}
        &\max_{t\in\R}\abs{\bbP\left( \widehat W_{\test}(\bcI)\le t\right)- \Phi(t)}  \\
        &\le  C C_1 \sqrt{\frac{m \mu^{m} \rmin r^* \dmax \log^2 \dmax }{n}}+ C \frac{ C_1^2\sigma  }{ \lambda_{\min}} \frac{\norm{\bcI}_{\ell_1}}{\alpha_I\norm{\bcI}_\tF }   \sqrt{\frac{ m^4 (2\mu)^{3 m} (r^*)^{3 }\dmax d^*  \log^2 \dmax }{\rmin^2 n } } \\
        &\quad \quad + CC_1 \sqrt{\frac{m\dmax\log^2\dmax}{n}} + C\frac{\norm{\bcI}_{\ell_1}}{\norm{\bcI}_{\tF}}\frac{C_1  m^{2.5}\kappa_0 \sigma  }{\alpha_I \lambda_{\min}}\sqrt{\frac{(2\mu)^m r^*d^*\dmax \log^2 \dmax }{n}} \\
        &\le  C C_1 \sqrt{\frac{m \mu^{m} \rmin r^* \dmax \log^2 \dmax }{n}} + C\frac{\norm{\bcI}_{\ell_1}}{\norm{\bcI}_{\tF}}\frac{C_1^2  \kappa_0 \sigma  }{\alpha_I \lambda_{\min}}\sqrt{\frac{m^5 (2\mu)^{3m} (r^*)^3 d^*\dmax \log^2 \dmax }{\rmin^2 n}} 
\end{aligned}
\end{equation*}

\subsection{Proof of Corollary \ref{coro:CIs}}
\begin{proof}
    The proof is quite straightforward. It is clear that 
    \begin{equation*}
        \bbP\left(\left\langle\bcT,\bcI\right\rangle\in \widehat{\mathrm{CI}}_\alpha(\bcI) \right) = \bbP\left( -z_{\frac{\alpha}{2}}\le \widehat W_{\test}(\bcI) \le z_{\frac{\alpha}{2}}\right) = \bbP\left( \widehat W_{\test}(\bcI)\le z_{\frac{\alpha}{2}}\right) - \bbP\left( \widehat W_{\test}(\bcI)\le -z_{\frac{\alpha}{2}}\right).
    \end{equation*}
    Thus, as has been revealed in Theorem \ref{thm:lf-inference-empvar}, we have
    \begin{equation*}
        \begin{aligned}
           & \abs{\bbP\left(\left\langle\bcT,\bcI\right\rangle\in \widehat{\mathrm{CI}}_\alpha(\bcI) \right) - (1-\alpha)} \\
           & \le  \abs{ \bbP\left( \widehat W_{\test}(\bcI)\le z_{\frac{\alpha}{2}}\right) - (1-\frac{\alpha}{2}) } +\abs{ \bbP\left( \widehat W_{\test}(\bcI)\le-  z_{\frac{\alpha}{2}}\right) -\frac{\alpha}{2} } \\
           & \le 2 \max_{t\in\R}\abs{\bbP\left( \widehat W_{\test}(\bcI)\le t\right)- \Phi(t)} \\
           & \le C C_1 \sqrt{\frac{m \mu^{m} \rmin r^* \dmax \log^2 \dmax }{n}} + C\frac{\norm{\bcI}_{\ell_1}}{\norm{\bcI}_{\tF}}\frac{C_1^2  \kappa_0 \sigma  }{\alpha_I \lambda_{\min}}\sqrt{\frac{m^5 (2\mu)^{3m} (r^*)^3 d^*\dmax \log^2 \dmax }{\rmin^2 n}}.
        \end{aligned}
    \end{equation*}
\end{proof}

\subsection{Proof of Theorem \ref{thm:clt-lol-init}, \ref{thm:clt-dep-init} with dependent initialization}\label{apx:sec:proof-clt-dep-init}
\begin{proof}
Since the requirement of Theorem \ref{thm:clt-dep-init} is weaker than Theorem \ref{thm:clt-lol-init}, we shall first prove Theorem \ref{thm:clt-dep-init} and then transfer it to Theorem \ref{thm:clt-lol-init}. The proof of Theorem \ref{thm:clt-dep-init} follows essentially the same ideas as the proof of Theorem \ref{thm:lf-inference-popvar} in Section \ref{apx:sec:clt-indp}. However, the difference is that, when the initialization depends on the data $\left\{(\bcX_i,\xi_i)\right\}_{i=1}^n$, the uniform error bounds on $\bE_{j,\init}$, $\bE_{j,\rn}$, and $\cP_{\widehat{\bU}_{j,1}} - \cP_{\bU_j}$, as well as the remainder terms need to be carefully handled. To construct the uniform bound, a refined analysis of the covering numbers on the initialization $\widehat{\bcT}_{\init}$ and the initial singular subspaces $\{\cP_{\widehat{\bU}_{j,0}} \}_{j\in[m]}$ is presented.

\subsubsection{Covering numbers of low-rank tensors}
We know that $\widehat \bcT_{\init}$ is multilinear rank-$\br$ tensor and $\norm{\widehat \bcT_{\init}-\bcT}_{\linf}$ meets the initialization bound \eqref{eq:tensor-init}. We can analyze the metric entropy of rank-$\br$ tensors with bounded $\linf$-norm perturbation:
\begin{Lemma}[Covering number of perturbed rank-$\br$ tensors]\label{lemma:cover-init}
    Assume $\bcT$ is multilinear rank-$\br$ with incoherence parameter $\mu$ and minimum eigenvalue $\lambda_{\min}$. Denote the set of multilinear rank-$\br$ tensor in $\R^{d_1\times\cdots \times d_m }$ with bounded $\linf$-norm perturbation as $\mathbb{T}_{\init}(c_0)=\left\{ \widehat{\bcT}_0: \ \norm{\widehat{\bcT}_0- \bcT}_{\linf} \le c_0 , \rank(\widehat{\bcT}_0)=\br\right\}, $ for any $c_0\le \frac{\lambda_{\min}}{16\sqrt{d^*}}$. Then, the covering number of $\mathbb{T}_{\init}(c_0)$ can be controlled by 
    \begin{equation*}
        \cN(\mathbb{T}_{\init}(c_0),\norm{\cdot}_{\linf},\varepsilon) \le \left(1+ C 2^{2m+5}\kappa_0\sqrt{(2\mu)^m r^*\rmax^2} \frac{c_0}{\varepsilon}\right)^{r^*+ \sum_{j=1}^{m}d_j r_j }.
    \end{equation*}
\end{Lemma}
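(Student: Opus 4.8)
The plan is to build an $\varepsilon$-net for $\mathbb{T}_{\init}(c_0)$ in the $\linf$-norm by parametrizing each element via its Tucker factors and then netting each factor separately. First I would fix a reference Tucker decomposition $\bcT = \bcC\times_{j=1}^m\bU_j$ with the normalization $\cM_j(\bcC)\cM_j(\bcC)^\top = \bLambda_j^2$ (so $\norm{\bcC}_\tF\le\sqrt{r^*}\,\lambda_{\max}$ and each $\bU_j\in\O^{d_j\times r_j}$). Any $\widehat\bcT_0\in\mathbb{T}_{\init}(c_0)$ is multilinear rank $\br$, hence admits a decomposition $\widehat\bcT_0 = \widehat\bcC\times_{j=1}^m\widehat\bU_j$; the key preliminary observation is that the $\linf$-perturbation bound $\norm{\widehat\bcT_0-\bcT}_{\linf}\le c_0\le\lambda_{\min}/(16\sqrt{d^*})$ forces $\widehat\bcT_0$ into a small $\tF$-ball around $\bcT$ (since $\norm{\widehat\bcT_0-\bcT}_\tF\le\sqrt{d^*}\,c_0\le\lambda_{\min}/16$), which via Weyl's inequality and a Davis–Kahan / $\sin\Theta$ argument pins down $\norm{\cP_{\widehat\bU_{j,0}}-\cP_{\bU_j}}_2$ and keeps $\norm{\widehat\bcC}_\tF$ comparable to $\norm{\bcC}_\tF$. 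Concretely, one can choose a decomposition of $\widehat\bcT_0$ whose factors satisfy $\widehat\bU_j\in\O^{d_j\times r_j}$, $\norm{\widehat\bU_j-\bU_j\bO_j}_2\lesssim c_0\sqrt{d^*}/\lambda_{\min}$ for some rotation $\bO_j$, and $\norm{\widehat\bcC-\bcC}_\tF\lesssim \kappa_0\sqrt{r^*}\,c_0\sqrt{d^*}$ (up to the same rotation); here the incoherence of $\bcT$ is used to control the $\linf$-size of the pieces.

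Next I would net the parameter space. Let $\delta>0$ be a parameter to be chosen. Build a $\delta$-net $\cN_{\bcC}$ of the ball $\{\bcD\in\R^{\br}: \norm{\bcD-\bcC}_\tF\le R_C\}$ in Frobenius norm, of size $(1+2R_C/\delta)^{r^*}$, and for each $j$ a $\delta$-net $\cN_j$ of $\{\bW\in\R^{d_j\times r_j}: \norm{\bW}_2\le 1+R_U\}$ in operator (or Frobenius) norm, of size $(1+C(1+R_U)/\delta)^{d_j r_j}$ — this is the standard volumetric bound. The candidate net for $\mathbb{T}_{\init}(c_0)$ is $\{\bcD\times_{j=1}^m\bW_j: \bcD\in\cN_{\bcC}, \bW_j\in\cN_j\}$, of cardinality $(1+2R_C/\delta)^{r^*}\prod_{j}(1+C(1+R_U)/\delta)^{d_j r_j}$. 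To bound the approximation error, I would use the multilinear telescoping identity
\begin{equation*}
\bcD\times_{j=1}^m\bW_j - \widehat\bcC\times_{j=1}^m\widehat\bU_j = (\bcD-\widehat\bcC)\times_{j=1}^m\bW_j + \sum_{j=1}^m \widehat\bcC\times_{k<j}\widehat\bU_k\times_j(\bW_j-\widehat\bU_j)\times_{k>j}\bW_k,
\end{equation*}
and then pass to the $\linf$-norm entrywise: since the $\bW_j$ can be taken incoherent (by intersecting the nets with an incoherence constraint, or by noting $\widehat\bU_j$ is near the incoherent $\bU_j$) each term contributes at most $\delta$ times a factor of order $\kappa_0\sqrt{(2\mu)^m r^* \rmax^2}$ coming from the $\linf$-to-$\tF$ conversion $\norm{\bcA\times_{j}\bV_j}_{\linf}\le \prod_j\norm{\bV_j}_{2,\infty}\cdot(\text{core size})$ together with incoherence $\norm{\widehat\bU_j}_{2,\infty}\le\sqrt{2\mu r_j/d_j}$. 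Collecting the $m+1$ telescoped terms gives $\linf$-approximation error $\le C 2^{m}\kappa_0\sqrt{(2\mu)^m r^*\rmax^2}\,\delta$; setting this $=\varepsilon$, i.e. $\delta = \varepsilon/(C2^m\kappa_0\sqrt{(2\mu)^m r^*\rmax^2})$, and absorbing $R_C\asymp\kappa_0\sqrt{r^*}c_0\sqrt{d^*}$, $R_U\asymp c_0\sqrt{d^*}/\lambda_{\min}$ (both $\lesssim$ the displayed constant times $c_0$ after the dust settles) into a single bound $1+C2^{2m+5}\kappa_0\sqrt{(2\mu)^m r^*\rmax^2}\,c_0/\varepsilon$ per coordinate, with total exponent $r^* + \sum_j d_j r_j$, yields exactly the claimed estimate.

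The main obstacle I anticipate is the non-uniqueness of the Tucker decomposition: the map from factors $(\bcC,\bU_1,\dots,\bU_m)$ to the tensor has a large fiber (the $\prod_j\O(r_j)$ gauge group), so a careless net on the factor space is wildly redundant but, worse, a crude approximation bound can blow up if the chosen decomposition of $\widehat\bcT_0$ is badly conditioned or non-incoherent. The fix is to commit, for every $\widehat\bcT_0\in\mathbb{T}_{\init}(c_0)$, to a \emph{canonical} decomposition — e.g. the one obtained from $\widehat\bcT_0$'s own HOSVD, rotated to best-align with $\bU_j$ — and to prove (via the Davis–Kahan bound above, which is where $c_0\le\lambda_{\min}/(16\sqrt{d^*})$ is essential) that this canonical choice has operator-norm-bounded, incoherent factors with core norm $\asymp\lambda_{\max}\sqrt{r^*}$. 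A secondary technical point is handling the $\linf$ norm rather than $\tF$ throughout the telescoping: one must keep the intermediate factors incoherent so that $\norm{\cdot}_{2,\infty}$ bounds propagate; this is why the $(2\mu)^m$ and $\rmax^2$ factors (the latter from $\norm{\cM_j(\widehat\bcC)}_2$ versus $\norm{\widehat\bcC}_\tF$) appear in the constant.
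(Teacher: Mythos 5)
Your proposal is essentially the paper's proof: for each $\widehat{\bcT}_0\in\mathbb{T}_{\init}(c_0)$ fix a rotation-aligned, incoherent Tucker decomposition whose factor and core perturbations are of size $O\big(c_0\sqrt{d^*}\sqrt{\mu r_j/d_j}/\lambda_{\min}\big)$ and $O(\kappa_0 c_0\sqrt{d^*})$ (the paper's Lemmas on initialization perturbation and rotation alignment play the role of your Weyl/Davis–Kahan step), then net the core and each factor separately and transfer the factor resolution to an $\ell_\infty$ bound on the tensor via telescoping plus incoherence. The one refinement needed so the stated constant actually comes out is that the factor nets must be balls centered at the aligned $\bU_j$ with radius proportional to $c_0$ and with both radius and resolution measured in the row-wise $\ell_{2,\infty}$ norm (as the paper does), rather than the origin-centered, radius-$(1+R_U)$ operator-norm balls you wrote down: the latter would destroy the $c_0/\varepsilon$ dependence, and even with the correct center an operator-norm radius introduces an extra $\sqrt{d_j}$ into the per-coordinate factor (harmless downstream, but not the claimed $\sqrt{(2\mu)^m r^*\rmax^2}$ constant).
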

With the help of Lemma \ref{lemma:cover-init}, we can let $c_0$ be $C_1 \sigma \sqrt{\frac{\dmax \log \dmax }{n}} $, and select $\varepsilon= C_1 \sigma \sqrt{\frac{\log \dmax }{\dmax^{C_{\varepsilon} m } n}}$ for a large constant $C_{\varepsilon}\ge 1$ such that $n\le \dmax^{C_{\varepsilon} m}$. Without loss of generality, we also treat $C_{\varepsilon}\le C$. Then, it is clear that under the initialization condition, there exists an $\varepsilon$-net $\operatorname{Net}(\varepsilon)$ such that $\abs{\operatorname{Net}(\varepsilon)}\le \dmax^{C_{\varepsilon}  m\rmax\dmax }$ and for any initialization $\widehat \bcT_{\init}$, there always exist an element $\widehat \bcT_{\net}\in \operatorname{Net}(\varepsilon)$ such that $\widehat \bcT_{\net} = \widehat\bcC_{\net}\times_{j=1}^{m}\widehat{\mathbf{U}}_{j,\net}$, and 
\begin{equation*}
    \norm{\widehat{\bcT}_\init- \widehat \bcT_{\net}}_{\linf} \le C_1 \sigma \sqrt{\frac{\log \dmax }{\dmax^{C_{\varepsilon} m } n}}.
\end{equation*}
Moreover, according to the proof of Lemma \ref{lemma:l2inf-init}, we also have 
    \begin{equation*}
   \norm{ \cP_{\widehat{\bU}_{j,0}} - \cP_{\widehat{\bU}_{j,\net}} }_{2,\infty} \le \frac{8 C_1\sigma  }{\lambda_{\min}}\sqrt{\frac{\mu r_j}{d_j} }\sqrt{\frac{d^* \log \dmax }{\dmax^{C_{\varepsilon} m } n}}.
\end{equation*}
The $\varepsilon$ here is small enough for our purpose and we only need to consider the uniform bound of all elements in $\operatorname{Net}(\varepsilon)$ instead of all the initializations satisfying the condition \eqref{eq:tensor-init}.

\subsubsection{Controlling errors under dependent initialization}
    \noindent We now control the errors under dependence using the $\varepsilon$-net constructed above. 
    \begin{Lemma}\label{lemma:Einit-Ern-dep-l2} Under dependent initialization, the following bounds hold with probability at least $1-m2^{m+3}\dmax^{-3m}$:
\begin{equation}\label{eq:E-rn-Ber-dep}
    \begin{aligned}
        \norm{ \bE_{1,\rn}\left(\otimes_{j\neq 1} \widehat{\mathbf{U}}_{j,0} \right) }_2 &  \le C \sigma\left(1+\frac{C_1\sigma}{\lambda_{\min}}\sqrt{\frac{m^2 d^*\dmax\log\dmax}{n}} + \frac{ C_1^2 \sigma^2  }{\lambda_{\min}^2}\frac{d^*\dmax^{\frac{3}{2}} \log \dmax }{n} \right) \sqrt{\frac{  m \mu^{m-1} r^* d_1  d^*\log \dmax }{\rmin r_1 n}},
    \end{aligned}
\end{equation}
\begin{equation}\label{eq:dep-E-init-uniform}
    \norm{\bE_{1,\init}\left(\otimes_{j\neq 1} \widehat{\mathbf{U}}_{j,0} \right) }_2 \le C C_1 \sigma \sqrt{\frac{ m (2\mu)^{m-1} \rmax r^* \dmax d^*\log \dmax }{r_1 n}\cdot \frac{d_1\dmax \log \dmax}{n}\wedge 1}.
\end{equation}

    \end{Lemma}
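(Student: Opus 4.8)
\textbf{Proof proposal for Lemma \ref{lemma:Einit-Ern-dep-l2}.}

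The plan is to establish both bounds by a covering (epsilon-net) argument on the initialization, combined with the matrix Bernstein inequality already used in the independent case. The key difficulty under dependent initialization is that $\widehat{\bcT}_{\init}$ and the subspaces $\{\widehat{\bU}_{j,0}\}$ are not independent of the data $\{(\bcX_i,\xi_i)\}_{i=1}^n$, so the conditional moment bounds \eqref{eq:E-rn-Ber-condition} and \eqref{eq:E-init-Ber-condition} cannot be invoked directly. Instead, I will fix a deterministic element $\widehat{\bcT}_{\net} = \widehat\bcC_{\net}\times_{j=1}^m \widehat{\bU}_{j,\net}$ of the net $\operatorname{Net}(\varepsilon)$ from Lemma \ref{lemma:cover-init}, derive the bounds for that fixed element using the independent-case machinery, then take a union bound over all $|\operatorname{Net}(\varepsilon)| \le \dmax^{C_\varepsilon m\rmax\dmax}$ elements (which only inflates the failure probability polynomially and the bound by a constant, since the net size enters through its logarithm which is $O(m\rmax\dmax\log\dmax)$, already absorbed in the sample size condition of Theorem \ref{thm:clt-dep-init}). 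Finally I pass from the net element $\widehat{\bU}_{j,\net}$ back to the true $\widehat{\bU}_{j,0}$ using the approximation $\norm{\cP_{\widehat{\bU}_{j,0}} - \cP_{\widehat{\bU}_{j,\net}}}_{2,\infty}$ bound recorded right after Lemma \ref{lemma:cover-init}, whose $\linf$-perturbation is of order $\dmax^{-C_\varepsilon m/2}$ times the leading term and hence negligible.

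For the noise term \eqref{eq:E-rn-Ber-dep}, first I write $\bE_{1,\rn}\big(\otimes_{j\neq 1}\widehat{\bU}_{j,\net}\big) = \frac{d^*}{n}\sum_{i=1}^n \xi_i \cM_1(\bcX_i)\big(\otimes_{j\neq 1}\widehat{\bU}_{j,\net}\big)$. Since $\widehat{\bU}_{j,\net}$ is deterministic and $\mu$-incoherent (up to a factor $2$ via Lemma \ref{lemma:l2inf-init} applied to the net element, noting the net element itself satisfies the $\linf$ initialization bound up to the tiny $\varepsilon$), the conditional moment bounds \eqref{eq:E-rn-Ber-condition} hold verbatim with $\widehat{\bU}_{j,0}$ replaced by $\widehat{\bU}_{j,\net}$. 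Matrix Bernstein then gives the bound $C\sigma\sqrt{m\mu^{m-1}r^* d_1 d^*\log\dmax/(\rmin r_1 n)}$ for that fixed net element, with probability at least $1 - \dmax^{-3m}$ per element. The extra multiplicative factor $\big(1 + \frac{C_1\sigma}{\lambda_{\min}}\sqrt{m^2 d^*\dmax\log\dmax/n} + \frac{C_1^2\sigma^2}{\lambda_{\min}^2}\frac{d^*\dmax^{3/2}\log\dmax}{n}\big)$ appearing in the statement comes from transferring from $\widehat{\bU}_{j,\net}$ to $\widehat{\bU}_{j,0}$: expanding $\otimes_{j\neq1}\cP_{\widehat{\bU}_{j,0}} = \otimes_{j\neq1}(\cP_{\widehat{\bU}_{j,\net}} + (\cP_{\widehat{\bU}_{j,0}}-\cP_{\widehat{\bU}_{j,\net}}))$ by the binomial/telescoping identity, the leading term is the net bound and each correction term carries a factor $\norm{\cP_{\widehat{\bU}_{j,0}}-\cP_{\widehat{\bU}_{j,\net}}}_2 \lesssim \frac{C_1\sigma}{\lambda_{\min}}\sqrt{d^*\dmax\log\dmax/n}$ (spectral-norm bound from Lemma \ref{lemma:l2inf-init}), which I bound crudely by the spectral norm of the full noise sum times these correction factors; collecting the at-most-$2^{m-1}$ such terms and their at-most-quadratic-in-correction contributions yields exactly the displayed prefactor after also accounting for the cruder $\linf$ bounds on the noise tensor $\bE_{1,\rn}$ itself (which contribute the $\sqrt{\dmax}$ and $\dmax^{3/2}$ powers).

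For the initialization term \eqref{eq:dep-E-init-uniform}, I use $\bE_{1,\init}\big(\otimes_{j\neq1}\widehat{\bU}_{j,0}\big) = \frac{d^*}{n}\sum_i \langle\bcT^\vartriangle_1,\cM_1(\bcX_i)\rangle\cM_1(\bcX_i)\big(\otimes_{j\neq1}\widehat{\bU}_{j,0}\big) - \bcT^\vartriangle_1\big(\otimes_{j\neq1}\widehat{\bU}_{j,0}\big)$ where $\bcT^\vartriangle = \widehat{\bcT}_{\init}-\bcT$. Here the covering argument is applied jointly to $\bcT^\vartriangle$ (through $\widehat{\bcT}_{\init}$) and the subspaces. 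Over a fixed net element, $\norm{\bcT^\vartriangle_{\net}}_{\linf} \le C_1\sigma\sqrt{\dmax\log\dmax/n}$ is deterministic, so matrix Bernstein with the moment bounds \eqref{eq:E-init-Ber-condition} (net versions) gives $\norm{\bE_{1,\init,\net}(\otimes\widehat{\bU}_{j,\net})}_2 \lesssim C_1\sigma\sqrt{m(2\mu)^{m-1}r^*\dmax d^*\log\dmax/(r_1 n)}\cdot\sqrt{d_1\dmax\log\dmax/n}$ — the extra $\sqrt{d_1\dmax\log\dmax/n}$ coming from the variance proxy being of order $(\dmax\log\dmax/n)$ smaller than in the noise case because $\bcT^\vartriangle$ is itself $\linf$-small. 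The trivial deterministic bound $\norm{\bcT^\vartriangle_{\net}}_{\linf}\cdot\norm{\otimes\widehat{\bU}_{j,\net}}_{2,\infty}\cdot(\text{dimension})^{1/2}$ gives the complementary bound without the $\frac{d_1\dmax\log\dmax}{n}$ factor, explaining the $\wedge 1$ in the statement. Transferring back to $\widehat{\bU}_{j,0}$ and taking the union bound over the net (the net size over $\widehat{\bcT}_{\init}$ accounts for the factor $m2^{m+3}$ in the probability, together with the $m$ modes and the $2^{m-1}$ binomial terms) completes the argument.

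\textbf{Main obstacle.} The hardest part is organizing the transfer from the net element back to the actual (data-dependent) subspaces cleanly enough that the resulting prefactors are exactly as stated, rather than merely polynomially-larger — this requires carefully tracking which correction terms are controlled by $\ell_{2,\infty}$ perturbation bounds versus spectral bounds, and ensuring that the net resolution $\varepsilon \asymp \dmax^{-C_\varepsilon m/2}$ is fine enough that all net-approximation errors sit strictly below the leading terms. A secondary subtlety is confirming that the net elements themselves inherit incoherence (so that the moment bounds apply with $2\mu$ rather than a blown-up constant), which follows from Lemma \ref{lemma:l2inf-init} applied to each net element under the SNR condition of Theorem \ref{thm:clt-dep-init}.
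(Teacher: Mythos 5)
There is a genuine gap in both halves of your argument, and it stems from applying the $\varepsilon$-net at the wrong level. For the noise term \eqref{eq:E-rn-Ber-dep}, you propose to fix a net element, run the independent-case Bernstein bound, and union-bound over the whole net, claiming this "only inflates the bound by a constant." It does not: the net from Lemma~\ref{lemma:cover-init} has cardinality $\exp\bigl(Cm\rmax\dmax\log\dmax\bigr)$, so pushing each element's failure probability down to $\dmax^{-Cm\rmax\dmax}$ inflates the Bernstein deviation by a factor of order $\sqrt{m\rmax\dmax}$. Your leading term would then be $\asymp\sigma\sqrt{m\mu^{m-1}\rmax r^* d_1\dmax d^* \log\dmax/(r_1 n)}$, i.e.\ $\sqrt{\rmax\dmax}$ larger than the stated leading term, which carries no such inflation. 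The paper avoids this by expanding $\otimes_{j\neq 1}\cP_{\widehat{\bU}_{j,0}}$ around the \emph{true} subspaces $\otimes_{j\neq 1}\cP_{\bU_j}$: the zeroth-order term involves only deterministic $\bU_j$ (plain Bernstein, no net); the first-order terms factor the data-dependent $\cP_{\widehat{\bU}_{j,0}}-\cP_{\bU_j}$ out in operator norm (via Lemma~\ref{lemma:l2inf-init}) and apply Bernstein to $\sum_i\xi_i\cM_1(\bcX_i)\otimes_j\bI_{d_j}(\otimes_{k\neq j,1}\bU_k)$, producing the middle prefactor; only the terms with $\abs{\cS}\ge 2$ subspace differences go through the net, where the extra $\bigl(\tfrac{C_1\sigma}{\lambda_{\min}}\sqrt{d^*\dmax\log\dmax/n}\bigr)^{\abs{\cS}}$ smallness absorbs the union-bound cost. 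Your bookkeeping also conflates $\norm{\cP_{\widehat{\bU}_{j,0}}-\cP_{\widehat{\bU}_{j,\net}}}_2$ (which is $\varepsilon$-tiny by construction and cannot generate the stated prefactor) with $\norm{\cP_{\widehat{\bU}_{j,0}}-\cP_{\bU_j}}_2$, which is what actually drives those corrections.

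For the initialization term \eqref{eq:dep-E-init-uniform}, your phase-1 (net plus Bernstein) matches the paper, but your proposed alternative for the "$\wedge\,1$" branch — a trivial size bound $\norm{\bcT^\vartriangle}_{\linf}\cdot\max_i\norm{\cM_1(\bcX_i)(\otimes_{j\neq1}\widehat{\bU}_{j,0})}_2$ summed over $n$ terms — yields $C_1\sigma\sqrt{(2\mu)^{m-1}r_{-1}\,d^*d_1\,\dmax\log\dmax/n}$, which is a factor $\sqrt{d_1}$ larger than the claimed second phase. The paper closes this gap with a dilation argument: writing $\cM_1(\bcX_i)=\be_{1,k_i}\be_{-1,l_i}^\top$, dominating the dilated sum by a block-diagonal matrix, and using that $\norm{\frac{1}{n}\sum_i\be_{1,k_i}\be_{1,k_i}^\top}_2\lesssim 1/d_1$ (concentration of slice counts, which is independent of the initialization), together with incoherence of the $\widehat{\bU}_{j,0}$ block — this is exactly what recovers the missing $\sqrt{d_1}$. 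So both the leading-order noise bound and the second phase of the initialization bound require arguments your proposal does not supply.
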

    \begin{proof}
          For $\bE_{1,\rn}$, we always have the following splitting:
\begin{equation}\label{eq:dep-B2-rn-decomp}
\begin{aligned}
         \norm{\bE_{1,\rn}\left( \otimes_{j\neq 1} \widehat{\bU}_{j,0} \right)}_2 & \le \frac{d^*}{n} \left( \norm{\sum_{i=1}^{n} \xi_i \cM_1(\bcX_i)\left( \otimes_{j\neq 1} {\bU}_{j} \right)}_2 \right.
         \\
         & + \left. \underbrace{\sum_{j\neq 1} \norm{\sum_{i=1}^{n} \xi_i \cM_1(\bcX_i) \otimes_j \left( \cP_{\widehat{\bU}_{j,0}} - \cP_{\bU_{j}}\right)\left(  \otimes_{k\neq j, 1} {\bU}_{k} \right) }_2}_{\text{first-order term}}\right.\\
         &
         \left. + \underbrace{\sum_{\cS\subseteq [m]/\{1\}, \abs{\cS}\ge2}\norm{\sum_{i=1}^{n} \xi_i \cM_1(\bcX_i) \otimes_{j\in \cS } \left( \cP_{\widehat{\bU}_{j,0}} - \cP_{\bU_{j}}\right)\left(  \otimes_{k\in \cS^c } {\bU}_{k} \right)}_2}_{\text{higher-order term}} \right)  \\
     %& \le  C \sigma \sqrt{\frac{ m (2\mu)^{m-1} d_1  d^*\log \dmax }{n}}  +
\end{aligned}
\end{equation}
Here, the inequality holds because for any matrix $\bA$ and singular subspace $\bU$, we always have $\norm{A\bU}_2 =\norm{A\bU \bU^\top}_2$. We now look at the first term of the RHS in \eqref{eq:dep-B2-rn-decomp}.   Using the Bernstein inequality, with the same argument as \eqref{eq:E-rn-Ber}, for the sum of i.i.d random variable term in \eqref{eq:dep-B2-rn-decomp}, we have 
\begin{equation}%\label{eq:E-rn-Ber}
    \begin{aligned}
        \norm{\bE_{1,\rn}\left( \otimes_{j\neq 1} {\mathbf{U}}_{j} \right)}_2 & \le C \sigma \sqrt{\frac{  m \mu^{m-1} r^* d_1  d^*\log \dmax }{\rmin r_1 n}} ,
    \end{aligned}
\end{equation}
with probability at least $1-\dmax^{-3m }$ given that $n\ge m \rmax \dmax \log \dmax$. 

%%%%%%%%%%%%% First-order + Higher-order term

For the first-order term, we can write it as:
\begin{equation*}
    \begin{aligned}
       &\frac{d^*}{n}\norm{\sum_{i=1}^{n} \xi_i \cM_1(\bcX_i) \otimes_j \left( \cP_{\widehat{\bU}_{j,0}} - \cP_{\bU_{j}}\right)\left(  \otimes_{k\neq j, 1} {\bU}_{k} \right) }_2  & = \frac{d^*}{n}\left\| \left[\sum_{i=1}^{n} \xi_i \cM_1(\bcX_i) \otimes_j \bI_{d_j}\left(  \otimes_{k\neq j, 1} {\bU}_{k} \right)\right] \right.\\
      &  &\quad \quad\left.\cdot\otimes_j\left( \cP_{\widehat{\bU}_{j,0}} - \cP_{\bU_{j}}\right)\left(  \otimes_{k\neq j, 1} {\bI}_{r_k} \right)\right\|_2 \\
        & \le  \frac{8C_1\sigma}{\lambda_{\min}}\sqrt{\frac{d^*\dmax\log\dmax}{n}} \frac{d^*}{n} \norm{\sum_{i=1}^{n} \xi_i \cM_1(\bcX_i) \otimes_j \bI_{d_j}\left(  \otimes_{k\neq j, 1} {\bU}_{k} \right)}_2,
    \end{aligned}
\end{equation*}
according to the initialization error in Lemma \ref{lemma:l2inf-init}. Using the matrix Bernstein inequality, we have
\begin{equation*}
\begin{aligned}
        \frac{d^*}{n} \norm{\sum_{i=1}^{n} \xi_i \cM_1(\bcX_i) \otimes_j \bI_{d_j}\left(  \otimes_{k\neq j, 1} {\bU}_{k} \right)}_2 & \le C \sigma \frac{m\log\dmax}{n}\sqrt{ \mu^{m-2} r_{-1}/\rmin d_1 d^* \dmax }  + C\sqrt{\frac{m \mu^{m-2} r_{-1} d^* \dmax \log\dmax}{ \rmin n}} \\
    & \le C\sqrt{\frac{m \mu^{m-2} r_{-1} d^* \dmax \log\dmax}{ \rmin n}}.
\end{aligned}
\end{equation*}  
Thus, we control the first-order term by 

\begin{equation}\label{eq:E-rn-init-order-1-dep}
    \begin{aligned}
        \frac{d^*}{n}\norm{\sum_{i=1}^{n} \xi_i \cM_1(\bcX_i) \otimes_j \left( \cP_{\widehat{\bU}_{j,0}} - \cP_{\bU_{j}}\right)\left(  \otimes_{k\neq j, 1} {\bU}_{k} \right) }_2 & \le  \frac{C C_1\sigma}{\lambda_{\min}}\sqrt{\frac{d^*\dmax\log\dmax}{n}} \sqrt{\frac{m \mu^{m-2} r_{-1} d^* \dmax \log\dmax}{ \rmin n}} \\
        & \le \frac{C C_1\sigma}{\lambda_{\min}} \sqrt{\frac{m \mu^{m-2} r_{-1} (d^*)^2 \dmax^2 \log^2\dmax}{ \rmin n^2}} 
    \end{aligned}
\end{equation}
We now use the $\varepsilon$-net argument to deal with all the higher-order terms. We can decouple each one of the higher-order terms in \eqref{eq:dep-B2-rn-decomp} as 

\begin{equation*}
    \begin{aligned}
        &\frac{d^*}{n} \norm{\sum_{i=1}^{n} \xi_i \cM_1(\bcX_i) \otimes_{j\in \cS } \left( \cP_{\widehat{\bU}_{j,0}} - \cP_{\bU_{j}}\right)\left(  \otimes_{k\in \cS^c } {\bU}_{k} \right) }_2  \le \frac{d^*}{n} \norm{\sum_{i=1}^{n} \xi_i \cM_1(\bcX_i) \otimes_{j\in \cS} \left( \cP_{\widehat{\bU}_{j,\net}} - \cP_{\bU_{j}}\right)\left(  \otimes_{k\in \cS^c} {\bU}_{k} \right) }_2\\
        & + \frac{d^*}{n} \norm{\sum_{i=1}^{n} \xi_i \cM_1(\bcX_i) \underbrace{\left[\otimes_{j\in \cS } \left( \cP_{\widehat{\bU}_{j,0}} - \cP_{\widehat\bU_{j}}\right)-\otimes_{j\in \cS } \left( \cP_{\widehat{\bU}_{j,0}} - \cP_{\widehat\bU_{j,\net}}\right)\right]}_{\Delta_{\net} }\left(  \otimes_{k\in \cS^c} {\bU}_{k} \right) }_2 .
    \end{aligned}
\end{equation*}
The construction of $\varepsilon$-net indicate that we can always select a $\widehat\bU_{j,\net}$ in the net such that $\norm{\widehat\bU_{j,\net} -\widehat\bU_{j,0}}_2$ is sufficiently small. If we suppose $\norm{\widehat\bU_{j,\net} -\widehat\bU_{j,0}}_2\le \varepsilon$, then we always have $\norm{\Delta_{\net}}_2\le C m \varepsilon \cdot \frac{C_1\sigma  }{\lambda_{\min}} \sqrt{\frac{d^*\dmax \log \dmax }{n}}$ for any small $\varepsilon$.  
% the construction of $\varepsilon$-net indicates that 
% \begin{equation*}
% \begin{gathered}
%         \max_{i_1,\dots,i_m, j}\norm{1 \otimes_j \be_{j,i_j}^\top\left( \cP_{\widehat{\bU}_{j,0}} - \cP_{\widehat\bU_{j,\net}}\right)\left(  \otimes_{k\neq j, 1} \be_{k,i_k}^\top{\bU}_{k} \right)}_2 \le \frac{8 C_1\sigma  }{\lambda_{\min}}\sqrt{\frac{\mu^{m-1} r_{-1}}{d_{-1}} }\sqrt{\frac{d^*\dmax \log \dmax }{ \dmax^{C_{\varepsilon} m }n}} \\
%   \frac{d^*}{n}  \sum_{i=1}^{n} \norm{\xi_i \be_{1,i} }_2 \le C\sigma d^*\sqrt{m \log \dmax},
% \end{gathered}
% \end{equation*}
% with probability at least $1-\dmax^{-3 m}$. This leads to the fact that, with the same probability, the term containing $\cP_{\widehat{\bU}_{j,0}} - \cP_{\widehat\bU_{j,\net}}$ in the first order error can be controlled by
% \begin{equation}\label{eq:dep-B2-rn-order2-diff}
% \begin{aligned}
%         &\frac{d^*}{n} \norm{\sum_{i=1}^{n} \xi_i \cM_1(\bcX_i) \otimes_j \left( \cP_{\widehat{\bU}_{j,0}} - \cP_{\widehat \bU_{j,\net}}\right)\left(  \otimes_{k\neq j, 1} {\bU}_{k} \right) }_2  \\
%         &\le \frac{d^*}{n} \sum_{i=1}^n \norm{\xi_i  \be_{1,i} }_2   \max_{i_1,\dots,i_m, j}\norm{1 \otimes_j \be_{j,i_j}^\top\left( \cP_{\widehat{\bU}_{j,0}} - \cP_{\widehat \bU_{j,\net}} \right)\left(  \otimes_{k\neq j, 1} \be_{k,i_k}^\top{\bU}_{k,\net} \right)}_2   \\
%         & \le C \frac{C_1\sigma  }{\lambda_{\min}}\sqrt{\frac{m\mu^{m-1} r_{-1}d_1 d^* \dmax \log^2 \dmax }{n^2}},
% \end{aligned}
% \end{equation}
Given that $C_{\varepsilon}$ is large enough, we will have
\begin{equation}\label{eq:dep-B2-rn-order2-diff}
\begin{aligned}
        &\frac{d^*}{n} \norm{\sum_{i=1}^{n} \xi_i \cM_1(\bcX_i) \Delta_{\net}\left(  \otimes_{k\neq j, 1} {\bU}_{k} \right) }_2  \\
        &\le \frac{d^*}{n} \sum_{i=1}^n \norm{\xi_i  \be_{1,i} }_2   \norm{\Delta_{\net} }_2  \\
        & \le C \frac{C_1^2\sigma^2  }{\lambda_{\min}^2}\frac{m\mu^{m-1} r_{-1}d^* \dmax \log^2 \dmax }{n^2},
\end{aligned}
\end{equation}
which is sufficiently small and negligible. On the other hand, the term containing $ \cP_{\widehat{\bU}_{j,\net}} - \cP_{\bU_{j}}$ is still the sum of i.i.d. random matrices and can be handled with matrix Bernstein inequality. Therefore, we have
\begin{equation}\label{eq:dep-B2-rn-order2-net}
\begin{aligned}
        & \frac{d^*}{n} \norm{\sum_{i=1}^{n} \xi_i \cM_1(\bcX_i) \otimes_{j\in \cS} \left( \cP_{\widehat{\bU}_{j,\net}} - \cP_{\bU_{j}}\right)\left(  \otimes_{k\in \cS^c} {\bU}_{k} \right) }_2 \\
        &\le C \sigma \left(\frac{C C_1\sigma  }{\lambda_{\min}}\sqrt{\frac{d^*\dmax \log \dmax }{n}} \right)^{\abs{\cS}}\left(\frac{d^*}{n}\sqrt{\frac{\mu^{m-1} r_{-1}}{d_{-1}}}\cdot m\rmax \dmax \log \dmax  +  \sqrt{\frac{ m \mu^{m-1} \rmax d_1 \dmax d^*\log \dmax }{n}} \right) \\
        & \le C \sigma \left(\frac{ C C_1\sigma  }{\lambda_{\min}}\sqrt{\frac{d^*\dmax \log \dmax }{n}} \right)^{\abs{\cS}}\sqrt{\frac{ m \mu^{m-1} \rmax r_{-1}  d_1 \dmax d^*\log \dmax }{n}},
\end{aligned}
\end{equation}
for each element in the $\varepsilon$-net with probability at least $1-\dmax^{ C_{\varepsilon}m \rmax\dmax -3 m}$. Notice that here $\abs{\cS}\ge 2$. This suggests that such a bound holds uniformly for all the elements in the $\varepsilon$-net with probability at least $1-\dmax^{-3 m}$. Combining \eqref{eq:E-rn-init-order-1-dep}, \eqref{eq:dep-B2-rn-order2-diff}, and
\eqref{eq:dep-B2-rn-order2-net}, it is clear that under the SNR condition:
\begin{equation*}
     \frac{\lambda_{\min} }{\sigma} \ge C_{\mathsf{gap} } C_1\kappa_0\sqrt{  \frac{ m^2  \rmax r^* (2\mu)^{m-1} d^* \dmax \log \dmax }{\rmin n} },
\end{equation*}
we can  control the \eqref{eq:dep-B2-rn-decomp} with probability at least $1-\dmax^{-3m}$ with the bound:
\begin{equation*}
    \begin{aligned}
  \norm{ \bE_{1,\rn}\left(\otimes_{j\neq 1} \widehat{\mathbf{U}}_{j,0} \right) }_2 &  \le C \sigma\left(1+\frac{C_1\sigma}{\lambda_{\min}}\sqrt{\frac{m^2 d^*\dmax\log\dmax}{n}} + \frac{ C_1^2 \sigma^2  }{\lambda_{\min}^2}\frac{d^*\dmax^{\frac{3}{2}} \log \dmax }{n} \right) \sqrt{\frac{  m \mu^{m-1} r^* d_1  d^*\log \dmax }{\rmin r_1 n}}.
    \end{aligned}
\end{equation*}

\noindent We now focus on $\bE_{1,\init}$. We shall use the the $\varepsilon$-net argument to study it. To this end, we define $\bE_{1,\init}^{\net}$ following the definition of $\bE_{1,\init}$ in \eqref{eq:svd-U1-decomp} by substituting $\widehat\bcT_{\init}$ with $\widehat \bcT_{\net}$. Then, it is clear that
\begin{equation*}
    \norm{\bE_{1,\init}^{\net}\left(\otimes_{j\neq 1} \widehat{\mathbf{U}}_{j,0} \right) -\bE_{1,\net}\left(\otimes_{j\neq 1} \widehat{\mathbf{U}}_{j,\net} \right) }_2 \le 2^m C_1 \sigma \sqrt{\frac{ (2\mu)^{m-1} r_{-1} d^* d_1\log\dmax }{\dmax^{C_{\varepsilon m \rmax\dmax }} n}},
\end{equation*}
which is of a very small order. For each element in the $\operatorname{Net}(\varepsilon)$, we have the bound
\begin{equation}\label{eq:dep-E-init-Ber}
    \begin{aligned}
        \norm{\bE_{1,\init}^{\net}\left(\otimes_{j\neq 1} \widehat{\mathbf{U}}_{j,\net} \right)}_2 &\le C C_1 \sigma \left(\frac{d^*}{n} \sqrt{\frac{\dmax \log \dmax }{n}}\sqrt{\frac{\mu^{m-1} r_{-1}}{d_{-1}}}\cdot m \rmax\dmax \log \dmax  +  \sqrt{\frac{ m (2\mu)^{m-1} \rmax\dmax d_1 \dmax d^*\log^2 \dmax }{n^2}} \right) \\
        & \le C C_1 \sigma \sqrt{\frac{ m (2\mu)^{m-1}\rmax r^* \dmax d^*\log \dmax }{r_1 n}\cdot \frac{d_1\dmax \log \dmax}{n}}
    \end{aligned}
\end{equation}
that holds with probability at least $1-\dmax^{-C_{\varepsilon} m\rmax\dmax-3m}$ following the argument in \eqref{eq:E-init-Ber}. Then, we have the uniform bound of all the elements in the net with probability at least $1-\dmax^{-3m}$ with the order in \eqref{eq:dep-E-init-Ber}:

\begin{equation*}%\label{eq:dep-E-init-uniform}
    \norm{\bE_{1,\init}\left(\otimes_{j\neq 1} \widehat{\mathbf{U}}_{j,0} \right) }_2 \le C C_1 \sigma \sqrt{\frac{ m (2\mu)^{m-1} \rmax r^* \dmax d^*\log \dmax }{r_1 n}\cdot \frac{d_1\dmax \log \dmax}{n} } .
\end{equation*}
We now develop another phase. By writing $\cM_1(\bcX_i)$ as $\be_{1,i}\be_{-1,i}^\top$, we can use the dilation trick \citep{tropp2015introduction} to deal with the $\bE_{1,\init}\left(\otimes_{j\neq 1} \widehat{\mathbf{U}}_{j,0} \right) $, which gives:
\begin{equation*}
\begin{aligned} %\norm{\bE_{1,\init}\left(\otimes_{j\neq 1} \widehat{\mathbf{U}}_{j,0} \right) }_2 
        & \frac{d^*}{n}\norm{\sum_{i=1}^n\langle\bcT^{\vartriangle} ,\bcX_i\rangle\cM_1(\bcX_i)\left(\otimes_{j\neq 1} \widehat{\mathbf{U}}_{j,0} \right) }_2  = \frac{d^*}{n}\norm{\sum_{i=1}^n\bcT^{\vartriangle}_{\bomega_i} \be_{1,i}\be_{-1,i}^\top\left(\otimes_{j\neq 1} \widehat{\mathbf{U}}_{j,0} \right) }_2 \\ 
        & =  \frac{d^*}{n}\norm{\sum_{i=1}^n\left[ \begin{array}{cc}
      & \bcT^{\vartriangle}_{\bomega_i} \be_{1,i}\be_{-1,i}^\top\left(\otimes_{j\neq 1} \widehat{\mathbf{U}}_{j,0}\right)\\ \bcT^{\vartriangle}_{\bomega_i} \left(\otimes_{j\neq 1} \widehat{\mathbf{U}}_{j,0}^\top  \right)\be_{-1,i}\be_{1,i}^\top &  \end{array} \right]
  }_2   \\
  & \le \frac{d^*}{n}\norm{\sum_{i=1}^n\abs{\bcT^{\vartriangle}_{\bomega_i} }\left[ \begin{array}{cc}
      K\be_{1,i}\be_{1,i}^\top & \\  &  \frac{1}{K}\left(\otimes_{j\neq 1} \widehat{\mathbf{U}}_{j,0}^\top  \right)\be_{-1,i} \be_{-1,i}^\top\left(\otimes_{j\neq 1} \widehat{\mathbf{U}}_{j,0} \right) \end{array} \right]
  }_2,  \\ 
  & \le  C_1\sqrt{\frac{\dmax \log\dmax}{n}}\cdot d^*\norm{\left[ \begin{array}{cc}
      \frac{\sum_{i=1}^n}{n} K\be_{1,i}\be_{1,i}^\top & \\  &  \frac{1}{K} \frac{(2\mu)^{m-1} r_{-1}}{d_{-1}}\bI_{d_{-1}} \end{array} \right]
  }_2
   %   & \le  d^* C_1\sqrt{\frac{\dmax\log\dmax}{n}}\mu
\end{aligned}
\end{equation*}
for any $K$ with probability at least $1-\dmax^{-3m}$, where the first inequality holds because $\left[ \begin{array}{cc}
     & \bx\by^\top \\ \by\bx^\top &  \end{array} \right] \preceq \left[ \begin{array}{cc} \bx\bx^\top
     & \\  &  \by\by^\top \end{array} \right]$ for any vectors $\bx$, $\by$. By basic matrix concentration, it is clear that with probability at least $1-\dmax^{-3m}$, we have $\norm{\frac{\sum_{i=1}^n}{n}\be_{1,i}\be_{1,i}^\top}_2\le\frac{1}{d_1}+C\frac{m\log\dmax}{n}+ C\sqrt{\frac{m\log\dmax}{n d_1}}\le \frac{2}{d_1}$. By choosing $K=\sqrt{\frac{d_1 (2\mu)^{m-1} r_{-1} }{d_{-1}}}$, we know that 

\begin{equation*}
    \begin{aligned}
         &\norm{\bE_{1,\init}\left(\otimes_{j\neq 1} \widehat{\mathbf{U}}_{j,0} \right) }_2 \le  \frac{d^*}{n}\norm{\sum_{i=1}^n\langle\bcT^{\vartriangle} ,\bcX_i\rangle\cM_1(\bcX_i)\left(\otimes_{j\neq 1} \widehat{\mathbf{U}}_{j,0} \right) }_2  + \norm{\cM_1(\bcT^{\vartriangle} )}_2 \\
         & \le  C C_1\left(\sqrt{\frac{(2\mu)^{m-1} r_{-1} d^*\dmax \log\dmax}{n}} +  \sqrt{\frac{\dmax d^*\log\dmax}{n}} \right)\le  C C_1\sqrt{\frac{(2\mu)^{m-1} r_{-1} d^*\dmax \log\dmax}{n}}.
    \end{aligned}
\end{equation*}
     The two-phase bound on $\norm{\bE_{1,\init}\left(\otimes_{j\neq 1} \widehat{\mathbf{U}}_{j,0} \right) }_2$ can thus be obtained.
    \end{proof}

% No simple bound!
%\begin{equation}
%      \norm{\bE_{1,\rn}\left( \otimes_{j\neq 1} \widehat{\mathbf{U}}_{j,0} \right)}_2 =  \norm{\frac{d^*}{n} \sum_{i=1}^{n} \xi_i \cM_1(\bcX_i)\left( \otimes_{j\neq 1} \widehat{\mathbf{U}}_{j,0} \right)}_2 \le \frac{d^*}{n} \sum_{i=1}^{n}\sqrt{\frac{(2\mu)^{m-1} r_{-1}}{d_{-1}}}\norm{\xi_i \be_{1,i} }_2 ,
% \end{equation}

The $\ell_{2,\infty}$ bound of $\bE_{1,\rn}\left( \otimes_{j\neq 1} \widehat{\bU}_{j,0} \right)$ and $\bE_{1,\init}\left( \otimes_{j\neq 1} \widehat{\bU}_{j,0} \right)$ can be treated accordingly by the $\varepsilon$-net argument used in \eqref{eq:E-rn-Ber-dep} and \eqref{eq:dep-E-init-uniform}.  
% In the dependent case, the $\varepsilon$-net argument is somehow necessary. We can check the bound if we do not use such an involved argument to justify this: when  $\bcT^{\vartriangle}$ and $\{\widehat{\mathbf{U}}_{j,0}\}_{j=1}^{m} $ depend on the data $\left\{(\bcX_i,\xi_i)\right\}_{i=1}^n$, simple size bound of $\bE_{1,\init}\left(\otimes_{j\neq 1} \widehat{\mathbf{U}}_{j,0} \right)$ will only leads to the order %like the argument in \eqref{eq:E-rn-Ber-dep} 
% \begin{equation*}
%     \norm{\bE_{1,\init}\left(\otimes_{j\neq 1} \widehat{\mathbf{U}}_{j,0} \right) }_2\le C_1 \sigma d^*\cdot \sqrt{\frac{(2\mu)^{m-1} r_{-1}} {d_{-1}}} \sqrt{\frac{\dmax \log \dmax }{n}} = C_1  \sigma  \sqrt{\frac{(2\mu)^{m-1} r_{-1} d_1 \dmax d^*}{n}},
% \end{equation*}
% which is far slower than the error rate we derived using $\varepsilon$-net argument. 
% simple bound: 
% \begin{equation}
% \begin{aligned}
%          \norm{\bE_{1,\rn}\left( \otimes_{j\neq 1} \widehat{\mathbf{U}}_{j,0} \right)}_2 & \le  \left( \otimes_{j\neq 1} \widehat{\mathbf{U}}_{j,0} \right)  \frac{d^*}{n} \sum_{i=1}^{n}\sqrt{\frac{(2\mu)^{m-1} r_{-1}}{d_{-1}}}\norm{\xi_i \be_{1,i} }_2 \\
% \end{aligned}
% \end{equation}

Such rates can be directly used in the control of SVD representations like term $\mathfrak{B}_2$, $\mathfrak{B}_3$ in \eqref{eq:svd-U1-decomp}, or the bound of $\cP_{\widehat{\bU}_{j,1}} - \cP_{\bU_j}$, etc.

\begin{Lemma}\label{lemma:E-init-rn-2inf-dep} Under the same assumptions as Theorem \ref{thm:clt-dep-init}, when the initialization depends on the data $\left\{(\bcX_i,\xi_i)\right\}_{i=1}^n$,  we have the following inequalities with probability at least $1-m 2^{m+1}\dmax^{-3m}$:
\begin{equation*}
    \begin{gathered}
            \norm{\bE_{1,\rn}\left(\otimes_{j\neq 1} \widehat{\mathbf{U}}_{j,0} \right)}_{2,\infty} \le C \sigma \sqrt{\frac{ m (2\mu)^{m-1} r_{-1} d^*\log \dmax }{n}}+ C \sigma \frac{ C_1\sigma  }{\lambda_{\min}}\sqrt{\frac{m^2 d^*\dmax \log \dmax }{ n}} \sqrt{\frac{ m \mu^{m-1}  \rmax  r^* \dmax d^*\log \dmax }{r_1 n}}\\
        \norm{\bE_{1,\init}\left(\otimes_{j\neq 1} \widehat{\mathbf{U}}_{j,0} \right)}_{2,\infty} \le C C_1 \sigma \sqrt{\frac{1}{d_1}} \sqrt{\frac{ m (2\mu)^{m-1}\rmax r^* \dmax d^*\log \dmax }{r_1 n}\cdot \frac{d_1 \dmax  \log \dmax}{n}\wedge 1}.\\
    \end{gathered}
\end{equation*}
\end{Lemma}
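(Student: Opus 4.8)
The plan is to run the proof of Lemma~\ref{lemma:Einit-Ern-dep-l2} row by row, with the operator norm replaced throughout by the largest row-wise $\ell_2$ norm. Writing $\norm{\bA}_{2,\infty}=\max_{k_1\in[d_1]}\norm{\be_{k_1}^\top\bA}_2$, expressing $\cM_1(\bcX_i)=\be_{1,i}\be_{-1,i}^\top$, and using $\norm{\bv^\top\bU}_2=\norm{\bv^\top\cP_{\bU}}_2$ to pass from $\otimes_{j\neq1}\widehat{\bU}_{j,0}$ to the Kronecker product of projectors, one has
\begin{equation*}
\be_{k_1}^\top\bE_{1,\rn}\bigl(\otimes_{j\neq1}\widehat{\bU}_{j,0}\bigr)=\frac{d^*}{n}\sum_{i=1}^n[\be_{1,i}]_{k_1}\,\xi_i\,\be_{-1,i}^\top\bigl(\otimes_{j\neq1}\cP_{\widehat{\bU}_{j,0}}\bigr),
\end{equation*}
and an analogous identity for $\bE_{1,\init}$ with $\xi_i$ replaced by $\bcT^{\vartriangle}_{\bomega_i}=\langle\bcT^{\vartriangle},\bcX_i\rangle$, plus the deterministic term $-\be_{k_1}^\top\bT_1^{\vartriangle}(\otimes_{j\neq1}\widehat{\bU}_{j,0})$. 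In both cases I would then split $\otimes_{j\neq1}\cP_{\widehat{\bU}_{j,0}}$ exactly as in~\eqref{eq:dep-B2-rn-decomp}, into the exact term $\otimes_{j\neq1}\cP_{\bU_j}$, the first-order terms $\otimes_j(\cP_{\widehat{\bU}_{j,0}}-\cP_{\bU_j})\otimes_{k\neq j,1}\cP_{\bU_k}$, and the higher-order terms indexed by $\cS\subseteq[m]\setminus\{1\}$ with $\abs{\cS}\ge2$.

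For $\bE_{1,\rn}$, the exact term is a sum of independent, mean-zero random vectors in $\R^{r_{-1}}$, handled by a matrix Bernstein inequality (equivalently vector Bernstein, as in~\eqref{eq:E-rn-Ber}). The variance bookkeeping is the key computation: restricting the sum to indices with $[\be_{1,i}]_{k_1}=1$ contributes $\E[\be_{1,i}]_{k_1}=d_1^{-1}$, while $\E[\be_{-1,i}\be_{-1,i}^\top]=d_{-1}^{-1}\bI$, so the row-variance is $\sigma^2\tr(\otimes_{j\neq1}\cP_{\bU_j})\,d^*/n=\sigma^2 r_{-1}d^*/n$ and the $r_{-1}\times r_{-1}$ column-variance has operator norm $\sigma^2 d^*/n$; the per-summand size is $\sigma\sqrt{\mu^{m-1}r_{-1}/d_{-1}}\cdot d^*/n$ by incoherence of the $\bU_j$'s (and $2\mu$-incoherence of $\widehat{\bU}_{j,0}$ from Lemma~\ref{lemma:l2inf-init}). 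A union bound over $k_1\in[d_1]$ and the $m$ modes (which only inflates the logarithm) yields the first claimed term $C\sigma\sqrt{m(2\mu)^{m-1}r_{-1}d^*\log\dmax/n}$. For the first-order term I would peel off $\otimes_j(\cP_{\widehat{\bU}_{j,0}}-\cP_{\bU_j})$, bound its operator norm by the spectral perturbation bound of Lemma~\ref{lemma:l2inf-init}, and apply matrix Bernstein to the remaining identity-padded sum $\sum_i[\be_{1,i}]_{k_1}\xi_i\,\be_{-1,i}^\top(\otimes_j\bI_{d_j})(\otimes_{k\neq j,1}\bU_k)$, which reproduces the second claimed term. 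The higher-order terms $\abs{\cS}\ge2$ carry the dependence between $\widehat{\bU}_{j,0}$ and the data and would be controlled by the $\varepsilon$-net argument from the proof of Lemma~\ref{lemma:Einit-Ern-dep-l2}: cover $\mathbb{T}_{\init}(c_0)$ via Lemma~\ref{lemma:cover-init}, transfer entrywise closeness of net points to $\ell_{2,\infty}$ closeness of the induced HOSVD subspaces via the perturbation expansion inside the proof of Lemma~\ref{lemma:l2inf-init}, union-bound over the net (whose size $\dmax^{O(m\rmax\dmax)}$ is absorbed by taking $C_\varepsilon$ large), and check that the discretization residual $\Delta_{\net}$ is negligible; each such term is then of order $(C_1\sigma/\lambda_{\min})^{\abs{\cS}}$ times lower-order factors and is dominated by the first-order term under the stated SNR hypothesis.

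For $\bE_{1,\init}$, the same three-way split applies and I would obtain the claimed bound as the minimum of two phases, paralleling~\eqref{eq:dep-E-init-uniform}. In the first phase, using $\abs{\bcT^{\vartriangle}_{\bomega_i}}\le C_1\sigma\sqrt{\dmax\log\dmax/n}$ from~\eqref{eq:tensor-init} together with a row-wise matrix Bernstein over the $\varepsilon$-net, one obtains the bound carrying the factor $\sqrt{d_1\dmax\log\dmax/n}$. In the second phase, since only one row is examined at a time, the dilation trick of Lemma~\ref{lemma:Einit-Ern-dep-l2} can be replaced by a crude triangle-inequality estimate,
\begin{equation*}
\norm{\be_{k_1}^\top\bE_{1,\init}\bigl(\otimes_{j\neq1}\widehat{\bU}_{j,0}\bigr)}_2\le\frac{d^*}{n}\,\abs{\{i:[\be_{1,i}]_{k_1}=1\}}\cdot\max_i\abs{\bcT^{\vartriangle}_{\bomega_i}}\cdot\sqrt{\tfrac{(2\mu)^{m-1}r_{-1}}{d_{-1}}}+\norm{\be_{k_1}^\top\bT_1^{\vartriangle}\bigl(\otimes_{j\neq1}\widehat{\bU}_{j,0}\bigr)}_2,
\end{equation*}
combined with the standard concentration $\abs{\{i:[\be_{1,i}]_{k_1}=1\}}\le 2n/d_1$, which produces the $\sqrt{1/d_1}$ gain over the operator-norm bound and the term without the $\sqrt{d_1\dmax\log\dmax/n}$ factor; taking the smaller of the two yields the stated minimum inside the square root. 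The main obstacle is the uniform ($\varepsilon$-net) control of the higher-order corrections in $\ell_{2,\infty}$ — namely, propagating entrywise net-closeness to $\ell_{2,\infty}$-closeness of the HOSVD singular subspaces and verifying that each $\abs{\cS}\ge2$ contribution is killed by the SNR condition with the correct power — whereas the row-wise variance computations and the tracking of the $m,\mu,r$ factors are routine given Lemmas~\ref{lemma:l2inf-init}, \ref{lemma:cover-init}, and \ref{lemma:Einit-Ern-dep-l2}.
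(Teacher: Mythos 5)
Your proposal is correct and follows essentially the same route as the paper: the same three-way splitting of $\otimes_{j\neq1}\cP_{\widehat{\bU}_{j,0}}$ as in \eqref{eq:dep-l2inf-B2-rn-decomp}, matrix Bernstein for the exact term, the Bernoulli/row-counting gain ($\chi_k\le 2n/d_1$) to buy the extra $\sqrt{1/d_1}$, the $\varepsilon$-net from Lemmas \ref{lemma:cover-init} and \ref{lemma:l2inf-init} to decouple the data-dependent subspaces, and the same two-phase (net-based Bernstein vs.\ crude counting) bound for $\bE_{1,\init}$, including the same treatment of the mean term and of the discretization residual.

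The only deviations are in the noise part: you handle the first-order term by factoring out $\otimes_j(\cP_{\widehat{\bU}_{j,0}}-\cP_{\bU_j})$ in operator norm and applying a row-restricted Bernstein to the deterministic identity-padded sum (so no net is needed there), and you control the $\abs{\cS}\ge2$ terms at the operator-norm level of the net argument of Lemma \ref{lemma:Einit-Ern-dep-l2} rather than redoing the $\chi_k$-conditioned, row-wise version as the paper does in \eqref{eq:dep-l2inf-B2-rn-order1-net}--\eqref{eq:dep-l2inf-E-rn-Ber}. Both shortcuts are legitimate here: the factor-out step reproduces (in fact slightly improves on) the stated second term after summing over the $m-1$ modes, and the cruder higher-order bound carries an extra $\sqrt{d_1}$ relative to the first-order term, which is exactly absorbed by the $\sqrt{\dmax}$-inflated SNR assumed in Theorem \ref{thm:clt-dep-init}, so domination holds under the stated hypotheses (the paper's uniform row-wise treatment would survive without that extra factor). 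The remaining bookkeeping you leave implicit — inflating the Bernstein deviation by the net cardinality $\dmax^{O(m\rmax\dmax)}$, which is where the $\rmax\dmax$ factors inside the stated bounds come from, and the union over $k_1\in[d_1]$ and the modes — is inherited from the referenced proofs and poses no difficulty.
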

Lemma \ref{lemma:E-init-rn-2inf-dep} serves as counterpart of Lemma \ref{lemma:E-init-rn-2inf} under the dependent initialization. Its proof is much more involved than Lemma \ref{lemma:E-init-rn-2inf}.

\begin{proof}
    
The proof extends the discussion of Section \ref{apx:sec:proof-Einit-rn}. Still, we consider controlling $\norm{\be_{1,k}^\top \bE_{1,\rn}\left(\otimes_{j\neq 1} \widehat{\mathbf{U}}_{j,0} \right) }_{2}$ and $\norm{\be_{1,k}^\top\bE_{1,\init}\left(\otimes_{j\neq 1} \widehat{\mathbf{U}}_{j,0} \right) }_{2}$ for any $\be_{1,k}$ with $k\in[d_1]$ uniformly. We first consider $\norm{\be_{1,k}^\top \bE_{1,\rn}\left(\otimes_{j\neq 1} \widehat{\mathbf{U}}_{j,0} \right) }_{2}$. Following \eqref{eq:dep-B2-rn-decomp}, we do the splitting
\begin{equation}\label{eq:dep-l2inf-B2-rn-decomp}
\begin{aligned}
         \norm{\be_{1,k}^\top \bE_{1,\rn}\left( \otimes_{j\neq 1} \widehat{\bU}_{j,0} \right)}_2 & \le \frac{d^*}{n} \left( \norm{\sum_{i=1}^{n} \be_{1,k}^\top \xi_i \cM_1(\bcX_i)\left( \otimes_{j\neq 1} {\bU}_{j} \right)}_2 \right.
         \\
         & + \left. \underbrace{\sum_{j\neq 1} \norm{\sum_{i=1}^{n} \xi_i \be_{1,k}^\top  \cM_1(\bcX_i) \otimes_j \left( \cP_{\widehat{\bU}_{j,0}} - \cP_{\bU_{j}}\right)\left(  \otimes_{k'\neq j, 1} {\bU}_{k'} \right) }_2}_{\text{first-order term}}\right.\\
         &
         \left. + \underbrace{ \sum_{\cS\subseteq [m]/\{1\}, \abs{\cS}\ge2}\norm{\sum_{i=1}^{n} \xi_i\be_{1,k}^\top \cM_1(\bcX_i) \otimes_{j\in \cS } \left( \cP_{\widehat{\bU}_{j,0}} - \cP_{\bU_{j}}\right)\left(  \otimes_{k'\in \cS^c } {\bU}_{k'} \right)}_2}_{\text{higher-order term}} \right).  \\
     %& \le  C \sigma \sqrt{\frac{ m (2\mu)^{m-1} d_1  d^*\log \dmax }{n}}  +
\end{aligned}
\end{equation}
In \eqref{eq:dep-l2inf-B2-rn-decomp}, the first term is the sum of i.i.d. variables and can be controlled by \eqref{eq:E-rn-Ber-2inf}, which provides the first phase in the error bound. In the first-order and higher-order terms, we use a refined analysis of the concentration with Bernoulli random variables on the $\varepsilon$-net. We decompose the unfolding of each $\bcX_i$ as $\cM_1(\bcX_i)=\be_{1,k_i}\be_{-1, l_i}^\top$. Then it is clear that  $\be_{1,k_i}$ and $\be_{-1,l_i}$ are uniformly distributed over the canonical bases in $\R^{d_1}$ and $\R^{d_{-1}}$, and they are independent. Then, we define Bernoulli random variable $\idc_{k,i}=\be_{1,k}^\top \be_{1,k_i}=\idc\{k_i = k\}$. Thus, $\idc_{k,i}\sim \operatorname{Ber}(\frac{1}{d_1})$. Also, let the sum of these Bernoulli random variables be $\chi_{k}=\sum_{i=1}^n \idc_{k,i}$. To study the first-order term in  \eqref{eq:dep-l2inf-B2-rn-decomp},  it amounts to bounding each element in the net by choosing small enough $\varepsilon$:
\begin{equation*}
    \frac{d^*}{n}\norm{\sum_{}\xi_i \be_{1,k}^\top \cM_1(\bcX_i) \otimes_j \left( \cP_{\widehat{\bU}_{j,\net}} - \cP_{\bU_{j}} \right)\left(  \otimes_{k\neq j, 1} {\bU}_{k} \right)}_2.
\end{equation*}
Since $\idc_{k,i}$ and $\chi_{k}$ are independent of $\xi_i$ and $\be_{-1, l_i}$, conditional on each realization of $\{\idc_{k,i} \}_{i=1}^n$, we have
\begin{equation*}
\begin{aligned}
        &\frac{d^*}{n}\norm{\sum_{i=1}^n\xi_i \be_{1,k}^\top \cM_1(\bcX_i) \otimes_j \left( \cP_{\widehat{\bU}_{j,\net}} - \cP_{\bU_{j}} \right)\left(  \otimes_{k\neq j, 1} {\bU}_{k} \right)}_2 \mid\{\idc_{k,i} \}_{i=1}^n \\
    & = \frac{\chi_{k}}{n}\cdot \frac{d^*}{\chi_{k}} \norm{\sum_{\idc_{k,i}=1}\xi_i  \be_{-1, l_i}^\top \otimes_j \left( \cP_{\widehat{\bU}_{j,\net}} - \cP_{\bU_{j}} \right)\left(  \otimes_{k\neq j, 1} {\bU}_{k} \right)}_2\mid\{\idc_{k,i} \}_{i=1}^n. 
\end{aligned}
\end{equation*}
Now, conditional on $\{\idc_{k,i} \}_{i=1}^n$, we have 
\begin{equation*}
    \begin{gathered}
        \norm{\be_{-1, l_i}^\top \otimes_j \left( \cP_{\widehat{\bU}_{j,\net}} - \cP_{\bU_{j}} \right)\left(  \otimes_{k\neq j, 1} {\bU}_{k} \right)}_2\le C \frac{ C_1\sigma  }{\lambda_{\min}}\sqrt{\frac{d^*\dmax \log \dmax }{n}}\cdot \sqrt{\frac{\mu^{m-1} r_{-1}}{d_{-1}}} \\
        % \norm{\E \xi_i^2 \norm{ \be_{-1, l_i}^\top  \otimes_j \left( \cP_{\widehat{\bU}_{j,\net}} - \cP_{\bU_{j}} \right)\left(  \otimes_{k\neq j, 1} {\bU}_{k} \right)}_2^2 }_2\le C \sigma^2\frac{ C_1^2\sigma^2  }{\lambda_{\min}^2} {\frac{d^*\dmax \log \dmax }{n}}\frac{\mu^{m-1} r_{-1} d_1 }{d^*}.
    \end{gathered}
\end{equation*}
By the concentration inequality with sub-Gaussian series in \cite{tropp2012user},  This leads to the following bound:
\begin{equation}\label{eq:dep-l2inf-B2-rn-order1-net}
\begin{aligned}
        &\frac{d^*}{\chi_{k}} \norm{\sum_{\idc_{k,i}=1}\xi_i  \be_{-1, l_i}^\top \otimes_j \left( \cP_{\widehat{\bU}_{j,\net}} - \cP_{\bU_{j}} \right)\left(  \otimes_{k\neq j, 1} {\bU}_{k} \right)}_2\mid\{\idc_{k,i} \}_{i=1}^n \\
        &\le C \sigma \frac{ C_1\sigma  }{\lambda_{\min}}\sqrt{\frac{d^*\dmax \log \dmax }{n}}   \sqrt{\frac{ m \mu^{m-1} \rmax r^*  d_1 \dmax d^*\log \dmax }{r_1 \chi_{k}}}\\
        % & \le C \sigma \frac{ C_1\sigma  }{\lambda_{\min}}\sqrt{\frac{d^*\dmax \log \dmax }{n}} \left(\sqrt{\frac{\mu^{m-1}m^2 \rmax r^* d_1 \dmax^2 d^* }{ \chi_{k}^2 }} + \sqrt{\frac{ m \mu^{m-1} \rmax  d_1 \dmax d^*\log \dmax }{\chi_{k}}}\right),
\end{aligned}
\end{equation}
for each element in the $\varepsilon$-net with probability at least $1-\dmax^{ C_{\varepsilon}m \rmax\dmax -3 m}$. Thus, \eqref{eq:dep-l2inf-B2-rn-order1-net} holds uniformly for all elements in the $\varepsilon$-net with probability at least $1-\dmax^{ -3 m}$. By the basic concentration of Bernoulli random variables, it is clear that 
\begin{equation}\label{eq:idc-concentration}
    \bbP\left(\frac{\chi_{k}}{n}\ge \frac{1}{d_1} +\frac{Cm\log\dmax }{n} +\sqrt{\frac{Cm\log\dmax}{d_1 n}}\right) \le \dmax^{-3m} .
\end{equation}
Given that $n\ge C m\dmax\log\dmax$, we have $\chi_{k} \le 2 n/d_1 $ with probability at least $1-\dmax^{-3m}$. Combining $\chi_{k} \le 2 n/d_1 $ with \eqref{eq:dep-l2inf-B2-rn-order1-net}, we conclude that with probability at least $1-2\dmax^{-3m}$, we have 
\begin{equation}\label{eq:dep-l2inf-E-rn-Ber}
    \begin{aligned}
        &\frac{d^*}{n}\norm{\sum_{i=1}^n\xi_i \be_{1,k}^\top \cM_1(\bcX_i) \otimes_j \left( \cP_{\widehat{\bU}_{j,\net}} - \cP_{\bU_{j}} \right)\left(  \otimes_{k\neq j, 1} {\bU}_{k} \right)}_2  \\
        & = \frac{\chi_{k}}{n}\cdot \frac{d^*}{\chi_{k}} \norm{\sum_{\idc_{k,i}=1}\idc_{k,i}\xi_i  \be_{-1, l_i}^\top \otimes_j \left( \cP_{\widehat{\bU}_{j,\net}} - \cP_{\bU_{j}} \right)\left(  \otimes_{k\neq j, 1} {\bU}_{k} \right)}_2 \\
        &\le C \sigma \frac{ C_1\sigma  }{\lambda_{\min}}\sqrt{\frac{d^*\dmax \log \dmax }{n}} \sqrt{\frac{ m \mu^{m-1} \rmax r^*  d_1 \dmax d^*\log \dmax }{r_1 n}}\sqrt{\frac{\chi_{k}}{n} } \le C \sigma \frac{ C_1\sigma  }{\lambda_{\min}}\sqrt{\frac{d^*\dmax \log \dmax }{ n}} \sqrt{\frac{ m \mu^{m-1}  \rmax  r^* \dmax d^*\log \dmax }{r_1 n}}.
    \end{aligned}
\end{equation}
Notice that, this bound uses the size of each i.i.d. matrix, and it is clear that such an argument can be naturally extended to higher-order terms. Thus, the total higher-order term is also controlled by \eqref{eq:dep-l2inf-E-rn-Ber}. This gives the second phase of the error bound.

\noindent Then, we focus on $ \norm{\be_{1,k}^\top\bE_{1,\init}\left(\otimes_{j\neq 1} \widehat{\mathbf{U}}_{j,0} \right)}_2$ by studying the uniform bound of each element, i.e., $\norm{\be_{1,k}^\top\bE_{1,\init}^{\net}\left(\otimes_{j\neq 1} \widehat{\mathbf{U}}_{j,\net} \right)}_2$ in the $\varepsilon$-net. Using the argument above again, we can check that the following inequality: 
\begin{equation*}
    \begin{aligned}
        \norm{\be_{1,k}^\top\bE_{1,\init}^{\net}\left(\otimes_{j\neq 1} \widehat{\mathbf{U}}_{j,\net} \right)}_2 &\le C C_1 \sigma \sqrt{\frac{ m (2\mu)^{m-1} \rmax r^* \dmax d_1 \dmax d^*\log^2 \dmax }{r_1 n^2}}\sqrt{\frac{\chi_{k}}{n} } \\
        & \le C C_1 \sigma \sqrt{\frac{ m (2\mu)^{m-1} \rmax r^* \dmax d^*\log \dmax }{r_1 n}\cdot \frac{\dmax \log \dmax}{n}}
    \end{aligned}
\end{equation*}
holds uniformly with probability at least $1-2\dmax^{-3m}$. Thus, we conclude that
\begin{equation}\label{eq:dep-l2inf-E-init-Ber}
        \norm{\be_{1,k}^\top\bE_{1,\init}\left(\otimes_{j\neq 1} \widehat{\mathbf{U}}_{j,0} \right) }_2 \le C C_1 \sigma \sqrt{\frac{ m (2\mu)^{m-1}  \rmax r^* \dmax d^*\log \dmax }{r_1 n}\cdot \frac{\dmax \log \dmax}{n}}.
\end{equation}

Applying the results in \eqref{eq:dep-l2inf-E-rn-Ber} and \eqref{eq:dep-l2inf-E-init-Ber} to all the $k\in[d_1]$, we can prove the first phase. For the second phase, we use $\idc_{k,i}=\be_{1,k}^\top \be_{1,k_i}=\idc\{k_i = k\}$ and apply the simple size bound:
\begin{equation*}
    \begin{aligned}
        &\frac{d^*}{n}\norm{\sum_{i=1}^n\left[\bcT^{\vartriangle}\right]_{\omega_i} \be_{1,k}^\top\be_{1,i}\be_{-1,i}^\top\left(\otimes_{j\neq 1} \widehat{\mathbf{U}}_{j,0} \right) }_2  =  \frac{d^*}{n} \norm{\sum_{i=1}^{n} \left[\bcT^{\vartriangle}\right]_{\omega_i}\idc_{k,i} \be_{-1, l_i}^\top\left(\otimes_{j\neq 1} \widehat{\mathbf{U}}_{j,0} \right)  }_2 \\
        &\le C_1\sqrt{\frac{\dmax\log\dmax}{n}}\sqrt{\frac{(2\mu)^{m-1}r_{-1}}{d_{-1}}}\frac{d^*}{n}\sum_{i=1}^n \idc_{k,i}.
    \end{aligned}
\end{equation*}
Using the concentration in \eqref{eq:idc-concentration}, we have the following inequality with the same probability:
\begin{equation*}
\begin{aligned}
         \norm{\be_{1,k}^\top\bE_{1,\init}\left(\otimes_{j\neq 1} \widehat{\mathbf{U}}_{j,0} \right) }_2 & \le \frac{d^*}{n}\norm{\sum_{i=1}^n\left[\bcT^{\vartriangle}\right]_{\omega_i} \be_{1,k}^\top\be_{1,i}\be_{-1,i}^\top\left(\otimes_{j\neq 1} \widehat{\mathbf{U}}_{j,0} \right) }_2 + \norm{\be_{1,k}^\top\cM_1(\bcT^{\vartriangle})}_2 \\
     & \le  C C_1\sqrt{\frac{\dmax d^*\log\dmax}{n}}\sqrt{\frac{(2\mu)^{m-1}r_{-1}}{d_1 } }.
\end{aligned}
\end{equation*}
Thus, we finish the proof of the two-phase error bound.
\end{proof}

We now give a detailed study on the singular subspace perturbation using results in Lemma \ref{lemma:Einit-Ern-dep-l2}, \ref{lemma:E-init-rn-2inf-dep}:

\begin{Lemma}[Perturbation bound with dependent initialization]\label{lemma:l2inf-powerit-dep} 
Under the assumptions of Theorem \ref{thm:clt-dep-init}, after the power iteration, the following perturbation bounds:
    \begin{equation*}
\begin{aligned}
        \norm{\cP_{\widehat{\bU}_{j,1}} - \cP_{\bU_j}}_{2} &\le \frac{C C_1 \sigma}{\lambda_{\min} }\sqrt{\frac{  m (2\mu)^{m-1}\rmax r^* d_j  d^*\log \dmax }{ r_j n}} \\
        \norm{\cP_{\widehat{\bU}_{j,1}} - \cP_{\bU_j}}_{2,\infty} & \le  CC_1\sqrt{\frac{\mu r_j }{d_j}} \left(\frac{\sigma}{\lambda_{\min} }\sqrt{\frac{  m (2\mu)^{m-1}\rmax r^* d_j d^*\log \dmax }{ r_j n}} + \frac{C_1 \sigma^2}{\lambda_{\min}^2 }\frac{  m^{\frac{3}{2}} (2\mu)^{\frac{1}{2}m}\rmax^{\frac{1}{2}} (r^*)^{\frac{1}{2}} d^*\dmax^{\frac{3}{2}} \log \dmax }{ \sqrt{r_j} n} \right) \\
         \norm{\cP_{\widehat{\bU}_{j,1}} - \cP_{\bU_j}}_{\linf} & \le CC_1\frac{\mu r_j}{d_j} \cdot \left(\frac{\sigma}{\lambda_{\min} }\sqrt{\frac{  m (2\mu)^{m-1}\rmax r^* d_j  d^*\log \dmax }{ r_j n}} + \frac{C_1\sigma^2}{\lambda_{\min}^2 }\frac{  m^{\frac{3}{2}} (2\mu)^{\frac{1}{2}m}\rmax^{\frac{1}{2}} (r^*)^{\frac{1}{2}} d^*\dmax^{\frac{3}{2}} \log \dmax }{ \sqrt{r_j} n}\right). \\
    \end{aligned}
\end{equation*}
hold uniformly for all $j\in[m]$ with probability at least $1-3m^{2} 2^{m+3} \dmax^{-3m}$.
\end{Lemma}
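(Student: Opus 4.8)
\textbf{Proof plan for Lemma~\ref{lemma:l2inf-powerit-dep}.}
The plan is to mirror the structure of the proof of Lemma~\ref{lemma:l2inf-powerit} (the independent-initialization case), but to feed into it the dependent-initialization error bounds collected in Lemmas~\ref{lemma:Einit-Ern-dep-l2} and~\ref{lemma:E-init-rn-2inf-dep} in place of their independent counterparts \eqref{eq:E-init-Ber}--\eqref{eq:E-rn-Ber} and Lemma~\ref{lemma:E-init-rn-2inf}. First I would fix $j=1$ without loss of generality and recall the representation \eqref{eq:U1-expansion}--\eqref{eq:U1-expansion-S} of $\cP_{\widehat{\bU}_{1,1}}-\cP_{\bU_1}$ in terms of $\Delta_{1,1}=\mathfrak{B}_1+\mathfrak{B}_2+\mathfrak{B}_2^\top+\mathfrak{B}_3$. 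The first task is to re-derive the operator-norm bounds on $\mathfrak{B}_1,\mathfrak{B}_2,\mathfrak{B}_3$ using the dependent-initialization inputs: $\mathfrak{B}_1$ is unchanged because Lemma~\ref{lemma:l2inf-init} still applies (it only uses the $\linf$ bound \eqref{eq:tensor-init} and the SNR condition); $\mathfrak{B}_2$ and $\mathfrak{B}_3$ are re-bounded from \eqref{eq:E-rn-Ber-dep} and \eqref{eq:dep-E-init-uniform}, which differ from the independent bounds by the extra factors $(1+\tfrac{C_1\sigma}{\lambda_{\min}}\sqrt{m^2 d^*\dmax\log\dmax/n}+\cdots)$ for $\bE_{1,\rn}$ and $\sqrt{(d_1\dmax\log\dmax/n)\wedge 1}$ for $\bE_{1,\init}$. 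Under the (stronger) SNR and sample-size hypotheses of Theorem~\ref{thm:clt-dep-init} these correction factors are $O(1)$, so $\norm{\Delta_{1,1}}_2\cdot\norm{\bLambda_1^{-2}}_2\le \tfrac18$ still holds, the representation series converges, and the higher-order terms $\sum_{k\ge 2}\cS_{\widehat{\bU}_{1,1}}^{(k)}$ are dominated by $\cS_{\widehat{\bU}_{1,1}}^{(1)}$.

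Next I would bound $\cP_{\widehat{\bU}_{1,1}}-\cP_{\bU_1}$ mode by mode in each of the three norms. For the spectral norm, $\norm{\cP_{\widehat{\bU}_{1,1}}-\cP_{\bU_1}}_2 \lesssim \norm{\Delta_{1,1}}_2 \lambda_{\min}^{-2}$, and plugging in $\norm{\mathfrak{B}_2}_2\lesssim \kappa_0\sigma\sqrt{m(2\mu)^{m-1}r^* d_1 d^*\log\dmax/(\rmin r_1 n)}\cdot\lambda_{\min}$ (after absorbing the $O(1)$ correction) gives the claimed $\ell_2$ rate; here I treat $r^*/\rmin\asymp r^*$ as in the earlier proof so the bound reads $\sqrt{m(2\mu)^{m-1}\rmax r^* d_1 d^*\log\dmax/(r_1 n)}\cdot\sigma/\lambda_{\min}$. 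For the $\ell_{2,\infty}$ norm I would split $\cS_{\widehat{\bU}_{1,1}}^{(1)}$ into $\bU_1\bLambda_1^{-2}\bU_1^\top\Delta_{1,1}\cP_{\bU_1}^\perp$ (whose left factor carries $\norm{\cP_{\bU_1}}_{2,\infty}\le\sqrt{\mu r_1/d_1}$) and $\cP_{\bU_1}^\perp\Delta_{1,1}\bU_1\bLambda_1^{-2}\bU_1^\top$; for the latter I need the row-wise bound on $\cP_{\bU_1}^\perp\Delta_{1,1}\bU_1$, which through $\mathfrak{B}_2,\mathfrak{B}_3$ reduces to the $\ell_{2,\infty}$ bounds on $\bE_{1,\rn}(\otimes_{j\neq1}\widehat\bU_{j,0})$ and $\bE_{1,\init}(\otimes_{j\neq1}\widehat\bU_{j,0})$ supplied by Lemma~\ref{lemma:E-init-rn-2inf-dep}, together with $\norm{\bT_1\bU_1\bLambda_1^{-2}}_2\le\lambda_{\min}^{-1}$ and the $\ell_{2,\infty}$-incoherence of $\widehat\bU_{j,0}$ from Lemma~\ref{lemma:l2inf-init}. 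The two terms in the stated $\ell_{2,\infty}$ bound come respectively from the $\bE_{1,\rn}$ main part (first term) and from the residual $\bE_{1,\rn}$-with-$\widehat\bU$-perturbation part plus $\bE_{1,\init}$ and $\mathfrak{B}_3$ contributions (the $C_1\sigma^2/\lambda_{\min}^2$ term). The $\linf$ bound follows by the same decomposition, using that each summand in the representation is again $\mu$-incoherent on the right, exactly as in the proof of Lemma~\ref{lemma:l2inf-init}, so one gains an extra $\sqrt{\mu r_1/d_1}$ from the right factor on top of the $\ell_{2,\infty}$ rate.

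Then I would make the higher-order terms of the representation rigorous: since $\norm{\Delta_{1,1}}_2\lambda_{\min}^{-2}\le\tfrac18$, each $\cS_{\widehat{\bU}_{1,1}}^{(k)}$ is bounded in every norm by a geometric factor $8^{-k}$ times the first-order bound (the $\ell_{2,\infty}$ and $\linf$ versions use, as in Lemma~\ref{lemma:l2inf-init}, that the left- or right-most operator in each monomial carries the incoherence of $\bU_1$), so summing the series only inflates constants. Finally I would union-bound over all $j\in[m]$ and over the events in Lemmas~\ref{lemma:Einit-Ern-dep-l2} and~\ref{lemma:E-init-rn-2inf-dep}; each holds with probability at least $1-m2^{m+3}\dmax^{-3m}$, and accounting for the three norms and $m$ modes yields the stated $1-3m^2 2^{m+3}\dmax^{-3m}$.

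\textbf{Main obstacle.} The routine part is the norm bookkeeping; the genuinely delicate step is the $\ell_{2,\infty}$ control of the $\bE_{1,\rn}$ term under dependence, which is why Lemma~\ref{lemma:E-init-rn-2inf-dep} is invoked as a black box here. Its two-phase structure — an i.i.d.\ piece handled by matrix Bernstein and a net-plus-conditional-Bernoulli piece for the $\widehat\bU_{j,0}$-perturbation terms — is the crux, and the $C_1\sigma^2/\lambda_{\min}^2$ correction term in the final $\ell_{2,\infty}$ bound (absent in the independent case) is precisely the price of that dependence; keeping this term subdominant is exactly what forces the extra $\sqrt{\dmax}$-type slack in the SNR/sample-size hypotheses of Theorem~\ref{thm:clt-dep-init} relative to Theorem~\ref{thm:lf-inference-empvar}.
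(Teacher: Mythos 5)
Your proposal follows essentially the same route as the paper's own proof: the paper likewise re-runs the representation-series argument from Lemma~\ref{lemma:l2inf-powerit}, substituting the dependent-initialization $\ell_2$ and $\ell_{2,\infty}$ bounds of Lemmas~\ref{lemma:Einit-Ern-dep-l2} and~\ref{lemma:E-init-rn-2inf-dep} into the expansions (e.g.\ \eqref{eq:powerit-U-delta11-U}--\eqref{eq:powerit-U-delta11-lam}), checks series convergence under the stronger SNR so the correction factors are $O(1)$, and then reads off the spectral, $\ell_{2,\infty}$, and $\linf$ rates with a union bound over modes. Your treatment of $\mathfrak{B}_1$ via Lemma~\ref{lemma:l2inf-init} and your identification of the $C_1\sigma^2/\lambda_{\min}^2$ term as the price of dependence are both consistent with the paper's argument, so the plan is correct.
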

% The proof of such lemma can be found in Section \ref{apx:sec:proof-l2inf-powerit-dep}.
The proof of this Lemma is deferred to Section \ref{apx:sec:proof-l2inf-powerit-dep}.  With the help of Lemma \ref{lemma:Einit-Ern-dep-l2}, \ref{lemma:E-init-rn-2inf-dep}, \ref{lemma:l2inf-powerit-dep}, and results developed above in Lemma \ref{lemma:oracle-init-frE-2-5}, we can derive the following error control on the remainder terms:
\begin{Lemma}[Controlling remainder terms with dependent initialization]\label{lemma:dep-init-frE-2-5} Under the conditions in Theorem \ref{thm:clt-dep-init}, the terms from $\frE_2$ to $\frE_5$ can be controlled by
\begin{equation*}
    \begin{aligned}
              \abs{\frE_2} &\le   C C_1 \sigma \left(\norm{\cP_{\TT}(\bcI)}_{\tF}\sqrt{d^*}\wedge \norm{\bcI}_{\ell_1}\right) \sqrt{\frac{ m (2\mu)^{m} \rmax^2 r^*  \dmax \log \dmax }{ n}\cdot \frac{\dmax \log \dmax}{n} \wedge \frac{1}{\dmin} } \\
       \abs{\frE_3} & \le C C_1\norm{ \cP_{\TT}(\bcI) }_{\tF} \cdot \frac{\sigma^2}{\lambda_{\min}}\sqrt{\frac{m^2 d^*\dmax\log\dmax}{n}}\sqrt{\frac{  m^2 \mu^{m-1} (r^*)^2 \dmax  d^*\log \dmax }{\rmin^3 n}} \\
       & +C C_1\norm{\bcI}_{\ell_1} \frac{\sigma^2}{\lambda_{\min} } \frac{  m^2 (2\mu)^{\frac{3}{2}m}\rmax (r^*)^{\frac{3}{2}} \dmax\sqrt{d^*}  \log \dmax }{ \rmin n} \cdot\left(1  + \frac{C_1 \sigma}{\lambda_{\min} }\frac{  m \rmax^{\frac{1}{2}} (r^*)^{\frac{1}{2}}  \dmax \sqrt{d^*} \sqrt{\log \dmax} }{ \sqrt{n}} \right)  \\
       & + C C_1\norm{ \cP_{\TT}(\bcI) }_{\tF} \cdot \sigma \sqrt{\frac{ m^3 (2\mu)^{m-1} r^* \dmax^2 d^* \log^2\dmax }{  r_j n^2}}\\
        \abs{\frE_4}& \le C C_1 \norm{\bcI}_{\ell_1} \frac{\sigma^2}{\lambda_{\min} } \frac{  m^2 (2\mu)^{\frac{3}{2}m}\rmax (r^*)^{\frac{3}{2}} \dmax\sqrt{d^*}  \log \dmax }{ \rmin n} \cdot\left(1  + \frac{C_1 \sigma}{\lambda_{\min} }\frac{  m \rmax^{\frac{1}{2}} (r^*)^{\frac{1}{2}}  \dmax \sqrt{d^*} \sqrt{\log \dmax} }{ \sqrt{n}} \right) \\
        \abs{\frE_5} & \le   C C_1^2 \norm{\bcI}_{\ell_1} \frac{\sigma^2 }{\lambda_{\min} }\frac{  m^2 (2\mu)^{\frac{3}{2}m-1}\rmax (r^*)^{\frac{3}{2}} \dmax  \sqrt{d^*} \log \dmax }{ \rmin n} \left( 1 + \frac{C_1^2 \sigma^2 }{\lambda_{\min}^2 }\frac{m d^* \dmax^2 \log\dmax}{n} \right) \\
    \end{aligned}
\end{equation*}
uniformly with probability at least $1-\dmax^{-2m}$.
\end{Lemma}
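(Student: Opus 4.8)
The starting point is the decomposition \eqref{eq:test-decomp}, which is purely algebraic and hence valid whether or not $\widehat\bcT_{\init}$ depends on $\{(\bcX_i,\xi_i)\}_{i=1}^n$; only $\frE_1$ is the i.i.d.\ Gaussian term, so the task is to bound $\frE_2,\dots,\frE_5$. The plan is to run the same term-by-term bookkeeping as in the independent case (Lemma \ref{lemma:oracle-init-frE-2-5}), with one structural change: every step there that used independence of $\{\widehat\bU_{j,0}\}$ and $\bcT^{\vartriangle}=\widehat\bcT_{\init}-\bcT$ from the sampling to invoke matrix Bernstein / sub-Gaussian concentration will instead be run on the $\varepsilon$-net of admissible initializations supplied by Lemma \ref{lemma:cover-init}. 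Concretely, I fix $\varepsilon\asymp C_1\sigma\sqrt{\log\dmax/(\dmax^{C_\varepsilon m}n)}$ with $C_\varepsilon$ a large constant, pass from $\widehat\bcT_{\init}$ to the nearest net element $\widehat\bcT_{\net}=\widehat\bcC_{\net}\times_{j=1}^m\widehat\bU_{j,\net}$ (for which both $\|\widehat\bcT_{\init}-\widehat\bcT_{\net}\|_{\linf}$ and $\|\cP_{\widehat\bU_{j,0}}-\cP_{\widehat\bU_{j,\net}}\|_{2,\infty}$ are negligible, as recorded right after Lemma \ref{lemma:cover-init}), apply the concentration estimate to the now-deterministic $\widehat\bcT_{\net}$, and union-bound over the $\le\dmax^{C_\varepsilon m\rmax\dmax}$ net points, the exponent being absorbed into the improved tail exponents of the individual concentration inequalities. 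The net-substitution errors are checked to be lower order at this scale, and the accumulated failure probabilities from this step together with those of the supporting Lemmas \ref{lemma:Einit-Ern-dep-l2}, \ref{lemma:E-init-rn-2inf-dep}, \ref{lemma:l2inf-powerit-dep} sum to at most $\dmax^{-2m}$.

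With this machinery the four terms are handled as follows. For $\frE_2=\langle\bcE_{\init},\,\bcI\times_{j=1}^m\cP_{\bU_j}\rangle$, write $\bcE_{\init}=-\big(\tfrac{d^*}{n}\sum_i\langle\bcT^{\vartriangle},\bcX_i\rangle\bcX_i-\bcT^{\vartriangle}\big)$; after the net reduction $\bcT^{\vartriangle}$ is fixed, so $\langle\bcE_{\init},\bcA\rangle$ with $\bcA:=\bcI\times_{j=1}^m\cP_{\bU_j}$ is a centered sum of i.i.d.\ bounded scalars, and a scalar Bernstein bound yields the ``concentration'' phase carrying the debiasing gain of order $\dmax\log\dmax/n$. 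The ``deterministic'' phase follows from $\abs{\langle\bcE_{\init},\bcA\rangle}\le\|\bcE_{\init}\|_{\linf}\|\bcA\|_{\ell_1}$ together with the $\linf$ initialization bound \eqref{eq:tensor-init} and incoherence; routing the size of $\bcA$ through either $\|\cP_\TT(\bcI)\|_\tF\sqrt{d^*}$ or $\|\bcI\|_{\ell_1}$ and then taking the smaller of the two phases produces the stated $\wedge$-bound. For $\frE_3$, $\frE_4$, $\frE_5$ --- built respectively from the remainder part $\cS^{(\rem)}_{\widehat\bU_{j,1}}$ of the first-order SVD expansion \eqref{eq:U1-order1-expansion}, the higher-order series $\sum_{l\ge2}\cS^{(l)}_{\widehat\bU_{j,1}}$, and the multilinear remainder $\bcE_{\rem}$ of \eqref{eq:hatT-decomp-Erem} --- I repeat the single-term enumeration from the proof of Lemma \ref{lemma:oracle-init-frE-2-5}, but at each occurrence of $\bE_{j,\rn}(\otimes_{k\neq j}\widehat\bU_{k,0})$, $\bE_{j,\init}(\otimes_{k\neq j}\widehat\bU_{k,0})$ (and their $\ell_{2,\infty}$ versions) or of $\cP_{\widehat\bU_{j,1}}-\cP_{\bU_j}$ in spectral / $\ell_{2,\infty}$ / $\linf$ norm I substitute the dependent-initialization bounds of Lemmas \ref{lemma:Einit-Ern-dep-l2}, \ref{lemma:E-init-rn-2inf-dep}, \ref{lemma:l2inf-powerit-dep}. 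Those bounds differ from their independent counterparts only by extra powers of $\rmax$ and by the multiplicative inflation factors of the form $1+\frac{C_1\sigma}{\lambda_{\min}}\sqrt{m^2 d^*\dmax\log\dmax/n}+\cdots$, which propagate through the products and, once collected, yield exactly the displayed bounds. Throughout, the pieces of $\cS^{(\rem)}$, $\cS^{(l)}$ and $\bcE_{\rem}$ that carry $\cP_{\bU_j}^\perp$ on the side facing $\bcT$ vanish upon contraction, and the surviving pieces are paired against $\cM_j(\bcI)$ and bounded by $\|\bcI\|_{\ell_1}$ times the product of the relevant $\ell_{2,\infty}/\linf$ perturbation norms, as in the independent case.

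I expect the principal obstacle of this lemma (granting the supporting lemmas) to be organizational but substantial: $\frE_3$ and $\frE_5$ are each a sum of up to $O(2^m)$ structurally distinct single terms, and with the dependent estimates each term splits further into a ``concentration'' and a ``deterministic/net'' phase, so one must identify which of the resulting $O(2^{2m})$ contributions dominates under the SNR and sample-size hypotheses of Theorem \ref{thm:clt-dep-init} and verify the rest are lower order --- in particular that the dangerous cross terms (an SVD-perturbation factor times $\bE_{j,\init}$, etc.) are controlled via the $\sqrt{d_j\dmax\log\dmax/n}\wedge1$ debiasing gain rather than by a purely deterministic bound. Getting the exact $\wedge$-structure in the $\frE_2$ bound right, and aligning it with what the downstream normalization in the proof of Theorem \ref{thm:clt-dep-init} actually requires, is the most delicate accounting step. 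All of this is a long but bounded computation once the supporting lemmas are in hand; the genuinely new probabilistic difficulty --- the Bernoulli-indicator conditioning $\idc_{k,i}=\idc\{k_i=k\}$ with $\sum_i\idc_{k,i}\lesssim n/d_j$ that avoids the spurious $\sqrt{d_j}$ loss in the $\ell_{2,\infty}$ bounds, carried out uniformly over the $\varepsilon$-net --- has already been isolated into Lemmas \ref{lemma:Einit-Ern-dep-l2} and \ref{lemma:E-init-rn-2inf-dep}.
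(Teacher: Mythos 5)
Your overall architecture is the paper's: keep the algebraic decomposition \eqref{eq:test-decomp}, rerun the term-by-term bookkeeping of Lemma \ref{lemma:oracle-init-frE-2-5}, and at every appearance of $\bE_{j,\rn}(\otimes_{k\neq j}\widehat\bU_{k,0})$, $\bE_{j,\init}(\otimes_{k\neq j}\widehat\bU_{k,0})$, or $\cP_{\widehat\bU_{j,1}}-\cP_{\bU_j}$ substitute the dependent-initialization rates obtained via the $\varepsilon$-net of Lemma \ref{lemma:cover-init} (this is exactly how the paper treats $\frE_3,\frE_4,\frE_5$, supplemented by the dependent analogue of Lemma \ref{lemma:E-rn-Udiff}, and with the $\bcE_{\init}$-cross-terms in $\frE_5$ handled by a size bound). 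For those three terms your plan is sound, if schematic.

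The genuine gap is in $\frE_2$, specifically in how you propose to get the second branch of the $\wedge$ (the $1/\dmin$ phase). Your ``deterministic phase'' $\abs{\langle\bcE_{\init},\bcA\rangle}\le\norm{\bcE_{\init}}_{\linf}\norm{\bcA}_{\ell_1}$ does not work: first, $\norm{\bcE_{\init}}_{\linf}$ is not of order $\norm{\bcT^{\vartriangle}}_{\linf}$, because $\bcE_{\init}=\bcT^{\vartriangle}-\frac{d^*}{n}\sum_i\langle\bcT^{\vartriangle},\bcX_i\rangle\bcX_i$ rescales every sampled entry by $d^*/n$, so $\norm{\bcE_{\init}}_{\linf}\lesssim\frac{d^*}{n}(m\log\dmax)\norm{\bcT^{\vartriangle}}_{\linf}$; second, even granting a benign bound, the product $\norm{\bcE_{\init}}_{\linf}\norm{\bcA}_{\ell_1}$ carries no $\dmin^{-1/2}$ gain and is larger than the claimed $1/\dmin$ branch by roughly a factor $d^*\sqrt{\dmin}\log\dmax/n$ (up to $\mathrm{poly}(m,\mu,r^*)$), which is enormous under the hypotheses of Theorem \ref{thm:clt-dep-init}. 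The paper obtains this branch by a different mechanism: it bounds $\abs{\frE_2}\le\sqrt{\rmin}\,\norm{\cP_{\bU_{j'}}\bE_{j',\init}(\otimes_{k\neq j'}\cP_{\bU_k})}_2\cdot\bigl(\norm{\cP_\TT(\bcI)}_\tF\ \text{or}\ \norm{\bcI}_{\ell_1}\sqrt{\mu^m r^*/d^*}\bigr)$, and the $1/\dmin$ arises from the \emph{second} (dilation-trick, no-debiasing-gain) phase of the spectral bound in Lemma \ref{lemma:Einit-Ern-dep-l2}, multiplied by the incoherence factor $\sqrt{\mu r_{j'}/d_{j'}}$ coming from the left projection $\cP_{\bU_{j'}}$; the two choices of the $\bcI$-side factor give the outer $\wedge$. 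You cannot discard that branch as redundant either: the $\frE_2$ bound of this lemma is reused in the proof of Theorem \ref{thm:clt-lol-init}, where $n$ may be as small as order $\dmax\log^2\dmax$ and $\dmax\log\dmax/n$ can exceed $1/\dmin$. So to close the proof you should route $\frE_2$ through the projected spectral-norm bound (as in \eqref{eq:dep-E-init-full-uniform}) rather than through $\norm{\bcE_{\init}}_{\linf}\norm{\bcA}_{\ell_1}$; your scalar-Bernstein-over-the-net argument is fine for the $\dmax\log\dmax/n$ branch but is not enough by itself.
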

This lemma essentially follows Lemma \ref{lemma:oracle-init-frE-2-5} and its proof is also similar to that described Section \ref{apx:sec:proof-all-rem-ind}. We only need to revisit the proof in Section \ref{apx:sec:proof-all-rem-ind} and plug in new error rates under the dependent initialization conditions. We defer the proof of this lemma to Section \ref{apx:sec:proof-lemma:dep-init-frE-2-5}.

According to \eqref{eq:core-clt} and \eqref{eq:test-stat-popvar}, the term $\frE_1$ converge to asymptotic normal distribution, when the SNR satisfies 
\begin{equation*}
   \frac{\lambda_{\min}}{ \sigma} \ge  C_{\gap}\kappa_0 C_1\sqrt{\frac{  m^4 (2\mu)^{m-1} (r^*)^2   d^* \dmax^2 \log^2 \dmax }{ n}}, 
\end{equation*}
the remainder terms from $\frE_2$ to $\frE_5$ can be bounded by
\begin{equation}\label{eq:test-stat-popvar-rem-dep}
\begin{aligned}
        \abs{\frac{\sum_{i=2}^{5}\frE_i}{\sigma\norm{\cP_{\TT}(\bcI)}_\tF\sqrt{d^*/n} }} & \le  C C_1 \sqrt{\frac{m^3(2\mu)^m \rmax^2 r^*\dmax^2\log^2\dmax }{n}} + C C_1\frac{\sigma}{\lambda_{\min}}\sqrt{\frac{  m^4 \mu^{m-1} (r^*)^2   d^* \dmax^2 \log^2 \dmax }{\rmin^3 n}}  \\
        & + C \frac{ C_1^2\sigma  \norm{\bcI}_{\ell_1}}{\lambda_{\min}  \norm{\cP_{\TT}(\bcI)}_\tF }\frac{  m^2 (2\mu)^{\frac{3}{2}m}\rmax (r^*)^{\frac{3}{2}} \dmax \log \dmax }{ \rmin \sqrt{n}}.
\end{aligned}
\end{equation}
Given the alignment condition in Assumption \ref{asm:alignment}, we have the following bound from $\frE_2$ to $\frE_5$:
\begin{equation*}
    \begin{aligned}
       \abs{\frac{\sum_{i=2}^{5}\frE_i}{\sigma\norm{\cP_{\TT}(\bcI)}_\tF\sqrt{d^*/n} }} & \le  C C_1 \sqrt{\frac{m^3(2\mu)^m \rmax^2 r^*\dmax^2\log^2\dmax }{n}} + C C_1\frac{\sigma}{\lambda_{\min}}\sqrt{\frac{  m^4 \mu^{m-1} (r^*)^2   d^* \dmax^2 \log^2 \dmax }{\rmin^3 n}}  \\
        & + C \frac{ C_1^2\sigma  \norm{\bcI}_{\ell_1}}{\alpha_I \lambda_{\min}  \norm{\bcI}_\tF }\sqrt{\frac{ m^4 (2\mu)^{3 m} \rmax^2 (r^*)^{3 }\dmax d^*  \log^2 \dmax }{\rmin^2 n } }
    \end{aligned}
\end{equation*}
with probability at least $1- \dmax^{-2m}$. Associated with \eqref{eq:core-clt}, it is clear that 
\begin{equation*}
\begin{aligned}
        &\max_{t\in\R}\abs{\bbP\left( \frac{\left\langle  \widehat{\bcT} - \bcT,\bcI  \right\rangle }{ \sigma\norm{\cP_{ \TT }(\bcI)}_\tF \sqrt{{d^*}/{n}} } \le t\right)- \Phi(t)} \le  C C_1 \sqrt{\frac{m^3(2\mu)^m \rmax^2 r^*\dmax^2\log^2\dmax }{n}} \\
        & \quad \quad + C C_1\frac{\sigma}{\lambda_{\min}}\sqrt{\frac{  m^4 \mu^{m-1} (r^*)^2   d^* \dmax^2 \log^2 \dmax }{\rmin^3 n}} + C \frac{ C_1^2\sigma  \norm{\bcI}_{\ell_1}}{\alpha_I \lambda_{\min}  \norm{\bcI}_\tF }\sqrt{\frac{ m^4 (2\mu)^{3 m} \rmax^2 (r^*)^{3 }\dmax d^*  \log^2 \dmax }{\rmin^2 n } }
\end{aligned}
\end{equation*}
\end{proof}
%\texorpdfstring{$\ell_{2,\infty}$}{l2inf}
\subsubsection{Improved bound with leave-one-out initialization to prove Theorem \ref{thm:clt-lol-init}}
Next, we assume the leave-one-out initialization in \eqref{eq:asp-loo-init} holds. Then, we can show that the improved $\ell_{2,\infty}$ norm bound (and consequently the $\ell_{2}$ bound) can be achieved 
\begin{equation}
    \norm{\widehat \bcT_{\init}^{(k,l)} -   \bcT }_{\linf } \le C_2 \sigma \sqrt{\frac{\dmax \log \dmax }{n}}, \quad    \norm{ \cP_{\widehat{\bU}_{j,0}^{(k,l)}} - \cP_{\widehat{\bU}_{j,0}} }_{\tF} \le C_2\frac{\sigma}{\lambda_{\min}}\sqrt{\frac{\mu r_k}{d_k}} \sqrt{   \frac{ d^* \dmax \log \dmax }{ n }}
\end{equation}

% improve $\frE_3$ to $\frE_5$ in Lemma \ref{lemma:dep-init-frE-2-5} 

Moreover, we assume the sample size: $n\ge C_{\gap}C_2^2\kappa_0^2m^2(2\mu)^m\rmax r^*\dmax\log\dmax$
\begin{Lemma}\label{lemma:loo-l2inf-ipv}
    With the leave-one-out initialization in \eqref{eq:asp-loo-init}, the following improvement on the $\ell_{2,\infty}$ norm bound can be made:
\begin{subequations}\label{eq:loo-l2inf-ipv}
\begin{align}
    &\norm{\bE_{1,\rn} \left( \otimes_{j\neq 1}\cP_{\widehat{\mathbf{U}}_{j,0}} -\otimes_{j\neq 1}\cP_{{\mathbf{U}}_{j}} \right)}_{2,\infty} \le C \sigma \frac{ C_2\sigma  }{\lambda_{\min}}\sqrt{\frac{d^*\dmax \log \dmax }{n}} \sqrt{\frac{ m^3 \alpha_d (2\mu)^{m-1} r_{-1}  d^*\log \dmax }{n}} \label{eq:loo-l2inf-ipv-1}\\
    &  \norm{\bE_{1,\rn}\left(\otimes_{j\neq 1} \widehat{\mathbf{U}}_{j,0} \right)}_{2,\infty} \le C \sigma \sqrt{\frac{ m (2\mu)^{m-1} r_{-1} d^*\log \dmax }{n}}\label{eq:loo-l2inf-ipv-2}\\
    & \norm{\bE_{1,\init}\left(\otimes_{j\neq 1} \widehat{\mathbf{U}}_{j,0} \right)}_{2,\infty} \le CC_2 \kappa_0\sigma \sqrt{\frac{m^3 (2\mu)^{2m-1} (r^*)^2 d^*  \dmax \log^2\dmax  }{ n^2}} \label{eq:loo-l2inf-ipv-3}\\
    &\norm{\cP_{\widehat{\bU}_{1,1}} - \cP_{\bU_1}}_{2,\infty} \le  \frac{C C_2\sigma  }{\lambda_{\min}}\sqrt{\frac{ m (2\mu)^{m-1} r_{-1} d_1  d^*\log \dmax }{n}}\cdot\sqrt{\frac{\mu r_1}{d_1}}\label{eq:loo-l2inf-ipv-4} 
\end{align}
\end{subequations}
with probability at least $1-3 m 2^{m+3} \dmax^{-3m}$.
\end{Lemma}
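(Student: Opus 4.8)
\emph{Strategy.} The four bounds \eqref{eq:loo-l2inf-ipv-1}--\eqref{eq:loo-l2inf-ipv-4} are the leave-one-out refinements of the crude estimates in Lemmas~\ref{lemma:E-init-rn-2inf-dep} and \ref{lemma:l2inf-powerit-dep}; the point is to trade the $\varepsilon$-net argument (which costs powers of $\dmax$) for a row-wise leave-one-out decoupling. For each bound I would fix a row index $k_1\in[d_1]$, estimate $\norm{\be_{1,k_1}^\top(\cdot)}_2$, and then union-bound over the $d_1\le\dmax$ rows and the $m$ modes (taking each Bernstein threshold at level $\dmax^{-4m}$ so that the union survives and the stated probability $1-3m2^{m+3}\dmax^{-3m}$ holds). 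The key algebraic fact is that, writing $\cM_1(\bcX_i)=\be_{1,k_i}\be_{-1,l_i}^\top$, one has $\be_{1,k_1}^\top\cM_1(\bcX_i)=\idc\{k_i=k_1\}\be_{-1,l_i}^\top$, so only the $\chi_{k_1}:=\#\{i:k_i=k_1\}$ observations in slice $k_1$ of mode $1$ enter the $k_1$-th row; by \eqref{eq:idc-concentration} and $n\gg\dmax\log\dmax$ we may assume $\chi_{k_1}\le 2n/d_1$. Assumption~\ref{asm:lol-init} supplies companions $\widehat\bcT_{\init}^{(1,k_1)}$, $\widehat{\bU}_{j,0}^{(1,k_1)}$ that are independent of exactly those $\chi_{k_1}$ observations, retain the $\linf$ accuracy \eqref{eq:asp-loo-init} (hence are incoherent with parameter $\lesssim2\mu$, as in Lemma~\ref{lemma:l2inf-init}), and satisfy the projection-closeness bound $\norm{\cP_{\widehat{\bU}_{j,0}^{(1,k_1)}}-\cP_{\widehat{\bU}_{j,0}}}_{\tF}\le C_2\tfrac{\sigma}{\lambda_{\min}}\sqrt{\mu r_1/d_1}\sqrt{d^*\dmax\log\dmax/n}$.

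For \eqref{eq:loo-l2inf-ipv-2} I would write $\be_{1,k_1}^\top\bE_{1,\rn}(\otimes_{j\neq1}\widehat{\bU}_{j,0})=\tfrac{d^*}{n}\sum_{i:k_i=k_1}\xi_i\,\be_{-1,l_i}^\top(\otimes_{j\neq1}\widehat{\bU}_{j,0})$ and insert $\otimes_{j\neq1}\widehat{\bU}_{j,0}^{(1,k_1)}$. The main part, conditionally on the leave-one-out subspaces and on $\chi_{k_1}$, is a sum of independent mean-zero vectors with summand norm $\lesssim\tfrac{d^*}{n}\sigma\sqrt{(2\mu)^{m-1}r_{-1}/d_{-1}}\log\dmax$ and variance proxy $\lesssim\tfrac{d^*}{n}\sigma^2(2\mu)^{m-1}r_{-1}$, so vector Bernstein gives the claimed $\sigma\sqrt{m(2\mu)^{m-1}r_{-1}d^*\log\dmax/n}$. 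The residual carries $D=\otimes_{j\neq1}\widehat{\bU}_{j,0}-\otimes_{j\neq1}\widehat{\bU}_{j,0}^{(1,k_1)}$, which telescopes mode by mode so that $\norm{D}_{\tF}\lesssim m\sqrt{r_{-1}/\rmin}\,C_2\tfrac{\sigma}{\lambda_{\min}}\sqrt{\mu r_1/d_1}\sqrt{d^*\dmax\log\dmax/n}$; since $l_i$ is uniform, $\E_{l_i}\norm{\be_{-1,l_i}^\top D}_2^2=\norm{D}_{\tF}^2/d_{-1}$, so this residual is again a centered sum and concentrates to $\lesssim\sigma\norm{D}_{\tF}\sqrt{d^*\log\dmax/n}$, which the statistically-optimal SNR condition of Theorem~\ref{thm:clt-lol-init} makes $o(1)$ times the main part. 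Bound \eqref{eq:loo-l2inf-ipv-1} is identical with $\otimes_{j\neq1}\cP_{\bU_j}$ as reference subspace, invoking the $\ell_{2,\infty}$ bound of Lemma~\ref{lemma:l2inf-init} for $\cP_{\widehat{\bU}_{j,0}^{(1,k_1)}}-\cP_{\bU_j}$ to control the relevant $\norm{\cdot}_{\tF}$. For \eqref{eq:loo-l2inf-ipv-3} I would first record that $[\bcE_{\init}]_{\omega}=[\bcT^{\vartriangle}]_{\omega}\bigl(1-\tfrac{d^*}{n}N_{\omega}\bigr)$, so $\be_{1,k_1}^\top\bE_{1,\init}(\otimes_{j\neq1}\widehat{\bU}_{j,0})$ is exactly a slice-$k_1$ centered sum; replacing $\bcT^{\vartriangle}$ and $\widehat{\bU}_{j,0}$ by their leave-one-out versions makes the principal sum conditionally i.i.d.\ with Bernstein deviation $\lesssim\norm{\bcT^{\vartriangle}}_{\linf}\sqrt{(2\mu)^{m-1}r_{-1}d^*\log\dmax/n}\lesssim C_2\sigma\sqrt{(2\mu)^{m-1}r_{-1}d^*\dmax\log^2\dmax/n^2}$, absorbed into the stated bound, while the leave-one-out replacement residuals are again secondary centered sums controlled by the projection-closeness bound, the count $\chi_{k_1}\le2n/d_1$, and the SNR condition.

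Finally, \eqref{eq:loo-l2inf-ipv-4} follows by feeding \eqref{eq:loo-l2inf-ipv-1}--\eqref{eq:loo-l2inf-ipv-3} into the representation-formula machinery already set up for Lemma~\ref{lemma:l2inf-powerit-dep}: expand $\cP_{\widehat{\bU}_{1,1}}-\cP_{\bU_1}=\sum_{k\ge1}\cS^{(k)}_{\widehat{\bU}_{1,1}}$ as in \eqref{eq:U1-expansion}, keep the first-order term, and within it the main piece $\cP_{\bU_1}^\perp\bE_{1,\rn}(\otimes_{j\neq1}\bU_j)\bV_1\bLambda_1^{-1}\bU_1^\top$ of \eqref{eq:U1-order1-expansion}; its $k_1$-th row is bounded by $\norm{\be_{1,k_1}^\top\bE_{1,\rn}(\otimes_{j\neq1}\bU_j)}_2\,\norm{\bLambda_1^{-1}}$ via the $\otimes_{j\neq1}\bU_j$-reference case of \eqref{eq:loo-l2inf-ipv-2}, while the $\bU_1(\cdot)\cP_{\bU_1}^\perp$ branch picks up the extra incoherence factor $\norm{\cP_{\bU_1}}_{2,\infty}=\sqrt{\mu r_1/d_1}$. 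The remainder term of \eqref{eq:U1-order1-expansion} is controlled by \eqref{eq:loo-l2inf-ipv-1} and \eqref{eq:loo-l2inf-ipv-3} together with $\norm{\bLambda_1^{-1}}$ and $\norm{\bLambda_1^{-2}}$, and the higher-order series $\sum_{k\ge2}\cS^{(k)}_{\widehat{\bU}_{1,1}}$ is dominated by the first-order one once $\norm{\Delta_{1,1}}_2\norm{\bLambda_1^{-2}}\le1/8$, which holds under the SNR condition exactly as in Lemma~\ref{lemma:l2inf-powerit-dep}.

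\emph{Main obstacle.} The delicate step is the treatment of the leave-one-out replacement residuals, especially in \eqref{eq:loo-l2inf-ipv-3}: they are \emph{not} small entrywise --- swapping $\widehat\bcT_{\init}$ for $\widehat\bcT_{\init}^{(1,k_1)}$ perturbs a single $\linf$ entry by the same order $\sigma\sqrt{\dmax\log\dmax/n}$ --- and only become negligible after being rewritten as a \emph{second} centered sum over the independent, uniformly sampled slice-$k_1$ positions, re-concentrated, and then killed by the statistically-optimal SNR; keeping the Kronecker-telescoping of the subspace differences aligned with the $\chi_{k_1}$-counting so that every failure event stays at level $\dmax^{-4m}$ (and hence survives the union over all $d_1\le\dmax$ rows and all $m$ modes) is the bookkeeping that produces the final probability.
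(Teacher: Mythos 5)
Your overall architecture (fix a row $k_1$, insert the leave-one-out companions from Assumption~\ref{asm:lol-init}, handle the ``independent'' part by the machinery of Lemmas~\ref{lemma:E-init-rn-2inf} and \ref{lemma:E-rn-Udiff}, union over rows and modes, then feed \eqref{eq:loo-l2inf-ipv-1}--\eqref{eq:loo-l2inf-ipv-3} into the spectral representation to get \eqref{eq:loo-l2inf-ipv-4}) matches the paper. But the step you yourself flag as delicate --- the replacement residuals --- is where your argument breaks. You propose to treat $D=\otimes_{j\neq1}\widehat{\bU}_{j,0}-\otimes_{j\neq1}\widehat{\bU}_{j,0}^{(1,k_1)}$ (and likewise $\widehat\bcT_{\init}-\widehat\bcT_{\init}^{(1,k_1)}$ in \eqref{eq:loo-l2inf-ipv-3}) as a fixed object, note $\E_{l_i}\norm{\be_{-1,l_i}^\top D}_2^2=\norm{D}_{\tF}^2/d_{-1}$, and re-apply Bernstein ``conditionally.'' This is not valid: $\widehat{\bU}_{j,0}$ and $\widehat\bcT_{\init}$ are computed from \emph{all} observations, so these differences are functions of exactly the slice-$k_1$ data $(\xi_i,l_i)$ over which you are summing; conditioning on $D$ changes the law of the summands and the ``second centered sum'' is not a sum of terms independent of its weights. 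Re-introducing an $\varepsilon$-net to fix this would defeat the purpose of the leave-one-out construction.

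The paper resolves this differently: it never concentrates against the data-dependent differences. Instead it factors them out deterministically --- e.g.\ $\norm{A\,[\otimes_{j\in\cS}(\cP_{\widehat{\bU}_{j,0}}-\cP_{\widehat{\bU}_{j,0}^{(1,k_1)}})\otimes\bI]}_2\le\norm{A\,[\otimes_{j\in\cS}\bI_{d_j}\otimes_{k'\in\cS^c}\widehat{\bU}_{k',0}^{(1,k_1)}]}_2\prod_{j\in\cS}\norm{\cP_{\widehat{\bU}_{j,0}}-\cP_{\widehat{\bU}_{j,0}^{(1,k_1)}}}_{\tF}$ --- and applies matrix Bernstein only to the remaining random matrix, which involves solely leave-one-out factors and is genuinely independent of the slice-$k_1$ observations (in \eqref{eq:loo-l2inf-ipv-3} this requires the algebraic decomposition \eqref{eq:T1-T2-decomp}, a derived bound $\norm{\widehat{\bcC}_0-\widehat{\bcC}_0^{(1,k_1)}}_{\tF}\lesssim C_2\kappa_0\sigma\sqrt{d^*\dmax\log\dmax/n}$, and concentration of the middle term around the permutation matrix $\Perm_{1,k_1}$, i.e.\ the mean must be subtracted). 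This factorization is also precisely where the $\alpha_d=\dmax/\dmin$ factor in \eqref{eq:loo-l2inf-ipv-1} and the $\kappa_0,(r^*)^2$ factors in \eqref{eq:loo-l2inf-ipv-3} come from: the Frobenius closeness in \eqref{eq:asp-loo-init} carries $\sqrt{\mu r_1/d_1}$ while the concentration with identity blocks in the modes $j\in\cS$ carries $\prod_{j\in\cS}d_j$. Your sketch, even if it could be repaired, would not produce these factors, which is a sign the decoupling step needs the paper's factor-out-then-concentrate route rather than a conditional re-concentration.
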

\begin{proof}

    We begin with \eqref{eq:loo-l2inf-ipv-1}. For the $\ell_{2,\infty}$ norm, it is equivalent to study the $\ell_{2}$ norm when multiplied by $\be_{1,k_1}$ for any $k_1\in [d_1]$, i.e.,
    \begin{equation*}
\begin{aligned}
            &\norm{\be_{1,k_1}^\top\bE_{1,\rn} \left( \otimes_{j\neq 1}\cP_{\widehat{\mathbf{U}}_{j,0}} -\otimes_{j\neq 1}\cP_{{\mathbf{U}}_{j}} \right) }_2\\
            & \le \norm{\be_{1,k_1}^\top\bE_{1,\rn} \left( \otimes_{j\neq 1}\cP_{\widehat{\mathbf{U}}_{j,0}} -\otimes_{j\neq 1}\cP_{\widehat \bU_{j,0}^{(1,k_1)}} \right) }_2 + \underbrace{\norm{\be_{1,k_1}^\top\bE_{1,\rn} \left( \otimes_{j\neq 1}\cP_{\widehat \bU_{j,0}^{(1,k_1)}} -\otimes_{j\neq 1}\cP_{{\mathbf{U}}_{j}} \right) }_2}_{\text{independent term}}\\
\end{aligned}
    \end{equation*}
where we borrow the leave-out-out initialization $\widehat{\bcT}_{\init}^{(1,k_1)}=\widehat{\bcC}_0^{(1,k_1)}\times_{j=1}^m \widehat \bU_{j,0}^{(1,k_1)}$ which is independent of the observations from the $k_1$ slice on mode $1$. That is to say,  all the non-zero terms in $\sum_{i=1}^{n} \xi_i \be_{1,k_1}^\top  \cM_1(\bcX_i)$ is independent of the singular subspace $\cP_{\widehat \bU_{j,0}^{(1,k_1)}} $. Thus, the second term can be simply handled by the rate under independent initialization just following Lemma \ref{lemma:E-rn-Udiff}:
\begin{equation}\label{eq:loo-l2inf-ipv-1-base} 
                \norm{\be_{1,k_1}^\top\bE_{1,\rn} \left( \otimes_{j\neq 1}\cP_{\widehat \bU_{j,0}^{(1,k_1)}} -\otimes_{j\neq 1}\cP_{{\mathbf{U}}_{j}} \right) }_2 \le C \sigma \frac{ C_2\sigma  }{\lambda_{\min}}\sqrt{\frac{d^*\dmax \log \dmax }{n}} \sqrt{\frac{ m (2\mu)^{m-1} r_{-1}  d^*\log \dmax }{n}}.
\end{equation}
We now focus on the first term:
 \begin{equation*}
        \begin{aligned}
        & \norm{\be_{1,k_1}^\top\bE_{1,\rn} \left( \otimes_{j\neq 1}\cP_{\widehat{\mathbf{U}}_{j,0}} -\otimes_{j\neq 1}\cP_{\widehat \bU_{j,0}^{(1,k_1)}} \right) }_2 \\
        & \le { \sum_{\cS\subseteq [m]/\{1\}, \abs{\cS}\ge 1}\norm{\frac{ d^* }{n}  \sum_{i=1}^{n} \xi_i\be_{1,k}^\top \cM_1(\bcX_i) \otimes_{j\in \cS } \left( \cP_{\widehat{\bU}_{j,0}} - \cP_{\widehat \bU_{j,0}^{(1,k_1)}}\right)\left(  \otimes_{k'\in \cS^c } \widehat \bU_{k',0}^{(1,k_1)} \right)}_2},  \\
        \end{aligned}
    \end{equation*}
in which each term can be split by:
\begin{equation}\label{eq:loo-l2inf-ipv-1-add}
    \begin{aligned}
        & \norm{\frac{ d^* }{n} \sum_{i=1}^{n}\xi_i \be_{1,k_1}^\top  \cM_1(\bcX_i) \otimes_{j\in \cS} \left( \cP_{\widehat{\bU}_{j,0}} - \cP_{\widehat \bU_{j,0}^{(1,k_1)}}\right)\left(  \otimes_{k'\in \cS^c} \widehat \bU_{k',0}^{(1,k_1)} \right) }_2  \\
        & =\norm{\frac{ d^* }{n} \sum_{i=1}^{n}\xi_i \be_{1,k_1}^\top  \cM_1(\bcX_i) \left[\otimes_{j\in \cS} \bI_{d_j}   \otimes_{k'\in \cS^c } \widehat \bU_{k',0}^{(1,k_1)} \right] \left[\otimes_{j\in \cS} \left( \cP_{\widehat{\bU}_{j,0}} - \cP_{\widehat \bU_{j,0}^{(1,k_1)}}\right)\otimes_{k'\in \cS^c} \bI_{d_{k'}}\right] }_2 \\
        & \le \norm{\frac{ d^* }{n} \sum_{i=1}^{n}\xi_i \be_{1,k_1}^\top  \cM_1(\bcX_i) \left[\otimes_{j\in \cS} \bI_{d_j}   \otimes_{k'\in \cS^c} \widehat \bU_{k',0}^{(1,k_1)} \right]  }_2 \cdot \prod_{j\in \cS}\norm{ \cP_{\widehat{\bU}_{j,0}^{(1,k_1)}} - \cP_{\widehat{\bU}_{j,0}} }_{\tF} \\
        & \le C\sigma \sqrt{\frac{m\mu^{m-2} r^* d^* \prod_{j\in \cS} d_j\log\dmax  }{r_1 \prod_{j\in \cS}\mu r_j n}}\left(\frac{C_2 \sigma}{\lambda_{\min}}\sqrt{\frac{\mu r_1}{d_1}} \sqrt{ \frac{ d^* \dmax \log \dmax }{ n }}\right)^{\abs{\cS}} \\
        & \le C (\frac{1}{4})^{\abs{\cS}}  C_2\sigma\frac{ \sigma}{\lambda_{\min}} \sqrt{\frac{m \alpha_d \mu^{m-1} r^* (d^* )^2 \dmax \log\dmax  }{ n^2}},
    \end{aligned}
\end{equation}
where the first inequality is due to the fact $\norm{\otimes_{j\in \cS} \bA_j\otimes_{k'\cS^c} \bI_{d_{k'}}}_2 = \prod_{j\in \cS}\norm{\bA_j}_2\le\prod_{j\in \cS}\norm{\bA_j}_{\tF} $ for any $\bA_j$, and the second inequality is from the matrix concentration inequality:
\begin{equation*}
\begin{aligned}
        & \norm{\frac{ d^* }{n} \sum_{i=1}^{n}\xi_i \be_{1,k_1}^\top  \cM_1(\bcX_i) \left[\otimes_{j\in \cS} \bI_{d_j}   \otimes_{k'\in \cS^c} \widehat \bU_{k',0}^{(1,k_1)} \right]   }_2 \\
        & \le C \sigma \frac{m \log\dmax }{n} \sqrt{\frac{ \mu^{m-2} r^* d^* d_1 \prod_{j\in \cS} d_j }{r_1 \prod_{j\in \cS} \mu r_j}} + C\sigma \sqrt{\frac{m\mu^{m-2} r^* d^* \prod_{j\in \cS} d_j\log\dmax  }{r_1 \prod_{j\in \cS} \mu r_j n}},
\end{aligned}
\end{equation*}
which holds with probability at least $1-d^{-3m}$.  Summing up \eqref{eq:loo-l2inf-ipv-1-base}, \eqref{eq:loo-l2inf-ipv-1-add} we know that under the SNR
\begin{equation}\label{eq:apx-loo-SNR}
    \frac{C_2 \sigma}{\lambda_{\min}} \sqrt{ \frac{ \alpha_d \mu \rmax d^* \dmax \log \dmax }{ n }}\le 1/C_{\gap},
\end{equation}
the desired bound holds with probability at least $1-2^{m+1}d^{-3m}$.

\eqref{eq:loo-l2inf-ipv-2} immediately follows from \eqref{eq:loo-l2inf-ipv-1} by noticing that 
\begin{equation*}
    \norm{\bE_{1,\rn} \left( \otimes_{j\neq 1}\cP_{\widehat{\mathbf{U}}_{j,0}} \right)}_{2,\infty} \le \norm{\bE_{1,\rn} \left( \otimes_{j\neq 1}\cP_{\widehat{\mathbf{U}}_{j,0}} -\otimes_{j\neq 1}\cP_{{\mathbf{U}}_{j}} \right)}_{2,\infty} + \norm{\bE_{1,\rn} \left(\otimes_{j\neq 1}\cP_{{\mathbf{U}}_{j}} \right)}_{2,\infty},
\end{equation*}
where the latter has been bounded by  Lemma \ref{lemma:E-init-rn-2inf}.

We now consider \eqref{eq:loo-l2inf-ipv-3} by noticing that:
\begin{equation*}
\begin{aligned}
        & \norm{\be_{1,k_1}^\top\bE_{1,\rn} \left( \otimes_{j\neq 1}\cP_{\widehat{\mathbf{U}}_{j,0}}\right) }_2 \le \norm{ \frac{d^*}{n}\sum_{i=1}^n\langle \bcT^{\vartriangle} , \bcX_i\rangle \be_{1,k_1}^\top\cM_1(\bcX_i)\left( \otimes_{j\neq 1}\cP_{\widehat{\mathbf{U}}_{j,0}} \right) }_2 \\
\end{aligned}
\end{equation*}
where we can further control the first term by

\begin{equation}\label{eq:loo-Einit-decomp}
\begin{aligned}
         & \norm{\be_{1,k_1}^\top\bE_{1,\init} \left( \otimes_{j\neq 1}\cP_{\widehat{\mathbf{U}}_{j,0}}\right) }_2  = \norm{ \left[\frac{d^*}{n}\sum_{i=1}^n \langle \bcT^{\vartriangle} , \bcX_i\rangle \be_{1,k_1}^\top\cM_1(\bcX_i)-\be_{1,k_1}^\top\cM_1(\bcT^{\vartriangle})\right]\left( \otimes_{j\neq 1}\cP_{\widehat{\mathbf{U}}_{j,0}} \right) }_2  \\
        & \le  \norm{ \left[\frac{d^*}{n}\sum_{i=1}^n \langle \bcT^{\vartriangle}_{(1,k_1)} , \bcX_i\rangle \be_{1,k_1}^\top\cM_1(\bcX_i)-\be_{1,k_1}^\top\cM_1(\bcT^{\vartriangle}_{(1,k_1)})\right]\left( \otimes_{j\neq 1}\cP_{\widehat{\mathbf{U}}_{j,0}} \right) }_2 \\
        & \quad + \norm{ \left[\frac{d^*}{n}\sum_{i=1}^n \langle \bcT^{\vartriangle}-\bcT^{\vartriangle}_{(1,k_1)} , \bcX_i\rangle \be_{1,k_1}^\top\cM_1(\bcX_i)-\be_{1,k_1}^\top\cM_1(\bcT^{\vartriangle}-\bcT^{\vartriangle}_{(1,k_1)})\right]\left( \otimes_{j\neq 1}\cP_{\widehat{\mathbf{U}}_{j,0}} \right) }_2\\
       &  \le  \underbrace{\norm{ \left[\frac{d^*}{n}\sum_{i=1}^n \langle \bcT^{\vartriangle}_{(1,k_1)} , \bcX_i\rangle \be_{1,k_1}^\top\cM_1(\bcX_i)-\be_{1,k_1}^\top\cM_1(\bcT^{\vartriangle}_{(1,k_1)})\right]\left(\otimes_{j\neq 1}\cP_{\widehat \bU_{j,0}^{(1,k_1)}} \right) }_2}_{\frH_1} \\
       & \quad + \underbrace{\norm{ \left[\frac{d^*}{n}\sum_{i=1}^n \langle \bcT^{\vartriangle}_{(1,k_1)} , \bcX_i\rangle \be_{1,k_1}^\top\cM_1(\bcX_i)-\be_{1,k_1}^\top\cM_1(\bcT^{\vartriangle}_{(1,k_1)})\right]\left( \otimes_{j\neq 1}\cP_{\widehat{\mathbf{U}}_{j,0}}  -\otimes_{j\neq 1}\cP_{\widehat \bU_{j,0}^{(1,k_1)}} \right) }_2}_{\frH_2}  \\
        & \quad + \underbrace{\norm{ \left[\frac{d^*}{n}\sum_{i=1}^n \langle \widehat \bcT_{\init}-\widehat \bcT_{\init}^{(k,l)} , \bcX_i\rangle \be_{1,k_1}^\top\cM_1(\bcX_i)-\be_{1,k_1}^\top\cM_1(\widehat \bcT_{\init}-\widehat \bcT_{\init}^{(k,l)})\right]\left( \otimes_{j\neq 1}\cP_{\widehat{\mathbf{U}}_{j,0}} \right) }_2}_{\frH_3}\\
  %  &  -\otimes_{j\neq 1}\cP_{{ \bU }_{j}} 
\end{aligned}
\end{equation}
In $\frH_1$ of \eqref{eq:loo-Einit-decomp}, the $\bcT^{\vartriangle}_{(1,k_1)}$ and singular subspace $\left(\otimes_{j\neq 1}\cP_{\widehat \bU_{j,0}^{(1,k_1)}} \right)$ are independent of the observations in the $k_1$ slice, i.e., the non-zero terms of $\be_{1,k_1}^\top\cM_1(\bcX_i)$. Thus, we can simply invoke Lemma \ref{lemma:E-init-rn-2inf} to bound $\frH_1$ by:

\begin{equation}\label{frH-1}
    \norm{ \left[\frac{d^*}{n}\sum_{i=1}^n \langle \bcT^{\vartriangle}_{(1,k_1)} , \bcX_i\rangle \be_{1,k_1}^\top\cM_1(\bcX_i)-\be_{1,k_1}^\top\cM_1(\bcT^{\vartriangle}_{(1,k_1)})\right]\left(\otimes_{j\neq 1}\cP_{\widehat \bU_{j,0}^{(1,k_1)}} \right) }_2\le C C_2 \sigma \sqrt{\frac{ m (2\mu)^{m-1} r_{-1} \dmax d^*\log^2 \dmax }{n^2}}
\end{equation}
In $\frH_2$ of \eqref{eq:loo-Einit-decomp}, we can follow the steps in \eqref{eq:loo-l2inf-ipv-1-base}  and \eqref{eq:loo-l2inf-ipv-1-add} and use the same argument to treat it. Clearly, this will lead to the following:
\begin{equation}\label{frH-2}
\begin{aligned}
    & \norm{ \frac{d^*}{n}\sum_{i=1}^n \langle \bcT^{\vartriangle}_{(1,k_1)} , \bcX_i\rangle \be_{1,k_1}^\top\cM_1(\bcX_i)\left( \otimes_{j\neq 1}\cP_{\widehat{\mathbf{U}}_{j,0}}  -\otimes_{j\neq 1}\cP_{\widehat \bU_{j,0}^{(1,k_1)}} \right) }_2  \\
    & \le    C \sigma \frac{ C_2\sigma  }{\lambda_{\min}}\sqrt{\frac{d^*\dmax \log \dmax }{n}} \sqrt{\frac{ m \alpha_d (2\mu)^{m-1} r_{-1}  d^* \dmax \log^2 \dmax }{n^2}} 
\end{aligned}
\end{equation}
where the bound holds under the SNR condition in \eqref{eq:apx-loo-SNR}.

In  $\frH_3$ of \eqref{eq:loo-Einit-decomp}, we can use the fact that for any $\bcT_1$, $\bcT_2$ and $\bcX$: 
\begin{equation}\label{eq:T1-T2-decomp}
\begin{aligned}
         \langle\bcT_1 -\bcT_2,\bcX\rangle & = \left[\Vect(\bcC_1)^\top\left(\otimes_{j\in[m]}\bU_{j,1}^\top\right)-\Vect(\bcC_2)^\top\left(\otimes_{j\in[m]}\bU_{j,2}^\top\right)\right]\Vect(\bcX) \\
        & = \sum_{\abs{\cS}\ge 1} \Vect(\bcC_2)^\top \otimes_{j\in\cS } \left(\bU_{j,1} -\bU_{j,2}\right)^\top  \otimes_{k'\in\cS^c} \bU_{k',2}^\top  \Vect(\bcX)  \\
        & \quad + \sum_{\cS} \Vect(\bcC_1-\bcC_2)^\top \otimes_{j\in\cS } \left(\bU_{j,1} -\bU_{j,2}\right)^\top  \otimes_{k'\in\cS^c} \bU_{k',2}^\top \Vect(\bcX), \\
\end{aligned}
\end{equation}
where in \eqref{eq:T1-T2-decomp}, the first term is the summation for all the $\cS\subseteq[m]$ with $\abs{\cS}\ge 1$, while in the second term $\cS$ can take all the sets in $[m]$ including the empty set. Letting $\widehat \bcT_{\init},\widehat \bcT_{\init}^{(1,k_1)} $ be the corresponding $\bcT_1$ and $\bcT_2$ in \eqref{eq:T1-T2-decomp}, we can rewrite $\frH_3$ in \eqref{eq:loo-Einit-decomp} as follows (for simplicity, we ignore the mean term $\be_{1,k_1}^\top\cM_1(\widehat \bcT_{\init}-\widehat \bcT_{\init}^{(k,l)})$ in the expression):
\begin{subequations}\label{eq:loo-init-kl-diff}
    \begin{align}
         & \norm{ \frac{d^*}{n}\sum_{i=1}^n \langle \widehat \bcT_{\init}-\widehat \bcT_{\init}^{(1,k_1)} , \bcX_i\rangle \be_{1,k_1}^\top\cM_1(\bcX_i)\left( \otimes_{j\neq 1}\cP_{\widehat{\mathbf{U}}_{j,0}} \right) }_2 \notag\\
         & \le  \sum_{\abs{\cS}\ge 1, \cS'}\left\| \frac{d^*}{n}\sum_{i=1}^n \Vect(\widehat{\bcC}_0^{(1,k_1)})^\top\otimes_{j\in \cS} \left( \widehat{\bU}_{j,0} - \widehat \bU_{j,0}^{(1,k_1)}\right)^\top\left(  \otimes_{k'\in \cS^c} \widehat \bU_{k',0}^{(1,k_1)} \right)^\top  \Vect(\bcX_i) \be_{1,k_1}^\top\cM_1(\bcX_i) \right. \notag\\
         & \quad \quad \quad \quad \left.\otimes_{j\in \cS'} \left( \cP_{\widehat{\bU}_{j,0}} - \cP_{\widehat \bU_{j,0}^{(1,k_1)}}\right)\left(  \otimes_{k'\in \cS^{'c}} \widehat \bU_{k',0}^{(1,k_1)} \right) \right\|_2 \notag\\
         & +  \sum_{\cS, \cS'}\left\| \frac{d^*}{n}\sum_{i=1}^n \Vect(\widehat{\bcC}_0-\widehat{\bcC}_0^{(1,k_1)})^\top\otimes_{j\in \cS} \left( \widehat{\bU}_{j,0} - \widehat \bU_{j,0}^{(1,k_1)}\right)^\top\left(  \otimes_{k'\in \cS^c} \widehat \bU_{k',0}^{(1,k_1)} \right)^\top  \Vect(\bcX_i) \be_{1,k_1}^\top\cM_1(\bcX_i) \right. \notag\\
         & \quad \quad \quad \quad \left.\otimes_{j\in \cS'} \left( \cP_{\widehat{\bU}_{j,0}} - \cP_{\widehat \bU_{j,0}^{(1,k_1)}}\right)\left(  \otimes_{k'\in \cS^{'c}} \widehat \bU_{k',0}^{(1,k_1)} \right) \right\|_2 \notag\\
         & = \sum_{\abs{\cS}\ge 1, \cS'}\left\| \Vect(\widehat{\bcC}_0^{(1,k_1)})^\top\left[\otimes_{j\in \cS} \left( \widehat{\bU}_{j,0} - \widehat \bU_{j,0}^{(1,k_1)}\right)^\top\otimes_{k'\in \cS^c} \bI_{r_{k'}}\right] \right. \label{eq:loo-init-kl-diff-1}\\
          & \quad \quad \quad \quad \left.  \cdot \frac{d^*}{n}\sum_{i=1}^n \left(\otimes_{j\in \cS}\bI_{d_j}  \otimes_{k'\in \cS^c} \widehat \bU_{k',0}^{(1,k_1)} \right)^\top  \Vect(\bcX_i) \be_{1,k_1}^\top\cM_1(\bcX_i)
        \otimes_{j\in \cS'} \bI_{d_j}\left(  \otimes_{k'\in \cS^{'c}} \widehat \bU_{k',0}^{(1,k_1)} \right) \right. \notag\\
        & \quad \quad \quad \quad\left.\cdot \otimes_{j\in \cS'} \left( \cP_{\widehat{\bU}_{j,0}} - \cP_{\widehat \bU_{j,0}^{(1,k_1)}}\right)\left(  \otimes_{k'\in \cS^{'c}} \bI_{r_{k'}} \right) \right\|_2 \notag\\
        &  + \sum_{\cS, \cS'}\left\|  \Vect(\widehat{\bcC}_0-\widehat{\bcC}_0^{(1,k_1)})^\top\left[\otimes_{j\in \cS} \left( \widehat{\bU}_{j,0} - \widehat \bU_{j,0}^{(1,k_1)}\right)^\top\otimes_{k'\in \cS^c} \bI_{r_{k'}}\right] \right. \label{eq:loo-init-kl-diff-2}\\
          & \quad \quad \quad \quad \left.  \cdot \frac{d^*}{n}\sum_{i=1}^n\left(\otimes_{j\in \cS}\bI_{d_j}  \otimes_{k'\in \cS^c} \widehat \bU_{k',0}^{(1,k_1)} \right)^\top  \Vect(\bcX_i) \be_{1,k_1}^\top\cM_1(\bcX_i)
        \otimes_{j\in \cS'} \bI_{d_j}\left(  \otimes_{k'\in \cS^{'c}} \widehat \bU_{k',0}^{(1,k_1)} \right) \right. \notag\\
        & \quad \quad \quad \quad\left.\cdot \otimes_{j\in \cS'} \left( \cP_{\widehat{\bU}_{j,0}} - \cP_{\widehat \bU_{j,0}^{(1,k_1)}}\right)\left(  \otimes_{k'\in \cS^{'c}} \bI_{r_{k'}} \right) \right\|_2. \notag
    \end{align}
\end{subequations}
Clearly, the mean term can also be decomposed like this. Here $\cS$ can be taken as all the subsets of $[m]$, and $\cS'$ can be taken as all the subsets of $[m]/\{1\}$. In \eqref{eq:loo-init-kl-diff-1} and \eqref{eq:loo-init-kl-diff-2}, we actually only need the concentration of the middle term, where $ \Vect(\bcX_i) \be_{1,k_1}^\top\cM_1(\bcX_i)$ and singular subspace $\widehat \bU_{k',0}^{(1,k_1)}$ are independent for all the $k'\in[m]$. To study the concentration, we denote the interested middle term by:
\begin{equation*}
    \begin{aligned}
        \frH_m(\cS,\cS')=\left(\otimes_{j\in \cS}\bI_{d_j}  \otimes_{k'\in \cS^c} \widehat \bU_{k',0}^{(1,k_1)} \right)^\top  \Vect(\bcX_i) \be_{1,k_1}^\top\cM_1(\bcX_i)
        \otimes_{j\in \cS'} \bI_{d_j}\left(  \otimes_{k'\in \cS^{'c}} \widehat \bU_{k',0}^{(1,k_1)} \right).
    \end{aligned}
\end{equation*}
Conditional on the leave-one-out initialization, we can check that:
\begin{equation*}
    \begin{aligned}
        &\norm{\frH_m(\cS,\cS')}_{\psi_2}\le \sqrt{\frac{(2\mu)^{2m-1} r^* r_{-1}}{d^* d_{-1}} }\cdot\sqrt{\frac{\prod_{j\in \cS} d_j \prod_{j\in \cS'} d_j }{\prod_{j\in \cS} 2\mu r_j \prod_{j\in \cS'} 2\mu r_j }} \\
        &\max\left\{\norm{\E\frH_m(\cS,\cS')\frH_m^\top(\cS,\cS')}_2 ,\norm{\E\frH_m^\top(\cS,\cS')\frH_m(\cS,\cS')}_2 \right\} \le \frac{(2\mu)^{2m-1} r^* r_{-1}}{(d^*)^2 }\frac{\prod_{j\in \cS} d_j \prod_{j\in \cS'} d_j }{\prod_{j\in \cS} 2\mu r_j \prod_{j\in \cS'} 2\mu r_j }.
    \end{aligned}
\end{equation*}
Therefore, for each $\cS,\cS'$, we have 
\begin{equation}\label{loo-middle}
    \begin{aligned}
        & \norm{\frac{d^*}{n}\sum_{i=1}^n \left(\otimes_{j\in \cS}\bI_{d_j}  \otimes_{k'\in \cS^c} \widehat \bU_{k',0}^{(1,k_1)} \right)^\top  \Vect(\bcX_i) \be_{1,k_1}^\top\cM_1(\bcX_i)
        \otimes_{j\in \cS'} \bI_{d_j}\left(  \otimes_{k'\in \cS^{'c}} \widehat \bU_{k',0}^{(1,k_1)} \right)-\Perm_{1,k_1}^\top }_2 \\
        & \le \sqrt{\frac{m(2\mu)^{2m-1} r^* r_{-1}\log\dmax}{n } }\cdot\sqrt{\frac{\prod_{j\in \cS} d_j \prod_{j\in \cS'} d_j }{\prod_{j\in \cS} 2\mu r_j \prod_{j\in \cS'} 2\mu r_j }} 
    \end{aligned}
\end{equation}
with probability at least $1-\dmax^{-4m}$. Here $\Perm_{1,k_1}\in\R^{d_{-1}\times d^*}$ represents the mean of $ \frac{d^*}{n}\frH_m(\cS,\cS')^\top$, and it is actually a 0-1 matrix that projects a vectorized tensor of dimension $d^*$ to its vectorized $k_1$-th slice (dimension $d_{-1}$) in mode-1 unfolding.

We first handle \eqref{eq:loo-init-kl-diff-1}. Here we point out that under the leave-one-out initialization condition, there always exists global rotations $\{\widehat\bO_j\in\O^{r_j\times r_j}\}_{j=1}^{m}$ acting on  $\widehat\bU_{j,0}^{(1,k_1)}$ for $j\in[m]$ such that each $\norm{\bU_{j,0}^{(1,k_1)}\widehat\bO_j -\widehat\bU_{j,0}}_{\tF}$ share the same order as $\norm{ \cP_{\widehat{\bU}_{j,0}^{(k,l)}} - \cP_{\widehat{\bU}_{j,0}} }_{\tF}$. For brevity, we assume that $\widehat\bU_{j,0}^{(1,k_1)}$ are the corresponding $\bU_{j,0}^{(1,k_1)}\widehat\bO_j$ after such rotations.  In \eqref{eq:loo-init-kl-diff-1}, we have $\norm{\Vect(\widehat{\bcC}_0^{(1,k_1)})}_2=\norm{\widehat{\bcT}_{\init}^{(1,k_1)}}_{\tF}\le C \sqrt{\rmin}\lambda_{\max}$, where we use the initialization condition of leave-one-out series. Applying the tricks in \eqref{eq:loo-l2inf-ipv-1-add}, we derive that:

\begin{equation}\label{eq:loo-init-kl-diff-1-res}
    \begin{aligned}
        & \left\| \Vect(\widehat{\bcC}_0^{(1,k_1)})^\top\left[\otimes_{j\in \cS} \left( \widehat{\bU}_{j,0} - \widehat \bU_{j,0}^{(1,k_1)}\right)^\top\otimes_{k'\in \cS^c} \bI_{r_{k'}}\right] \right. \\
          & \quad   \cdot\left[ \frac{d^*}{n}\sum_{i=1}^n \left(\otimes_{j\in \cS}\bI_{d_j}  \otimes_{k'\in \cS^c} \widehat \bU_{k',0}^{(1,k_1)} \right)^\top  \Vect(\bcX_i) \be_{1,k_1}^\top\cM_1(\bcX_i)
        \otimes_{j\in \cS'} \bI_{d_j}\left(  \otimes_{k'\in \cS^{'c}} \widehat \bU_{k',0}^{(1,k_1)} \right)-\Perm_{1,k_1}^\top \right] \\
        & \quad \left.\cdot \otimes_{j\in \cS'} \left( \cP_{\widehat{\bU}_{j,0}} - \cP_{\widehat \bU_{j,0}^{(1,k_1)}}\right)\left(  \otimes_{k'\in \cS^{'c}} \bI_{r_{k'}} \right) \right\|_2 \\
        & \le C \sqrt{\rmin}\lambda_{\max}\cdot \sqrt{\frac{m(2\mu)^{2m-1} r^* r_{-1}\log\dmax}{n } }\cdot\sqrt{\frac{\prod_{j\in \cS} d_j \prod_{j\in \cS'} d_j }{\prod_{j\in \cS} 2\mu r_j \prod_{j\in \cS'} 2\mu r_j }} \cdot \left(\frac{C_2 \sigma}{\lambda_{\min}}\sqrt{\frac{\mu r_1}{d_1}} \sqrt{ \frac{ d^* \dmax \log \dmax }{ n }}\right)^{\abs{\cS}+\abs{\cS'}} \\
        & \le CC_2 \sqrt{\rmin}\lambda_{\max} \cdot (\frac{1}{4})^{\abs{\cS}+\abs{\cS'}} \frac{ \sigma}{\lambda_{\min}} \sqrt{\frac{m \alpha_d (2\mu)^{2m-1} (r^*)^2 d^*  \dmax \log^2\dmax  }{ \rmin n^2}}\\
        &\le  CC_2 \cdot (\frac{1}{4})^{\abs{\cS}+\abs{\cS'}}  \kappa_0\sigma \sqrt{\frac{m \alpha_d (2\mu)^{2m-1} (r^*)^2 d^*  \dmax \log^2\dmax  }{ n^2}},
    \end{aligned}
\end{equation}
under the SNR condition in \eqref{eq:apx-loo-SNR} for
each $\cS,\cS'$ where $\abs{\cS}\ge 1$. 

We now focus on \eqref{eq:loo-init-kl-diff-2}. Under the global rotation described above, we actually have:
\begin{equation}
    \begin{aligned}
        & \norm{\widehat{\bcC}_0-\widehat{\bcC}_0^{(1,k_1)}}_{\tF} = \norm{\widehat{\bcT}_{\init}\times_{j=1}^m \widehat{\bU}_{j,0}^\top - \widehat{\bcT}_{\init}^{(1,k_1)}\times_{j=1}^m \widehat{\bU}_{j,0}^{(1,k_1)\top}  }_{\tF} \\
        &= \norm{ \left(\widehat{\bcT}_{\init}-\widehat{\bcT}_{\init}^{(1,k_1)}\right)\times_{j=1}^m \widehat{\bU}_{j,0}^{\top}  }_{\tF} + \norm{\widehat{\bcT}_{\init}^{(1,k_1)}\times_1\left(\widehat{\bU}_{1,0}-\widehat{\bU}_{1,0}^{(1,k_1)}\right)^{\top}\times_{j=2}^{m} \widehat{\bU}_{j,0}^\top}_{\tF} \\
        & \quad + \norm{\widehat{\bcT}_{\init}^{(1,k_1)}\times_1\widehat{\bU}_{1,0}^{(1,k_1)\top}\times_2\left(\widehat{\bU}_{2,0}-\widehat{\bU}_{2,0}^{(1,k_1)}\right)^{\top}\times_{j=3}^{m} \widehat{\bU}_{j,0}^\top}_{\tF} \\
        & \cdots \\
        & \quad + \norm{\widehat{\bcT}_{\init}^{(1,k_1)}\times_{j=1}^{m-1}\widehat{\bU}_{j,0}^{(1,k_1)\top}\times_{m}\left(\widehat{\bU}_{m,0}-\widehat{\bU}_{m,0}^{(1,k_1)}\right)^{\top}}_{\tF} \\
        &\le C_2\sigma \sqrt{\frac{d^*\dmax\log\dmax}{n}} + C C_2 m  \sqrt{\rmin}\lambda_{\max} \frac{ \sigma}{\lambda_{\min}}\sqrt{\frac{\mu r_1}{d_1}} \sqrt{ \frac{ d^* \dmax \log \dmax }{ n }} \\
        &\le C C_2\kappa_0\sigma \sqrt{\frac{d^*\dmax\log\dmax}{n}},
    \end{aligned}
\end{equation}
where in the first inequality we use the leave-one-out initialization condition and $\ell_{\infty}$ norm bound between $\widehat{\bcT}_{\init},\widehat{\bcT}_{\init}^{(1,k_1)}$, and in the second inequality we use the fact that $d_1\gg m^2 \rmax^2\mu $. With this bound on $\widehat{\bcC}_0-\widehat{\bcC}_0^{(1,k_1)}$, we can repeat the analysis in
\eqref{eq:loo-init-kl-diff-1-res} and derive the following bound:
\begin{equation}\label{eq:loo-init-kl-diff-2-res}
    \begin{aligned}
         &  \left\|  \Vect(\widehat{\bcC}_0-\widehat{\bcC}_0^{(1,k_1)})^\top\left[\otimes_{j\in \cS} \left( \widehat{\bU}_{j,0} - \widehat \bU_{j,0}^{(1,k_1)}\right)^\top\otimes_{k'\in \cS^c} \bI_{r_{k'}}\right] \right. \\
         & \quad   \cdot\left[ \frac{d^*}{n}\sum_{i=1}^n \left(\otimes_{j\in \cS}\bI_{d_j}  \otimes_{k'\in \cS^c} \widehat \bU_{k',0}^{(1,k_1)} \right)^\top  \Vect(\bcX_i) \be_{1,k_1}^\top\cM_1(\bcX_i)
        \otimes_{j\in \cS'} \bI_{d_j}\left(  \otimes_{k'\in \cS^{'c}} \widehat \bU_{k',0}^{(1,k_1)} \right)-\Perm_{1,k_1}^\top \right] \\
        & \quad\left.\cdot \otimes_{j\in \cS'} \left( \cP_{\widehat{\bU}_{j,0}} - \cP_{\widehat \bU_{j,0}^{(1,k_1)}}\right)\left(  \otimes_{k'\in \cS^{'c}} \bI_{r_{k'}} \right) \right\|_2 \\
       & \le  C C_2\kappa_0\sigma \sqrt{\frac{d^*\dmax\log\dmax}{n}}\cdot \sqrt{\frac{m(2\mu)^{2m-1} r^* r_{-1}\log\dmax}{n } }\cdot\sqrt{\frac{\prod_{j\in \cS} d_j \prod_{j\in \cS'} d_j }{\prod_{j\in \cS} 2\mu r_j \prod_{j\in \cS'} 2\mu r_j }} \cdot \left(\frac{C_2 \sigma}{\lambda_{\min}}\sqrt{\frac{\mu r_1}{d_1}} \sqrt{ \frac{ d^* \dmax \log \dmax }{ n }}\right)^{\abs{\cS}+\abs{\cS'}} \\
        & \le CC_2 \kappa_0\sigma \sqrt{\frac{d^*\dmax\log\dmax}{n}}\cdot (\frac{1}{4})^{\abs{\cS}+\abs{\cS'}}  \sqrt{\frac{m(2\mu)^{2m-1} r^* r_{-1}\log\dmax}{n } }\\
        &\le  CC_2 \cdot (\frac{1}{4})^{\abs{\cS}+\abs{\cS'}}  \kappa_0\sigma \sqrt{\frac{m (2\mu)^{2m-1} (r^*)^2 d^*  \dmax \log^2\dmax  }{ n^2}}.
    \end{aligned}
\end{equation}
Here \eqref{eq:loo-init-kl-diff-1-res} and \eqref{eq:loo-init-kl-diff-2-res} holds uniformly for all suitable $\cS,\cS'\subseteq[m]$ under the leave-one-out initialization with probability at least $1-2^{m+1}\dmax^{-4m}\ge 1-\dmax^{-3m}$. Summing up all the $\cS,\cS'$, we have the desired bound for \eqref{eq:loo-l2inf-ipv-3}.

We turn to handle \eqref{eq:loo-l2inf-ipv-4} with additional sample size condition:
$n\ge C_{\gap}C_2^2\kappa_0^2m^2(2\mu)^m\rmax r^*\dmax\log\dmax$. Under this condition, clearly the error $\norm{\bE_{1,\rn}\left(\otimes_{j\neq 1} \widehat{\mathbf{U}}_{j,0} \right)}_{2,\infty}$ will dominate the initialization error $\norm{\bE_{1,\init}\left(\otimes_{j\neq 1} \widehat{\mathbf{U}}_{j,0} \right)}_{2,\infty}$, we thus have 
\begin{equation}\label{eq:loo-El2linf-new}
    \begin{aligned}
        & \norm{(\bE_{1,\rn}+\bE_{1,\init})\left(\otimes_{j\neq 1} \widehat{\mathbf{U}}_{j,0} \right)}_{2,\infty} \le C \sigma \sqrt{\frac{ m (2\mu)^{m-1} r_{-1} d^*\log \dmax }{n}} \\
        & \norm{(\bE_{1,\rn}+\bE_{1,\init})\left(\otimes_{j\neq 1} \widehat{\mathbf{U}}_{j,0} \right)}_{2} \le C \sigma \sqrt{\frac{ m (2\mu)^{m-1} r_{-1} d_1 d^*\log \dmax }{n}}.
    \end{aligned}
\end{equation}
Plugging in \eqref{eq:loo-El2linf-new} into the analysis of Lemma \ref{lemma:l2inf-powerit} by substituting the error rates, we can easily derive the claimed bound of \eqref{eq:loo-l2inf-ipv-4}, As a by-product, we also have the following norm bound:
\begin{equation}\label{eq:loo-U11-l2linf-new}
    \begin{aligned}
         &\norm{\cP_{\widehat{\bU}_{1,1}} - \cP_{\bU_1}}_{2} \le \frac{C C_2\sigma  }{\lambda_{\min}}\sqrt{\frac{ m (2\mu)^{m-1} r^* d_1  d^*\log \dmax }{  r_1 n}} \\
         &\norm{\cP_{\widehat{\bU}_{1,1}} - \cP_{\bU_1}}_{\linf}\le \frac{C C_2\sigma  }{\lambda_{\min}}\sqrt{\frac{ m (2\mu)^{m-1} r_{-1} d_1 d^*\log \dmax }{n}}\cdot\frac{\mu r_1}{d_1}.
    \end{aligned}
\end{equation}

     % and all the higher-order term can be handled following the proof of Lemma \ref{lemma:E-init-rn-2inf-dep}, i.e., \eqref{eq:dep-l2inf-B2-rn-order1-net}, \eqref{eq:idc-concentration} \eqref{eq:dep-l2inf-E-rn-Ber}. Notice that, in the higher-order term, 
    
\end{proof}

With the new $\ell_{2,\infty}$ norm bound in Lemma \ref{lemma:loo-l2inf-ipv} and the corresponding $\ell_{2}$ norm bound shown in its proof, we are able to prove the following lemma on the remainder terms:

\begin{Lemma}[Remainder terms with leave-one-out initialization]\label{lemma:loo-init-frE-2-5} Under the conditions in Theorem , the terms from $\frE_3$ to $\frE_5$ can be controlled by
\begin{equation*}
    \begin{gathered}
       \abs{\frE_3}\le C \frac{C_2^2 \sigma^2}{\lambda_{\min}} \norm{\bcI}_{\ell_1}  \frac{ (r^*)^{\frac{3}{2}} m^{\frac{5}{2}} (2\mu)^{\frac{3}{2}m-1} \dmax  \sqrt{\alpha_d d^*}\log \dmax }{ n} + C C_2 \kappa_0\sigma  \norm{\bcI}_{\ell_1} \sqrt{\frac{m^3 (2\mu)^{3m-1} (r^*)^3 \dmax^2 \log^2\dmax  }{ n^2}} \\
        \abs{\frE_4}\le C\norm{\bcI}_{\ell_1} \frac{ C_2^2\sigma^2  }{\lambda_{\min}}\frac{ m^2 (2\mu)^{\frac{3}{2}m} (r^*)^{\frac{3}{2}} \dmax  \sqrt{d^*}\log \dmax }{ n}\\
        \abs{\frE_5}\le C\norm{\bcI}_{\ell_1} \frac{ C_2^2\sigma^2  }{\lambda_{\min}}\frac{ m^2 (2\mu)^{\frac{3}{2}m} (r^*)^{\frac{3}{2}} \dmax  \sqrt{d^*}\log \dmax }{ n} \\  
    \end{gathered}
\end{equation*}
uniformly with probability at least $1-\dmax^{-2m}$.
\end{Lemma}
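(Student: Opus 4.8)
The plan is to reproduce, almost verbatim, the computation carried out for Lemma~\ref{lemma:oracle-init-frE-2-5} in Section~\ref{apx:sec:proof-all-rem-ind}, and to feed into it the sharper leave-one-out perturbation rates. Recall that $\frE_3,\frE_4,\frE_5$ were isolated in the decomposition \eqref{eq:test-decomp}: $\frE_3=\sum_{j}\langle\bcT\times_{k\neq j}\cP_{\bU_k}\times_j\cS_{\widehat{\bU}_{j,1}}^{(\rem)},\bcI\rangle$ is the first-order SVD remainder paired against $\bcT$, $\frE_4=\sum_j\langle\bcT\times_{k\neq j}\cP_{\bU_k}\times_j(\sum_{l\ge2}\cS_{\widehat{\bU}_{j,1}}^{(l)}),\bcI\rangle$ collects the order-$\ge 2$ SVD blocks, and $\frE_5=\langle\bcE_{\rem},\bcI\rangle$ is made of the cross terms of the triple product defining $\widehat{\bcT}$. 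In Section~\ref{apx:sec:proof-all-rem-ind} these were all bounded using only (i) the $\ell_2$, $\ell_{2,\infty}$ and $\linf$ perturbation rates of $\bE_{j,\rn}$, $\bE_{j,\init}$ and $\cP_{\widehat{\bU}_{j,1}}-\cP_{\bU_j}$, and (ii) the elementary pairings $|\langle\bcA,\bcI\rangle|\le\norm{\bcA}_{\linf}\norm{\bcI}_{\ell_1}$ and $|\langle\bcA,\bcI\rangle|\le\norm{\bcA}_{\tF}\norm{\cP_{\TT}(\bcI)}_{\tF}$ (the latter when $\bcA$ is projected into $\TT$), together with the crude upgrade $\norm{\cP_{\TT}(\bcI)}_{\tF}\le\norm{\bcI}_{\tF}\le\norm{\bcI}_{\ell_1}$. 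So I would simply reopen that computation and substitute the leave-one-out rates of Lemma~\ref{lemma:loo-l2inf-ipv} and their $\ell_2$/$\linf$ consequences recorded in \eqref{eq:loo-U11-l2linf-new}; no new probabilistic tool is needed beyond those already used to prove Lemma~\ref{lemma:loo-l2inf-ipv}. The statement omits $\frE_2$, which under a leave-one-out initialization is controlled exactly as in Lemma~\ref{lemma:dep-init-frE-2-5}.

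I would dispose of $\frE_4$ and $\frE_5$ first. For $\frE_4$, each block $\cS_{\widehat{\bU}_{j,1}}^{(l)}$ with $l\ge2$ carries, relative to the $l=1$ block, a factor of order $(\norm{\Delta_{j,1}}_2/\lambda_{\min}^{2})^{l-1}\le 8^{-(l-1)}$, so the geometric series is dominated by its $l=2$ term; bounding $\sum_{l\ge2}\cS_{\widehat{\bU}_{j,1}}^{(l)}$ in $\linf$ via \eqref{eq:loo-l2inf-ipv-4} and \eqref{eq:loo-U11-l2linf-new}, the contraction against $\bcT$ in operator norm, and the pairing with $\bcI$ through $\norm{\bcI}_{\ell_1}$, gives $\abs{\frE_4}\lesssim\norm{\bcI}_{\ell_1}(C_2^2\sigma^2/\lambda_{\min})\,m^2(2\mu)^{\frac32 m}(r^*)^{3/2}\dmax\sqrt{d^*}\log\dmax/n$. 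For $\frE_5$, $\bcE_{\rem}$ is a sum of at most $2^{m+1}$ products, each containing at least two ``differences'' drawn from $\{\widehat{\bcT}_{\ubs}-\bcT\}\cup\{\cP_{\widehat{\bU}_{j,1}}-\cP_{\bU_j}\}_{j\in[m]}$; bounding the remaining factors in operator norm, the repeated differences in $\linf$ (using \eqref{eq:loo-l2inf-ipv-4}, \eqref{eq:loo-U11-l2linf-new}, and the $\ell_2$/$\linf$ control of $\widehat{\bcT}_{\ubs}-\bcT$ implied by \eqref{eq:loo-l2inf-ipv-2}--\eqref{eq:loo-l2inf-ipv-3}), and pairing with $\norm{\bcI}_{\ell_1}$, yields the same order as $\frE_4$.

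The delicate term is $\frE_3$, and it is exactly where the leave-one-out gain is genuinely used. From \eqref{eq:U1-order1-expansion}, $\cS_{\widehat{\bU}_{1,1}}^{(\rem)}$ splits into (a) the cross term $\cP_{\bU_1}^\perp\bE_{1,\rn}\big(\otimes_{j\neq1}\cP_{\widehat{\bU}_{j,0}}-\otimes_{j\neq1}\cP_{\bU_j}\big)\big(\otimes_{j\neq1}\bU_j\big)\bV_1\bLambda_1^{-1}\bU_1^\top$, plus (b) the $\mathfrak{B}_3$-type pieces and $\cP_{\bU_1}^\perp\bE_{1,\init}\big(\otimes_{j\neq1}\cP_{\widehat{\bU}_{j,0}}\big)\bT_1^\top\bU_1\bLambda_1^{-2}\bU_1^\top$. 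For (a), when paired against $\bcT$ and $\bcI$ and written in the mode-$j$ unfolding, the contraction with $\bcT$ forces the active factor onto the incoherent singular subspaces, so the quantity is controlled by $\norm{\bcI}_{\ell_1}$ times $\norm{\bE_{1,\rn}\big(\otimes_{j\neq1}\cP_{\widehat{\bU}_{j,0}}-\otimes_{j\neq1}\cP_{\bU_j}\big)}_{2,\infty}$; plugging in the sharpened leave-one-out bound \eqref{eq:loo-l2inf-ipv-1} for this $\ell_{2,\infty}$ norm is precisely what removes the extra $\sqrt{d^*\dmax/n}$ factor that the arbitrarily-dependent bound of Lemma~\ref{lemma:dep-init-frE-2-5} would incur. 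For (b), one uses \eqref{eq:loo-l2inf-ipv-2} on $\bE_{1,\rn}\big(\otimes_{j\neq1}\widehat{\bU}_{j,0}\big)$ and \eqref{eq:loo-l2inf-ipv-3} on $\bE_{1,\init}\big(\otimes_{j\neq1}\widehat{\bU}_{j,0}\big)$, together with $\norm{\Delta_{1,1}}_2/\lambda_{\min}^{2}\le\frac18$ and the standard operator-norm bounds on $\bcC$ and $\bLambda_1^{-1}$; summing the two contributions over $j\in[m]$ and applying the pairing inequalities (with the crude upgrade to $\norm{\bcI}_{\ell_1}$) produces the claimed two-term bound for $\abs{\frE_3}$.

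Finally, every perturbation estimate invoked above is valid for each mode $j\in[m]$ and each row index $k_1$ on an event of probability at least $1-O(m2^m\dmax^{-3m})$ by Lemma~\ref{lemma:loo-l2inf-ipv}; a union bound over these finitely many events, absorbing the polynomially small failure probabilities using $n=O(\dmax^{4m})$, upgrades this to the stated uniform validity with probability at least $1-\dmax^{-2m}$. The main obstacle here is bookkeeping, not a new idea: one must carry the leave-one-out decoupling $\cP_{\widehat{\bU}_{j,0}}=\cP_{\widehat{\bU}_{j,0}^{(1,k_1)}}+\big(\cP_{\widehat{\bU}_{j,0}}-\cP_{\widehat{\bU}_{j,0}^{(1,k_1)}}\big)$ through every factor of $\cS_{\widehat{\bU}_{j,1}}^{(\rem)}$ and through $\bcE_{\rem}$ while keeping the binomial-expansion and accumulated-incoherence factors $(2\mu)^{O(m)}$, $(r^*)^{O(1)}$, $m^{O(1)}$ from blowing up — but this has already been executed inside the proof of Lemma~\ref{lemma:loo-l2inf-ipv}, so here the task reduces to substituting those rates into the template of Section~\ref{apx:sec:proof-all-rem-ind}.
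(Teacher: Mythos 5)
Your proposal follows essentially the same route as the paper's own proof, which is literally to revisit the template of Section~\ref{apx:sec:proof-all-rem-ind} and substitute the leave-one-out rates of Lemma~\ref{lemma:loo-l2inf-ipv}: your handling of the three pieces of $\frE_3$ (the $\bE_{j,\rn}(\otimes_{k\neq j}\cP_{\widehat\bU_{k,0}}-\otimes_{k\neq j}\cP_{\bU_k})$ cross term via \eqref{eq:loo-l2inf-ipv-1}, the $\mathfrak{B}_{j,3}$ and $\bE_{j,\init}$ pieces via \eqref{eq:loo-l2inf-ipv-2}--\eqref{eq:loo-l2inf-ipv-3}), of $\frE_4$ via the $\ell_{2,\infty}$ control of the higher-order SVD blocks, and the final union-bound bookkeeping all match the paper. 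One caution: for the $\bcE_{\rn}\times_{j\in\cS}(\cP_{\widehat\bU_{j,1}}-\cP_{\bU_j})$ cross terms inside $\frE_5$, a pure size/$\linf$ bound (as your sketch literally reads) only yields a $1/\sqrt{n}$ rate and would not reach the stated $1/n$ bound once $n\gg\dmax$, so the $\varepsilon$-net uniform-concentration step \eqref{eq:Erem-Tdiff-ind-conc} of the template (used by the paper in \eqref{eq:loo-Erem-Tdiff-res-dep}) must be retained there --- which your explicit deferral to that template does implicitly cover.
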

\begin{proof}
    We only need to go back to the proof of Lemma \ref{lemma:oracle-init-frE-2-5} (Section \ref{apx:sec:proof-all-rem-ind}) and plug in the new error rates developed in Lemma \ref{lemma:loo-l2inf-ipv} to update the bounds. 
    
    \noindent \textbf{(i) $\frE_3$ in} \eqref{eq:test-decomp}. We begin with $\abs{\frE_3}$ by controlling $\frF_1$, $\frF_2$ and $\frF_3$ in \eqref{eq:frE-3-decomp}.  For $\frF_1$, we have the following bound with probability $1-\dmax^{-3m}$:
\begin{equation*}
    \abs{\frF_1} \le C \sigma  \norm{\bcI}_{\ell_1}\frac{ C_2\sigma  }{\lambda_{\min}}\sqrt{\frac{d^*\dmax \log \dmax }{n}} \sqrt{\frac{ \alpha_d m^3 (2\mu)^{2(m-1)} r_{-j} r^*  d_j\log \dmax }{ n}}.
\end{equation*}
This is a direct consequence of \eqref{eq:frF-1} and Lemma \ref{lemma:loo-l2inf-ipv}. Moreover, we use \eqref{eq:frF-2} to control $\frF_2$ at the level:
\begin{equation*}
\begin{aligned}
 \abs{\frF_2} & \le \norm{\bcI}_{\ell_1}\sqrt{\frac{\mu^{m-1} r_{-j}}{d_{-j}}} \left( \sqrt{\frac{\mu r_j}{d_j}}\norm{\mathfrak{B}_{j,3} \bU_j\bLambda_j^{-1}\bV_j^\top }_{2} +\norm{\mathfrak{B}_{j,3} \bU_j\bLambda_j^{-1}\bV_j^\top }_{2,\infty}\right) \\
    & \le C \frac{C_2^2 \sigma^2}{\lambda_{\min}} \norm{\bcI}_{\ell_1}\sqrt{\frac{\mu^{m} r^* }{d^* }}   \frac{ \sqrt{r^* r_{-j}} m (2\mu)^{m-1} d_j  d^*\log \dmax }{ n}.
\end{aligned}
\end{equation*}
For $\frF_3$, the way we deal with it is different from Section \ref{apx:sec:proof-all-rem-ind}. We consider the entrywise argument:

\begin{equation*}
 \begin{aligned}
      \abs{\frF_3} & = \abs{  \left\langle \cP_{\bU_j}^\perp \bE_{j,\init}\left( \otimes_{k\neq j}\cP_{\widehat{\mathbf{U}}_{k,0}} \right) \left( \otimes_{k\neq j}\bU_{k} \right)\cP_{\bV_j}\left(\otimes_{k\neq j}\bU_k^\top\right) , \sum_{\omega}\left[\bcI\right]_{\omega}\cM_j(\bomega)\otimes_{k\neq j}\cP_{{\mathbf{U}}_{k}}\right\rangle} \\
  & \le   \sum_{\omega} \abs{\left[\bcI\right]_{\omega}  \left\langle \cP_{\bU_j}^\perp \bE_{j,\init}\left( \otimes_{k\neq j}\cP_{\widehat{\mathbf{U}}_{k,0}} \right) \left( \otimes_{k\neq j}\bU_{k} \right)\cP_{\bV_j}\left(\otimes_{k\neq j}\bU_k^\top\right) , \cM_j(\bomega)\otimes_{k\neq j}\cP_{{\mathbf{U}}_{k}}\right\rangle} \\
  & \le  \norm{\bcI}_{\ell_1} \sqrt{\frac{\mu^{m-1} r_{-j}}{d_{-j}}} \left( \sqrt{\frac{\mu r_j}{d_j}}\norm{\bE_{j,\init}\left( \otimes_{k\neq j}\cP_{\widehat{\mathbf{U}}_{k,0}} \right) }_{2} +\norm{\bE_{j,\init}\left( \otimes_{k\neq j}\cP_{\widehat{\mathbf{U}}_{k,0}} \right) }_{2,\infty}\right) \\
  & \le  C C_2 \kappa_0\sigma  \norm{\bcI}_{\ell_1}\sqrt{\frac{\mu^{m} r^* }{d^* }}   \sqrt{\frac{m^3 (2\mu)^{2m-1} (r^*)^2 d^* d_j \dmax \log^2\dmax  }{ n^2}}.
 \end{aligned}
\end{equation*}
Combining the results on $\frF_1$, $\frF_2$, $\frF_3$, we have:
\begin{equation*}
    \begin{aligned}
         \abs{\frE_3}\le C \frac{C_2^2 \sigma^2}{\lambda_{\min}} \norm{\bcI}_{\ell_1}  \frac{ (r^*)^{\frac{3}{2}} m^{\frac{5}{2}} (2\mu)^{\frac{3}{2}m-1} \dmax  \sqrt{\alpha_d d^*}\log \dmax }{ n} + C C_2 \kappa_0\sigma  \norm{\bcI}_{\ell_1} \sqrt{\frac{m^3 (2\mu)^{3m-1} (r^*)^3 d_j \dmax \log^2\dmax  }{ n^2}}.
    \end{aligned}
\end{equation*}
 \noindent \textbf{(ii) $\frE_4$ in} \eqref{eq:test-decomp}. According to \eqref{eq:frE-4} and the new $\ell_{2,\infty}$ of Lemma \ref{lemma:loo-l2inf-ipv}, we have

\begin{equation}\label{eq:loo-frE-4}
\begin{aligned}
         \abs{\frE_4} & \le \sum_{j=1}^m \norm{\bcI}_{\ell_1}\sqrt{\frac{\mu^{m-1} r_{-j}}{d_{-j}}}\norm{\left(\sum_{l\ge 2}\cS_{\widehat{\bU}_{j,1}}^{(l)}\right)\bT_j}_{2,\infty} \\
         & \le C\norm{\bcI}_{\ell_1}\sqrt{\frac{\mu^{m} r^* }{d^* }} \frac{ C_2^2\sigma^2  }{\lambda_{\min}}\frac{ m^2 (2\mu)^{m-1} r^* \dmax  d^*\log \dmax }{ n}.
\end{aligned}
\end{equation}
Thus, we have 
\begin{equation*}
    \abs{\frE_4}\le C\norm{\bcI}_{\ell_1}\sqrt{\frac{\mu^{m} r^* }{d^* }} \frac{ C_2^2\sigma^2  }{\lambda_{\min}}\frac{ m^2 (2\mu)^{m-1} r^* \dmax  d^*\log \dmax }{ n}.
\end{equation*}

  \noindent \textbf{(iii) $\frE_5$ in} \eqref{eq:test-decomp}.
We need to repeat the analyses of two different types of errors. For case (1), \eqref{eq:Erem-noTdiff} and \eqref{eq:Erem-noTdiff-res} 

\begin{equation}\label{eq:loo-Erem-noTdiff-res}
    \begin{aligned}
        \abs{\left\langle\bcT\times_{j\in \cS } \left( \cP_{\widehat{\bU}_{j,1}} - \cP_{\bU_j}\right)\times_{k\in \cS^c } \cP_{{\bU}_{k}}, \bcI \right\rangle} \le  \frac{C C_1^2\sigma^2  }{\lambda_{\min}}\norm{\bcI}_{\ell_1}\frac{ m (2\mu)^{m-1} r_{-j} \dmax  d^*\log \dmax }{n} \cdot\sqrt{\frac{\mu^m r^*}{d^*}}.
    \end{aligned}
\end{equation}
For case (2), following the steps in 
\eqref{eq:metric-ety-Uj1}, \eqref{eq:Erem-Tdiff}, \eqref{eq:Erem-Tdiff-ind-conc}, \eqref{eq:Erem-Tdiff-res}, and substituting the $\ell_{2,\infty}$, $\ell_{\infty}$ bound of $\cP_{\widehat{\bU}_{j,1}} - \cP_{\bU_j}$ with the new order presented in Lemma \eqref{lemma:loo-l2inf-ipv}, we can conclude that the error of the second type can be controlled by:
\begin{equation}\label{eq:loo-Erem-Tdiff-res-dep}
    \begin{aligned}
        & \abs{\left\langle \left(\bcE_{\rn} + \bcE_{\init}\right)\times_{j\in \cS } \left( \cP_{\widehat{\bU}_{j,1}} - \cP_{\bU_j}\right)\times_{k\in \cS^c } \cP_{{\bU}_{k}} , \bcI \right\rangle} \\
        & \le C (\frac{1}{4})^{\abs{\cS}}\norm{\bcI}_{\ell_1} \frac{C_2\sigma^2  }{\lambda_{\min}}\sqrt{\frac{ m^2 (2\mu)^{3m-1} (r^*)^3 \dmax^2  d^*\log^2 \dmax }{n^2}},
    \end{aligned}
\end{equation}
for $\abs{\cS}=1$. Here the term related to $\bcE_{\rn}$ can be handled by \eqref{eq:Erem-Tdiff-ind-conc}, and the term  related to $\bcE_{\init}$ can be simply controlled by the size bound of each term in the summation:
\begin{equation*}
\begin{aligned}
        & \abs{\langle \bcT^{\vartriangle}, \bcX_i\rangle\left\langle \left(\bcX_i\right)\times_{j\in \cS } \left( \cP_{\widehat{\bU}_{j,1}} - \cP_{\bU_j}\right)\times_{k\in \cS^c } \cP_{{\bU}_{k}} , \bomega \right\rangle }\le C C_1\sigma\sqrt{\frac{\dmax\log\dmax}{n}} \prod_{j\in\cS}\norm{\cP_{\widehat{\bU}_{j,1}} - \cP_{\bU_j}}_{\ell_{\infty}}\prod_{j\in\cS^c}\frac{\mu r_j }{d_j} \\
    & \le CC_1\frac{\mu^{m} r^* }{d^*} \cdot \sigma\sqrt{\frac{\dmax\log\dmax}{n}} \left(\frac{C_2 \sigma}{\lambda_{\min} }\sqrt{\frac{  m (2\mu)^{m-1}\rmax r^* \dmax  d^*\log \dmax }{ n}}\right)^{\abs{\cS}} \\
    & \le  C (\frac{1}{4})^{\abs{\cS}}\frac{C_2^2\sigma^2  }{\lambda_{\min}}\sqrt{\frac{ m^2 (2\mu)^{3m-1} (r^*)^3 \dmax^2  \log^2 \dmax }{d^* n^2}}.
\end{aligned}
\end{equation*}
Summing up all the \eqref{eq:loo-Erem-noTdiff-res} and \eqref{eq:loo-Erem-Tdiff-res-dep} for every $\abs{\cS}\ge 1$, we can prove the claim. 
\end{proof}
Using the bound of $\frE_2$ in  Lemma \ref{lemma:dep-init-frE-2-5} and bounds for $\frE_3$ to $\frE_5$ in Lemma \ref{lemma:loo-init-frE-2-5}, we can control the remainder terms in \eqref{eq:test-stat-popvar}  as
\begin{equation}\label{eq:loo-test-stat-popvar-rem}
\begin{aligned}
        \abs{\frac{\sum_{i=2}^{5}\frE_i}{\sigma\norm{\cP_{\TT}(\bcI)}_\tF\sqrt{d^*/n} }} & \le C C_2 \kappa_0  \frac{\norm{\bcI}_{\ell_1}}{\norm{\cP_{\TT}(\bcI)}_\tF } \sqrt{\frac{m^3 (2\mu)^{3m-1} (r^*)^3 \dmax^2 \log^2\dmax  }{d^* n}} \\
        & + C \frac{ C_2^2\sigma  \norm{\bcI}_{\ell_1}}{\lambda_{\min}  \norm{\cP_{\TT}(\bcI)}_\tF }\frac{\alpha_d^{\frac{1}{2}} m^{\frac{5}{2}} (2\mu)^{\frac{3}{2}m} (r^*)^{\frac{3}{2}} \dmax \log \dmax }{ \sqrt{n}}.
\end{aligned}
\end{equation}
Given the alignment condition in Assumption \ref{asm:alignment}, we have the following bound from $\frE_2$ to $\frE_5$:
\begin{equation*}
    \begin{aligned}
       \abs{\frac{\sum_{i=2}^{5}\frE_i}{\sigma\norm{\cP_{\TT}(\bcI)}_\tF\sqrt{d^*/n} }} &\le C_2 \kappa_0  \frac{\norm{\bcI}_{\ell_1}}{\alpha_I \norm{\bcI}_{\tF}} \sqrt{\frac{m^3 (2\mu)^{3m-1} (r^*)^3 \dmax \log^2\dmax  }{ n}} \\
       & + C \frac{ C_2^2\sigma  \norm{\bcI}_{\ell_1} }{\alpha_I \lambda_{\min}  \norm{\bcI}_\tF }\sqrt{\frac{\alpha_d m^5 (2\mu)^{3 m} (r^*)^{3 }\dmax d^*  \log^2 \dmax }{ n } }
    \end{aligned}
\end{equation*}
with probability at least $1- \dmax^{-2m}$. Combining this result with \eqref{eq:core-clt}, we can derive that
\begin{equation*}
\begin{aligned}
        &\max_{t\in\R}\abs{\bbP\left( \frac{\left\langle  \widehat{\bcT} - \bcT,\bcI  \right\rangle }{ \sigma\norm{\cP_{ \TT }(\bcI)}_\tF \sqrt{{d^*}/{n}} } \le t\right)- \Phi(t)}  \\
        &\le  C_2 \kappa_0  \frac{\norm{\bcI}_{\ell_1}}{\alpha_I \norm{\bcI}_{\tF}} \sqrt{\frac{m^3 (2\mu)^{3m-1} (r^*)^3 \dmax \log^2\dmax  }{ n}} + C \frac{ C_2^2\sigma  \norm{\bcI}_{\ell_1} }{\alpha_I \lambda_{\min}  \norm{\bcI}_\tF }\sqrt{\frac{\alpha_d m^5 (2\mu)^{3 m} (r^*)^{3 }\dmax d^*  \log^2 \dmax }{ n } }+C \dmax^{-2m}.
\end{aligned}
\end{equation*}

With the help of \eqref{eq:homovar-vardiff}, we can immediately get the 

\begin{equation*}
\begin{aligned}
        &\max_{t\in\R}\abs{\bbP\left( \widehat W_{\test}(\bcI)\le t\right)- \Phi(t)}  \\
        &\le  C_2 \kappa_0  \frac{\norm{\bcI}_{\ell_1}}{\alpha_I \norm{\bcI}_{\tF}} \sqrt{\frac{m^3 (2\mu)^{3m-1} (r^*)^3 \dmax \log^2\dmax  }{ n}} + C \frac{ C_2^2\sigma  \norm{\bcI}_{\ell_1} }{\alpha_I \lambda_{\min}  \norm{\bcI}_\tF }\sqrt{\frac{\alpha_d m^5 (2\mu)^{3 m} (r^*)^{3 }\dmax d^*  \log^2 \dmax }{ n } } \\
        &\quad \quad +  C\frac{\norm{\bcI}_{\ell_1}}{\norm{\bcI}_{\tF}}\frac{C_1  m^{2.5}\kappa_0 \sigma  }{\alpha_I \lambda_{\min}}\sqrt{\frac{(2\mu)^m r^*d^*\dmax \log^2 \dmax }{n}} \\
        &\le  C_2 \kappa_0  \frac{\norm{\bcI}_{\ell_1}}{\alpha_I \norm{\bcI}_{\tF}} \sqrt{\frac{m^3 (2\mu)^{3m-1} (r^*)^3 \dmax \log^2\dmax  }{ n}} + C \frac{ C_2^2\sigma \kappa_0  \norm{\bcI}_{\ell_1} }{\alpha_I \lambda_{\min}  \norm{\bcI}_\tF }\sqrt{\frac{\alpha_d m^5 (2\mu)^{3 m} (r^*)^{3 }\dmax d^*  \log^2 \dmax }{ n } }.
\end{aligned}
\end{equation*}
This finishes the proof.

\subsection{Proof of Theorem \ref{thm:clt-two-test}}
\begin{proof}
According to the proof of Theorem \ref{thm:lf-inference-empvar} in Section \ref{apx:sec:clt-indp-var}, we have 
\begin{equation*}
\begin{aligned}
        \widehat W_{\test}(\bcI) & = W_{\test}(\bcI)+ W_{\test}(\bcI)\left( \frac{\sigma \norm{\cP_{\widehat \TT }(\bcI)}_\tF}{\widehat\sigma \norm{\cP_{\widehat \TT }(\bcI)}_\tF}-1\right) \\
        & = \frac{\frE_1}{\sigma\norm{\cP_{\TT}(\bcI)}_\tF\sqrt{d^*/n} }  +\frac{\sum_{i=2}^5\frE_i}{\sigma\norm{\cP_{\TT}(\bcI)}_\tF\sqrt{d^*/n} } + W_{\test}(\bcI)\left( \frac{\sigma \norm{\cP_{\widehat \TT }(\bcI)}_\tF}{\widehat\sigma \norm{\cP_{\widehat \TT }(\bcI)}_\tF}-1\right) \\
        & := \frac{\frE_1(\bcI)}{\sigma\norm{\cP_{\TT}(\bcI)}_\tF\sqrt{d^*/n} }  + \Delta(\bcI)
\end{aligned}
\end{equation*}
Again, invoking \eqref{eq:frE2-5-homo}, \eqref{eq:homovar-vardiff}, and  \eqref{eq:bound-Wtest}, we have 

\begin{equation}\label{eq:delta-I}
   \abs{\Delta(\bcI)}\le C C_1 \sqrt{\frac{m \mu^{m} \rmin r^* \dmax \log^2 \dmax }{n}} + C\frac{\norm{\bcI}_{\ell_1}}{\norm{\bcI}_{\tF}}\frac{C_1^2  \kappa_0 \sigma  }{\alpha_I \lambda_{\min}}\sqrt{\frac{m^5 (2\mu)^{3m} (r^*)^3 d^*\dmax \log^2 \dmax }{\rmin^2 n}},
\end{equation}
with probability at least $1-\dmax^{-2m}$. Replacing $\bcI$ by $\bcI_1$ and $\bcI_2$ respectively, we have  
\begin{equation*}
    \begin{aligned}
          \left[\begin{array}{c}\widehat W_{\test}(\bcI_1) \\ \widehat W_{\test}(\bcI_2)\end{array} \right] =   \underbrace{\left[\begin{array}{c}\frac{\frE_1(\bcI_1)}{\sigma\norm{\cP_{\TT}(\bcI_1)}_\tF\sqrt{d^*/n} }  \\ \frac{\frE_1(\bcI_2)}{\sigma\norm{\cP_{\TT}(\bcI_2)}_\tF\sqrt{d^*/n} }  \end{array} \right]}_{\frE_{1}(\bcI_1,\bcI_2)} +   \underbrace{\left[\begin{array}{c}\Delta(\bcI_1) \\ \Delta(\bcI_2)\end{array} \right]}_{\Delta(\bcI_1,\bcI_2) }.
    \end{aligned}
\end{equation*}
As $\frE_1$ is given by $\frE_1(\bcI)= \frac{d^*}{n} \sum_{i=1}^{n} \xi_i \left\langle \bcX_i, \cP_{\TT}(\bcI) \right\rangle$, we know that $\frE_{1}(\bcI_1,\bcI_2)$ converge to the 2-dimensional Gaussian distribution by mutivariate Berry-Esseen theorem \citep{stein1972bound,raivc2019multivariate}, with the covariance given by:
\begin{equation*}
    \rho(\bcI_1,\bcI_2) = \E\frac{d^* \xi_i^2 \left\langle \bcX_i, \cP_{\TT}(\bcI_1) \right\rangle \left\langle \bcX_i, \cP_{\TT}(\bcI_2) \right\rangle}{\sigma^2 \norm{\cP_{\TT}(\bcI_1)}_\tF \norm{\cP_{\TT}(\bcI_2)}_\tF } = \frac{\left\langle\cP_{ \TT }(\bcI_1),\cP_{ \TT }(\bcI_2)\right\rangle}{\norm{\cP_{ \TT }(\bcI_1)}_{\tF} \norm{\cP_{ \TT }(\bcI_2)}_{\tF} }.
\end{equation*}
Moreover, we have $\norm{\Delta(\bcI_1,\bcI_2)}_2$ bounded by the rate in \eqref{eq:delta-I} with probability at least $1-\dmax^{-2m}$. Using the fact that the gradient of $\Phi_{\rho}$ is bounded: $\norm{\nabla \Phi_{\rho} (t_1,t_2)}\le C$, which gives the Lipschitz condition of $\Phi_{\rho} (t_1,t_2)$, we prove the theorem.
\end{proof}

\subsection{Proof of Least Square Estimator in Theorem \ref{thm:constrained-ls}}
\begin{proof}
    Consider minimizing the loss function 
\begin{equation*}
\begin{aligned}
   \widetilde{\bcT}= & \argmin_{\bcW=\bcD\times_{j=1}^m\bW_j} h_n(\bcW)=\frac{d^*}{n}\sum_{i=1}^n\left(Y_i-\left\langle\bcW,\bcX_i \right\rangle \right)^2 \\
    & \text{s.t.}\  \bW_j\in\O^{d_j\times r_j}, \Inco(\bW_j)\le \mu \
   \end{aligned}
\end{equation*} %,\bcD \in\R^{\br} 
We now introduce some notations to better describe the problem. we let $\bcZ$ to be the full noise tensor. We define the tensor operator $\cQ_{{\bA}}(\cdot)$ as $\cQ_{\bA}(\bcW)= \bcW\times_{j=1}^{m}\bA_j $, and the sampling operator $\cP_{\Omega}(\cdot)$ as $\cP_{\Omega}(\bcW)=\sum_{i=1}^n\left\langle\bcW,\bcX_i\right\rangle\bcX_i$ (sometimes we ignore the parentheses of operators for brevity). Assume the optimial solution is $\widetilde{\bcT}=\widetilde{\bcC}\times_{j=1}^m\widetilde{\bU}_j$ . Notice that the optimization problem has no restriction on the core tensor $\bcD$, from which we have the gradient condition:
\begin{equation*}
    \nabla_{\bcD} h_n(\widetilde{\bcC},\widetilde{\bU}_1,\cdots,\widetilde{\bU}_m )=\cQ_{\widetilde{\bU}^\top}\cP_{\Omega}\left( \bcT + \bcZ -\widetilde{\bcT}\right) =0.
\end{equation*}
Thus, the core tensor follows:
\begin{equation*}
    \begin{aligned}
      \frac{d^*}{n}\cQ_{\widetilde{\bU}^\top}\cP_{\Omega} \cQ_{\widetilde{\bU}}\widetilde{\bcC}=   \frac{d^*}{n}\cQ_{\widetilde{\bU}^\top}\cP_{\Omega}\left( \bcT + \bcZ \right).
    \end{aligned}
\end{equation*}
To proceed to study the solution $\widetilde{\bcT}$, we fix the randomenss of $\widetilde{\bU}$: accroding to the metric entropy of the Grassmannian manifold in \cite{cai2013sparse}, for the space $\O_{\bW}=\prod_{j=1}^{m} \O^{d_j\times r_j}\bigcap\{\Inco(\bW_j)\le \mu \}$, the metric entropy can be bounded by $\cN(\O_{\bW},\tF,\varepsilon)\le (\frac{c}{\varepsilon})^{\sum_{j=1}^{m}d_j r_j}$ when the distance is measured by $\max_j\norm{\cP_{\bW_j}-\cP_{\bW_j'}}_{\tF}$. For any fixed $\{\breve{\bU}_j\}_{j=1}^m$ in the $\varepsilon$-net, with probability at least $1-\dmax^{-Cm^2\rmax\dmax}$, we have 
\begin{equation}\label{eq:cct-QPQ}
    \norm{ \frac{d^*}{n}\cQ_{\breve{\bU}^\top}\cP_{\Omega} \cQ_{\breve{\bU}}-\Id}\le C_2\sqrt{\frac{\mu^m m^2\rmax r^*\dmax\log \dmax}{n}}+C_3 \frac{\mu^m m^2\rmax r^*\dmax\log \dmax}{n},
\end{equation}
where $\Id$ means the identity operator. \eqref{eq:cct-QPQ} can be proved by checking the concentration of 
\begin{equation*}
   \norm{\frac{d^*}{n}\sum_{i=1}^{n} \otimes_{j=1}^m \breve{\bU}_j^\top \Vect(\bcX_i)\Vect(\bcX_i)^\top\otimes_{j=1}^m \breve{\bU}_j -\bI_{r^*}}_2
\end{equation*}
using matrix Bernstein inequality. We check the first order condition: 
\begin{equation*}
    \norm{\otimes_{j=1}^m \breve{\bU}_j^\top \Vect(\bcX_i)\Vect(\bcX_i)^\top\otimes_{j=1}^m \breve{\bU}_j}_2\le \frac{\mu^m r^*}{d^*},
\end{equation*}
and the second order condition:
\begin{equation*}
    \norm{\E \otimes_{j=1}^m \breve{\bU}_j^\top \Vect(\bcX_i)\Vect(\bcX_i)^\top\otimes_{j=1}^m \cP_{\breve{\bU}_j}\Vect(\bcX_i)\Vect(\bcX_i)^\top\breve{\bU}_j }_2\le \frac{1}{d^*}\frac{\mu^m r^*}{d^*}.
\end{equation*}
This leads to the bound in \eqref{eq:cct-QPQ}.

Thus, taking $\varepsilon=\dmax^{-Cm}$ for large $C$ we can show that 
\eqref{eq:cct-QPQ} holds uniformly for all $\{\bW_j\}_{j=1}^m\in\O_{\bW}$ with probability at least $1-\dmax^{-3m}$. \eqref{eq:cct-QPQ} indicates that $\frac{d^*}{n}\cQ_{\widetilde{\bU}^\top}\cP_{\Omega} \cQ_{\widetilde{\bU}} $ is invertible given the sample size $n\ge C \mu^m m^2\rmax r^*\dmax\log \dmax$. We then have
\begin{equation}\label{eq:core-repres}
\begin{aligned}
        \widetilde{\bcC} &=  \left(\frac{d^*}{n}\cQ_{\widetilde{\bU}^\top}\cP_{\Omega} \cQ_{\widetilde{\bU}}\right)^{-1} \frac{d^*}{n}\cQ_{\widetilde{\bU}^\top}\cP_{\Omega}\left( \bcT + \bcZ \right), \\
       \widetilde{\bcT} & = \cQ_{\widetilde{\bU}} \widetilde{\bcC} 
       =  \cQ_{\widetilde{\bU}} \left(\frac{d^*}{n}\cQ_{\widetilde{\bU}^\top}\cP_{\Omega} \cQ_{\widetilde{\bU}}\right)^{-1} \frac{d^*}{n}\cQ_{\widetilde{\bU}^\top}\cP_{\Omega}\left( \bcT + \bcZ \right).
\end{aligned}
\end{equation}
\eqref{eq:cct-QPQ} and \eqref{eq:core-repres} tell us that given any $\{\bW_j\}_{j=1}^m$, the corresponding optimal core tensor and the induced optimal solution are determined, and the induced optimal solution changes smoothly with respect to $\{\bW_j\}_{j=1}^m$. The $\varepsilon$-net argument implies that $ \widetilde{\bcT}$ can be very close to the optimal solution induced by a group of singular subspaces in the $\varepsilon$-net of $\O_{\bW}$ by taking $\varepsilon=\dmax^{-Cm}$ for large $C$. Therefore, it suffices to treat the proxy of $\{\widetilde{\bU}_j\}_{j=1}^m$ in the $\varepsilon$-net as $\{\widetilde{\bU}_j\}_{j=1}^m$ itself. We now denote such a proxy still as $\{\widetilde{\bU}_j\}_{j=1}^m$ for a better illustration. $\{\widetilde{\bU}_j\}_{j=1}^m$ is thus fixed in our following discussion.

Using the optimality of $\widetilde{\bcT}$, we have
\begin{equation*}
    \begin{aligned}
        h_n(\widetilde{\bcT})\le h_n({\bcT}),\\
    \end{aligned}
\end{equation*}
which implies 
\begin{equation}\label{eq:LS-opt-HR}
    \begin{aligned}
         \frac{d^*}{n}\norm{\cP_{\Omega}\left( \widetilde{\bcT}-\bcT \right)}_{\tF}^2 \le 2 \frac{d^*}{n}\left\langle\widetilde{\bcT}-\bcT,\cP_{\Omega}(\bcZ)  \right\rangle.\\
    \end{aligned}
\end{equation}
Our gist of proof is that, we first provide an upper bound of the RHS of \eqref{eq:LS-opt-HR}, and then show that the LHS of \eqref{eq:LS-opt-HR} can be connected to (lower bounded by) $\norm{ \widetilde{\bcT} -\bcT}_{\tF}$. To this end, we first tackle the RHS of \eqref{eq:LS-opt-HR}. Since $\{\widetilde{\bU}_j\}_{j=1}^m$ are incoherent subspaces, $\frac{d^*}{n}\cQ_{\widetilde{\bU}^\top}\cP_{\Omega}\left( \bcT + \bcZ \right)$ can be very close to the determined $\cQ_{\widetilde{\bU}^\top}\bcT$ as described in the following Lemma \ref{lemma:LS-cct-QT}:
\begin{Lemma}\label{lemma:LS-cct-QT}
With probability at least $1-\dmax^{-3m}$, we have
\begin{equation}\label{eq:LS-cct-QT}
    \max_j\norm{\cM_j\left( \frac{d^*}{n}\cQ_{\widetilde{\bU}^\top}\cP_{\Omega}\left( \bcT + \bcZ \right) - \cQ_{\widetilde{\bU}^\top} \bcT \right)}_2 \le C_2 (\sigma\vee\norm{\bcT}_{\linf})\sqrt{\frac{\mu^m m^2\rmax r^* d^* \dmax \log \dmax}{n}}
\end{equation}
uniformly for any $\{\widetilde{\bU}_j\}_{j=1}^m$ in a net of $\O_{\bW}$ with cardinality at most $\dmax^{Cm^2\rmax\dmax}$. 
\end{Lemma}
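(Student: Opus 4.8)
The plan is to split the quantity inside each mode-$j$ unfolding into a sampling-fluctuation term driven by $\bcT$ and a noise term driven by $\bcZ$, peel off the incoherent factors $\widetilde{\bU}_k$ to reduce each to a sum of small independent random matrices, control these by a matrix Bernstein inequality, and finally make the estimate uniform over the prescribed net of $\O_{\bW}$ by a union bound. First I would write
\[
\tfrac{d^*}{n}\cQ_{\widetilde{\bU}^\top}\cP_{\Omega}\big(\bcT+\bcZ\big)-\cQ_{\widetilde{\bU}^\top}\bcT=\cQ_{\widetilde{\bU}^\top}\Big(\tfrac{d^*}{n}\sum_{i=1}^n\langle\bcT,\bcX_i\rangle\bcX_i-\bcT\Big)+\cQ_{\widetilde{\bU}^\top}\Big(\tfrac{d^*}{n}\sum_{i=1}^n\xi_i\bcX_i\Big),
\]
both summands being centered since $\E[\langle\bcT,\bcX_1\rangle\bcX_1]=\tfrac1{d^*}\bcT$ and $\E\xi_i=0$. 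Taking the mode-$j$ unfolding, each becomes a sum of independent $r_j\times r_{-j}$ random matrices of the form $(\sigma\text{- or }\bcT\text{-weight})\cdot\widetilde{\bU}_j^\top\cM_j(\bcX_i)\big(\otimes_{k\neq j}\widetilde{\bU}_k\big)$. Since $\Inco(\widetilde{\bU}_j)\le\mu$ gives $\|\widetilde{\bU}_j^\top\be_{j,k}\|_2\le\sqrt{\mu r_j/d_j}$ and, writing $\cM_j(\bcX_i)=\be_{j,k_i}\be_{-j,\ell_i}^\top$, $\|(\otimes_{k\neq j}\widetilde{\bU}_k)^\top\be_{-j,\ell}\|_2\le\sqrt{\mu^{m-1}r_{-j}/d_{-j}}$, each summand (after the $d^*/n$ scaling) has operator norm, or sub-Gaussian Orlicz operator norm in the noise term, at most $\tfrac{d^*}{n}(\sigma\vee\|\bcT\|_{\linf})\sqrt{\mu^m r^*/d^*}$. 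Averaging over the uniform law of $\bcX_i$ yields $\E\big[\widetilde{\bU}_j^\top\cM_j(\bcX_i)(\otimes_{k\neq j}\cP_{\widetilde{\bU}_k})\cM_j(\bcX_i)^\top\widetilde{\bU}_j\big]\preceq\tfrac{\mu^{m-1}r_{-j}}{d^*}\bI_{r_j}$ and the transposed analogue $\preceq\tfrac{\mu r_j}{d^*}\bI_{r_{-j}}$, so after multiplying by $\sigma^2$ (resp.\ $\|\bcT\|_{\linf}^2$) and summing, the matrix-Bernstein variance proxy is of order $\tfrac{d^*}{n}(\sigma\vee\|\bcT\|_{\linf})^2\mu^{m-1}r_{-j}$.

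Next I would invoke the matrix Bernstein inequality of \cite{koltchinskii2011remark,tropp2012user} (its sub-Gaussian-series version for the noise term, after a routine truncation of $\xi_i$). For a single $\widetilde{\bU}$ this gives, with probability at least $1-e^{-t}$,
\[
\Big\|\cM_j\big(\tfrac{d^*}{n}\cQ_{\widetilde{\bU}^\top}\cP_{\Omega}(\bcT+\bcZ)-\cQ_{\widetilde{\bU}^\top}\bcT\big)\Big\|_2\lesssim(\sigma\vee\|\bcT\|_{\linf})\sqrt{\tfrac{d^*\mu^{m-1}r_{-j}(t+\log\dmax)}{n}}+(\sigma\vee\|\bcT\|_{\linf})\tfrac{\sqrt{\mu^m r^*d^*}\,(t+\log\dmax)}{n}.
\]
Using the metric-entropy bound $\cN(\O_{\bW},\max_j\|\cP_{\cdot}-\cP_{\cdot}\|_{\tF},\varepsilon)\le(c/\varepsilon)^{\sum_j d_jr_j}$ from \cite{cai2013sparse} with $\varepsilon=\dmax^{-Cm}$ produces a net of cardinality at most $\dmax^{C'm^2\rmax\dmax}$ (since $\sum_j d_jr_j\le m\rmax\dmax$); choosing $t\asymp m^2\rmax\dmax\log\dmax$ and union-bounding over the net makes the display hold simultaneously for all net elements on an event of probability at least $1-\dmax^{-3m}$. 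Under the sample-size hypothesis $n\ge C_{\gap}\mu^m m^2\rmax r^*\dmax\log\dmax$ the linear term is dominated by the square-root term, and the crude bounds $r_{-j}=r^*/r_j\le\mu r^*$ (as $r_j\ge1$, $\mu\ge1$) together with $m\le m^2$ show the square-root term is at most $C_2(\sigma\vee\|\bcT\|_{\linf})\sqrt{\mu^m m^2\rmax r^*d^*\dmax\log\dmax/n}$, after possibly enlarging $C_{\gap}$ and $C_2$.

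The main obstacle I expect is the bookkeeping that ties together the Bernstein variance proxy, the per-term size bound, the deviation level $t$ forced by the net cardinality, and the sample-size hypothesis so that the estimate lands exactly at the stated rate; in particular one must verify that inflating $t$ up to the net-entropy scale $m^2\rmax\dmax\log\dmax$ does not overwhelm the bound, which is precisely where the slack in the $\rmax\dmax$ and $m^2$ factors of the target is consumed, and that the linear ("$L\cdot t$") Bernstein term stays subdominant. The underlying probabilistic and algebraic steps—incoherence peeling, the expectation computations, and matrix Bernstein—are standard.
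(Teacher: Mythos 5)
Your proposal is correct and follows essentially the same route as the paper: for a fixed $\{\widetilde{\bU}_j\}$ you reduce each mode-$j$ unfolding to a sum of independent incoherence-peeled random matrices, verify the size and variance conditions, apply matrix Bernstein, and then union-bound over the $\varepsilon$-net of $\O_{\bW}$ (cardinality $\dmax^{Cm^2\rmax\dmax}$) with deviation level $t\asymp m^2\rmax\dmax\log\dmax$, absorbing the linear Bernstein term via the sample-size condition. The only cosmetic differences are that the paper treats $\xi_i+\langle\bcT,\bcX_i\rangle$ in a single sub-Gaussian bound rather than splitting signal and noise, and keeps the slightly cruder per-mode variance $\mu^m r^*/d^*$; neither changes the argument or the rate.
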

\eqref{eq:cct-QPQ} and \eqref{eq:LS-cct-QT} indicate that the $\Omega$-dependent $\widetilde{\bcT} $ can be very close to its $\Omega$-independent version, $\cQ_{\widetilde{\bU}}\cQ_{\widetilde{\bU}^\top}\bcT$, with error 
\begin{equation}\label{eq:LS-opt-QQT-linf}
    \norm{\widetilde{\bcT} -\cQ_{\widetilde{\bU}}\cQ_{\widetilde{\bU}^\top}\bcT}_\linf \le C_2 (\sigma\vee\norm{\bcT}_{\linf})\sqrt{\frac{\mu^{2m} m^2\rmax (r^*)^2 \dmax \log \dmax}{n}}.
\end{equation}
To get rid of the dependence on $\Omega$ and study the concentration, we instead focus on $\cQ_{\widetilde{\bU}}\cQ_{\widetilde{\bU}^\top}\bcT$ and controlling $\norm{ \cQ_{\widetilde{\bU}}\cQ_{\widetilde{\bU}^\top}\bcT - \bcT }_{\tF}$. The RHS of \eqref{eq:LS-opt-HR} can thus be handled by
\begin{equation}\label{eq:LS-UB-decomp}
    2 \frac{d^*}{n}\left\langle\widetilde{\bcT}-\bcT,\cP_{\Omega}(\bcZ)  \right\rangle = \underbrace{2 \frac{d^*}{n}\left\langle\widetilde{\bcT}-\cQ_{\widetilde{\bU}}\cQ_{\widetilde{\bU}^\top}\bcT,\cP_{\Omega}(\bcZ)  \right\rangle}_{\text{Term 1}} 
    + \underbrace{2 \frac{d^*}{n}\left\langle \cQ_{\widetilde{\bU}}\cQ_{\widetilde{\bU}^\top}\bcT - \bcT ,\cP_{\Omega}(\bcZ)  \right\rangle}_{\text{Term 2}} 
\end{equation}
Using \eqref{eq:LS-opt-QQT-linf}, we can upper bound the Term 1 in \eqref{eq:LS-UB-decomp} by
\begin{equation}\label{eq:LS-UB-term-1}
    2 \frac{d^*}{n}\left\langle\widetilde{\bcT}-\cQ_{\widetilde{\bU}}\cQ_{\widetilde{\bU}^\top}\bcT,\cP_{\Omega}(\bcZ)  \right\rangle \le C \sigma (\sigma\vee\norm{\bcT}_{\linf})\sqrt{\frac{\mu^{2m} m^4\rmax^2 (r^*)^2 \dmax^2 (d^*)^2 \log^2 \dmax}{n^2}}.
\end{equation}
Term 2 in \eqref{eq:LS-UB-decomp} can be controlled by basic  Bernstein inequality by noticing that $\norm{\cQ_{\widetilde{\bU}}\cQ_{\widetilde{\bU}^\top}\bcT - \bcT }_\linf\le 2\sqrt{\mu^{m}r^*}\norm{\bcT}_{\linf} $, which gives:
\begin{equation}\label{eq:LS-UB-term-2}
    2 \frac{d^*}{n}\left\langle \cQ_{\widetilde{\bU}}\cQ_{\widetilde{\bU}^\top}\bcT - \bcT ,\cP_{\Omega}(\bcZ)  \right\rangle \lesssim  \sigma\sqrt{\mu^{m}r^*}\norm{\bcT}_{\linf} \frac{m^2\rmax\dmax d^* \log\dmax }{n} + \norm{ \cQ_{\widetilde{\bU}}\cQ_{\widetilde{\bU}^\top}\bcT - \bcT }_{\tF}\sigma\sqrt{\frac{m^2\rmax\dmax d^*\log\dmax}{n}}.
\end{equation}
Similar as Lemma \ref{lemma:LS-cct-QT}, \eqref{eq:LS-UB-term-1} and \eqref{eq:LS-UB-term-2} holds uniformly for any $\{\widetilde{\bU}_j\}_{j=1}^m$ in the net of $\O_{\bW}$. 
We now handle the LHS of \eqref{eq:LS-opt-HR}. Since $(a+b)^2=a^2+b^2+2ab\ge\frac{1}{2}a^2-b^2$ because $2ab\le\frac{1}{2}a^2+2b^2$, we have
\begin{equation*}
\begin{aligned}
         \frac{d^*}{n}\norm{\cP_{\Omega}\left( \widetilde{\bcT}-\bcT \right)}_{\tF}^2 & \ge \frac{1}{2}\frac{d^*}{n}\norm{\cP_{\Omega}\left(\cQ_{\widetilde{\bU}}\cQ_{\widetilde{\bU}^\top}\bcT-\bcT \right)}_{\tF}^2 - \frac{d^*}{n}\norm{\cP_{\Omega}\left( \cQ_{\widetilde{\bU}}\cQ_{\widetilde{\bU}^\top}\bcT-\widetilde{\bcT} \right)}_{\tF}^2 \\
         &\ge \frac{1}{2}\frac{d^*}{n}\norm{\cP_{\Omega}\left(\cQ_{\widetilde{\bU}}\cQ_{\widetilde{\bU}^\top}\bcT-\bcT \right)}_{\tF}^2  - C (\sigma\vee\norm{\bcT}_{\linf})^2\frac{\mu^{2m} m^2\rmax (r^*)^2 d^*\dmax \log \dmax}{n}.
\end{aligned}
\end{equation*}
It remains to study the concentration of $\frac{d^*}{n}\norm{\cP_{\Omega}\left(\cQ_{\widetilde{\bU}}\cQ_{\widetilde{\bU}^\top}\bcT-\bcT \right)}_{\tF}^2$. We invoke the following Lemma \ref{lemma:LS-cct-QQT}:
\begin{Lemma}\label{lemma:LS-cct-QQT}
    With probability at least $1-\dmax^{-3m}$, we have
\begin{equation}\label{eq:LS-cct-QQT}
\begin{aligned}
         &\abs{\frac{d^*}{n}\norm{\cP_{\Omega}\left(\cQ_{\widetilde{\bU}}\cQ_{\widetilde{\bU}^\top}\bcT-\bcT \right)}_{\tF}^2 - \norm{\cQ_{\widetilde{\bU}}\cQ_{\widetilde{\bU}^\top}\bcT-\bcT}_{\tF}^2} \\
         &\le C\frac{{m^2\rmax\mu^{m}r^* d^* \dmax \log \dmax }\norm{\bcT}_{\linf}^2}{n} +C\norm{\bcT}_{\linf}\norm{\cQ_{\widetilde{\bU}}\cQ_{\widetilde{\bU}^\top}\bcT-\bcT}_{\tF} \sqrt{\frac{ m^2\rmax{\mu^{m}r^*} d^* \dmax\log \dmax  }{n}},
\end{aligned}
\end{equation}
uniformly for any $\{\widetilde{\bU}_j\}_{j=1}^m$ in a net of $\O_{\bW}$ with cardinality at most $\dmax^{Cm^2\rmax\dmax}$.
\end{Lemma}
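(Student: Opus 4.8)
The plan is to fix $\widetilde{\bU}=(\widetilde{\bU}_1,\dots,\widetilde{\bU}_m)$ to be an arbitrary element of the $\varepsilon$-net of $\O_{\bW}$ (the same net, with $\varepsilon=\dmax^{-Cm}$, used in \eqref{eq:cct-QPQ} and Lemma \ref{lemma:LS-cct-QT}), so that $\bcR:=\cQ_{\widetilde{\bU}}\cQ_{\widetilde{\bU}^\top}\bcT-\bcT$ becomes a \emph{deterministic} tensor, and then apply a scalar Bernstein inequality followed by a union bound over the net. Writing $\norm{\cP_{\Omega}(\bcR)}_\tF^2=\sum_{i=1}^n\langle\bcR,\bcX_i\rangle^2$ (the convention used throughout, as in \eqref{eq:LS-opt-HR}) and using that each $\bcX_i$ is uniform on the canonical basis so that $\E\langle\bcR,\bcX_i\rangle^2=\norm{\bcR}_\tF^2/d^*$, one gets
$$\frac{d^*}{n}\norm{\cP_{\Omega}(\bcR)}_\tF^2-\norm{\bcR}_\tF^2=\sum_{i=1}^n Z_i,\qquad Z_i:=\frac{d^*}{n}\langle\bcR,\bcX_i\rangle^2-\frac{1}{n}\norm{\bcR}_\tF^2,$$
a sum of $n$ i.i.d.\ mean-zero random variables, each depending on $\bcX_i$ only. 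Thus no Hanson--Wright-type machinery is needed here: a one-dimensional Bernstein bound suffices.

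The only structural input is a sharp sup-norm control of $\bcR$. Each entry satisfies $[\cQ_{\widetilde{\bU}}\cQ_{\widetilde{\bU}^\top}\bcT]_\omega=\langle\bcT,\bomega\cdot(\cP_{\widetilde{\bU}_1},\dots,\cP_{\widetilde{\bU}_m})\rangle$, and since $\norm{\bomega\cdot(\cP_{\widetilde{\bU}_1},\dots,\cP_{\widetilde{\bU}_m})}_\tF^2=\prod_{j=1}^m[\cP_{\widetilde{\bU}_j}]_{\omega_j,\omega_j}\le\prod_{j=1}^m\mu r_j/d_j=\mu^m r^*/d^*$ by the constraint $\Inco(\widetilde{\bU}_j)\le\mu$, combining with $\norm{\cdot}_{\ell_1}\le\sqrt{d^*}\norm{\cdot}_{\ell_2}$ gives $\norm{\cQ_{\widetilde{\bU}}\cQ_{\widetilde{\bU}^\top}\bcT}_\linf\le\sqrt{\mu^m r^*}\,\norm{\bcT}_\linf$, hence $\norm{\bcR}_\linf\le 2\sqrt{\mu^m r^*}\,\norm{\bcT}_\linf$ (exactly the bound already invoked in the proof of Theorem \ref{thm:constrained-ls}). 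From here $|Z_i|\le\tfrac{2d^*}{n}\norm{\bcR}_\linf^2$ and
$$\sum_{i=1}^n\Var(Z_i)\le\frac{(d^*)^2}{n}\,\E\langle\bcR,\bcX_1\rangle^4=\frac{d^*}{n}\sum_\omega[\bcR]_\omega^4\le\frac{d^*}{n}\norm{\bcR}_\linf^2\norm{\bcR}_\tF^2.$$

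Applying Bernstein's inequality with failure probability $2\exp(-L)$ for $L\asymp m^2\rmax\dmax\log\dmax$ yields, for this fixed $\widetilde{\bU}$,
$$\Big|\frac{d^*}{n}\norm{\cP_{\Omega}(\bcR)}_\tF^2-\norm{\bcR}_\tF^2\Big|\lesssim\sqrt{\frac{d^*L}{n}}\,\norm{\bcR}_\linf\norm{\bcR}_\tF+\frac{d^*L}{n}\norm{\bcR}_\linf^2,$$
and substituting the $\linf$ bound on $\bcR$ reproduces precisely the two terms of \eqref{eq:LS-cct-QQT}. Since the net has cardinality at most $\dmax^{Cm^2\rmax\dmax}$, choosing $L$ a sufficiently large multiple of $m^2\rmax\dmax\log\dmax$ and taking a union bound over the net leaves probability at least $1-\dmax^{-3m}$, which is the asserted statement; and if one wants it for arbitrary $\widetilde{\bU}\in\O_{\bW}$, it suffices to observe that both sides of \eqref{eq:LS-cct-QQT}, as functions of $\widetilde{\bU}$, are Lipschitz in $\max_j\norm{\cP_{\widetilde{\bU}_j}-\cP_{\widetilde{\bU}_j'}}_\tF$ with constant polynomial in the dimensions and in $n\le\dmax^{4m}$, so that $\varepsilon=\dmax^{-Cm}$ absorbs the discretization error. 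I expect the only real difficulty to be bookkeeping: one must propagate the \emph{sharp} bound $\norm{\bcR}_\linf\le2\sqrt{\mu^m r^*}\norm{\bcT}_\linf$ rather than the crude $\mu^m r^*\norm{\bcT}_\linf$ (only the former produces the stated exponents), and track the powers of $\mu$, $r^*$, $m$, $\rmax$ through the Bernstein bound; the probabilistic content itself is entirely routine.
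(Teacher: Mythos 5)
Your proposal is correct and follows essentially the same route as the paper: write the quantity as a sum of i.i.d.\ centered scalars, bound $\norm{\cQ_{\widetilde{\bU}}\cQ_{\widetilde{\bU}^\top}\bcT-\bcT}_{\linf}\le 2\sqrt{\mu^m r^*}\norm{\bcT}_{\linf}$ via incoherence, apply the scalar Bernstein inequality with the same first- and second-moment controls, and take a union bound over the net of cardinality $\dmax^{Cm^2\rmax\dmax}$. The explicit derivation of the sup-norm bound and the optional Lipschitz/discretization remark are just added bookkeeping beyond what the paper records, not a different argument.
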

Thus, the LHS of \eqref{eq:LS-opt-HR} can be controlled by 
\begin{equation}\label{eq:LS-LB}
    \begin{aligned}
         &\frac{d^*}{n}\norm{\cP_{\Omega}\left( \widetilde{\bcT}-\bcT \right)}_{\tF}^2  \ge \frac{1}{2}\frac{d^*}{n}\norm{\cP_{\Omega}\left(\cQ_{\widetilde{\bU}}\cQ_{\widetilde{\bU}^\top}\bcT-\bcT \right)}_{\tF}^2  - C (\sigma\vee\norm{\bcT}_{\linf})^2\frac{\mu^{2m} m^2\rmax (r^*)^2 d^*\dmax \log \dmax}{n} \\
         & \ge \frac{1}{2}\norm{\cQ_{\widetilde{\bU}}\cQ_{\widetilde{\bU}^\top}\bcT-\bcT}_{\tF}^2 -C\frac{{m^2\rmax\mu^{m}r^* d^*\dmax\log \dmax }\norm{\bcT}_{\linf}^2}{n} -C\norm{\bcT}_{\linf}\norm{\cQ_{\widetilde{\bU}}\cQ_{\widetilde{\bU}^\top}\bcT-\bcT}_{\tF} \sqrt{\frac{ m^2\rmax{\mu^{m}r^*}d^* \dmax\log \dmax  }{n}} \\
         & \quad \quad -C (\sigma\vee\norm{\bcT}_{\linf})^2\frac{\mu^{2m} m^2\rmax (r^*)^2 d^*\dmax \log \dmax}{n} 
\end{aligned}
\end{equation}
Combining the lower bound of the LHS of \eqref{eq:LS-opt-HR} in \eqref{eq:LS-LB}, and the RHS of \eqref{eq:LS-opt-HR} in \eqref{eq:LS-UB-term-1}, \eqref{eq:LS-UB-term-2}, we have:
\begin{equation*}
    \begin{aligned}
        &\norm{\cQ_{\widetilde{\bU}}\cQ_{\widetilde{\bU}^\top}\bcT-\bcT}_{\tF}^2 - C (\sigma\vee\norm{\bcT}_{\linf}) \norm{ \cQ_{\widetilde{\bU}}\cQ_{\widetilde{\bU}^\top}\bcT - \bcT }_{\tF}\sqrt{\frac{m^2\rmax{\mu^{m}r^*} d^*\dmax\log \dmax}{n}} \\
        & \le  C (\sigma\vee\norm{\bcT}_{\linf})^2\frac{\mu^{2m} m^2\rmax (r^*)^2 d^*\dmax \log \dmax}{n}.
    \end{aligned}
\end{equation*}
This implies that 
\begin{equation*}
    \begin{aligned}
        &\norm{\cQ_{\widetilde{\bU}}\cQ_{\widetilde{\bU}^\top}\bcT-\bcT}_{\tF} \le  C (\sigma\vee\norm{\bcT}_{\linf})\sqrt{\frac{\mu^{2m} m^2\rmax (r^*)^2 d^*\dmax \log \dmax}{n}}.
    \end{aligned}
\end{equation*}
Since the error of $\norm{\widetilde{\bcT} -\cQ_{\widetilde{\bU}}\cQ_{\widetilde{\bU}^\top}\bcT}_{\tF}$ can be further controlled by \eqref{eq:LS-opt-QQT-linf}, we have
\begin{equation*}
  \norm{\widetilde{\bcT}-\bcT}_{\tF} \le \norm{\cQ_{\widetilde{\bU}}\cQ_{\widetilde{\bU}^\top}\bcT-\bcT}_{\tF} + \norm{\widetilde{\bcT} -\cQ_{\widetilde{\bU}}\cQ_{\widetilde{\bU}^\top}\bcT}_{\tF} \le C (\sigma\vee\norm{\bcT}_{\linf})\sqrt{\frac{\mu^{2m} m^2\rmax (r^*)^2 d^*\dmax \log \dmax}{n}},
\end{equation*}
with probability at least $1-6\dmax^{-3m}$. This finishes the proof.
\end{proof}

%\cite{bouchard2021riemannian}

\subsection{Proof of Lower Bound in Theorem \ref{thm:opt-uct-qtf}}
We now assume the noise $\xi_i\sim\cN(0,\sigma^2)$. We endow $\cM_{\br}$ with the Euclidean (Frobenius) metric. Let $\Bar{\bcT} $ be the estimator of $\bcT$ stated above. notice that the MSE can be lower bounded by:
\begin{equation*}
\begin{aligned}
        & \E\left(g_{\bcI}(\Bar{\bcT}) - g_{\bcI}({\bcT})\right)^2\ge  \E\left(g_{\bcI}(\Bar{\bcT}) - g_{\bcI}({\bcT})\right)^2 \idc\{\bar{\bcT}\in\bbB_{\opt}\} \\
        &= \E\left(g_{\bcI}\left(\Bar{\bcT}\idc\{\bar{\bcT}\in\bbB_{\opt}\} +{\bcT}\idc\{\bar{\bcT}\notin\bbB_{\opt}\}\right) - g_{\bcI}({\bcT})\right)^2 
\end{aligned}
\end{equation*}
Clearly, we have  $\E\left[\Bar{\bcT}\idc\{\bar{\bcT}\in\bbB_{\opt}\} +{\bcT}\idc\{\bar{\bcT}\notin\bbB_{\opt}\}\right] = (1+\varepsilon_{\opt})\bcT$, where $\abs{\varepsilon_{\opt}}\le \dmax^{-C_{\gap}\widetilde{m}}$ is exceedingly small. Thus, 
\begin{equation*}
    \widetilde{\bcT}_{\ubs} = \frac{\Bar{\bcT}\idc\{\bar{\bcT}\in\bbB_{\opt}\} +{\bcT}\idc\{\bar{\bcT}\notin\bbB_{\opt}\}}{1+ \varepsilon_{\opt}}
\end{equation*}
is an unbiased estimator of $\bcT$, with the MSE lower bound:
\begin{equation}\label{eq:mse-lb}
\begin{aligned}
        & \E\left(g_{\bcI}(\Bar{\bcT}) - g_{\bcI}({\bcT})\right)^2\ge\E\left(g_{\bcI}\left(\Bar{\bcT}\idc\{\bar{\bcT}\in\bbB_{\opt}\} +{\bcT}\idc\{\bar{\bcT}\notin\bbB_{\opt}\}\right) - g_{\bcI}({\bcT})\right)^2  \\
        & = (1+\varepsilon_{\opt} )^2\E (g_{\bcI}(\widetilde{\bcT}_{\ubs}) - g_{\bcI}(\bcT))^2 + \varepsilon_{\opt}^2 g_{\bcI}(\bcT)^2
\end{aligned}
\end{equation}
Therefore, it amounts to studying the Cramér–Rao lower bound of the estimator $g_{\bcI}(\widetilde{\bcT}_{\ubs})$. Denote the vectorized error as $\bar{\bdelta}=\Vect({\widetilde{\bcT}_{\ubs}-\bcT})$.  By intrinsic Cramér–Rao lower bounds on Riemannian manifolds in \cite{smith2005covariance}, \cite{boumal2013intrinsic}, \cite{boumal2014optimization}, and the generalization of inverse Fisher information \citep{gorman1990lower,kay1993fundamentals,shao2003mathematical},
we have the covariance matrix $\bC$ of $\bar{\bdelta}$ satisfies
\begin{equation}\label{eq:CR-cov-lb}
    \bC \succeq  \underbrace{\bF^{\dagger}}_{\text{Fisher information term}} - \underbrace{\frac{1}{3}(\bR_{\sfm}\bF^{\dagger} + \bF^{\dagger}\bR_{\sfm})}_{\text{small curvature term}},
\end{equation}
where $\bF$ is the Fisher information metric on the Riemannian manifold, and $\bR_{\sfm}$ is a remainder matrix representing the Riemannian curvature terms defined in \cite{smith2005covariance} which is generally of small order. For the noisy tensor completion problem, we have the log-likelihood function
\begin{equation*}
    l(\bcW) = \sum_{i=1}^n\frac{(Y_i-\langle\bcW,\bcX_i \rangle)^2}{2\sigma^2},
\end{equation*}
with the Fisher information on the Riemannian manifold \citep{smith2005covariance,boumal2013intrinsic} given by:
\begin{equation}\label{eq:CR-Fisher}
\begin{aligned}
       \bF &=\E\left[\Vect\left(\cP_\TT(\nabla_{\bcT} l(\bcT))\right)\cdot \Vect^\top\left(\cP_\TT(\nabla_{\bcT} l(\bcT))\right) \right] =\frac{n}{\sigma^2}\E\left[\Vect\left(\cP_\TT(\bcX_i) \right) \Vect\left(\cP_\TT(\bcX_i)\right)\right] \\
       & = \frac{n}{\sigma^2} \frac{1}{d^*}\underbrace{\left[\otimes_{j=1}^m \cP_{\bU_j}+\sum_{j=1}^m \Perm_j\left[\left(\otimes_{k \neq j} {\bU}_{ k}\right)\cP_{\bV_j}\left(\otimes_{k \neq j} {\bU}_{ k}^\top\right)\right] \otimes \cP_{{\bU}_{j}}^{\perp}  \Perm_j^\top \right]  }_{\text{denoted by }\bT_{\cM}} \sum_{l=1}^{d^*}\Vect(\bomega_l)\Vect(\bomega_l)^\top   \\
       &\noindent \hfill \qquad  \qquad \qquad \qquad \qquad \qquad \qquad \cdot \left[\otimes_{j=1}^m \cP_{\bU_j}+\sum_{j=1}^m \Perm_j\left[\left(\otimes_{k \neq j} {\bU}_{ k}\right)\cP_{\bV_j}\left(\otimes_{k \neq j} {\bU}_{ k}^\top\right)\right] \otimes \cP_{{\bU}_{j}}^{\perp}  \Perm_j^\top \right] \\
       & = \frac{n}{\sigma^2}\frac{1}{d^*} \bT_{\cM} \bI_{d^*}\bT_{\cM}  = \frac{n}{\sigma^2}\frac{1}{d^*} \bT_{\cM},
\end{aligned}
\end{equation}
where $\bT_{\cM}$ defines the vectorized projection operator $\cP_{\TT}$. Here $\Perm_j\in\R^{d^*}$ is a permutation matrix that reshapes a mode-$j$ vectorized tensor to its default mode-1 vectorization. By definition, we have $\bT_{\cM}^2=\bT_{\cM}$. The inverse Fisher information is thus $\bF^{\dagger}=\frac{\sigma^2 d^*}{n}\bT_{\cM}$. Therefore, our main term $\bF^{\dagger}$ already gives the lower bound of estimating $\langle \bcT, \bcI \rangle$, which is exactly $\Vect(\bcI)^\top \bF^{\dagger} \Vect(\bcI)= \frac{\sigma^2 d^*}{n} \norm{\cP_\TT (\bcI)}_{\tF}^2$.

%\cite{lubich2013dynamical,massart2019curvature,swijsen2022tensor}

We now have a closer look at the curvature of the manifold $\cM_{\br}$ near $\bcT$ to justify its vanishing. 
% We note that the low multilinear rank tensor space can be viewed as a manifold $\cM=\prod_{j=1}^m\St(d_j,r_j)\times \R^{\br}/(\prod_{j=1}^m \cO_{r_j} )$ \citep{kasai2016low}, where $\St(d_j,r_j)$ are Stiefel manifolds. 
According to the sectional curvature of the Segre manifold (rank-1 tensor manifold) in Section 6.2.3  of \cite{swijsen2022tensor} (also, \cite{jacobsson2024approximating}), we have the sectional curvature at $\bcT$ bounded by $\abs{K_{\max}}\le \frac{1}{\lambda_{\min}^2}$.
The error can be controlled by:
\begin{equation*}
\begin{aligned}
        \norm{\Bar{\bdelta}}_2 & = \norm{\widetilde{\bcT}_{\ubs} -\bcT}_{\tF} \le \norm{\frac{\Bar{\bcT}}{1+ \varepsilon_{\opt}}-\bcT}_{\tF}\idc\{\bar{\bcT}\in\bbB_{\opt}\}  + \norm{\frac{{\bcT}}{1+ \varepsilon_{\opt}} - \bcT}_{\tF} \idc\{\bar{\bcT}\notin\bbB_{\opt}\} \\
        & \le 2\norm{\Bar{\bcT}-\bcT}_{\tF}\idc\{\bar{\bcT}\in\bbB_{\opt}\} + 4\varepsilon_{\opt}\lambda_{\max}  
        \le C\sigma\sqrt{\frac{d^*\dmax\log\dmax}{n}},
\end{aligned}
\end{equation*}
where we use the fact that $\widetilde{m} = \max\{m, \log(n \vee \frac{\lambda_{\max}}{\sigma}) / \log\dmax\}$ implies $\abs{\varepsilon_{\opt}}\le \dmax^{-C_{\gap}\widetilde{m}}\le \sigma/(\lambda_{\max}\operatorname{Poly}(\dmax n))$ with any fixed polynomial degree as long as $C_{\gap}$ is large. We then have the relationship $\norm{\Bar{\bdelta}}_2\ll \frac{1}{\sqrt{\abs{K_{\max}}}}$ since $\lambda_{\min}/\sigma \gg \sqrt{\frac{d^* \dmax \log \dmax}{n}}$.   Thus, according to \cite{smith2005covariance,boumal2014optimization}, we know that the curvature term in Cramér–Rao lower bound is negligible. Moreover, to have a refined study on the upper bound of such a term,  \cite{smith2005covariance,boumal2013intrinsic} show that the exact $\bR_{\sfm}$ can be well approximated by 
% This property can be extend to the local region near $\bcT$.
$$
(\bR_{\sfm})_{i j}=\E\left[\left\langle\bcR\left(\bar{\bdelta}, \bT_{\cM} \be_i\right) \bT_{\cM}\be_j, \bar{\bdelta}\right\rangle\right],
$$
where $\bcR$ indicates the Riemannian curvature tensor. For more introduction to the  Riemannian curvature tensor, see also, \cite{lee2006riemannian,boumal2014optimization}. Given the bounded $\abs{K_{\max}}$ around $\bcT$, it is clear that 
\begin{equation*}
    \abs{\langle\bcR(\bx, \by) \bz,\bdelta\rangle} \le \abs{K_{\max}}\cdot\abs{\langle\langle \by, \bz\rangle \bx-\langle \bx, \bz\rangle \by,\bdelta\rangle}
\end{equation*}
for any $\bx, \by,\bz,\bdelta$ by the sectional curvature. Thus, we have the bound on $(\bR_{\sfm})_{i j}$:
\begin{equation*}
    \abs{(\bR_{\sfm})_{i j}}\le 2\abs{K_{\max}} \norm{\bT_{\cM} \be_i}_2 \norm{\bT_{\cM} \be_j }_2 \E\norm{\Bar{\bdelta}}_2^2\le \frac{\sigma^2}{\lambda_{\min}^2 }\frac{r^*\mu^m\dmax}{d^*} \frac{d^*\dmax\log\dmax}{n},
\end{equation*}
which leads to:
\begin{gather}
     % \norm{\bR_{\sfm}}_2 \le \frac{C_m\sigma^2 r^*\mu^m\dmax^2\sqrt{d^*}\log\dmax}{ \lambda_{\min}^2 n}, \\
     \abs{(\bR_{\sfm})_{i j}}\norm{\be_i\be_j^\top \bT_{\cM}}_{2}\le  \frac{C_m\sigma^2 (r^*)^{\frac{3}{2}}\mu^{\frac{3m}{2}}\dmax^{\frac{5}{2}}\log\dmax}{ \sqrt{d^*} \lambda_{\min}^2 n}, \ \forall i,j \in [d^*] \label{eq:CR-curv-Rm}
\end{gather}
Combining \eqref{eq:CR-cov-lb}, \eqref{eq:CR-Fisher}, and \eqref{eq:CR-curv-Rm}, it is clear that for any $g_{\bcI}(\bcT)=\langle\bcT,\bcI\rangle$, the variance of the estimation $g_{\bcI}(\widetilde{\bcT}_{\ubs})=\langle\widetilde{\bcT}_{\ubs},\bcI\rangle$ satisfies
\begin{equation*}
\begin{aligned}
      \Var(g_{\bcI}(\widetilde{\bcT}_{\ubs}))  & = \nabla^\top g_{\bcI}(\bcT)\cdot  \bC \cdot  \nabla g_{\bcI}(\bcT)  = \Vect(\bcI)^\top \cdot \bC \cdot \Vect(\bcI) \\
        & \ge  \Vect(\bcI)^\top \bF^{\dagger} \Vect(\bcI) -\frac{1}{3}\left(\Vect(\bcI)^\top\bF^{\dagger}\bR_{\sfm}\Vect(\bcI)+\Vect(\bcI)^\top\bR_{\sfm}\bF^{\dagger}\Vect(\bcI)\right) \\
        & \ge \frac{\sigma^2 d^*}{n} \norm{\cP_\TT (\bcI)}_{\tF}^2 - \frac{2\sigma^2 d^*}{3n} \norm{\cP_\TT (\bcI)}_{\tF} \norm{\bcI}_{\ell_1}\max_{i,j\in [d^*]} \abs{(\bR_{\sfm})_{i j}}\norm{\be_i\be_j^\top \bT_{\cM}}_{2}\cdot\sqrt{d^*} \\
        & \ge \left(1-\frac{C_m\sigma^2 (r^*)^{\frac{3}{2}}\mu^{\frac{3m}{2}}\dmax^{\frac{5}{2}}\log\dmax \norm{\bcI}_{\ell_1} }{ \lambda_{\min}^2 n \norm{\cP_\TT (\bcI)}_{\tF}}\right)\frac{\sigma^2 d^*}{n} \norm{\cP_\TT (\bcI)}_{\tF}^2.
\end{aligned}
\end{equation*}
Accordin to the alignment assumption \ref{asm:alignment}, we have 
\begin{equation*}
    \norm{ \cP_{\TT}(\bcI) }_\tF \ge \alpha_{I} \cdot \dmax^{\frac{1}{2}}\left(d^*\right)^{-\frac{1}{2}}\norm{ \bcI }_\tF.
\end{equation*}
Therefore, the $\Var(g_{\bcI}(\widetilde{\bcT}_{\ubs}))$ can be lower bounded by
\begin{equation}\label{eq:var-lb-exact}
     \Var(g_{\bcI}(\widetilde{\bcT}_{\ubs})) \ge \left(1-\frac{C_m\sigma^2 (r^*)^{\frac{3}{2}}\mu^{\frac{3m}{2}}\dmax^{2} (d^*)^{\frac{1}{2}}\log\dmax \norm{\bcI}_{\ell_1} }{ \alpha_{I}\lambda_{\min}^2 n \norm{\bcI}_{\tF} }\right)\frac{\sigma^2 d^*}{n} \norm{\cP_\TT (\bcI)}_{\tF}^2.
\end{equation}
Since $m\ge 2$, we always have $ (d^*)^{\frac{1}{2}} \ge \dmax$, thus  
$$\frac{\lambda_{\min}}{\sigma }\gg \sqrt{\frac{C_m (r^*)^{\frac{3}{2}}\mu^{\frac{3m}{2}}\dmax d^*\log\dmax \norm{\bcI}_{\ell_1} }{\alpha_I\norm{\bcI}_{\tF} n}}\ge \sqrt{\frac{C_m (r^*)^{\frac{3}{2}}\mu^{\frac{3m}{2}}\dmax^{2} (d^*)^{\frac{1}{2}}\log\dmax \norm{\bcI}_{\ell_1} }{\alpha_I\norm{\bcI}_{\tF} n}}.$$ 

Inserting \eqref{eq:var-lb-exact} into \eqref{eq:mse-lb},  given $\abs{\varepsilon_{\opt}}\le \sigma/(\lambda_{\max}\operatorname{Poly}(\dmax n))$  and  $\varepsilon_{\SNR}\to 0$, we have 
\begin{equation*}
    \begin{aligned}
          \E\left(g_{\bcI}(\Bar{\bcT}) - g_{\bcI}({\bcT})\right)^2&\ge (1+\varepsilon_{\opt} )^2\E (g_{\bcI}(\widetilde{\bcT}_{\ubs}) - g_{\bcI}(\bcT))^2 = (1+\varepsilon_{\opt} )^2 \Var(g_{\bcI}(\widetilde{\bcT}_{\ubs}))  \\
         & \ge (1+\varepsilon_{\opt} )^2\left(1-\frac{C_m\sigma^2 (r^*)^{\frac{3}{2}}\mu^{\frac{3m}{2}}\dmax^{2} (d^*)^{\frac{1}{2}}\log\dmax \norm{\bcI}_{\ell_1} }{ \alpha_{I}\lambda_{\min}^2 n \norm{\bcI}_{\tF} }\right)\frac{\sigma^2 d^*}{n} \norm{\cP_\TT (\bcI)}_{\tF}^2 \\
         & \ge (1-\varepsilon_{\SNR}^2)\frac{\sigma^2 d^*}{n} \norm{\cP_\TT (\bcI)}_{\tF}^2
    \end{aligned}
\end{equation*}
Thus, we prove the desired claim.

\subsection{Proof of Theorem \ref{thm:hetero-exp-clt}}
The proof immediately follows the proof of Theorem \ref{thm:lf-inference-popvar} by noticing that, when under the heteroskedastic and sub-exponential noises, the error bounds for $\{\frE_i\}_{i=2}^5$ developed in Lemma \ref{lemma:oracle-init-frE-2-5} still hold by replacing the $\sigma$ with $\sigmax$. The reason that this replacement is correct can be justified by the matrix Bernstein inequality in Lemma \ref{lemma:sub-exp-bern}. It can also be checked in the proof of Theorem \ref{thm:lf-inference-empvar} that we can replace the $\psi_2$ norm by $\psi_1$ norm in all the concentration inequalities using the Bernstein-type conditions. Then, we go back to the decomposition \eqref{eq:test-decomp} and check the CLT of $\frE_1$ under the heteroskedastic and sub-exponential noises, where $\frE_1= \frac{d^*}{n} \sum_{i=1}^{n} \xi_i \left\langle \bcX_i, \cP_{\TT}(\bcI) \right\rangle$ according to \eqref{eq:frE-1-true}. 

Using the Berry-Esseen theorem \citep{stein1972bound,raivc2019multivariate}, we compute the second and third moments of $ \frE_1$:
\begin{equation*}
    \begin{gathered}
        \E\abs{\frE_1}^2 =\frac{d^*}{n}\sum_{\bomega}\E \left[\xi_i^2 \left\langle \bomega, \cP_{\TT}(\bcI) \right\rangle^2\mid\bcX_i = \bomega\right]= \frac{d^*}{n}\sum_{\omega}\left[\cP_{\TT}(\bcI)\right]_{\omega}^2\left[\bcS\right]_{\omega}^2 = \frac{d^*}{n} \norm{\cP_{\TT }(\bcI)\odot\bcS  }_{\tF}^2 
    \end{gathered}
\end{equation*}
and
\begin{equation*}
    \begin{aligned}
         & \E\abs{\xi_i \left\langle \bcX_i, \cP_{\TT}(\bcI) \right\rangle}^3 = \frac{1}{d^*}\sum_{\omega}\left[\cP_{\TT}(\bcI)\right]_{\omega}^3\E\left[\xi_i^3\mid[\bcX_i]_{\omega}=1\right]\le \frac{C}{d^*}\sum_{\omega}\left[\cP_{\TT}(\bcI)\right]_{\omega}^2\sigmax^3\max\abs{\left\langle \bcX_i, \cP_{\TT}(\bcI) \right\rangle}   \\
         & \le C  \sigmax^3 \frac{\norm{\cP_{\TT}(\bcI)}_\tF^3}{d^*} \max \left( \sqrt{\norm{\bcX_i \times_{j=1}^m \cP_{\bU_{j} }}_{\tF}^2+ \sum_{j=1}^m \norm{\cM_j(\bcX_i)\times_{k\neq j}^m \cP_{\bU_{k}}}_{\tF}^2} \right) \\
         & \le C \sigmax^3 \frac{\norm{\cP_{\TT}(\bcI)}_\tF^3}{d^*} \sqrt{\frac{2m r^* \mu^{m-1} \dmax}{ \rmin d^*}} \\
         & \le C \kappa_{\sigma}^3 \sqrt{m r^* \mu^{m-1} \dmax} \frac{\norm{\cP_{\TT}(\bcI)\odot\bcS}_\tF^3}{\sqrt{\rmin }\left(d^*\right)^{\frac{3}{2}}},
    \end{aligned}
\end{equation*}
where we use \eqref{eq:variance-BE} in the second line, and  \eqref{eq:variance-BE-max} in the third line, and the fact that $\sigmax/[\bcS]_{\omega}\le \kappa_{\sigma}$ for the heterogeneous noises in the last line. Thus, we have the new asymptotic normality for heteroskedastic and sub-exponential noises:
\begin{equation}\label{eq:hetero-core-clt}
    \max_{t\in\R}\abs{\bbP\left(\frac{\frE_1}{ \norm{\cP_{\TT }(\bcI)\odot\bcS  }_{\tF} \sqrt{d^*/n} } \le t\right)- \Phi(t)} \le  C \kappa_{\sigma}^3\sqrt{\frac{m r^* \mu^{m-1} \dmax}{\rmin n}}
\end{equation}
Clearly, according to the lower bound of each element in $\bcS$ and alignment assumption, we have:
 \begin{equation}\label{eq:hetero-align}
     \norm{\cP_{\TT }(\bcI)\odot\bcS  }_{\tF}\ge  \sigmin\norm{\cP_{\TT }(\bcI)\odot \bcJ }_{\tF}\ge \alpha_I  \sigmin\cdot \dmax^{\frac{1}{2}}\left(d^*\right)^{-\frac{1}{2}}\norm{ \bcI }_\tF.
 \end{equation}
Here $\bcJ$ is the full-1 tensor. Therefore, combining remainder terms in Lemma \ref{lemma:oracle-init-frE-2-5} and \eqref{eq:hetero-align}, we have:
\begin{equation*}
    \begin{aligned}
       \abs{\frac{\sum_{i=2}^{5}\frE_i}{\norm{\cP_{\TT }(\bcI)\odot\bcS  }_{\tF} \sqrt{d^*/n} }}\le C C_1 \kappa_{\sigma}\sqrt{\frac{ m^3 (2\mu)^{m-1} r^* \dmax \log^2\dmax }{ \rmax \rmin n}} + C \frac{ C_1^2\kappa_{\sigma}\sigmax  \norm{\bcI}_{\ell_1} }{\alpha_I \lambda_{\min}  \norm{\bcI}_\tF }\sqrt{\frac{ m^4 (2\mu)^{3 m} (r^*)^{3 }\dmax d^*  \log^2 \dmax }{\rmin^2 n } }
    \end{aligned}
\end{equation*}
with probability at least $1- \dmax^{-2m}$. Combining this result with \eqref{eq:hetero-core-clt}, we can derive that
\begin{equation*}
\begin{aligned}
        &\max_{t\in\R}\abs{\bbP\left( \frac{\left\langle  \widehat{\bcT} - \bcT,\bcI  \right\rangle }{ \norm{\cP_{\TT }(\bcI)\odot\bcS  }_{\tF} \sqrt{{d^*}/{n}} } \le t\right)- \Phi(t)}  \\
        &\le C C_1 \kappa_{\sigma}^3\sqrt{\frac{ m^3 (2\mu)^{m-1} r^* \dmax \log^2\dmax }{ \rmax \rmin n}} + C \frac{ C_1^2\kappa_{\sigma}\sigmax  \norm{\bcI}_{\ell_1} }{\alpha_I \lambda_{\min}  \norm{\bcI}_\tF }\sqrt{\frac{ m^4 (2\mu)^{3 m} (r^*)^{3 }\dmax d^*  \log^2 \dmax }{\rmin^2 n } } +C \dmax^{-2m}.
\end{aligned}
\end{equation*}
This gives the asymptotic normality of $W^{\sfh}_{\test}(\bcI)$. We now proceed to derive the asymptotic normality of $\widehat W^{\sfh}_{\test}(\bcI)$ by computing the estimation error of variance: 
\begin{equation}\label{eq:hetero-var-decomp}
\begin{aligned}
        \abs{\widehat s^2(\bcI) - \norm{\cP_{\TT }(\bcI)\odot\bcS  }_{\tF}^2 }& \le \abs{\frac{d^*}{n}\sum_{i=1}^n\left[\left(Y_i-\left\langle \bcT_{\init},\bcX_i \right\rangle\right)\left\langle\cP_{\widehat \TT }(\bcI), \bcX_i\right\rangle \right]^2 - \norm{\cP_{\widehat\TT }(\bcI)\odot\bcS  }_{\tF}^2 } \\
    & + \abs{\norm{\cP_{\widehat \TT }(\bcI)\odot\bcS  }_{\tF}^2 - \norm{\cP_{\TT }(\bcI)\odot\bcS  }_{\tF}^2}
\end{aligned}
\end{equation}
We  consider the first term in \eqref{eq:hetero-var-decomp}:
\begin{equation}\label{eq:hetero-var-conv}
    \begin{aligned}
        &\abs{\frac{d^*}{n}\sum_{i=1}^n 
        \left[\left(Y_i-\left\langle \bcT_{\init},\bcX_i \right\rangle\right)\left\langle\cP_{\widehat \TT }(\bcI), \bcX_i\right\rangle \right]^2 - \norm{\cP_{\widehat\TT }(\bcI)\odot\bcS  }_{\tF}^2 }  \\
        &=  \abs{\frac{d^*}{n}\sum_{i=1}^n\left(\xi_i^2 + 2 \xi_i \left\langle \bcT^{\vartriangle},\bcX_i \right\rangle + \left\langle \bcT^{\vartriangle},\bcX_i \right\rangle^2 \right)\left\langle\cP_{\widehat \TT }(\bcI), \bcX_i\right\rangle^2 - \norm{\cP_{\widehat\TT }(\bcI)\odot\bcS  }_{\tF}^2 } \\
        & \le \abs{\frac{d^*}{n}\sum_{i=1}^n \xi_i^2\left\langle \cP_{\widehat \TT }(\bcI),\bcX_i \right\rangle^2- \norm{\cP_{\widehat\TT }(\bcI)\odot\bcS  }_{\tF}^2}  \\
        & + 2\abs{\frac{d^*}{n}\sum_{i=1}^n \xi_i \left\langle \bcT^{\vartriangle},\bcX_i \right\rangle\left\langle \cP_{\widehat \TT }(\bcI),\bcX_i \right\rangle^2 } + \abs{\frac{d^*}{n}\sum_{i=1}^n\left\langle \bcT^{\vartriangle},\bcX_i \right\rangle^2\left\langle \cP_{\widehat \TT }(\bcI),\bcX_i \right\rangle^2 }  \\
    \end{aligned}
\end{equation}
The first and term in \eqref{eq:hetero-var-conv} follows the sub-Weibull distribution sub-Weibull($\alpha$) with $\alpha=\frac{1}{2}$, and its concentration has been studies in  \cite{kuchibhotla2022moving}. To use its concentration property, we check the Orlicz norm of each i.i.d terms and their second-order moment:
\begin{equation*}
\begin{aligned}
        &\norm{\xi_i^2\left\langle \cP_{\widehat \TT }(\bcI),\bcX_i \right\rangle^2}_{\psi_{\frac{1}{2}}} \le C \sigmax^2\cdot \frac{m\mu^{m-1} r^*\dmax }{\rmin d^*} \norm{\cP_{\widehat \TT }(\bcI)}_{\tF}^2 \\
        &\E\left(\xi_i^2\left\langle \cP_{\widehat \TT }(\bcI),\bcX_i \right\rangle^2\right)^2 \le C \sigmax^4 \frac{1}{d^*} \frac{m\mu^{m-1} r^*\dmax }{\rmin d^*} \norm{\cP_{\widehat \TT }(\bcI)}_{\tF}^4.
\end{aligned} 
\end{equation*}
Thus, according to Theorem 3.4 of \cite{kuchibhotla2022moving}, we have
\begin{equation}\label{eq:hetero-var-sigma}
\begin{aligned}
        &\abs{\frac{d^*}{n}\sum_{i=1}^n \xi_i^2\left\langle \cP_{\widehat \TT }(\bcI),\bcX_i \right\rangle^2- \norm{\cP_{\widehat\TT }(\bcI)\odot\bcS  }_{\tF}^2}\\
    &\le C\sigmax^2 \norm{\cP_{\widehat \TT }(\bcI)}_{\tF}^2\sqrt{\frac{m^2\mu^{m-1} r^*\dmax\log\dmax }{\rmin n }} + C \sigmax^2\cdot \frac{m\mu^{m-1} r^*\dmax (m\log\dmax)^4 }{\rmin n} \norm{\cP_{\widehat \TT }(\bcI)}_{\tF}^2 \\
    & \le C \sigmax^2\norm{\cP_{\widehat\TT }(\bcI)  }_{\tF}^2\sqrt{\frac{m^5\mu^{m-1} r^*\dmax\log^4\dmax }{\rmin n }}.
\end{aligned}
\end{equation}
with probability at least $1-\dmax^{-3m}$. Here we use the sample size condition $n\ge C_{\gap}m^5\mu^{m-1} r^*\dmax\log^4\dmax  $.
% \begin{equation*}
%     \left\|\frac{1}{n} \sum_{i=1}^n X_i\right\|_{\infty} \leq 7 \sqrt{\frac{\Gamma_{n, q}(t+\log q)}{n}}+\frac{C_\alpha K_{n, q}(\log (2 n))^{1 / \alpha}(t+\log q)^{1 / \alpha^*}}{n},
% \end{equation*}
The second and third terms in \eqref{eq:hetero-var-conv} can be controlled by
\begin{equation}\label{eq:hetero-var-rem}
    \begin{aligned}
        &  2\abs{\frac{d^*}{n}\sum_{i=1}^n \xi_i \left\langle \bcT^{\vartriangle},\bcX_i \right\rangle\left\langle \cP_{\widehat \TT }(\bcI),\bcX_i \right\rangle^2 } + \abs{\frac{d^*}{n}\sum_{i=1}^n\left\langle \bcT^{\vartriangle},\bcX_i \right\rangle^2\left\langle \cP_{\widehat \TT }(\bcI),\bcX_i \right\rangle^2 } \\
         &\le CC_1\sigmax^2\sqrt{\frac{m\mu^{m-1} r^*\dmax^2\log\dmax}{\rmin n^2}} \norm{\cP_{\widehat \TT }(\bcI)}_{\tF}^2 + C_1^2\sigmax^2\frac{\dmax\log\dmax}{n}\norm{\cP_{\widehat \TT }(\bcI)}_{\tF}^2 \\
         & \le CC_1^2\sigmax^2\sqrt{\frac{m\mu^{m-1} r^*\dmax^2\log\dmax}{\rmin n^2}} \norm{\cP_{\widehat \TT }(\bcI)}_{\tF}^2
    \end{aligned}
\end{equation}
with probability at least $1-\dmax^{-3m}$ by the Bernstein inequality because we have
\begin{equation*}
    \begin{aligned}
        &\norm{\xi_i \left\langle \bcT^{\vartriangle},\bcX_i \right\rangle\left\langle \cP_{\widehat \TT }(\bcI),\bcX_i \right\rangle^2}_{\psi_1}\le CC_1\sigmax^2\sqrt{\frac{\dmax\log\dmax}{n}}\cdot \frac{m\mu^{m-1} r^*\dmax }{\rmin d^*} \norm{\cP_{\widehat \TT }(\bcI)}_{\tF}^2 \\
        &\norm{\left\langle \bcT^{\vartriangle},\bcX_i \right\rangle^2\left\langle \cP_{\widehat \TT }(\bcI),\bcX_i \right\rangle^2}_{\psi_1}\le CC_1^2\sigmax^2 {\frac{\dmax\log\dmax}{n}}\cdot \frac{m\mu^{m-1} r^*\dmax }{\rmin d^*} \norm{\cP_{\widehat \TT }(\bcI)}_{\tF}^2 \\
        &\E\left(\xi_i \left\langle \bcT^{\vartriangle},\bcX_i \right\rangle\left\langle \cP_{\widehat \TT }(\bcI),\bcX_i \right\rangle^2\right)^2 \le CC_1^2\sigmax^4 {\frac{\dmax\log\dmax}{n}} \frac{m\mu^{m-1} r^*\dmax }{\rmin (d^*)^2} \norm{\cP_{\widehat \TT }(\bcI)}_{\tF}^4\\
        &\E\left(\left\langle \bcT^{\vartriangle},\bcX_i \right\rangle^2\left\langle \cP_{\widehat \TT }(\bcI),\bcX_i \right\rangle^2\right)^2 \le CC_1^2\sigmax^4 \left({\frac{\dmax\log\dmax}{n}}\right)^2 \frac{m\mu^{m-1} r^*\dmax }{\rmin (d^*)^2} \norm{\cP_{\widehat \TT }(\bcI)}_{\tF}^4
    \end{aligned}
\end{equation*}
We now consider the second term in \eqref{eq:hetero-var-decomp}:
\begin{equation*}
    \begin{aligned}
        &\abs{\norm{\cP_{\widehat \TT }(\bcI)\odot\bcS  }_{\tF}^2 - \norm{\cP_{\TT }(\bcI)\odot\bcS  }_{\tF}^2} \\
        &\le \abs{\norm{\cP_{\widehat \TT }(\bcI)\odot\bcS  }_{\tF} - \norm{\cP_{ \TT }(\bcI)\odot\bcS  }_{\tF}}\cdot\abs{\norm{\cP_{\widehat \TT }(\bcI)\odot\bcS  }_{\tF} + \norm{\cP_{ \TT }(\bcI)\odot\bcS  }_{\tF}}\\
        &\le 2 \norm{\cP_{ \TT }(\bcI)\odot\bcS  }_{\tF}^2\left(1+\frac{\norm{(\cP_{ \TT }(\bcI) -\cP_{ \widehat\TT }(\bcI))\odot\bcS  }_{\tF}}{\norm{\cP_{ \TT }(\bcI)\odot\bcS  }_{\tF}}\right) \frac{\norm{(\cP_{ \TT }(\bcI) -\cP_{ \widehat\TT }(\bcI))\odot\bcS  }_{\tF}}{\norm{\cP_{ \TT }(\bcI)\odot\bcS  }_{\tF}}
    \end{aligned}
\end{equation*}
Using \eqref{eq:PT-diff-var}, we can derive that 
 \begin{equation*}
        \frac{\norm{(\cP_{ \TT }(\bcI) -\cP_{ \widehat\TT }(\bcI))\odot\bcS  }_{\tF}}{\norm{\cP_{ \TT }(\bcI)\odot\bcS  }_{\tF}}\le \kappa_{\sigma}\frac{\norm{\cP_{\widehat \TT }(\bcI) - \cP_{\TT }(\bcI) }_\tF}{ \norm{\cP_{\TT }(\bcI) }_\tF } \le C\frac{\norm{\bcI}_{\ell_1}}{\norm{\bcI}_{\tF}}\frac{C_1  m^2\kappa_0\kappa_{\sigma} \sigma  }{\alpha_I \lambda_{\min}}\sqrt{\frac{(2\mu)^m r^*d^*\dmax \log \dmax }{n}}\le\frac{1}{8},
    \end{equation*}
which gives:
\begin{equation}\label{eq:hetero-PT-diff-var}
   \abs{\norm{\cP_{\widehat \TT }(\bcI)\odot\bcS  }_{\tF}^2 - \norm{\cP_{\TT }(\bcI)\odot\bcS  }_{\tF}^2} \le C \norm{\cP_{ \TT }(\bcI)\odot\bcS  }_{\tF}^2 \frac{\norm{\bcI}_{\ell_1}}{\norm{\bcI}_{\tF}}\frac{C_1  m^2\kappa_0\kappa_{\sigma} \sigma  }{\alpha_I \lambda_{\min}}\sqrt{\frac{(2\mu)^m r^*d^*\dmax \log \dmax }{n}}
\end{equation}
Combining \eqref{eq:hetero-var-decomp},\eqref{eq:hetero-var-conv}, \eqref{eq:hetero-var-sigma}, \eqref{eq:hetero-var-rem}, and \eqref{eq:hetero-PT-diff-var}, it is clear that
\begin{equation}\label{eq:hetero-var-diff-ratio}
\begin{aligned}
        \abs{\frac{\widehat s^2(\bcI)}{\norm{\cP_{\TT }(\bcI)\odot\bcS  }_{\tF}^2}-1}& \le C\frac{\norm{\bcI}_{\ell_1}}{\norm{\bcI}_{\tF}}\frac{C_1  m^2\kappa_0\kappa_{\sigma} \sigma  }{\alpha_I \lambda_{\min}}\sqrt{\frac{(2\mu)^m r^*d^*\dmax \log \dmax }{n}} + C C_1\kappa_{\sigma}^2\frac{\norm{\cP_{\widehat\TT }(\bcI)  }_{\tF}^2}{\norm{\cP_{\TT }(\bcI)  }_{\tF}^2}\sqrt{\frac{m^5\mu^{m-1} r^*\dmax\log^4\dmax }{\rmin n }} \\
        & \le C C_1\kappa_{\sigma}^2\sqrt{\frac{m^5\mu^{m-1} r^*\dmax\log^4\dmax }{\rmin n }}  + C\frac{\norm{\bcI}_{\ell_1}}{\norm{\bcI}_{\tF}}\frac{C_1  m^2\kappa_0\kappa_{\sigma} \sigma  }{\alpha_I \lambda_{\min}}\sqrt{\frac{(2\mu)^m r^*d^*\dmax \log \dmax }{n}} 
\end{aligned}
\end{equation}
Moreover, we have
\begin{equation*}
    \abs{\frac{\frE_1}{ \norm{\cP_{\TT }(\bcI)\odot\bcS  }_{\tF} \sqrt{d^*/n}}} \le C\kappa_{\sigma}\sqrt{m\log\dmax},
\end{equation*}
with probability at least $1-\dmax^{-3m}$ by Bernstein inequality. Plugging in this result, together with \eqref{eq:hetero-core-clt}  \eqref{eq:hetero-align}, \eqref{eq:hetero-var-diff-ratio} into the decomposition \eqref{eq:hatW-decomp} , we have
\begin{equation*}
\begin{aligned}
        &\max_{t\in\R}\abs{\bbP\left( \widehat W^{\sfh}_{\test}(\bcI)\le t\right)- \Phi(t)}  \\
        &\le   C C_1\kappa_{\sigma}^3\sqrt{\frac{m^6\mu^{m} r^*\dmax\log^5\dmax }{n }}  + C \frac{ C_1^2\kappa_{\sigma}^2\sigmax  \norm{\bcI}_{\ell_1} }{\alpha_I \lambda_{\min}  \norm{\bcI}_\tF }\sqrt{\frac{ m^5 (2\mu)^{3 m} (r^*)^{3 }\dmax d^*  \log^2 \dmax }{\rmin^2 n } }.
\end{aligned}
\end{equation*}
Thus, we finish the proof.
\subsection{Proof of Proposition \ref{prop:corr-ub}}
\begin{proof}
  according to the projection onto the tangent space in \eqref{eq:true-TT-proj} and \eqref{eq:mode-j-tangent}, we have

  \begin{equation*}
\begin{aligned}
      \abs{\rho(\bcI_1,\bcI_2)} &= \frac{\abs{\left\langle\cP_{ \TT }(\bcI_1),\cP_{ \TT }(\bcI_2)\right\rangle}}{\norm{\cP_{ \TT }(\bcI_1)}_{\tF} \norm{\cP_{ \TT }(\bcI_2)}_{\tF} } \\
      & \le \frac{d^*}{\dmax\alpha_I^2\norm{\bcI_1}_{\tF}\norm{\bcI_2}_{\tF} }\left( \abs{\left\langle\bcI_1 \cdot \left( \cP_{{\bU}_{1}},\dots,\cP_{{\bU}_{m}}\right), \bcI_2 \cdot \left( \cP_{{\bU}_{1}},\dots,\cP_{{\bU}_{m}}\right)\right\rangle} \right.\\
      & \quad \left.+ \sum_{j=1}^{m} \abs{\left\langle \bcC \cdot\left(\bU_1,\dots, \bW_j(\bcI_1), \dots, \bU_m\right), \bcC \cdot\left(\bU_1,\dots, \bW_j(\bcI_1), \dots, \bU_m\right)\right\rangle}\right)\\
      &\le \frac{d^*}{\dmax\alpha_I^2\norm{\bcI_1}_{\tF}\norm{\bcI_2}_{\tF} } \left( 2\frac{\mu^m r^* \norm{\bcI_1}_{\ell_1}\norm{\bcI_2}_{\ell_1}}{d^*} + \sum_{j=1}^{m}\abs{ \left\langle \cM_j(\bcI_1)^\top \cM_j(\bcI_2), \cP_{\bH_j}\right\rangle}\right) \\
      &\le \frac{2\mu^m r^* \norm{\bcI_1}_{\ell_1}\norm{\bcI_2}_{\ell_1}}{\dmax\alpha_I^2\norm{\bcI_1}_{\tF}\norm{\bcI_2}_{\tF} } + \frac{d^*\sum_{j=1}^{m}\abs{ \left\langle \cM_j(\bcI_1)^\top \cM_j(\bcI_2), \cP_{\bH_j}\right\rangle}}{\dmax\alpha_I^2\norm{\bcI_1}_{\tF}\norm{\bcI_2}_{\tF} }.
\end{aligned}
  \end{equation*}
  Here the first inequality is because of the alignment condition and the second inequality is because of the incoherent condition and \eqref{eq:mode-j-tangent}.
\end{proof}
\section{Useful Technical Tools}

\begin{Lemma}[Core estimation error]\label{lemma:diff-core-init} Suppose $\widehat{\bcT}_0$ is a rank-$\br$ estimation of $\bcT$ with error $\norm{\widehat{\bcT}_0 - \bcT}_{\tF}\le\delta\le 2^{-\frac{5}{2}}/m \cdot \lambda_{\min}  $. Then, There exists a decomposition of $\widehat{\bcT}_0$, denoted by  $\widehat{\bcT}_0 = \widehat{\bcC}_0\times_{j=1}^m\widehat{\bU}_{0,j}$, such that 
\begin{equation*}
    \norm{\widehat{\bU}_{0,j}-\bU_{j}}_2\le\norm{\widehat{\bU}_{0,j}-\bU_{j}}_{\tF}\le \frac{2^{\frac{3}{2}} \delta }{ \lambda_{\min}},
\end{equation*}
and correspondingly, 
\begin{equation*}
    \norm{\widehat{\bcC}_0-\bcC}_{\tF} \le 2^{5} \sqrt{\rmin } m \kappa_0 \delta.
\end{equation*}

\end{Lemma}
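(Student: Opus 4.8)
\textbf{Proof proposal for Lemma \ref{lemma:diff-core-init}.}

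The plan is to produce the desired decomposition via the HOSVD of $\widehat{\bcT}_0$ and then control the singular-subspace perturbation mode-by-mode using Wedin's (or Davis--Kahan's) $\sin\Theta$ theorem applied to the relevant unfoldings. First I would fix a mode $j\in[m]$ and consider the mode-$j$ unfoldings $\cM_j(\bcT) = \bU_j\bLambda_j\bV_j^\top$ and $\cM_j(\widehat{\bcT}_0)$. Since $\norm{\cM_j(\widehat{\bcT}_0) - \cM_j(\bcT)}_2 \le \norm{\cM_j(\widehat{\bcT}_0) - \cM_j(\bcT)}_{\tF} = \norm{\widehat{\bcT}_0 - \bcT}_{\tF} \le \delta$ and the $r_j$-th singular value of $\cM_j(\bcT)$ is at least $\lambda_{\min}$ while the $(r_j+1)$-th is $0$ (both tensors have multilinear rank $\br$), Wedin's theorem gives $\norm{\cP_{\widehat{\bU}_{0,j}} - \cP_{\bU_j}}_2 \le \frac{2\delta}{\lambda_{\min}-\delta} \le \frac{4\delta}{\lambda_{\min}}$ once $\delta \le \lambda_{\min}/2$ (which holds under the hypothesis). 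Passing from the projection-norm bound to a bound on $\norm{\widehat{\bU}_{0,j} - \bU_j}_{\tF}$ requires choosing the orthogonal representative of $\widehat{\bU}_{0,j}$ optimally aligned with $\bU_j$; using $\norm{\widehat{\bU}_{0,j}\widehat{\bO}_j - \bU_j}_{\tF} \le \sqrt{2r_j}\,\norm{\sin\Theta}_{\tF}$-type identities together with $\norm{\sin\Theta}_{2} = \norm{\cP_{\widehat{\bU}_{0,j}} - \cP_{\bU_j}}_2$ and absorbing constants yields $\norm{\widehat{\bU}_{0,j}-\bU_j}_{\tF} \le \frac{2^{3/2}\delta}{\lambda_{\min}}$, after a slightly more careful constant-tracking argument (for $r_j=1$ the bound is immediate; for general $r_j$ one uses the standard $\|\sin\Theta\|_{\mathrm F}\le \sqrt2\|\mathrm{dist}\|$ refinement plus the fact that the perturbation spans at most $r_j$ directions). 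This fixes the rotations $\widehat{\bO}_j$, and we set $\widehat{\bcC}_0 := \widehat{\bcT}_0 \times_{j=1}^m (\widehat{\bU}_{0,j}\widehat{\bO}_j)^\top$ so that $\widehat{\bcT}_0 = \widehat{\bcC}_0 \times_{j=1}^m (\widehat{\bU}_{0,j}\widehat{\bO}_j)$ is the claimed decomposition (relabeling $\widehat{\bU}_{0,j}\widehat{\bO}_j$ as $\widehat{\bU}_{0,j}$).

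Next I would bound the core error. Write $\widehat{\bcC}_0 - \bcC = \widehat{\bcT}_0 \times_{j=1}^m \widehat{\bU}_{0,j}^\top - \bcT \times_{j=1}^m \bU_j^\top$. Insert a telescoping sum: $\widehat{\bcC}_0 - \bcC = (\widehat{\bcT}_0 - \bcT)\times_{j=1}^m \widehat{\bU}_{0,j}^\top + \sum_{k=1}^m \bcT \times_{j<k}\bU_j^\top \times_k (\widehat{\bU}_{0,k} - \bU_k)^\top \times_{j>k}\widehat{\bU}_{0,j}^\top$. The first term has Frobenius norm at most $\delta$ (orthogonal projections are non-expansive). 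For each summand in the telescoping part, I would bound $\norm{\bcT \times_{j<k}\bU_j^\top \times_k (\widehat{\bU}_{0,k} - \bU_k)^\top \times_{j>k}\widehat{\bU}_{0,j}^\top}_{\tF} \le \norm{(\widehat{\bU}_{0,k}-\bU_k)^\top \cM_k(\bcT)}_{\tF} \le \norm{\widehat{\bU}_{0,k}-\bU_k}_2 \cdot \norm{\cM_k(\bcT)}_{\tF} \le \frac{4\delta}{\lambda_{\min}}\cdot \sqrt{\rmin}\,\lambda_{\max}$, since $\norm{\cM_k(\bcT)}_{\tF}^2 = \sum_i \lambda_i^2(\cM_k(\bcT)) \le \rmin \lambda_{\max}^2$ (the mode-$k$ unfolding has rank $r_k \le \rmin$). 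Summing over $k$ and using $\lambda_{\max} = \kappa(\bcT)\lambda_{\min} \le \kappa_0 \lambda_{\min}$ gives $\norm{\widehat{\bcC}_0-\bcC}_{\tF} \le \delta + 4m\sqrt{\rmin}\,\kappa_0\,\delta \le 2^5\sqrt{\rmin}\,m\,\kappa_0\,\delta$, absorbing the leading $\delta$ and slack into the constant $2^5$.

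The main obstacle I anticipate is not any deep estimate but rather the bookkeeping of absolute constants so that they land at exactly $2^{3/2}$ and $2^5$: this requires being careful about (i) the precise form of Wedin's bound and the denominator $\lambda_{\min}-\delta$ versus $\lambda_{\min}$, which is where the hypothesis $\delta \le 2^{-5/2}/m\cdot\lambda_{\min}$ (hence in particular $\delta \le \lambda_{\min}/2$) is used, and (ii) the conversion from $\|\cP_{\widehat{\bU}_{0,j}}-\cP_{\bU_j}\|$ to $\|\widehat{\bU}_{0,j}-\bU_j\|_{\mathrm F}$ after optimal rotation, where one must decide whether to pay a $\sqrt{r_j}$ factor or exploit that the symmetric difference of projectors has rank $\le 2r_j$ but the relevant distance only sees $r_j$ directions. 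A secondary subtlety is ensuring the rotations $\widehat{\bO}_j$ chosen independently per mode are mutually compatible in defining a single valid Tucker decomposition of $\widehat{\bcT}_0$ — this is automatic because the Tucker format is invariant under per-mode orthogonal changes of basis (as noted in the paper's preliminaries), so absorbing each $\widehat{\bO}_j$ into both $\widehat{\bU}_{0,j}$ and $\widehat{\bcC}_0$ simultaneously causes no conflict.
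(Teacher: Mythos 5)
Your argument is correct in substance and, for the subspace bound, is essentially the paper's own route: the paper invokes the generalized Davis--Kahan theorem (Yu--Wang--Samworth, Theorem 2) together with an optimal per-mode rotation, which is exactly your Wedin-plus-rotation step, and like you it does not track the constant in detail; the clean way to land on $2^{3/2}\delta/\lambda_{\min}$ is to use the Wedin form with gap $\lambda_{\min}$ (not $\lambda_{\min}-\delta$), giving $\norm{\sin\Theta}_{\tF}\le \delta/(\lambda_{\min}-\delta)\le 2\delta/\lambda_{\min}$ in the worse orientation, combined with $\min_{\bO}\norm{\widehat\bU_{0,j}\bO-\bU_j}_{\tF}\le\sqrt2\,\norm{\sin\Theta}_{\tF}$; your intermediate $4\delta/\lambda_{\min}$ is looser but harmless downstream. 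For the core error you take a genuinely different and arguably more elementary route: you telescope $\widehat\bcC_0-\bcC$ directly in Frobenius norm and bound each summand by $\norm{\widehat\bU_{0,k}-\bU_k}_2\norm{\bcT}_{\tF}$, whereas the paper unfolds along the minimal-rank mode $j'$, passes from Frobenius to spectral norm at a cost $\sqrt{2\rmin}$ (using that the difference has rank at most $2\rmin$ in that unfolding), and then expands the Kronecker-product difference $\otimes_{k\neq j'}\widehat\bU_{0,k}-\otimes_{k\neq j'}\bU_k$ in spectral norm, which requires the additional smallness condition $2^{3/2}m\delta/\lambda_{\min}\le 1/2$. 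Your route avoids the Kronecker expansion and that extra condition, and both arguments deliver the claimed order $\sqrt{\rmin}\,m\,\kappa_0\,\delta$.

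One justification in your write-up is wrong as stated, though the inequality it supports is true: you bound $\norm{\cM_k(\bcT)}_{\tF}^2\le\rmin\lambda_{\max}^2$ ``because the mode-$k$ unfolding has rank $r_k\le\rmin$,'' but by definition of $\rmin$ one has $r_k\ge\rmin$. The correct reason is Frobenius-norm invariance across unfoldings: $\norm{\cM_k(\bcT)}_{\tF}=\norm{\bcT}_{\tF}=\norm{\cM_{j'}(\bcT)}_{\tF}\le\sqrt{r_{j'}}\,\lambda_{\max}=\sqrt{\rmin}\,\lambda_{\max}$ with $j'=\argmin_j r_j$, i.e.\ you may evaluate the norm through the minimal-rank mode. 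With this repair (and the observation that the constants $2^{3/2}$ and $2^5$ comfortably absorb your looser intermediate subspace bound), your proof goes through.
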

\begin{proof}
The first claim is a direct consequence of the generalized Davis–Kahan theorem (e.g., Theorem 2 in \cite{yu2015useful}) by simply selecting ${\widehat{\bU}_{0,j}} = \widehat{\bU}_{0,j} \widehat{\bO}_{0,j}$ after rotation. For the core estimation error, we notice that 
\begin{equation*}
\begin{aligned}
        &\norm{\widehat{\bcC}_0 - \bcC }_{\tF}  = \norm{\cM_{j'}(\widehat{\bcC}_0 - \bcC) }_{\tF} = \norm{ \widehat{\bU}_{0,j'}^\top \widehat\bT_{j'}\otimes_{k\neq j'}\widehat{\bU}_{0,k} - \bU_{j'}^\top \bT_{j'}\otimes_{k\neq j'}{\bU}_{k}   }_{\tF}  \\
        & \le \sqrt{2\rmin}\norm{ \widehat{\bU}_{0,j'}^\top \widehat\bT_{j'}\otimes_{k\neq j'}\widehat{\bU}_{0,k} - \bU_{j'}^\top \bT_{j'}\otimes_{k\neq j'}{\bU}_{k}   }_{2} \\
        &\le  \sqrt{2\rmin}\left(\norm{ \widehat{\bU}_{0,j'}^\top (\widehat\bT_{j'}-\bT_j)\otimes_{k\neq j'}\widehat{\bU}_{0,k}   }_{2}+ \norm{ \widehat{\bU}_{0,j'}^\top \bT_{j'}\otimes_{k\neq j'}\widehat{\bU}_{0,k} - \bU_{j'}^\top \bT_{j'}\otimes_{k\neq j'}{\bU}_{k} }_{2}\right) \\
        & \le \sqrt{2\rmin}\left( \delta+ \norm{ \widehat{\bU}_{0,j'}^\top \bT_{j'}\otimes_{k\neq j'}\widehat{\bU}_{0,k} - \bU_{j'}^\top \bT_{j'}\otimes_{k\neq j'}{\bU}_{k} }_{2} \right)
\end{aligned}
\end{equation*}
where $j'=\argmin_{j\in[m]}{r_j}$. By assuming that $2^{3/2}m\delta/\lambda_{\min}\le \frac{1}{2}$, the expansion of the error $\norm{\otimes_{k\neq j'}\widehat{\bU}_{k}  - \otimes_{k\neq j'}{\bU}_{k} }_2$ will lead to the bound:
\begin{equation*}
    \norm{\otimes_{k\neq j'}\widehat{\bU}_{k}  - \otimes_{k\neq j'}{\bU}_{k} }_2 \le 2^{5/2} m\delta/\lambda_{\min}.
\end{equation*}
Thus, we have
\begin{equation*}
    \norm{ \widehat{\bU}_{0,j'}^\top \bT_{j'}\otimes_{k\neq j'}\widehat{\bU}_{0,k} - \bU_{j'}^\top \bT_{j'}\otimes_{k\neq j'}{\bU}_{k} }_{2} \le 2^{7/2} m \kappa_0 \delta,
\end{equation*}
and consequently,
\begin{equation*}
    \norm{\widehat{\bcC}_0 - \bcC }_{\tF}  \le  \sqrt{2\rmin}(\delta + 2^{7/2} m \kappa_0 \delta )\le 2^{5} \sqrt{\rmin } m \kappa_0 \delta.
\end{equation*}

\end{proof}

\begin{Lemma}[$\ell_{2,\infty}$ bound of projection onto tangent space]\label{lemma:diff-tangent-proj}
    Under the initialization error in Lemma \ref{lemma:l2inf-init}, suppose we have the SNR
    \begin{equation*}
    \frac{\lambda_{\min} }{\sigma} \ge C_{\mathsf{gap} }\kappa_0 m \sqrt{\frac{d^* \dmax \log \dmax }{n} }.
\end{equation*}
    Then, for any canonical basis  tensor  $\bomega=\be_{1,k_1}\circ\cdots\circ\be_{m,k_m}$, the difference can be controlled by:
    \begin{equation*}
        \norm{\cP_{\widehat \TT }(\bomega) - \cP_{\TT }(\bomega) }_\tF \le \frac{C C_1  m^2\kappa_0 \sigma  }{\lambda_{\min}}\sqrt{\frac{(2\mu)^m r^*\dmax^2 \log \dmax }{n}}. 
    \end{equation*}
\end{Lemma}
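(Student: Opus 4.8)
The strategy is to compare $\cP_{\widehat\TT}(\bomega)$ and $\cP_{\TT}(\bomega)$ term-by-term using the explicit formulas \eqref{eq:true-TT-proj} and \eqref{eq:emp-PT}, controlling each difference by the perturbation bounds on the singular subspaces supplied by Lemma~\ref{lemma:l2inf-init} together with the incoherence of $\bomega$. Recall that $\cP_{\TT}(\bomega)$ is the sum of the ``core-projected'' part $\bomega\cdot(\cP_{\bU_1},\dots,\cP_{\bU_m})$ and the $m$ tangential pieces $\bcC\cdot(\bU_1,\dots,\bW_j(\bomega),\dots,\bU_m)$, and similarly for $\cP_{\widehat\TT}(\bomega)$ with hats everywhere; so it suffices to bound (i) $\norm{\bomega\cdot(\cP_{\widehat\bU_{1,0}},\dots,\cP_{\widehat\bU_{m,0}}) - \bomega\cdot(\cP_{\bU_1},\dots,\cP_{\bU_m})}_\tF$ and (ii) for each $j$, $\norm{\widehat\bcC_0\cdot(\widehat\bU_{1,0},\dots,\widehat\bW_j,\dots,\widehat\bU_{m,0}) - \bcC\cdot(\bU_1,\dots,\bW_j,\dots,\bU_m)}_\tF$.

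For (i), I would telescope the multilinear product: the difference is a sum of $m$ terms each of the form $\bomega\cdot(\cP_{\widehat\bU_{1,0}},\dots,\cP_{\widehat\bU_{j,0}}-\cP_{\bU_j},\dots,\cP_{\bU_m})$ (with some modes already switched to hats, some not). Since $\bomega$ is a single canonical basis tensor, applying projections $\cP_{\bU_k}$ in the modes $k\neq j$ keeps the relevant unfolding incoherent, so the Frobenius norm of each such term is at most $\sqrt{(2\mu)^{m-1}r_{-j}/d_{-j}}\cdot\norm{\be_{j,k_j}^\top(\cP_{\widehat\bU_{j,0}}-\cP_{\bU_j})}_2$; invoking the $\ell_{2,\infty}$ bound of Lemma~\ref{lemma:l2inf-init}, $\norm{\be_{j,k_j}^\top(\cP_{\widehat\bU_{j,0}}-\cP_{\bU_j})}_2\le \frac{8C_1\sigma}{\lambda_{\min}}\sqrt{\tfrac{\mu r_j}{d_j}}\sqrt{\tfrac{d^*\dmax\log\dmax}{n}}$, which gives the claimed order (up to the $m$, $\mu$, $r^*$ factors) after summing over the $m$ modes. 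For (ii), I would further split: the $\bW_j$ term depends on $\bcC$ (or $\widehat\bcC_0$), on the projections $\cP_{\bU_j}^\perp$, on the Kronecker factor $\otimes_{k\neq j}\bU_k$, and on $\cM_j^\dagger(\bcC)$ (i.e. $\bV_j\bLambda_j^{-1}$), each of which must be replaced by its hatted counterpart one at a time. The subspace replacements use the same $\ell_{2,\infty}$/spectral/$\ell_\infty$ bounds; the core and pseudoinverse replacements use Lemma~\ref{lemma:diff-core-init} (with $\delta\lesssim\sigma\sqrt{d^*\dmax\log\dmax/n}$, which is $\ll\lambda_{\min}$ by the stated SNR condition), noting that $\norm{\widehat\bcC_0-\bcC}_\tF\lesssim\sqrt{\rmin}\,m\kappa_0\delta$ and that the pseudoinverse perturbs with an extra $1/\lambda_{\min}$ factor and $\kappa_0$. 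Throughout I would keep track of which unfoldings retain incoherence, so that every error term carries a $\sqrt{(2\mu)^m r^*/d^*}$-type prefactor; multiplying by $\norm{\bomega}_{\ell_1}=1$ and summing over $j\in[m]$ recovers the stated bound.

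The main obstacle is bookkeeping the interplay between the $1/\lambda_{\min}$ amplification coming from $\cM_j^\dagger(\bcC)=\bV_j\bLambda_j^{-1}$ and the size of the subspace/core perturbations, and making sure no term accumulates a stray factor larger than $\frac{C_1 m^2\kappa_0\sigma}{\lambda_{\min}}\sqrt{(2\mu)^m r^*\dmax^2\log\dmax/n}$. In particular, the term where $\widehat\bW_j$ inherits both a subspace perturbation (order $\frac{\sigma}{\lambda_{\min}}\sqrt{d^*\dmax\log\dmax/n}$ in spectral norm) and the $1/\lambda_{\min}$ from the pseudoinverse is the one most prone to blowing up; here I rely on the cancellation $\cM_j(\bcC\cdot(\bU_1,\dots,\bW_j,\dots,\bU_m))=\cP_{\bU_j}^\perp\cM_j(\bomega)(\otimes_{k\neq j}\bU_k)\cP_{\bV_j}(\otimes_{k\neq j}\bU_k^\top)$ from \eqref{eq:mode-j-tangent}, which shows the $\bLambda_j$ factors cancel against $\cM_j^\dagger(\bcC)$ in the leading term, leaving only the genuinely perturbative contributions. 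Provided the SNR condition $\lambda_{\min}/\sigma\ge C_\gap\kappa_0 m\sqrt{d^*\dmax\log\dmax/n}$ is used to absorb all second-order (product-of-two-perturbations) terms into first-order ones, the bound follows.
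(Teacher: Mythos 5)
Your proposal is correct and follows essentially the same route as the paper: telescope the purely projected term mode-by-mode using the $\ell_{2,\infty}$ bound of Lemma~\ref{lemma:l2inf-init} together with incoherence of the untouched modes, and handle the $\bW_j$ pieces through the identity \eqref{eq:mode-j-tangent} (so the $\bLambda_j^{-1}$ from $\cM_j^{\dagger}(\bcC)$ cancels), reducing the remaining comparison to $\cP_{\widehat\bV_j}$ versus $\cP_{\bV_j}$, which the paper controls exactly as you indicate via the core-tensor perturbation of Lemma~\ref{lemma:diff-core-init} (after fixing a rotation of the decomposition, cf.\ Lemma~\ref{lemma:init-rotation}) plus a Davis--Kahan step, absorbing second-order products under the stated SNR condition.
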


\begin{proof}
Since the projection onto tangent space $\cP_{\TT }$ is given by
\begin{equation*}
\begin{aligned}
        \cP_{\TT}(\bcI) & = \bcI \cdot \left( \cP_{{\bU}_{1}},\dots,\cP_{{\bU}_{m}}\right) + \sum_{j=1}^{m} \bcC \cdot\left(\bU_1,\dots, \bW_j, \dots, \bU_m\right), \\
      % \Vect\left(\right) & = 
\end{aligned}
\end{equation*}
where, according to \eqref{eq:mode-j-tangent}, for each $\bW_j$ we have
\begin{equation*}
    \cM_j\left(\bcC \cdot\left(\bU_1,\dots, \bW_j, \dots, \bU_m\right) \right) = \cP_{\bU_j}^\perp \cM_j(\bcI) \left( \otimes_{k\neq j}\bU_{k} \right)\cP_{\bV_j} \left( \otimes_{k\neq j}\bU_{k}^\top \right)
\end{equation*}

\begin{equation}
     \cP_{\widehat \TT }(\bcI) = \bcI \cdot \left( \cP_{\widehat{\bU}_{1,0}},\dots,\cP_{\widehat{\bU}_{m,0}}\right) + \sum_{j=1}^{m} \widehat{\bcC}_0 \cdot\left(\widehat{\bU}_{1,0},\dots, \widehat{\bW}_{j}, \dots, \widehat{\bU}_{m,0}\right),
\end{equation}
we can decouple the error as 
    \begin{equation}\label{eq:tangent-diff-decomp}
      \begin{aligned}
& \norm{\cP_{\widehat \TT }(\bomega) - \cP_{\TT }(\bomega) }_\tF \le  \norm{\bomega\times_{j=1}^m \cP_{\bU_j} - \bomega\times_{j=1}^m \cP_{\widehat\bU_{j,0}} }_{\tF}   \\
 & + \sum_{j=1}^{m} \norm{\cP_{\widehat \bU_{j,0}}^\perp \cM_j(\bomega) \left( \otimes_{k\neq j}\widehat \bU_{k,0} \right)\cP_{\widehat  \bV_j} \left( \otimes_{k\neq j}\widehat \bU_{k,0}^\top \right)-\cP_{\bU_j}^\perp \cM_j(\bomega) \left( \otimes_{k\neq j}\bU_{k} \right)\cP_{\bV_j} \left( \otimes_{k\neq j}\bU_{k}^\top \right) }_{\tF}
      \end{aligned}
    \end{equation}
For the first term at the RHS of \eqref{eq:tangent-diff-decomp}, we have
\begin{equation}\label{eq:tangent-diff-recur}
    \begin{aligned}
         &\norm{\cP_{\widehat \TT }(\bomega) - \cP_{\TT }(\bomega) }_\tF =  \norm{\otimes_{j=1}^m \cP_{\bU_{j}}\be_{j,k_j} - \otimes_{j=1}^m \cP_{\widehat\bU_{j,0}}\be_{j,k_j}  }_2 \\
         &= \norm{\otimes_{1} (\cP_{\widehat\bU_{1,0}}-\cP_{\bU_{1}})\be_{1,k_1}\otimes_{j=2}^m \cP_{\bU_{j}}\be_{j,k_j}  }_2 + \norm{\otimes_{1} \cP_{\widehat\bU_{1,0}}\be_{1,k_1} \left(\otimes_{j=2}^m \cP_{\widehat\bU_{j,0}}\be_{j,k_j} -\otimes_{j=2}^m \cP_{\bU_{j}}\be_{j,k_j}\right) }_2 \\
         & = \norm{\otimes_{1} (\cP_{\widehat\bU_{1,0}}-\cP_{\bU_{1}})\be_{1,k_1}\otimes_{j=2}^m \cP_{\bU_{j}}\be_{j,k_j}  }_2 + \norm{\otimes_{1} \cP_{\widehat\bU_{1,0}}\be_{1,k_1} \otimes_{2} (\cP_{\widehat\bU_{2,0}}-\cP_{\bU_{2}})\be_{2,k_2} \left(\otimes_{j=3}^m  \cP_{\bU_{j}}\be_{j,k_j}\right) }_2  \\
         & +  \norm{\otimes_{1} \cP_{\widehat\bU_{1,0}}\be_{1,k_1} \otimes_{2} \cP_{\widehat\bU_{2,0}}\be_{2,k_2} \left(\otimes_{j=3}^m \cP_{\widehat\bU_{j,0}}\be_{j,k_j} -\otimes_{j=3}^m \cP_{\bU_{j}}\be_{j,k_j}\right) }_2 \\
         &\cdots \\
         & \le  \frac{8 m C_1\sigma  }{\lambda_{\min}}\sqrt{\frac{(2\mu)^m r^*}{d^*} }\sqrt{\frac{d^*\dmax \log \dmax }{n}},
    \end{aligned}
\end{equation}
where we use the fact in Lemma \ref{lemma:l2inf-init} that
    \begin{equation*}
   \norm{ \cP_{\widehat{\bU}_{j,0}} - \cP_{\bU_j} }_{2,\infty} \le \frac{8 C_1\sigma  }{\lambda_{\min}}\sqrt{\frac{\mu r_j}{d_j} }\sqrt{\frac{d^*\dmax \log \dmax }{n}}.
\end{equation*}
We now consider other terms at the RHS of \eqref{eq:tangent-diff-decomp}. To handle them, we need to fix the decomposition $\widehat{\bcT}_0 = \widehat{\bcC}_0\times_{j=1}^m\widehat{\bU}_{0,j}$. We invoke Lemma \ref{lemma:init-rotation} and let the decomposition be the one described in Lemma \ref{lemma:init-rotation}. This indicates the bound:
    \begin{equation*}
   \norm{ \widehat{\bU}_{j,0} - \bU_j }_{2,\infty} \le \frac{C C_1\sigma  }{\lambda_{\min}}\sqrt{\frac{\mu r_j}{d_j} }\sqrt{\frac{d^*\dmax \log \dmax }{n}}.
\end{equation*}
For each $j$, we let $\frG_{j,1}:= \cP_{\bU_j}^\perp \cM_j(\bomega) \left( \otimes_{k\neq j}\bU_{k} \right) $, and $\frG_{j,2}:=\cP_{\bV_j} \left( \otimes_{k\neq j}\bU_{k}^\top \right)$, with the corresponding estimated version be $\widehat\frG_{j,1}$ and $\widehat\frG_{j,2}$. Then, for each $j$, we have
\begin{equation*}
\begin{aligned}
        &\norm{\cP_{\widehat \bU_{j,0}}^\perp \cM_j(\bomega) \left( \otimes_{k\neq j}\widehat \bU_{k,0} \right)\cP_{\widehat  \bV_j} \left( \otimes_{k\neq j}\widehat \bU_{k,0}^\top \right)-\cP_{\bU_j}^\perp \cM_j(\bomega) \left( \otimes_{k\neq j}\bU_{k} \right)\cP_{\bV_j} \left( \otimes_{k\neq j}\bU_{k}^\top \right) }_{\tF} \\
        & \le\sqrt{2}\norm{\widehat\frG_{j,1} \widehat\frG_{j,2} -\frG_{j,1}\frG_{j,2} }_2\\
        & \le \sqrt{2}\left(\norm{(\widehat\frG_{j,1} -\frG_{j,1})\frG_{j,2} }_2 + \norm{\frG_{j,1}(\widehat\frG_{j,2} -\frG_{j,2}) }_2 + \norm{(\widehat\frG_{j,1} -\frG_{j,1})(\widehat\frG_{j,2} -\frG_{j,2}) }_2\right).
\end{aligned}
\end{equation*}
Here, $ \widehat\frG_{j,1} -\frG_{j,1}$ can be controlled using the same technique as \eqref{eq:tangent-diff-recur}, which gives
\begin{equation*}
    \norm{ \widehat\frG_{j,1} -\frG_{j,1}}_2 \le  \frac{C m C_1\sigma  }{\lambda_{\min}}\sqrt{\frac{(2\mu)^m r^*}{\rmin d^*} }\sqrt{\frac{d^*\dmax^2 \log \dmax }{n}}.
\end{equation*}
For the $\frG_{j,2}$, We need to study $\cP_{\widehat\bV_{j,0}}-\cP_{\bV_j}$. We notice that $\cM_j(\bcC)^\top = \bV_j \bLambda_j$. Moreover, according to Lemma \ref{lemma:diff-core-init}, we can control the error for estimating the core with $\norm{\widehat{\bcC}_0-\bcC}_{\tF} \le 2^{5} \sqrt{\rmin } m \kappa_0 C_1\sigma \sqrt{\frac{d^*\dmax\log\dmax }{n}}$. Using the generalized Davis–Kahan theorem, we have
\begin{equation*}
    \norm{\cP_{\widehat\bV_{j,0}}-\cP_{\bV_j}}_2\le C \sqrt{\rmin } m \kappa_0 \frac{  C_1 \sigma}{\lambda_{\min}} \sqrt{\frac{d^*\dmax\log\dmax }{n}}.
\end{equation*}
Thus, with the given SNR, we have $\norm{\widehat\frG_{j,2} -\frG_{j,2}}_2\le C \sqrt{\rmin } m \kappa_0 \frac{  C_1 \sigma}{\lambda_{\min}} \sqrt{\frac{d^*\dmax\log\dmax }{n}}$, where we invoke Lemma \ref{lemma:init-rotation} again on the control of $\norm{\otimes_{k\neq j}\widehat\bU_{k}^\top - \otimes_{k\neq j}\bU_{k}^\top}_2$.  Combining the bounds for $  \norm{ \widehat\frG_{j,1} -\frG_{j,1}}_2$ and $  \norm{ \widehat\frG_{j,2} -\frG_{j,2}}_2$, we conclude that
\begin{equation*}
    \norm{\widehat\frG_{j,1} \widehat\frG_{j,2} -\frG_{j,1}\frG_{j,2} }_2 \le \frac{C  m\kappa_0 C_1\sigma  }{\lambda_{\min}}\sqrt{\frac{(2\mu)^m r^*}{d^*} }\sqrt{\frac{d^*\dmax^2 \log \dmax }{n}}.
\end{equation*}
Together with \eqref{eq:tangent-diff-decomp}, \eqref{eq:tangent-diff-recur}, we can derive the desired bound.

% We $\bT_{\cM}=\left[\otimes_{j=1}^m \cP_{\bU_j}+\sum_{j=1}^m \otimes_{k=1}^{j-1} \cP_{\bU_k} \otimes \cP_{\bW_j}\otimes_{k=j+1}^{m}\cP_{\bU_k} \right] $, and 

% \begin{equation*}
% \bW_j=\cP_{{\bU}_{j}}^{\perp} \mathcal{M}_j\left(\bcI\right)\left(\otimes_{k \neq j} {\bU}_{ k}\right) \mathcal{M}_j^{\dagger}\left({\bcC }\right).
%\end{equation*}

\end{proof}

\begin{Lemma}[Matrix Bernstein inequality with sub-exponential noises]\label{lemma:sub-exp-bern}
Consider independent random self-adjoint matrix $\bX_i\in \R^{d\times d}$ with bounded norm: $\norm{\bX_i}_2\le R$. Suppose $\xi_i$ are another series of sub-exponential random variables where each $\xi_i$ only depends on $\bX_i$, and conditional on any $\bX_i$, we have $\norm{\xi_i|\bX_i}_{\psi_1}\le \sigmax$ and $\E [\xi_i|\bX_i] =0$. Denote $\sigma^2_{\bX}:=\left\|\sum_i^n \E\bX_i^2\right\|_2$.
% $$
% \mathbb{E} \mathbf{X}_k=\mathbf{0} \quad \text { and } \quad \mathbb{E}\left(\mathbf{X}_k^p\right) \preccurlyeq \frac{p!}{2} \cdot R^{p-2} \mathbf{A}_k^2 \quad \text { for } p=2,3,4, \ldots
% $$
Then, their exist a numerical constant $C>0$ such that the following inequality holds for all $t \geq 0$:
$$
\mathbb{P}\left\{\norm{\sum_i^n \xi_i\mathbf{X}_i}_2 \geq t\right\} \leq d \cdot \exp \left(\frac{-C t^2 }{\sigma^2_{\bX}\sigmax^2 +R\sigmax t}\right)
$$
    
\end{Lemma}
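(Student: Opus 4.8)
The plan is to reduce the sub-exponential matrix Bernstein inequality to a truncation argument combined with the standard (bounded) matrix Bernstein inequality \citep{tropp2012user}. First I would introduce a truncation level $\tau$, to be optimized at the end, and split each term as $\xi_i\bX_i = \xi_i\bX_i\idc\{\abs{\xi_i}\le \tau\} + \xi_i\bX_i\idc\{\abs{\xi_i}> \tau\}$. Write $\bY_i := \xi_i\bX_i\idc\{\abs{\xi_i}\le \tau\} - \E[\xi_i\bX_i\idc\{\abs{\xi_i}\le \tau\}\mid \bX_i]$ for the centered truncated part. The goal is to control $\norm{\sum_i \bY_i}_2$ via bounded matrix Bernstein, and separately to show the tail part $\sum_i \xi_i\bX_i\idc\{\abs{\xi_i}>\tau\}$ together with the recentering correction is negligible with high probability.

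For the bounded part: each $\bY_i$ is self-adjoint with $\norm{\bY_i}_2 \le 2\tau R$ (using $\norm{\bX_i}_2\le R$ and that the conditional mean of $\xi_i\idc\{\abs{\xi_i}\le\tau\}$ has absolute value at most the full conditional mean magnitude plus a sub-exponential tail contribution, all $O(\tau)$; more carefully $\abs{\E[\xi_i\idc\{\abs{\xi_i}\le\tau\}\mid\bX_i]} = \abs{\E[\xi_i\idc\{\abs{\xi_i}>\tau\}\mid\bX_i]}\le \sigmax e^{-c\tau/\sigmax}$, which is $\ll \tau$). The matrix variance parameter is $\norm{\sum_i \E\bY_i^2}_2$; conditionally, $\E[\bY_i^2\mid\bX_i] \preceq \E[\xi_i^2\idc\{\abs{\xi_i}\le\tau\}\mid\bX_i]\,\bX_i^2 \preceq C\sigmax^2 \bX_i^2$ since $\norm{\xi_i\mid\bX_i}_{\psi_1}\le\sigmax$ gives $\E[\xi_i^2\mid\bX_i]\le C\sigmax^2$. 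Taking expectations and summing yields variance bound $\le C\sigmax^2\sigma_{\bX}^2$. Then bounded matrix Bernstein gives $\bbP\{\norm{\sum_i\bY_i}_2\ge t/2\} \le d\exp\left(-\frac{Ct^2}{\sigmax^2\sigma_{\bX}^2 + \tau\sigmax^2 R t/\sigmax}\right)$ — I would choose $\tau \asymp \sigmax\log(\cdot)$ or, more cleanly, optimize $\tau$ so that the truncation tail matches the Bernstein tail, leading to the stated denominator $\sigma_{\bX}^2\sigmax^2 + R\sigmax t$.

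For the tail part: by a union bound, $\bbP\{\exists i: \abs{\xi_i}>\tau\} \le n e^{-c\tau/\sigmax}$; on the complement, $\sum_i\xi_i\bX_i\idc\{\abs{\xi_i}>\tau\} = 0$, and the recentering term $\sum_i\E[\xi_i\bX_i\idc\{\abs{\xi_i}>\tau\}\mid\bX_i]$ has norm at most $n R\sigmax e^{-c\tau/\sigmax}$, which is negligible once $\tau \gtrsim \sigmax\log(nR\sigmax/t)$ — and this logarithmic factor is exactly what turns $\tau$ into the effective bound $\sigmax$ up to the absorbed constants and log terms; alternatively one states the result with $\tau$ left as $C\sigmax\log\dmax$ given the paper's running assumption $n\le\dmax^{4m}$. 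Combining the two pieces via a union bound over the two events yields the claimed inequality with a single absolute constant $C$.

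\textbf{Main obstacle.} The delicate point is handling the conditional recentering terms $\E[\xi_i\idc\{\abs{\xi_i}\le\tau\}\mid\bX_i]$, which are nonzero because truncation breaks the mean-zero property; one must verify that both their direct contribution to $\norm{\bY_i}_2$ and their accumulated contribution $\norm{\sum_i\E[\xi_i\bX_i\idc\{\abs{\xi_i}\le\tau\}\mid\bX_i]}_2$ are dominated by the main Bernstein term after the $\tau\asymp\sigmax\log(\cdot)$ choice. This is routine but is where all the bookkeeping lives; everything else is a direct invocation of the bounded-case inequality and sub-exponential tail bounds. I would also double-check whether the paper intends $\sigma_{\bX}^2$ to include the $\xi_i$-variance or not — here it is defined purely via $\E\bX_i^2$, so the $\sigmax^2$ factor must be carried explicitly through the variance proxy, consistent with the stated denominator $\sigma_{\bX}^2\sigmax^2$.
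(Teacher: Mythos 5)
There is a genuine gap. Your truncation-plus-bounded-Bernstein route cannot deliver the inequality \emph{as stated}, i.e.\ with the clean denominator $\sigma_{\bX}^2\sigmax^2+R\sigmax t$ and no logarithmic loss. To make the truncated summands bounded by $O(\sigmax R)$ you must take $\tau\asymp\sigmax$, but then the event $\{\exists i:\abs{\xi_i}>\tau\}$ has probability of constant order; to make that event negligible you need $\tau\gtrsim\sigmax\log n$ (or $\tau$ growing with $t$), and this $\tau$ enters the uniform-bound parameter of the bounded matrix Bernstein inequality, turning the linear-in-$t$ part of the exponent into $t/(R\sigmax\log n)$ (or, with a $t$-dependent $\tau$, collapsing the exponent to a constant in the large-deviation regime). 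So the best your argument yields is the lemma with an extra $\log n$ (equivalently $m\log\dmax$) factor in the second denominator term, which is a strictly weaker statement than the one claimed; your own hedge about "leaving $\tau=C\sigmax\log\dmax$" is an implicit admission of this. The recentering bookkeeping you flag as the main obstacle is indeed routine; the real obstruction is structural to the truncation approach.

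The paper proves the lemma without any truncation: it verifies the Bernstein moment-growth condition $\E\,\abs{\xi_i}^k\bX_i^k\preceq\frac{k!}{2}(C\sigmax R)^{k-2}\,\sigmax^2\,\E\bX_i^2$ for all $k\ge 3$, conditioning on $\bX_i$ and using $\norm{\xi_i\mid\bX_i}_{\psi_1}\le\sigmax$ together with $\norm{\bX_i}_2\le R$, and then invokes Theorem 6.2 of \cite{tropp2012user} (matrix Bernstein under the subexponential/Bernstein moment condition), which gives exactly the stated tail with no log factor. If you want to salvage your write-up, replace the truncation step by a direct bound on the conditional matrix MGF $\E[e^{\theta\xi_i\bX_i}\mid\bX_i]$ for $\abs{\theta}\le c/(R\sigmax)$ via the moment series — that is precisely what Tropp's moment-condition theorem packages, and it is the step your proposal is missing.
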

\begin{proof}
    This is a direct application of Theorem 6.2 in \cite{tropp2012user}. We can check that, according to the definition of the sub-exponential norm, each element in the series
    satisfies the Bernstein condition: 
    $$\E \abs{\xi_i}^k\bX^{k}_i = \E \left[\E[\abs{\xi_i}^k\bX^{k}_i|\bX_i]\right]  \preceq \frac{k!}{2} \cdot (C \sigmax R)^{k-2} \sigmax^2\E\bX_i^2 , \text{ for } k\ge 3, i\in[n].$$ 
    Summing up all the independent elements we have
    \begin{equation*}
        \E \sum_{i=1}^n\abs{\xi_i}^k\bX^{k}_i \preceq \frac{k!}{2} \cdot (C \sigmax R)^{k-2} \sigmax^2\sigma^2_{\bX},
    \end{equation*}
    which matches the condition of Theorem 6.2 in \cite{tropp2012user}. Thus we prove the claim.
\end{proof}

\section{Proofs of Technical Lemmas}

\subsection{Proof of Lemma \ref{lemma:cover-init}}\label{apx:sec:proof-metric-ety}
\begin{proof}
    According to Lemma \ref{lemma:l2inf-init}, by setting $C_1$ in  Lemma \ref{lemma:l2inf-init} as $C_1=c_0\cdot\sqrt{\frac{n}{\dmax \log \dmax}}/\sigma$, for any Tucker decomposition $\widehat{\bcT}_0 =\widetilde{\bcC}_{0}\times_{j=1}^m \widetilde{\bU}_{j,0}  $, we have the bound
        \begin{equation*}
   \norm{ \cP_{\widetilde{\bU}_{j,0}} - \cP_{\bU_j} }_{2,\infty} \le \frac{8 c_0  }{\lambda_{\min}}\sqrt{\frac{\mu r_j}{d_j} }\sqrt{d^*},
\end{equation*}
and also, 
    \begin{equation*}
   \norm{ \cP_{\widetilde{\bU}_{j,0}} - \cP_{\bU_j} }_{2} \le \frac{8 c_0  }{\lambda_{\min}}\sqrt{d^*}.
\end{equation*}
for any $\widehat{\bcT}_0\in \mathbb{T}_{\init}(c_0)$. However, such a decomposition is arbitrary. To fix the decomposition and ease the discussion, we introduce the following lemma:

\begin{Lemma}\label{lemma:init-rotation}
    For every $\widehat{\bcT}_0\in \mathbb{T}_{\init}(c_0)$, there always exists a decomposition $\left(\widehat{\bcC}_{0},\widehat{\bU}_{1,0},\dots, \widehat{\bU}_{m,0} \right)$ such that $\widehat{\bcT}_0 = \widehat{\bcC}_{0}\cdot \left(\widehat{\bU}_{1,0},\dots,\widehat{\bU}_{m,0} \right)$, and 
\begin{equation*}
\begin{gathered}
        \norm{\widehat{\bU}_{j,0}- \bU_j}_{2,\infty}\le  \frac{8 \sqrt{3} c_0  }{\lambda_{\min}}\sqrt{\frac{\mu r_j}{d_j} }\sqrt{d^*}, \  \norm{\widehat{\bU}_{j,0}- \bU_j}_{2} \le \frac{8 \sqrt{2} c_0  }{\lambda_{\min}}\sqrt{d^*}, \\
        \max_{j}\norm{\cM_j\left(\widehat{\bcC}_{0} -\bcC\right) }_2 \le 2^{m+4}\kappa_0\sqrt{d^*}c_0 .
\end{gathered}
\end{equation*}
\end{Lemma}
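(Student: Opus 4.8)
\textbf{Proof plan for Lemma \ref{lemma:init-rotation}.} The statement says: given any tensor $\widehat{\bcT}_0\in\mathbb{T}_{\init}(c_0)$, there is a Tucker decomposition whose singular factors are close to $\bU_j$ in both $\ell_2$ and $\ell_{2,\infty}$, and whose core is close to $\bcC$. The plan is to start from an \emph{arbitrary} Tucker decomposition of $\widehat{\bcT}_0$, then fix the gauge freedom (the $m$ orthogonal rotations acting on each factor and inversely on the core) by choosing the rotations via a Procrustes / Davis--Kahan alignment with $\bU_j$. The point of Lemma \ref{lemma:l2inf-init} (applied with $C_1 = c_0\sqrt{n/(\dmax\log\dmax)}/\sigma$, noting the $\linf$ bound $\norm{\widehat{\bcT}_0-\bcT}_{\linf}\le c_0$ gives exactly this value of $C_1$) is that the \emph{projectors} $\cP_{\widetilde{\bU}_{j,0}}$ are close to $\cP_{\bU_j}$ in spectral and $\ell_{2,\infty}$ norm, independently of which decomposition we picked; what remains is to pass from projector closeness to \emph{factor} closeness after choosing the optimal rotation.

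First I would recall the standard fact that if $\norm{\cP_{\widetilde{\bU}}-\cP_{\bU}}_2 = \delta < 1$ then the orthogonal matrix $\widehat{\bO}$ realizing $\min_{\bO\in\O^{r\times r}} \norm{\widetilde{\bU}\bO - \bU}_{\tF}$ (the Procrustes rotation, obtained from the SVD of $\widetilde{\bU}^\top\bU$) satisfies $\norm{\widetilde{\bU}\widehat{\bO}-\bU}_2 \le \sqrt{2}\,\delta$ — this is the sin--$\Theta$-to-factor conversion used in the generalized Davis--Kahan theorem (Theorem 2 of \cite{yu2015useful}). Applying the spectral bound $\norm{\cP_{\widetilde{\bU}_{j,0}}-\cP_{\bU_j}}_2 \le \frac{8c_0}{\lambda_{\min}}\sqrt{d^*}$ from Lemma \ref{lemma:l2inf-init} gives $\norm{\widehat{\bU}_{j,0}-\bU_j}_2 \le \frac{8\sqrt{2}\,c_0}{\lambda_{\min}}\sqrt{d^*}$ after setting $\widehat{\bU}_{j,0} := \widetilde{\bU}_{j,0}\widehat{\bO}_{j}$. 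For the $\ell_{2,\infty}$ bound I would write $\widehat{\bU}_{j,0}-\bU_j = (\cP_{\widetilde{\bU}_{j,0}}-\cP_{\bU_j})\bU_j + \widetilde{\bU}_{j,0}(\widetilde{\bU}_{j,0}^\top\bU_j\widehat{\bO}_j - \bI_{r_j})$ (or a similar algebraic identity), bound the first piece row-wise using the $\ell_{2,\infty}$ projector bound together with the incoherence of $\bU_j$, and bound the second piece by $\norm{\widetilde{\bU}_{j,0}}_{2,\infty}\cdot\norm{\widetilde{\bU}_{j,0}^\top\bU_j\widehat{\bO}_j-\bI}_2$, where the operator-norm factor is $O(\delta^2)$ and $\norm{\widetilde{\bU}_{j,0}}_{2,\infty}$ can be controlled via incoherence of $\widetilde{\bU}_{j,0}$ (which in turn follows from the $\ell_{2,\infty}$ projector bound plus incoherence of $\bU_j$). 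Tracking constants should yield the claimed $\frac{8\sqrt{3}\,c_0}{\lambda_{\min}}\sqrt{\mu r_j/d_j}\sqrt{d^*}$.

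Once the factors are fixed, the core is determined: $\widehat{\bcC}_0 = \widehat{\bcT}_0 \times_{j=1}^m \widehat{\bU}_{j,0}^\top$. To bound $\max_j\norm{\cM_j(\widehat{\bcC}_0-\bcC)}_2$ I would use $\widehat{\bcC}_0 - \bcC = (\widehat{\bcT}_0-\bcT)\times_{j=1}^m\widehat{\bU}_{j,0}^\top + \bcC\times_{k=1}^m\bU_k \times_{j=1}^m\widehat{\bU}_{j,0}^\top - \bcC$, unfold in an arbitrary mode, and estimate: the first term is bounded by $\norm{\widehat{\bcT}_0-\bcT}_{\tF}\le c_0\sqrt{d^*}$, and the second term is a telescoping sum over the $m$ modes of terms of the form $\bcC \times (\text{mixture of }\bU_k^\top\widehat{\bU}_{k,0}\text{ factors and one }\bU_l^\top(\widehat{\bU}_{l,0}-\bU_l))$, each contributing at most $\lambda_{\max}\cdot\norm{\widehat{\bU}_{l,0}-\bU_l}_2$ up to the $2^{m}$-type combinatorial factor from expanding the Kronecker product; this is essentially the calculation in Lemma \ref{lemma:diff-core-init}. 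Summing gives an $O(2^m\kappa_0\sqrt{d^*}c_0)$ bound, matching the claimed $2^{m+4}\kappa_0\sqrt{d^*}c_0$. The main obstacle, and the only genuinely delicate point, is the $\ell_{2,\infty}$ factor bound: one must be careful that the Procrustes rotation does not inflate the per-row norm, which is why the decomposition $\widehat{\bU}_{j,0}-\bU_j = (\cP_{\widetilde{\bU}_{j,0}}-\cP_{\bU_j})\bU_j + \widetilde{\bU}_{j,0}(\text{small})$ is used rather than a direct $\norm{\cdot}_{2,\infty}\le\norm{\cdot}_2$ bound, which would be far too lossy; the rest is bookkeeping, most of which is already done in Lemmas \ref{lemma:l2inf-init} and \ref{lemma:diff-core-init}.
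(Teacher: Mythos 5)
Your proposal is correct and follows essentially the same route as the paper: take an arbitrary Tucker decomposition, align each factor by the Procrustes rotation from the SVD of $\widetilde{\bU}_{j,0}^\top\bU_j$, convert the projector bounds of Lemma~\ref{lemma:l2inf-init} into factor bounds (spectral via the $\sqrt{2}\,\sin\Theta$ conversion, $\ell_{2,\infty}$ via splitting off the $O(\|\cos\widetilde\Theta-\bI\|_2)=O(\delta^2)$ rotation residue controlled by incoherence), and bound the core by expanding the unfolding as in Lemma~\ref{lemma:diff-core-init}. The only cosmetic difference is that the paper bounds the squared row norm $\|\be_{j,i}^\top(\widehat{\bU}_{j,0}-\bU_j)\|_2^2$ directly, which handles the cross term and yields the stated constant $8\sqrt{3}$, whereas your triangle-inequality split gives the same rate with a marginally larger constant.
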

Now we just use the decomposition in Lemma \ref{lemma:init-rotation} as the unique Tucker decomposition of elements in $\mathbb{T}_{\init}(c_0)$. According to this decomposition, we define the subsets w.r.t. $\widehat{\bU}_{j,0}$ and $\widehat{\bcC}_{0}$:
\begin{equation*}
    \begin{gathered}
        \bbS(\bU_j)= \left\{\widehat{\bU}_{j,0}\in \O^{d_j\times r_j} :\  \norm{\widehat{\bU}_{j,0} - \bU_j}_{2,\infty}\le \frac{8 \sqrt{3} c_0  }{\lambda_{\min}}\sqrt{\frac{\mu r_j}{d_j} }\sqrt{d^*} \right\}\\
        \bbS(\bcC) = \left\{\widehat{\bcC}_{0}\in \R^{\br}: 
        \ \norm{\widehat{\bcC}_{0} - \bcC}_\tF\le 2^{m+4}\rmax \kappa_0\sqrt{d^*}c_0  \right\}.
    \end{gathered}
\end{equation*}
It is clear that for any $\widehat{\bcT}_0\in \mathbb{T}_{\init}(c_0)$, the corresponding $\left(\widehat{\bcC}_{0},\widehat{\bU}_{1,0},\dots, \widehat{\bU}_{m,0} \right)$ always fall within the subsets $ \bbS(\bcC) $  and $\{\bbS(\bU_j)\}_{j=1}^{m}$. Notice that, for any $\varepsilon >0$,  if we control the error in each subset  with 
\begin{equation*}
    \begin{gathered}
        \norm{\widehat{\bU}_{j,0} - \bU_j}_{2,\infty}\le \frac{\sqrt{d_{-j}}}{2^{m+1}\kappa_0\lambda_{\min}\sqrt{(2\mu)^{m-1}r_{-j} }} \varepsilon := \widetilde{c}_{\bU_j}\varepsilon \\
        \norm{\widehat{\bcC}_{0} - \bcC}_\tF\le \frac{\sqrt{d^*}}{2^{m+1}\sqrt{(2\mu)^m r^*} } \varepsilon := \widetilde{c}_{\bcC}\varepsilon,
    \end{gathered}
\end{equation*}
then we shall have the control of $\norm{\widehat{\bcT}_0- \bcT}_{\linf}$ with the bound
\begin{equation*}
    \begin{aligned}
        \abs{\left[\widehat{\bcT}_0- \bcT\right]_{\omega}} & = \abs{ \widehat{\bcC}_{0}\times_{j=1}^m\widehat{\bU}_{j,0}^\top\be_{j,i}-\bcC\times_{j=1}^m\bU_j^\top\be_{j,i} } \\
        &\le 2^m \sqrt{\frac{(2\mu)^{m-1}r_{-j}}{d_{-j}}}\lambda_{\max} \norm{\widehat{\bU}_{j,0} - \bU_j}_{2,\infty} +2^m\sqrt{\frac{(2\mu)^m r^*}{d^*}} \norm{\widehat{\bcC}_{0} - \bcC}_\tF  \\
        & \le \varepsilon.
    \end{aligned}
\end{equation*}
This indicates that the covering number on $\mathbb{T}_{\init}(c_0)$ can be upper bounded by the covering numbers on $ \bbS(\bcC) $  and $\{\bbS(\bU_j)\}_{j=1}^{m}$, with the relationship:
\begin{equation}\label{eq:cover-init}
     \cN(\mathbb{T}_{\init}(c_0),\norm{\cdot}_{\linf},\varepsilon) \le \prod_{j} \cN(\bbS(\bU_j),\norm{\cdot}_{2,\infty},\widetilde{c}_{\bU_j}\varepsilon) \cdot \cN(\bbS(\bcC),\norm{\cdot}_{\tF},\widetilde{c}_{\bcC}\varepsilon).
\end{equation}
For each $j$, we can relax the ball with $\ell_{2,\infty}$ radius to the ball $\ell_2$ radius on $d_j$ rows, which gives that $\bbB(\bU_j,\norm{\cdot}_{2,\infty},c)=\times_{i=1}^{d_j}\bbB(\bU_{j,i},\ell_2,c)$. This argument leads to the covering number bound:
\begin{equation}\label{eq:cover-Uj}
\cN(\bbS(\bU_j),\norm{\cdot}_{2,\infty},\widetilde{c}_{\bU_j}\varepsilon) \le \left(1+ 2^{m+5}\kappa_0\sqrt{(2\mu)^{m} r^*}\frac{c_0}{\varepsilon}\right)^{d_j r_j}.
\end{equation}
For the $\bbS(\bcC)$, we also have the covering number:
\begin{equation}\label{eq:cover-S}
    \cN(\bbS(\bcC),\norm{\cdot}_{\tF},\widetilde{c}_{\bcC}\varepsilon) \le \left(1+ 2^{2m+5}\kappa_0\sqrt{(2\mu)^m r^*\rmax^2} \frac{c_0}{\varepsilon}\right)^{r^*}
\end{equation}
Inserting \eqref{eq:cover-Uj}, \eqref{eq:cover-S} into \eqref{eq:cover-init}, we prove the desired argument.
\end{proof}

\subsection{Proof of Lemma \ref{lemma:init-rotation}}\label{apx:sec:proof-PU-to-U}
\begin{proof}

To show this, notice that, for any decomposition $\widehat{\bcT}_0 =\widetilde{\bcC}_{0}\times_{j=1}^m \widetilde{\bU}_{j,0}  $, we can always select $\widetilde{\bO}_j = \widetilde{Q}^{L}_{j}(\widetilde{Q}^{R}_{j})^\top$, where $(\widetilde{Q}^{L}, \widetilde{Q}^{R})=\SVD(\widetilde{\bU}_{j,0}^\top\bU_j )$ are the left and right singular subspaces of $\widetilde{\bU}_{j,0}^\top\bU_j$. By doing so, we can rotate the decomposition as $\widehat{\bU}_{j,0}=\widetilde{\bU}_{j,0} \widetilde{\bO}_j$, and $\widehat{\bcC}_{0} = \widetilde{\bcC}_{0} \times_{j=1}^m \widetilde{\bO}_j^\top$. It can be verified that 
\begin{equation*}
    \norm{\widehat{\bU}_{j,0}- \bU_j}_{2} = \norm{ \widetilde{\bU}_{j,0} \widetilde{\bO}_j - \bU_j}_2 \le \sqrt{2} \norm{ \cP_{\widetilde{\bU}_{j,0}} - \cP_{\bU_j} }_{2} \le \frac{8 \sqrt{2} c_0  }{\lambda_{\min}}\sqrt{d^*}.
\end{equation*}
by the C-S decomposition and the $\sin\Theta$ distance of singular spaces \citep{edelman1998geometry,horn2012matrix}. For the $\ell_{2,\infty}$-norm, we have 
\begin{equation}\label{eq:rotation-PU-U}
    \begin{aligned}
        & \norm{\be_{j,i}^\top\left(\widehat{\bU}_{j,0}- \bU_j\right)}_2^2 =\be_{j,i}^\top\left( \widetilde{\bU}_{j,0} \widetilde{\bO}_j- \bU_j\right)\cdot\left( \widetilde{\bU}_{j,0} \widetilde{\bO}_j- \bU_j\right)^\top\be_{j,i} \\
   = &  \be_{j,i}^\top\left( \cP_{\widetilde{\bU}_{j,0}} + \cP_{\bU_j} - \widetilde{\bU}_{j,0} \widetilde{\bO}_j \bU_j^\top - \bU_j\widetilde{\bO}_j^\top\widetilde{\bU}_{j,0}^\top\right)\be_{j,i}\\
      = &\norm{\be_{j,i}^\top\left(\cP_{\widetilde{\bU}_{j,0}}- \cP_{\bU_j}\right)}_2^2  +\be_{j,i}^\top\left(\widetilde{\bU}_{j,0}\widetilde{Q}^{L}_{j}\left(\cos \widetilde{\Theta}-I_{r_j}\right)(\widetilde{Q}^{R}_{j})^\top \bU_j^\top \right)\be_{j,i} \\
      \le & \frac{64 c_0^2  }{\lambda_{\min}^2}\frac{\mu r_j}{d_j}\cdot d^*+ \norm{\widetilde{\bU}_{j,0}}_{2,\infty} \norm{\bU_{j}}_{2,\infty} \cdot\norm{I_{r_j}- \cos \widetilde{\Theta}}_2 \\
      \le & \frac{64 c_0^2  }{\lambda_{\min}^2}\frac{\mu r_j}{d_j}\cdot d^* + \frac{4\mu r_j}{d_j}\norm{\sin \frac{\widetilde{\Theta}}{2}}_2^2 \le \frac{64 c_0^2  }{\lambda_{\min}^2}\frac{\mu r_j}{d_j}\cdot d^* + \frac{2\mu r_j}{d_j}\norm{ \cP_{\widetilde{\bU}_{j,0}} - \cP_{\bU_j}}_2^2\\
      \le &\frac{3\cdot 64 c_0^2  }{\lambda_{\min}^2}\frac{\mu r_j}{d_j} \cdot d^*,
    \end{aligned}
\end{equation}
where the $ \widetilde{\Theta}$ indicates the principal angles between $\widetilde{\bU}_{j,0}, \bU_j$. This gives the bound of $\norm{\widehat{\bU}_{j,0}- \bU_j}_{2,\infty}$. For the spectral norm of $\widehat{\bcC}_{0}- \bcC$ on each mode, we have $\widehat{\bcC}_{0}=\widehat{\bcT}_0 \times_{j=1}^m \widehat{\bU}_{j,0}^\top$, and further
\begin{equation*}
\begin{aligned}
        &\norm{\cM_j\left(\widehat{\bcC}_{0} -\bcC\right) }_2 =\norm{\cM_j\left(\widehat{\bcT}_0 \times_{j=1}^m \widehat{\bU}_{j,0}^\top -\bcT\times_{j=1}^m {\bU}_{j}^\top \right) }_2  \\
        = & \norm{\widehat{\bU}_{j,0}^\top\cM_j(\widehat{\bcT}_0 )\left(\otimes_{k\neq j} \widehat{\bU}_{k,0}^\top\right) 
    - 
    {\bU}_{j}^\top\cM_j({\bcT} )\left(\otimes_{k\neq j} {\bU}_{k}^\top\right)}_2 \\
    \le & 2^m\sqrt{d^*}c_0 + 2^m \frac{8 \sqrt{2} c_0  }{\lambda_{\min}}\sqrt{d^*} \cdot\lambda_{\max}
    \le 2^m(8 \sqrt{2}\kappa_0 +1)\sqrt{d^*}c_0 \le 2^{m+4}\kappa_0\sqrt{d^*}c_0. 
\end{aligned}
\end{equation*}
This ratio is $j$-free. Thus, we prove the claim.
\end{proof}

% simple bound: 
% \begin{equation}
% \begin{aligned}
%          \norm{\bE_{1,\rn}\left( \otimes_{j\neq 1} \widehat{\mathbf{U}}_{j,0} \right)}_2 & \le  \left( \otimes_{j\neq 1} \widehat{\mathbf{U}}_{j,0} \right)  \frac{d^*}{n} \sum_{i=1}^{n}\sqrt{\frac{(2\mu)^{m-1} r_{-1}}{d_{-1}}}\norm{\xi_i \be_{1,i} }_2 \\
% \end{aligned}
% \end{equation}

\subsection{Proof of Lemma \ref{lemma:l2inf-powerit}}\label{apx:sec:proof-U1-l2inf}
\begin{proof}
    Without loss of generality, we prove the statement under the case when $j=1$. The uniform bound can be easily obtained by combining the single bound for each $j\in[m]$. To prove the bound, we follow the idea in the proof of Lemma \ref{lemma:l2inf-init} and notice the following facts:
    \begin{equation}\label{eq:powerit-delta11-spec}
\begin{aligned}
        & \max\left\{ \norm{\cP_{\bU_1}^\perp{\Delta}_{1,1}\bU_1\bLambda_1^{-2}}_2,\norm{\bLambda_1^{-2}\bU_1^\top{\Delta}_{1,1}\cP_{\bU_1}^\perp}_2 \right\} \le \frac{ \norm{\left(\bE_{1,\rn} + \bE_{1,\init}\right)\left(\otimes_{j\neq 1} \widehat{\mathbf{U}}_{j,0} \right)}_2}{\lambda_{\min}} \\
        & \quad \quad \quad \quad + \frac{\norm{\left(\bE_{1,\rn} + \bE_{1,\init}\right)\left(\otimes_{j\neq 1} \widehat{\mathbf{U}}_{j,0} \right)}_2^2}{\lambda_{\min}^2} \le C C_1 \frac{\sigma}{\lambda_{\min}} \sqrt{\frac{ r^* m (2\mu)^{m-1} d_1  d^*\log \dmax }{ \rmin r_1 n}}  \\
        &\norm{\bLambda_1^{-2}\bU_1^\top{\Delta}_{1,1}\bU_1\bLambda_1^{-2}}_2 
         \le C C_1\frac{\sigma}{\lambda_{\min}} \sqrt{\frac{ m (2\mu)^{m-1} r^* d_1  d^*\log \dmax }{\rmin r_1 n}} \cdot \frac{1}{\lambda_{\min}^2} \\ 
         &\norm{\cP_{\bU_1}^\perp{\Delta}_{1,1}\cP_{\bU_1}^\perp}_2 
          \le C_1\frac{\sigma}{\lambda_{\min}} \sqrt{\frac{ m (2\mu)^{m-1} r^* d_1  d^*\log \dmax }{\rmin r_1 n}} \cdot \lambda_{\min}^2,
\end{aligned}
\end{equation}
which hold with probability at least $1-2 \dmax^{-3m}$. Here, we use the bound \eqref{eq:E-init-Ber} and \eqref{eq:E-rn-Ber}. This can be viewed as the variant of \eqref{eq:init-delta10-spec}.

We first consider the spectral norm bound: from \eqref{eq:U1-expansion}, \eqref{eq:U1-expansion-S}, and \eqref{eq:powerit-delta11-spec}, we know that for each $k$, the individual term in $\cS_{\widehat{\bU}_{1,1}}^{(k)}$ is given by
\begin{equation*}
    \norm{\mathfrak{P}_1^{-s_1} {\Delta}_{1,1}  \cdots {\Delta}_{1,1} \mathfrak{P}_1^{-s_{k+1}}}_2 \le \left(\frac{C C_1\sigma  }{\lambda_{\min}}\sqrt{\frac{ m (2\mu)^{m-1}r^* d_1  d^*\log \dmax }{\rmin r_1 n}}\right)^{k}.
\end{equation*}
Summing up all the possible series with $\mathbf{s}: s_1+\cdots+s_{k+1}=k$, for $k$ from $1$ to $\infty$ we have the spectral norm bound:
\begin{equation*}
    \norm{\cP_{\widehat{\bU}_{j,1}} - \cP_{\bU_j}}_{2} \le \frac{C C_1\sigma  }{\lambda_{\min}}\sqrt{\frac{ m (2\mu)^{m-1} r^*d_1  d^*\log \dmax }{\rmin r_1 n}}.
\end{equation*}
Then, we turn to focus on the $\ell_{2,\infty}$ bound. To this end, we need the high-probability bound of $\norm{\cP_{\bU_1}^\perp{\Delta}_{1,1}\cP_{\bU_1}^\perp}_{2,\infty}$ and $\norm{\cP_{\bU_1}^\perp{\Delta}_{1,1}\bU_1\bLambda_1^{-2}}_{2,\infty}$ just like \eqref{eq:init-delta10-2inf}, so that we can leverage the argument in \eqref{eq:l2inf-decomp1} and \eqref{eq:l2inf-decomp2} to control $ \norm{\cP_{\widehat{\bU}_{j,1}} - \cP_{\bU_j}}_{2,\infty}$. Noticing that 
\begin{equation}\label{eq:powerit-U-delta11-U}
  \begin{aligned}
        \norm{\cP_{\bU_1}^\perp{\Delta}_{1,1}\cP_{\bU_1}^\perp}_{2,\infty}  \le & \norm{\cP_{\bU_1}\mathfrak{B}_3\cP_{\bU_1}^\perp}_{2,\infty} + \norm{\mathfrak{B}_3\cP_{\bU_1}^\perp}_{2,\infty}\le \left(\norm{\cP_{\bU_1}\left(\bE_{1,\rn} + \bE_{1,\init}\right)\left(\otimes_{j\neq 1} \widehat{\mathbf{U}}_{j,0} \right)}_{2,\infty} \right. \\
   & \left. + \norm{\left(\bE_{1,\rn} + \bE_{1,\init}\right)\left(\otimes_{j\neq 1} \widehat{\mathbf{U}}_{j,0} \right)}_{2,\infty}\right)\cdot \norm{\left(\bE_{1,\rn} + \bE_{1,\init}\right)\left(\otimes_{j\neq 1} \widehat{\mathbf{U}}_{j,0} \right)}_2,
  \end{aligned}
\end{equation}
and also 
\begin{equation}\label{eq:powerit-U-delta11-lam}
    \begin{aligned}
    & \norm{\cP_{\bU_1}^\perp{\Delta}_{1,1}\bU_1\bLambda_1^{-2}}_{2,\infty}  \le 
    \norm{\cP_{\bU_1}^\top\left(\bE_{1,\rn} + \bE_{1,\init}\right)\left(\otimes_{j\neq 1} \widehat{\mathbf{U}}_{j,0} \right)\bT_1^\top \cP_{\bU_1}\bLambda_1^{-2} }_{2,\infty} \\
    & +\norm{\cP_{\bU_1}^\top\left(\bE_{1,\rn} + \bE_{1,\init}\right)\left(\otimes_{j\neq 1} \widehat{\mathbf{U}}_{j,0} \right)}_{2,\infty}\norm{\left(\bE_{1,\rn} + \bE_{1,\init}\right)\left(\otimes_{j\neq 1} \widehat{\mathbf{U}}_{j,0} \right) }_{2}\cdot \lambda_{\min}^{-2} \\
    & \le \lambda_{\min}^{-1}\norm{\cP_{\bU_1}^\top\left(\bE_{1,\rn} + \bE_{1,\init}\right)\left(\otimes_{j\neq 1} \widehat{\mathbf{U}}_{j,0} \right)}_{2,\infty}\left( 1+  \norm{\left(\bE_{1,\rn} + \bE_{1,\init}\right)\left(\otimes_{j\neq 1} \widehat{\mathbf{U}}_{j,0} \right) }_{2}\cdot \lambda_{\min}^{-1}  \right) \\
    & \le 2 \lambda_{\min}^{-1}\left(\norm{\cP_{\bU_1}\left(\bE_{1,\rn} + \bE_{1,\init}\right)\left(\otimes_{j\neq 1} \widehat{\mathbf{U}}_{j,0} \right)}_{2,\infty} + \norm{\left(\bE_{1,\rn} + \bE_{1,\init}\right)\left(\otimes_{j\neq 1} \widehat{\mathbf{U}}_{j,0} \right)}_{2,\infty}\right),
    \end{aligned}
\end{equation}
we can turn to focus on the term $\norm{\left(\bE_{1,\rn} + \bE_{1,\init}\right)\left(\otimes_{j\neq 1} \widehat{\mathbf{U}}_{j,0} \right)}_{2,\infty}$. The detailed bound of this term is given in the following Lemma \ref{lemma:E-init-rn-2inf}.
\begin{Lemma}\label{lemma:E-init-rn-2inf} Under the same assumptions as Theorem \ref{thm:lf-inference-popvar}, when the initialization is independent of $\left\{(\bcX_i,\xi_i)\right\}_{i=1}^n$,  we have the following inequalities hold with probability at least $1-2m \dmax^{-3m}$:
\begin{equation*}
    \begin{gathered}
        \norm{\bE_{1,\init}\left(\otimes_{j\neq 1} \widehat{\mathbf{U}}_{j,0} \right)}_{2,\infty} \le C C_1 \sigma \sqrt{\frac{ m (2\mu)^{m-1} r_{-1} \dmax d^*\log^2 \dmax }{n^2}} \\
        \norm{\bE_{1,\rn}\left(\otimes_{j\neq 1} \widehat{\mathbf{U}}_{j,0} \right)}_{2,\infty} \le C \sigma \sqrt{\frac{ m (2\mu)^{m-1} r_{-1} d^*\log \dmax }{n}}\le \sqrt{\frac{\mu r_1 }{d_1}} \cdot C \sigma \sqrt{\frac{ m (2\mu)^{m-2} r_{-1} d_1 d^*\log \dmax }{n}}.\\
    \end{gathered}
\end{equation*}
\end{Lemma}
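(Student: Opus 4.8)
# Proof Proposal for Lemma \ref{lemma:E-init-rn-2inf}

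The plan is to bound the $\ell_{2,\infty}$ norm row by row: for each canonical basis vector $\be_{1,k}$ with $k\in[d_1]$, I control $\norm{\be_{1,k}^\top \bE_{1,\init}\left(\otimes_{j\neq 1}\widehat{\bU}_{j,0}\right)}_2$ and $\norm{\be_{1,k}^\top \bE_{1,\rn}\left(\otimes_{j\neq 1}\widehat{\bU}_{j,0}\right)}_2$ via a vector Bernstein (or matrix Bernstein via dilation) inequality, then take a union bound over $k\in[d_1]$. Throughout I will use that, by Lemma \ref{lemma:l2inf-init}, the initial singular subspaces $\{\widehat{\bU}_{j,0}\}$ are $2\mu$-incoherent under the stated SNR condition, so that $\norm{\left(\otimes_{j\neq 1}\cP_{\widehat{\bU}_{j,0}}\right)}_{2,\infty}$-type quantities carry factors $\sqrt{(2\mu)^{m-1}r_{-1}/d_{-1}}$. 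Crucially, since the initialization is independent of $\left\{(\bcX_i,\xi_i)\right\}_{i=1}^n$, I may condition on $\bcT^{\vartriangle}$ and $\{\widehat{\bU}_{j,0}\}$ and treat all the subspace factors as fixed, so the relevant sums are genuine sums of i.i.d. (conditionally) random matrices/vectors and no $\varepsilon$-net is needed.

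For the noise term $\bE_{1,\rn}$, write $\be_{1,k}^\top\bE_{1,\rn}\left(\otimes_{j\neq1}\widehat{\bU}_{j,0}\right) = \frac{d^*}{n}\sum_{i=1}^n \xi_i\, \be_{1,k}^\top\cM_1(\bcX_i)\left(\otimes_{j\neq1}\widehat{\bU}_{j,0}\right)$. Each summand is a row vector that is nonzero only when the mode-$1$ index of $\bcX_i$ equals $k$; conditioning on that event, the summand has Euclidean norm $\lesssim \sigma\sqrt{(2\mu)^{m-1}r_{-1}/d_{-1}}$ (sub-Gaussian in $\xi_i$), and the variance proxy is controlled by the second-moment bounds analogous to \eqref{eq:E-rn-Ber-condition} but with the extra $\be_{1,k}^\top$ projection, which contributes a factor $1/d_1$. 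Applying Bernstein for the sum of i.i.d.\ sub-Gaussian vectors gives, with probability $1-\dmax^{-3m}$, a bound of order $\sigma\sqrt{m(2\mu)^{m-1}r_{-1}d^*\log\dmax/n}$ (the linear/variance term dominates the large-deviation term under $n\gtrsim m\rmax\dmax\log\dmax$). Union bounding over $k\in[d_1]$ (adjusting the polynomial power of $\dmax$ absorbed into the constant) yields the claimed $\ell_{2,\infty}$ bound for $\bE_{1,\rn}$; the second displayed inequality follows by factoring out $\sqrt{\mu r_1/d_1}$ and replacing one $2\mu$ by its value.

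For the initialization term $\bE_{1,\init}$, recall $\bE_{1,\init}\left(\otimes_{j\neq1}\widehat{\bU}_{j,0}\right) = \frac{d^*}{n}\sum_i\langle\bT^{\vartriangle}_1,\cM_1(\bcX_i)\rangle\cM_1(\bcX_i)\left(\otimes_{j\neq1}\widehat{\bU}_{j,0}\right) - \bT^{\vartriangle}_1\left(\otimes_{j\neq1}\widehat{\bU}_{j,0}\right)$. After multiplying on the left by $\be_{1,k}^\top$, I again get a sum of i.i.d.\ centered row vectors (nonzero only on the $k$-slice), whose individual norms are $\lesssim C_1\sigma\sqrt{\dmax\log\dmax/n}\cdot\sqrt{(2\mu)^{m-1}r_{-1}/d_{-1}}$ using the $\linf$-bound on $\bcT^{\vartriangle}$, and whose variance proxy again carries the $1/d_1$ factor. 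Bernstein gives, per row, a bound of order $C_1\sigma\sqrt{m(2\mu)^{m-1}r_{-1}\dmax d^*\log^2\dmax/n^2}$ (here the large-deviation term is the leading one, giving the extra $\log\dmax/n$); union bounding over $k\in[d_1]$ and over the two terms, with total probability $1-2m\dmax^{-3m}$ (the factor $m$ anticipating the union over modes $j\in[m]$ needed when this lemma is invoked), gives the stated bound.

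The main obstacle is bookkeeping the incoherence factors and the slice-restriction carefully so the $1/d_1$ (equivalently $d^*/d_{-1}=d_1$) factor that distinguishes the $\ell_{2,\infty}$ bound from the $\ell_2$ bound appears in exactly the right place: the single-row sum has only $\approx n/d_1$ effective nonzero terms in expectation, but because I keep the deterministic step size $d^*/n$, the variance proxy is $\frac{(d^*)^2}{n^2}\cdot n\cdot \frac{1}{d_1}\cdot\sigma^2\frac{(2\mu)^{m-1}r_{-1}}{d_{-1}} = \frac{d^* d_1}{n}\sigma^2\frac{(2\mu)^{m-1}r_{-1}}{d_{-1}}\cdot\frac{1}{d_1} \asymp \frac{\sigma^2 (2\mu)^{m-1}r_{-1} d^*}{n}$, which matches the target once multiplied by $m\log\dmax$. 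Verifying that the per-summand operator-norm bound (the Bernstein $R$ parameter) does not blow up the large-deviation term beyond what is claimed — in particular that $n\gtrsim m\rmax\dmax\log\dmax$ suffices for the $\bE_{1,\rn}$ bound while the stated form of the $\bE_{1,\init}$ bound deliberately retains the large-deviation dominance — is the delicate point; everything else is a direct application of the conditional Bernstein inequality.
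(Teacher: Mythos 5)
Your proposal is correct and follows essentially the same route as the paper: reduce the $\ell_{2,\infty}$ bound to the rows $\be_{1,k}^\top \bE_{1,\init}(\otimes_{j\neq 1}\widehat{\bU}_{j,0})$ and $\be_{1,k}^\top \bE_{1,\rn}(\otimes_{j\neq 1}\widehat{\bU}_{j,0})$, condition on the (independent) initialization so the incoherent subspaces are fixed, apply matrix/vector Bernstein with the row-restricted second moment carrying the extra $1/d_1$ factor, and union over $k\in[d_1]$ under $n\gtrsim m\rmax\dmax\log\dmax$. One small correction that does not change the rate: for the $\bE_{1,\init}$ bound the dominant contribution is still the variance term of Bernstein (its proxy already carries the factor $\dmax\log\dmax/n$ from the $\ell_\infty$ bound on $\widehat{\bcT}_{\init}-\bcT$, which is what produces the $\log^2\dmax/n^2$), not the range/large-deviation term as you state.
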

Applying Lemma \ref{lemma:E-init-rn-2inf} to the aforementioned inequalities \eqref{eq:powerit-U-delta11-U} and \eqref{eq:powerit-U-delta11-lam} with the $\ell_2$ bound in \eqref{eq:E-init-Ber} and \eqref{eq:E-rn-Ber}, we have
\begin{equation}\label{eq:powerit-delta11-2inf}
\begin{aligned}
&\norm{\cP_{\bU_1}^\perp{\Delta}_{1,1}\cP_{\bU_1}^\perp}_{2,\infty}  \le  C C_1 \sqrt{\frac{\mu r_1}{d_1}}\frac{\sigma}{\lambda_{\min}} \sqrt{\frac{ m (2\mu)^{m-1} r_{-1} d^* d_1 \log \dmax }{ n}}\cdot \lambda_{\min}^2 \\
&\norm{\cP_{\bU_1}^\perp{\Delta}_{1,1}\bU_1\bLambda_1^{-2}}_{2,\infty}  \\
& \le 2 \lambda_{\min}^{-1}\left(\norm{\cP_{\bU_1}\left(\bE_{1,\rn} + \bE_{1,\init}\right)\left(\otimes_{j\neq 1} \widehat{\mathbf{U}}_{j,0} \right)}_{2,\infty} + \norm{\left(\bE_{1,\rn} + \bE_{1,\init}\right)\left(\otimes_{j\neq 1} \widehat{\mathbf{U}}_{j,0} \right)}_{2,\infty}\right) \\
& \le C C_1 \sqrt{\frac{\mu r_1}{d_1} }\frac{\sigma}{\lambda_{\min}} \sqrt{\frac{m (2\mu)^{m-1} r_{-1} d^* d_1 \log \dmax }{ n}}.
%\le C \sigma \sqrt{\frac{ m (2\mu)^{m-1} d_1  d^*\log \dmax }{n}}
\end{aligned}  
\end{equation}
Examining the series in \eqref{eq:U1-expansion-S} in the cases when $s_1=0$ or $s_1\ge 1$ just like \eqref{eq:l2inf-decomp1} and \eqref{eq:l2inf-decomp2}, we conclude that 
\begin{equation*}
\begin{aligned}
        &\norm{\be_{k_1}^{\top}\mathfrak{P}_1^{-s_1} {\Delta}_{1,1}  \cdots {\Delta}_{1,1} \mathfrak{P}_1^{-s_{k+1}}}_2 \\
        &\le \sqrt{\frac{\mu r_1}{d_1} } \frac{C C_1 \sigma}{\lambda_{\min}} \sqrt{\frac{m (2\mu)^{m-1} r_{-1} d^* d_1 \log \dmax }{ n}} \left(\frac{C C_1\sigma}{\lambda_{\min}} \sqrt{\frac{ m (2\mu)^{m-1} d_1  d^*\log \dmax }{n}} \right)^{k-1}.
\end{aligned}
\end{equation*}
Thus, we know that $\norm{\be_{k_1}^\top \cS_{\widehat{\bU}_{1,1}}^{(k)}}_2$ is controlled by
\begin{equation*}
\begin{aligned}
        &\norm{\be_{k_1}^\top \cS_{\widehat{\bU}_{1,1}}^{(k)} }_2 \le \sum_{\mathbf{s}: s_1+\cdots+s_{k+1}=k} \sqrt{\frac{\mu r_1}{d_1} } \frac{C C_1 \sigma}{\lambda_{\min}} \sqrt{\frac{m (2\mu)^{m-1} r_{-1} d^* d_1 \log \dmax }{ n}} \left(\frac{C C_1\sigma}{\lambda_{\min}} \sqrt{\frac{ m (2\mu)^{m-1} d_1  d^*\log \dmax }{n}} \right)^{k-1}\\
    &\le \sqrt{\frac{\mu r_1}{d_1} } \frac{4C C_1 \sigma}{\lambda_{\min}} \sqrt{\frac{m (2\mu)^{m-1} r_{-1} d^* d_1 \log \dmax }{ n}} \left(\frac{4 C C_1\sigma}{\lambda_{\min}} \sqrt{\frac{ m (2\mu)^{m-1} d_1  d^*\log \dmax }{n}} \right)^{k-1}
\end{aligned}
\end{equation*}
Given the SNR condition, we can sum up all the series for $k$ from $1$ to $\infty$ and can derive the following error bound
\begin{equation*}
    \begin{aligned}
    \norm{\be_{k_1}^{\top}\left( \cP_{\widehat{\bU}_{1,1}} - \cP_{\bU_1}\right)}_2 \le \sum_{k\ge 1} \norm{\be_{k_1}^\top \cS_{\widehat{\bU}_{1,1}}^{(k)} }_2 \le  \sqrt{\frac{\mu r_1}{d_1} } \frac{C C_1 \sigma}{\lambda_{\min}} \sqrt{\frac{m (2\mu)^{m-1} r_{-1} d^* d_1 \log \dmax }{ n}} ,
    \end{aligned}
\end{equation*}
for all $k_1\in[d_1]$ with probability at least $1-3m\dmax^{-3m}$. Extending $j=1$ to other modes, we can prove the statement. 

With the proof of $\ell_{2,\infty}$ bound above, the $\linf$ bound can be easily derived following the same fashion. We take the first-order term for example. In $\cS_{\widehat{\bU}_{1,1}}^{(1)}$  we have
\begin{equation*}
\begin{aligned}
       \norm{\cS_{\widehat{\bU}_{1,1}}^{(1)}}_{\linf} & = \norm{\bU_1\bLambda_1^{-2}\bU_1^\top {\Delta}_{1,1}\cP_{\bU_1}^\perp + \cP_{\bU_1}^\perp {\Delta}_{1,1}\bU_1\bLambda_1^{-2}\bU_1^\top}_{\linf} \le 2\norm{\bU_1}_{2,\infty} \norm{\cP_{\bU_1}^\perp {\Delta}_{1,1}\bU_1 \bLambda_1^{-2} }_{2,\infty}  \\
       & \le C C_1 \frac{\mu r_1}{d_1} \frac{\sigma}{\lambda_{\min}} \sqrt{\frac{m (2\mu)^{m-1} r_{-1} d^* d_1 \log \dmax }{ n}}.
\end{aligned}
\end{equation*}
For $\cS_{\widehat{\bU}_{1,1}}^{(k)}$ with larger $k$, two sides of $\linf$ norm error term can be controlled by $\norm{\bU_1}_{2,\infty}$, or $\norm{\cP_{\bU_1}^\perp {\Delta}_{1,1}\bU_1 \bLambda_1^{-2} }_{2,\infty}$, or $\norm{\cP_{\bU_1}^\perp{\Delta}_{1,1}\cP_{\bU_1}^\perp}_{2,\infty}$. According to \eqref{eq:powerit-delta11-2inf}, all of these quantities can be correspondingly controlled. Thus, we have
\begin{equation*}
    \norm{\cP_{\widehat{\bU}_{1,1}} - \cP_{\bU_1}}_{\linf}\le \frac{C C_1\sigma  }{\lambda_{\min}}\sqrt{\frac{ m (2\mu)^{m-1} r_{-1} d_1  d^*\log \dmax }{n}}\cdot\frac{\mu r_1}{d_1}
\end{equation*}
with probability at least $1-3m\dmax^{-3m}$ by summing up all the terms. Extending this argument from $j=1$ to all the cases, we prove the desired statement.
\end{proof}

\subsection{Proof of Lemma \ref{lemma:oracle-init-frE-2-5}}\label{apx:sec:proof-all-rem-ind}
\begin{proof} We now consider these $4$ terms separately.

\noindent \textbf{(i) $\frE_2$ in} \eqref{eq:test-decomp}.
Since $\frE_2=\left\langle\bcE_{\init}\times_{j=1}^m \cP_{\bU_{j} },  \bcI \right\rangle = \left\langle\bcE_{\init}\times_{j=1}^m \cP_{\bU_{j} },  \bcI \times_{j=1}^m \cP_{\bU_{j} } \right\rangle $, we know that 

\begin{equation*}
    \abs{\frE_2} \le \norm{\bcE_{\init}\times_{j=1}^m \cP_{\bU_{j} }}_\tF  \norm{\bcI \times_{j=1}^m \cP_{\bU_{j}}}_\tF  \le \sqrt{\rmin} \norm{ \cP_{\bU_{j'}} \bE_{j',\init}\otimes_{k\neq j' } \cP_{\bU_{k}} }_2 \norm{\cP_{\TT}(\bcI)}_\tF,
\end{equation*}
where $j'$ is the mode such that $r_{j'}=\rmin$. Similar as  \eqref{eq:E-init-Ber-condition} and \eqref{eq:E-init-Ber}, we can check that

\begin{equation}\label{eq:E-init-Ber-condition-full}
    \begin{gathered}
        \norm{\left\langle \bT^{\vartriangle}_{j'}, \cM_{j'}(\bcX_i)\right\rangle \cP_{{\mathbf{U}}_{j'} }\cM_{j'}(\bcX_i)\left(\otimes_{j\neq {j'}} {\mathbf{U}}_{j} \right)}_2\le C_1 \sigma \sqrt{\frac{\dmax \log \dmax }{n}}\sqrt{\frac{\mu^{m} r^*}{d^* }} \\
        \norm{\E \left\langle \bT^{\vartriangle}_{j'}, \cM_{j'}(\bcX_i)\right\rangle^2  \cP_{{\mathbf{U}}_{j'} } \cM_{j'}(\bcX_i)\left(\otimes_{j\neq _{j'}} \cP_{{\mathbf{U}}_{j}} \right)\cM_{j'}(\bcX_i)^\top  \cP_{{\mathbf{U}}_{j'} } }_2 \le C_1^2 \sigma^2 \frac{\dmax \log \dmax }{n} \frac{\mu^{m} r^* } {d_{}^*}  \\
        \norm{\E \left\langle \bT^{\vartriangle}_{j'}, \cM_{j'}(\bcX_i)\right\rangle^2 \left(\otimes_{j\neq {j'}} \cP_{{\mathbf{U}}_{j}} \right)\cM_{j'}(\bcX_i)^\top  \cP_{{\mathbf{U}}_{j'} } \cM_{j'}(\bcX_i)\left(\otimes_{j\neq {j'}} \cP_{{\mathbf{U}}_{j}} \right) }_2 \le C_1^2 \sigma^2 \frac{\dmax \log \dmax }{n} \frac{\mu \rmin}{d^*},
    \end{gathered}
\end{equation}
which gives the bound that, with probability at least $1-\dmax^{-3m}$, $\norm{ \cP_{\bU_{j'}} \bE_{j',\init}\otimes_{k\neq j' } \cP_{\bU_{k}} }_2$ can be controlled by
\begin{equation}\label{eq:E-init-Ber-full}
\begin{aligned}
        \norm{ \cP_{\bU_{j'}} \bE_{j',\init}\otimes_{k\neq j' } \cP_{\bU_{k}} }_2 & \le C C_1 \sigma \sqrt{\frac{\dmax \log \dmax }{n}}\sqrt{\frac{\mu^{m} r^*}{d^* }} \cdot \frac{m d^*\log d_1}{n} + CC_1 \sigma \sqrt{\frac{\dmax \log \dmax }{n}}\sqrt{\frac{\mu^{m} r^* m d^*\log \dmax }{ n }}  \\
        & \le CC_1 \sigma \sqrt{\frac{\dmax \log \dmax }{n}}\sqrt{\frac{m \mu^{m} r^*  d^* \log \dmax }{ n }},
\end{aligned}
\end{equation}
given that $n\ge m \rmax \dmax \log\dmax$. \eqref{eq:E-init-Ber-full} implies the desired bound on $\frE_2$:
\begin{equation*}
    \abs{\frE_2} \le \sqrt{\rmin} \norm{ \cP_{\bU_{j'}} \bE_{j',\init}\otimes_{k\neq j' } \cP_{\bU_{k}} }_2 \norm{\cP_{\TT}(\bcI)}_\tF \le CC_1 \sigma \sqrt{\frac{\dmax \log \dmax }{n}}\sqrt{\frac{m \mu^{m} \rmin r^*  d^* \log \dmax }{ n }} \norm{\cP_{\TT}(\bcI)}_\tF.
\end{equation*}

% \begin{equation*}
%     \frac{\frE_2}{\sigma\norm{\cP_{\TT}(\bcI)}_\tF\sqrt{d^*/n} }
% \end{equation*}
    
\noindent \textbf{(ii) $\frE_3$ in} \eqref{eq:test-decomp}. $\frE_3$ is defined by $\frE_3 = \sum_{j=1}^m \left\langle \bcT\times_{k\neq j}\cP_{\bU_{k} } \times_{j}\cS_{\widehat{\bU}_{j,1}}^{(\rem)}, \bcI \right\rangle$. We write the structure of $\cS_{\widehat{\bU}_{j,1}}^{(\rem)}$ in detail following \eqref{eq:U1-order1-expansion}:
\begin{equation*}
    \begin{aligned}
        \cS_{\widehat{\bU}_{j,1}}^{(\rem)} = &  \cP_{\bU_j}^\perp\bE_{j,\rn} \left( \otimes_{k\neq j}\cP_{\widehat{\mathbf{U}}_{k,0}} -\otimes_{k\neq j}\cP_{{\mathbf{U}}_{k}} \right)\left( \otimes_{k\neq j}\bU_{k} \right)\bV_j \bLambda_j^{-1}\bU_j^\top + \cP_{\bU_j}^\perp \mathfrak{B}_{j,3} \bU_j\bLambda_j^{-2}\bU_j^\top \\
        & +\cP_{\bU_j}^\perp\left( \bE_{j,\init}\left( \otimes_{k\neq j}\cP_{\widehat{\mathbf{U}}_{k,0}} \right)\bT_{j}^\top \right)\bU_j\bLambda_j^{-2}\bU_j^\top\\
        & + \left(\cP_{\bU_j}^\perp\bE_{j,\rn} \left( \otimes_{k\neq j}\cP_{\widehat{\mathbf{U}}_{k,0}} -\otimes_{k\neq j}\cP_{{\mathbf{U}}_{k}} \right)\left( \otimes_{k\neq j}\bU_{k} \right)\bV_j \bLambda_j^{-1}\bU_j^\top+ \cP_{\bU_j}^\perp \mathfrak{B}_{j,3} \bU_j\bLambda_j^{-2}\bU_j^\top\right)^\top\\
       & + \left(\cP_{\bU_j}^\perp\left( \bE_{j,\init}\left( \otimes_{k\neq j}\cP_{\widehat{\mathbf{U}}_{k,0}} \right)\bT_{j}^\top \right)\bU_j\bLambda_j^{-2}\bU_j^\top \right)^\top
    \end{aligned}
\end{equation*}
where term $\mathfrak{B}_{j,3}$ is defined similar as $\mathfrak{B}_{3}$ in \eqref{eq:svd-U1-decomp}, but is for the decomposition of $\widehat{\bU}_{j,1}$ on each mode-$j$.  Notice that, since we can write each term in $\frE_3$ as 
\begin{equation}\label{eq:frE-3-decomp}
 \begin{aligned}
       & \left\langle \bcT\times_{k\neq j}\cP_{\bU_{k} } \times_{j}\cS_{\widehat{\bU}_{j,1}}^{(\rem)}, \bcI \right\rangle = \left\langle  \cS_{\widehat{\bU}_{j,1}}^{(\rem)}  \bU_j \bLambda_j \bV_j^\top \otimes_{k\neq j}\bU_k^\top, \cM_j(\bcI)\otimes_{k\neq j}\cP_{{\mathbf{U}}_{k}}  \right\rangle  \\
       & = \left\langle\cP_{\bU_j}^\perp\bE_{j,\rn} \left( \otimes_{k\neq j}\cP_{\widehat{\mathbf{U}}_{k,0}} -\otimes_{k\neq j}\cP_{{\mathbf{U}}_{k}} \right)\left( \otimes_{k\neq j}\bU_{k} \right)\cP_{\bV_j} \left(\otimes_{k\neq j}\bU_k^\top\right) + \cP_{\bU_j}^\perp \mathfrak{B}_{j,3} \bU_j\bLambda_j^{-1}\bV_j^\top  , \cM_j(\bcI)\otimes_{k\neq j}\cP_{{\mathbf{U}}_{k}} \right\rangle \\
       & = \underbrace{\left\langle \cP_{\bU_j}^\perp\bE_{j,\rn} \left( \otimes_{k\neq j}\cP_{\widehat{\mathbf{U}}_{k,0}} -\otimes_{k\neq j}\cP_{{\mathbf{U}}_{k}} \right)\left( \otimes_{k\neq j}\bU_{k} \right)\cP_{\bV_j}\left(\otimes_{k\neq j}\bU_k^\top\right) , \cM_j(\bcI)\otimes_{k\neq j}\cP_{{\mathbf{U}}_{k}}\right\rangle}_{\frF_1}  \\
       & \quad + \underbrace{\left\langle \cP_{\bU_j}^\perp \mathfrak{B}_{j,3} \bU_j\bLambda_j^{-1}\bV_j^\top\left(\otimes_{k\neq j}\bU_k^\top\right)  , \cM_j(\bcI)\otimes_{k\neq j}\cP_{{\mathbf{U}}_{k}}\right\rangle }_{\frF_2}\\
       & \quad +\underbrace{\left\langle \cP_{\bU_j}^\perp \bE_{j,\init}\left( \otimes_{k\neq j}\cP_{\widehat{\mathbf{U}}_{k,0}} \right) \left( \otimes_{k\neq j}\bU_{k} \right)\cP_{\bV_j}\left(\otimes_{k\neq j}\bU_k^\top\right) , \cM_j(\bcI)\otimes_{k\neq j}\cP_{{\mathbf{U}}_{k}}\right\rangle }_{\frF_3}
 \end{aligned}
\end{equation}
we only need to care about $\frF_1$, $\frF_2$, and $\frF_3$ in \eqref{eq:frE-3-decomp}. $\frF_1$ is handled by an entry-wise argument:
\begin{equation}\label{eq:frF-1}
    \begin{aligned}
        \abs{\frF_1} & = \abs{\left\langle \cP_{\bU_j}^\perp\bE_{j,\rn} \left( \otimes_{k\neq j}\cP_{\widehat{\mathbf{U}}_{k,0}} -\otimes_{k\neq j}\cP_{{\mathbf{U}}_{k}} \right)\left( \otimes_{k\neq j}\bU_{k} \right)\cP_{\bV_j}\left(\otimes_{k\neq j}\bU_k^\top\right) , \sum_{\omega} \left[\bcI\right]_{\omega} \cM_j(\bomega)\otimes_{k\neq j}\cP_{{\mathbf{U}}_{k}}\right\rangle  } \\
        & \le \norm{\bcI}_{\ell_1}\sqrt{\frac{\mu^{m-1} r_{-j}}{d_{-j}}}\norm{ \cP_{\bU_j}^\perp\bE_{j,\rn} \left( \otimes_{k\neq j}\cP_{\widehat{\mathbf{U}}_{k,0}} -\otimes_{k\neq j}\cP_{{\mathbf{U}}_{k}} \right)\left( \otimes_{k\neq j}\bU_{k} \right)\cP_{\bV_j} }_{2,\infty} \\
        & \le \norm{\bcI}_{\ell_1}\sqrt{\frac{\mu^{m-1} r_{-j}}{d_{-j}}}\left( \sqrt{\frac{\mu r_j}{d_j}} \norm{\bE_{j,\rn} \left( \otimes_{k\neq j}\cP_{\widehat{\mathbf{U}}_{k,0}} -\otimes_{k\neq j}\cP_{{\mathbf{U}}_{k}} \right) }_{2} \right.\\ 
        & \left. \quad + \norm{\bE_{j,\rn} \left( \otimes_{k\neq j}\cP_{\widehat{\mathbf{U}}_{k,0}} -\otimes_{k\neq j}\cP_{{\mathbf{U}}_{k}} \right) }_{2,\infty} \right).
    \end{aligned}
\end{equation}
We now give the error bounds of key terms in \eqref{eq:frF-1}:
\begin{Lemma}\label{lemma:E-rn-Udiff}
    Under the same assumptions as Theorem \ref{thm:lf-inference-popvar}, when the initialization is independent of $\left\{(\bcX_i,\xi_i)\right\}_{i=1}^n$,  we have the following inequalities hold with probability at least $1-2m \dmax^{-3m}$:
    \begin{equation*}
        \begin{aligned}
            \norm{\bE_{j,\rn} \left( \otimes_{k\neq j}\cP_{\widehat{\mathbf{U}}_{k,0}} -\otimes_{k\neq j}\cP_{{\mathbf{U}}_{k}} \right)}_2 \le  C \sigma \frac{ C_1\sigma  }{\lambda_{\min}}\sqrt{\frac{d^*\dmax \log \dmax }{n}} \sqrt{\frac{ m^3 (2\mu)^{m-1} r^* d_j  d^*\log \dmax }{ \rmin r_j n}},  \\
            \norm{\bE_{j,\rn} \left( \otimes_{k\neq j}\cP_{\widehat{\mathbf{U}}_{k,0}} -\otimes_{k\neq j}\cP_{{\mathbf{U}}_{k}} \right)}_{2,\infty} \le C \sigma \frac{ C_1\sigma  }{\lambda_{\min}}\sqrt{\frac{d^*\dmax \log \dmax }{n}} \sqrt{\frac{ m^3 (2\mu)^{m-1} r_{-j}  d^*\log \dmax }{n}}.
        \end{aligned}
    \end{equation*}
\end{Lemma}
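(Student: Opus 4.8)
The final statement to prove is Lemma \ref{lemma:E-rn-Udiff}, which bounds $\bE_{j,\rn}(\otimes_{k\neq j}\cP_{\widehat{\bU}_{k,0}} - \otimes_{k\neq j}\cP_{\bU_k})$ in both spectral and $\ell_{2,\infty}$ norms, under independent initialization.

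\medskip

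\textbf{Plan of attack.} The starting point is the telescoping/binomial decomposition of the difference of Kronecker-product projections: writing $\cP_{\widehat{\bU}_{k,0}} = \cP_{\bU_k} + (\cP_{\widehat{\bU}_{k,0}} - \cP_{\bU_k})$ and expanding, one gets
\begin{equation*}
\otimes_{k\neq j}\cP_{\widehat{\bU}_{k,0}} - \otimes_{k\neq j}\cP_{\bU_k} = \sum_{\emptyset\neq \cS\subseteq [m]\setminus\{j\}} \left(\otimes_{k\in\cS}(\cP_{\widehat{\bU}_{k,0}} - \cP_{\bU_k})\right)\left(\otimes_{k\notin\cS\cup\{j\}}\cP_{\bU_k}\right),
\end{equation*}
so $\bE_{j,\rn}(\otimes_{k\neq j}\cP_{\widehat{\bU}_{k,0}} - \otimes_{k\neq j}\cP_{\bU_k})$ becomes a sum over nonempty $\cS$ of terms of the form $\bE_{j,\rn}\,\otimes_{k\in\cS}(\cP_{\widehat{\bU}_{k,0}} - \cP_{\bU_k})\,\otimes(\text{incoherent projectors})$. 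The key structural observation is that for each such term I can factor off the $(\cP_{\widehat{\bU}_{k,0}} - \cP_{\bU_k})$ factors (which are independent of the data by hypothesis and whose spectral norms are controlled by $\frac{8C_1\sigma}{\lambda_{\min}}\sqrt{d^*\dmax\log\dmax/n}$ via Lemma \ref{lemma:l2inf-init}), leaving a ``clean'' i.i.d. sum $\frac{d^*}{n}\sum_i \xi_i \cM_j(\bcX_i)\left(\otimes_{k\in\cS}\bI_{d_k}\otimes_{k\notin\cS\cup\{j\}}\bU_k\right)$ to which matrix Bernstein applies, exactly as in the first-order term analysis of \eqref{eq:E-rn-init-order-1-dep} in the dependent-case proof (but now simpler, since independence removes the need for an $\varepsilon$-net). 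Concretely: for the $|\cS|$-th term, $\|\bE_{j,\rn}\otimes_{k\in\cS}(\cP_{\widehat{\bU}_{k,0}} - \cP_{\bU_k})\otimes(\cdots)\|_2 \le \left(\frac{8C_1\sigma}{\lambda_{\min}}\sqrt{\frac{d^*\dmax\log\dmax}{n}}\right)^{|\cS|}\cdot\|\frac{d^*}{n}\sum_i \xi_i\cM_j(\bcX_i)(\otimes_{k\in\cS}\bI_{d_k}\otimes_{k\notin\cS\cup\{j\}}\bU_k)\|_2$, and the surviving i.i.d. sum is bounded by a Bernstein argument with per-term $\psi_2$-norm $\lesssim\sigma\sqrt{\frac{\mu^{m-1-|\cS|}r^*\prod_{k\in\cS}d_k}{\rmin r_j d_{-j}}}$ and variance proxy $\lesssim\sigma^2\frac{(2\mu)^{m-1-|\cS|}r^*\prod_{k\in\cS}d_k}{\rmin r_j d^*}$ (for the relevant operator-norm variance) — identical bookkeeping to \eqref{eq:E-rn-Ber-condition}, scaled by the extra $\prod_{k\in\cS}d_k$ from having replaced $|\cS|$ incoherent $\bU_k$'s by identities. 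This gives a Bernstein bound $\lesssim\sigma\sqrt{\frac{m\mu^{m-1-|\cS|}r^*\prod_{k\in\cS}d_k\,d^*\log\dmax}{\rmin r_j n}}$; multiplying by the $|\cS|$ factors of $\frac{8C_1\sigma}{\lambda_{\min}}\sqrt{\frac{d^*\dmax\log\dmax}{n}}$ and using $\prod_{k\in\cS}d_k\le\dmax^{|\cS|}$ together with the SNR condition (which makes $\frac{C_1\sigma}{\lambda_{\min}}\sqrt{\frac{d^*\dmax\log\dmax}{n}}\le 1$, so the geometric series in $|\cS|$ sums), the $|\cS|=1$ term dominates and yields exactly the claimed spectral bound $C\sigma\frac{C_1\sigma}{\lambda_{\min}}\sqrt{\frac{d^*\dmax\log\dmax}{n}}\sqrt{\frac{m^3(2\mu)^{m-1}r^*d_j d^*\log\dmax}{\rmin r_j n}}$ — the factor $m^3$ coming from the crude bound on the number of subsets $\cS$ and the polynomial slack in matching exponents.

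\medskip

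\textbf{The $\ell_{2,\infty}$ bound.} For the row-wise bound I would multiply on the left by $\be_{1,k_1}^\top$ (taking $j=1$ w.l.o.g.) and repeat the same decomposition; now the clean i.i.d. sum is $\frac{d^*}{n}\sum_i \xi_i\be_{1,k_1}^\top\cM_1(\bcX_i)(\otimes_{k\in\cS}\bI_{d_k}\otimes_{k\notin\cS\cup\{1\}}\bU_k)$. Writing $\cM_1(\bcX_i)=\be_{1,k_i}\be_{-1,l_i}^\top$, the indicator $\be_{1,k_1}^\top\be_{1,k_i}$ kills all but $\approx n/d_1$ terms, so the effective i.i.d. sum has $\chi_{k_1}\le 2n/d_1$ summands (via a routine Bernoulli concentration, as in \eqref{eq:idc-concentration}), and Bernstein on this reduced sum produces the row-norm bound $\lesssim\sigma\frac{C_1\sigma}{\lambda_{\min}}\sqrt{\frac{d^*\dmax\log\dmax}{n}}\sqrt{\frac{m^3(2\mu)^{m-1}r_{-1}d^*\log\dmax}{n}}$ — the replacement of $r^*d_j/(\rmin r_j)$ by $r_{-j}$ reflecting that only the mode-$(-j)$ row of $\cM_j(\bcX_i)$ survives. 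Taking a union bound over $k_1\in[d_1]$ and over modes $j\in[m]$ (each step costing $\dmax^{-3m}$ with room to spare) gives the stated probability $1-2m\dmax^{-3m}$.

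\medskip

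\textbf{Main obstacle.} The conceptually delicate point is the interchange/factorization step: to peel off $\otimes_{k\in\cS}(\cP_{\widehat{\bU}_{k,0}}-\cP_{\bU_k})$ from the random sum one must rewrite the product as $\big[\frac{d^*}{n}\sum_i\xi_i\cM_j(\bcX_i)(\otimes_{k\in\cS}\bI_{d_k}\otimes\cdots)\big]\cdot\big[\otimes_{k\in\cS}(\cP_{\widehat{\bU}_{k,0}}-\cP_{\bU_k})\otimes\bI\big]$ using $\|\otimes_k\bA_k\otimes\bI\|_2=\prod_k\|\bA_k\|_2$, and crucially exploit independence so that the random matrix and the (deterministic-conditionally) projector differences are not entangled — this is precisely where the independence hypothesis is used and where the dependent case (handled separately via $\varepsilon$-nets) is harder. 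The second mild subtlety is keeping track of the combinatorial/polynomial factors ($m^3$, powers of $2\mu$, $r^*$ vs.\ $r_{-j}$) so that the final constants match the statement exactly; this is bookkeeping rather than a genuine difficulty, and the SNR hypothesis guarantees the geometric series over $|\cS|\ge 1$ converges with the $|\cS|=1$ term dominating. Everything else — the matrix Bernstein invocations, the $\psi_2$-norm estimates, the Bernoulli-indicator reduction for the $\ell_{2,\infty}$ version — is standard and parallels computations already carried out in \eqref{eq:E-rn-Ber-condition}--\eqref{eq:E-rn-Ber} and in Lemma \ref{lemma:E-init-rn-2inf}.
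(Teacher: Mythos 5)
Your overall architecture differs from the paper's. Because the initialization is independent of the data, the paper does \emph{not} expand $\otimes_{k\neq j}\cP_{\widehat{\bU}_{k,0}}-\otimes_{k\neq j}\cP_{\bU_k}$ at all: it treats this whole Kronecker difference as a single fixed matrix and applies matrix Bernstein once to $\frac{d^*}{n}\sum_i\xi_i\cM_j(\bcX_i)\cdot(\text{difference})$, feeding into the variance proxies both the spectral bound $\frac{8C_1\sigma}{\lambda_{\min}}\sqrt{d^*\dmax\log\dmax/n}$ and, crucially, the row-wise ($\ell_{2,\infty}$/Frobenius) control of the difference inherited from the incoherence bounds of Lemma \ref{lemma:l2inf-init} (this is exactly \eqref{eq:E-rn-Udiff-Ber-condition}--\eqref{eq:E-rn-Udiff-l2inf-Ber}). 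Your binomial peeling over subsets $\cS$, with the peeled factors replaced by identities $\bI_{d_k}$ inside the random sum, is the machinery the paper reserves for the \emph{dependent} case (cf.\ \eqref{eq:dep-B2-rn-decomp}, \eqref{eq:E-rn-init-order-1-dep}); under independence it is unnecessary, and as executed it does not recover the stated rates.

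The concrete gap is quantitative: replacing $\cP_{\widehat{\bU}_{k,0}}-\cP_{\bU_k}$ by $\bI_{d_k}$ in the "clean" sum inflates the effective dimension of mode $k$ from $r_k$ to $d_k$, and this shows up in the Frobenius-type (row-direction) variance proxy of your Bernstein application. For the spectral bound, your $|\cS|=1$ term carries $\prod_{k\in\cS}d_k\le\dmax$ where the lemma has $d_j$; since the paper explicitly allows unbalanced dimensions, $\dmax/d_j$ is not absorbed by the $m^3(2\mu)^{m-1}$ slack. For the $\ell_{2,\infty}$ bound the loss is worse and occurs even with balanced dimensions: your peeled clean sum (row $k_1$, $|\cS|=\{k_0\}$) has per-term squared row norm of order $\mu^{m-2}(r_{-j}/r_{k_0})d_{k_0}/d_{-j}$, so after Bernstein and multiplying back the peeled spectral factor you obtain a bound larger than the stated one by roughly $\sqrt{d_{k_0}/(\mu r_{k_0})}$; the lemma's factor $r_{-j}$ (rather than $r_{-j}d_{k_0}/r_{k_0}$) comes precisely from keeping the difference intact and using $\norm{\text{difference}}_{2,\infty}^2\lesssim \big(\tfrac{C_1\sigma}{\lambda_{\min}}\big)^2\tfrac{(2\mu)^{m-1}r_{-j}}{d_{-j}}\cdot\tfrac{d^*\dmax\log\dmax}{n}$, i.e.\ the low-rank incoherence of the projector differences, which your identity-padding throws away. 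The fix is simply the paper's route: no peeling, one Bernstein application per mode with the difference's spectral and $\ell_{2,\infty}$ norms in the moment conditions (and the same direct row-wise Bernstein for the $\ell_{2,\infty}$ statement, so the Bernoulli-conditioning trick you invoke is also not needed here). Your factorization identity, the use of independence, and the union-bound/probability bookkeeping are otherwise fine.
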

Applying Lemma \ref{lemma:E-rn-Udiff} to \eqref{eq:frF-1}, we can control $\frF_1$ by 

\begin{equation*}
    \abs{\frF_1} \le C \sigma  \norm{\bcI}_{\ell_1}\frac{ C_1\sigma  }{\lambda_{\min}}\sqrt{\frac{d^*\dmax \log \dmax }{n}} \sqrt{\frac{ m^3 (2\mu)^{2(m-1)} r_{-j} r^*  d_j\log \dmax }{\rmin n}}.
\end{equation*}

$\frF_2$ in \eqref{eq:frE-3-decomp} can also be handled by the same entry-wise argument:

\begin{equation}\label{eq:frF-2}
    \begin{aligned}
        \abs{\frF_2} & = \abs{\left\langle \cP_{\bU_j}^\perp \mathfrak{B}_{j,3} \bU_j\bLambda_j^{-1}\bV_j^\top  , \sum_{\omega} \left[\bcI\right]_{\omega} \cM_j(\bomega)\otimes_{k\neq j}\cP_{{\mathbf{U}}_{k}}\right\rangle  } \\
       & \le \norm{\bcI}_{\ell_1}\sqrt{\frac{\mu^{m-1} r_{-j}}{d_{-j}}}\norm{\cP_{\bU_j}^\perp \mathfrak{B}_{j,3} \bU_j\bLambda_j^{-1}\bV_j^\top }_{2,\infty} \\
       & \le \norm{\bcI}_{\ell_1}\sqrt{\frac{\mu^{m-1} r_{-j}}{d_{-j}}} \left( \sqrt{\frac{\mu r_j}{d_j}}\norm{\mathfrak{B}_{j,3} \bU_j\bLambda_j^{-1}\bV_j^\top }_{2} +\norm{\mathfrak{B}_{j,3} \bU_j\bLambda_j^{-1}\bV_j^\top }_{2,\infty}\right)
    \end{aligned}
\end{equation}
Since $\mathfrak{B}_{j,3}$ is defined similar as $\mathfrak{B}_3$ in \eqref{eq:svd-U1-decomp}, the studies of $\mathfrak{B}_3$ in Lemma \ref{lemma:E-init-rn-2inf} can be directly applied to $\mathfrak{B}_{j,3}$. We thus have the inequalities:
\begin{equation*}
    \begin{gathered}
        \norm{\bE_{j,\init}\left(\otimes_{k\neq j} \widehat{\mathbf{U}}_{k,0} \right)}_{2,\infty} \le C C_1 \sigma \sqrt{\frac{ m (2\mu)^{m-1} r_{-j} \dmax d^*\log^2 \dmax }{n^2}} \\
        \norm{\bE_{j,\rn}\left(\otimes_{k\neq j} \widehat{\mathbf{U}}_{k,0} \right)}_{2,\infty} \le C \sigma \sqrt{\frac{ m (2\mu)^{m-1} r_{-j} d^*\log \dmax }{n}}\le \sqrt{\frac{\mu r_j }{d_j}} \cdot C \sigma \sqrt{\frac{ m (2\mu)^{m-2} r_{-j} d_j d^*\log \dmax }{n}}.\\
    \end{gathered}
\end{equation*}
with probability at least $1- 2m^2\dmax^{-3m}$ for any $j\in [m]$. This leads to the bound of $\mathfrak{B}_{j,3}$:
\begin{equation}\label{eq:B-j3}
    \begin{gathered}
        \norm{\mathfrak{B}_{j,3}}_2 \le C C_1^2\sigma^2 \frac{ m (2\mu)^{m-1} r^* d_j  d^*\log \dmax }{\rmin r_j n} \\
        \norm{\mathfrak{B}_{j,3}}_{2,\infty} \le C C_1^2\sigma^2 \sqrt{\frac{ m (2\mu)^{m-1} r^* d_j  d^*\log \dmax }{\rmin r_j n}}  \sqrt{\frac{\mu r_j }{d_j}} \cdot\sqrt{\frac{ m (2\mu)^{m-2} r_{-j} d_j d^*\log \dmax }{n}}
    \end{gathered}
\end{equation}
Plugging in \eqref{eq:B-j3} to \eqref{eq:frF-2}, we can control $\frF_2$ at the level:
\begin{equation*}
    \abs{\frF_2} \le C \frac{C_1^2 \sigma^2}{\lambda_{\min}} \norm{\bcI}_{\ell_1}\sqrt{\frac{\mu^{m} r^* }{d^* }}   \frac{ \sqrt{r^* r_{-j}} m (2\mu)^{m-1} d_j  d^*\log \dmax }{\sqrt{\rmin} n}.
\end{equation*}
$\frF_3$ in \eqref{eq:frE-3-decomp} can be handled by the concentration inequality. Notice that in each mode $j$ of $\frF_3$, we have 
\begin{equation}\label{eq:frF-3-bern}
    \begin{aligned}
       & \norm{\left\langle \cP_{\bU_j}^\perp \langle\bcT^{\vartriangle},\bcX_i \rangle \cM_j(\bcX_i)\left( \otimes_{k\neq j}\cP_{\widehat{\mathbf{U}}_{k,0}} \right) \left( \otimes_{k\neq j}\bU_{k} \right)\cP_{\bV_j}\left(\otimes_{k\neq j}\bU_k^\top\right)  , \cP_{\bU_j}^\perp \cM_j(\bcI) \otimes_{k\neq j}\cP_{{\mathbf{U}}_{k}}\right\rangle  }_{\psi_2} \\
       & \le C_1\sigma \sqrt{\frac{\dmax\log\dmax}{n}}\cdot\sqrt{\frac{\mu^{m-1}r_{-j}}{d_{-j}}}  \norm{\cP_{\bU_j}^\perp \cM_j(\bcI) \left( \otimes_{k\neq j}\bU_{k} \right)\cP_{\bV_j}\left(\otimes_{k\neq j}\bU_k^\top\right)  }_{\tF} \\
       & \abs{\left\langle \cP_{\bU_j}^\perp \langle\bcT^{\vartriangle},\bcX_i \rangle \cM_j(\bcX_i)\left( \otimes_{k\neq j}\cP_{\widehat{\mathbf{U}}_{k,0}} \right) \left( \otimes_{k\neq j}\bU_{k} \right)\cP_{\bV_j}\left(\otimes_{k\neq j}\bU_k^\top\right)  , \cP_{\bU_j}^\perp \cM_j(\bcI) \otimes_{k\neq j}\cP_{{\mathbf{U}}_{k}}\right\rangle}^2 \\
       & \le C_1^2\sigma^2\frac{\dmax\log\dmax}{n} \cdot \frac{1}{d^*}\norm{\cP_{\bU_j}^\perp \cM_j(\bcI) \left( \otimes_{k\neq j}\bU_{k} \right)\cP_{\bV_j}\left(\otimes_{k\neq j}\bU_k^\top\right)}_{\tF}^2.
    \end{aligned}
\end{equation}
Thus, it is clear that with probability at least $1-d^{-3m}$, we can control $\frF_3$ by:
\begin{equation}\label{eq:frF-3} 
    \abs{\frF_3} \le C C_1 \sigma \sqrt{\frac{ m (2\mu)^{m-1} r^* \dmax d^* \log^2\dmax }{ \rmax r_j n^2}} \norm{\cP_{\bU_j}^\perp \cM_j(\bcI) \left( \otimes_{k\neq j}\bU_{k} \right)\cP_{\bV_j}\left(\otimes_{k\neq j}\bU_k^\top\right)}_{\tF}
\end{equation}
% where the $\ell_2$ and $\ell_{2,\infty}$ bounds for $\bE_{j,\init}\left( \otimes_{k\neq j}\cP_{\widehat{\mathbf{U}}_{k,0}} \right)$ have been given in \eqref{eq:E-init-Ber} and Lemma \ref{lemma:E-init-rn-2inf}. 
Combining the bound of $\abs{\frF_1}$ with $\abs{\frF_2}$, $\abs{\frF_3}$,  we have
\begin{equation*}
    \abs{\frE_3}\le C \frac{C_1^2 \sigma^2}{\lambda_{\min}} \norm{\bcI}_{\ell_1} \frac{ m^2 (2\mu)^{\frac{3}{2}m-1} (r^*)^{\frac{3}{2}}\dmax \sqrt{d^*} \log \dmax }{\rmin n}+ C C_1 \sigma \sqrt{\frac{ m^2 (2\mu)^{m-1} r^* \dmax d^* \log^2\dmax }{ \rmax \rmin n^2}}\norm{\cP_{\TT}(\bcI)}_{\tF},
\end{equation*}
which holds with probability at least $1- 2m^2\dmax^{-3m}$.

\noindent \textbf{(iii) $\frE_4$ in} \eqref{eq:test-decomp}. An entry-wise argument like \eqref{eq:frF-1} gives the following control of $\frE_4$:
\begin{equation}\label{eq:frE-4}
\begin{aligned}
         \abs{\frE_4} &= \abs{\sum_{j=1}^m \left\langle \bcT\times_{k\neq j}\cP_{\bU_{k} } \times_{j}\left(\sum_{l\ge 2}\cS_{\widehat{\bU}_{j,1}}^{(l)}\right), \bcI \right\rangle} \\
         & \le \sum_{j=1}^m \abs{\left\langle \left(\sum_{l\ge 2}\cS_{\widehat{\bU}_{j,1}}^{(l)}\right)\bT_j \left(\otimes_{k\neq j}\cP_{{\mathbf{U}}_{k}} \right), \sum_{\omega} \left[\bcI\right]_{\omega} \cM_j(\bomega)\otimes_{k\neq j}\cP_{{\mathbf{U}}_{k}}\right\rangle} \\
         & \le \sum_{j=1}^m \norm{\bcI}_{\ell_1}\sqrt{\frac{\mu^{m-1} r_{-j}}{d_{-j}}}\norm{\left(\sum_{l\ge 2}\cS_{\widehat{\bU}_{j,1}}^{(l)}\right)\bT_j}_{2,\infty}
\end{aligned}
\end{equation}
Since the series $\left\{\cS_{\widehat{\bU}_{j,1}}^{(l)} \right\}_{l\ge 1}$ have been carefully discussed in the proof of Lemma \ref{lemma:l2inf-powerit} (see Section \ref{apx:sec:proof-U1-l2inf} for more details), we can derive the following bound:
\begin{equation*}
    \norm{\left(\sum_{l\ge 2}\cS_{\widehat{\bU}_{j,1}}^{(l)}\right)\bT_j}_{2,\infty} \le \frac{C C_1^2\sigma^2  }{\lambda_{\min}}\frac{ m (2\mu)^{m-1} r_{-j} d_j  d^*\log \dmax }{n} \cdot\sqrt{\frac{\mu r_j}{d_j}},
\end{equation*}
with probability at least $1-2m^2 \dmax^{-3m}$. With the help of Section \ref{apx:sec:proof-U1-l2inf}, this rate can be easily justified by noticing that all the non-zero terms of $\sum_{l\ge 2}\cS_{\widehat{\bU}_{j,1}}^{(l)}$ when multiplied by $\bT_j$ must end with $\bU_j\bLambda_j^{-2s}\bU_j^\top$. Plugging in this result to \eqref{eq:frE-4}, we have
\begin{equation*}
    \abs{\frE_4}\le C\norm{\bcI}_{\ell_1}\sqrt{\frac{\mu^{m} r^* }{d^* }} \frac{ C_1^2\sigma^2  }{\lambda_{\min}}\frac{ m^2 (2\mu)^{m-1} r^* \dmax  d^*\log \dmax }{\rmin n}
\end{equation*}

\noindent \textbf{(iv) $\frE_5$ in} \eqref{eq:test-decomp}. $\frE_5$ is defined by
\begin{equation*}
    \frE_5 = \left\langle \bcE_{\rem},\bcI  \right\rangle
\end{equation*}
To prove the vanishing of $\bcE_{\rem}$, we need a close look at each term in $\bcE_{\rem}$. According to \eqref{eq:hatT-decomp-Erem}, $\bcE_{\rem}$ is the higher order remainder containing at most $2^{m+1}$ single terms, where each single term has at least 2 differences as the factors. For each term, we only need to consider 2 cases: (1)  it has no $\left(\widehat{\bcT}_\ubs- \bcT\right)$ as a factor; (2) it has $\left(\widehat{\bcT}_\ubs- \bcT\right)$ as one of the differences in factors. We first check case (1). If $\left(\widehat{\bcT}_\ubs- \bcT\right)$ is not in the term, then such a term has at least 2 $\left( \cP_{\widehat{\bU}_{j,1}} - \cP_{\bU_j}\right)$ for different $j$s as factors. In this case, we can write such a term as
\begin{equation*}
    \begin{aligned}
             \bcT\times_{j\in \cS } \left( \cP_{\widehat{\bU}_{j,1}} - \cP_{\bU_j}\right)\times_{k\in \cS^c } \cP_{{\bU}_{k}},
    \end{aligned}
\end{equation*}
where $\cS$ is the index set with $\abs{\cS}\ge 2$. Consequently, the inner product between this term and $\bcI$ can be controlled by
\begin{equation}\label{eq:Erem-noTdiff}
    \begin{aligned}
        & \abs{\left\langle\bcT\times_{j\in \cS } \left( \cP_{\widehat{\bU}_{j,1}} - \cP_{\bU_j}\right)\times_{k\in \cS^c } \cP_{{\bU}_{k}}, \bcI \right\rangle} =\abs{\left\langle\bcT\times_{j\in \cS } \left( \cP_{\widehat{\bU}_{j,1}} - \cP_{\bU_j}\right)\times_{k\in \cS^c } \cP_{{\bU}_{k}}, \sum_{\omega} \left[\bcI\right]_{\omega}\bomega \right\rangle} \\
        & \le \norm{\bcI}_{\ell_1} \max_{\omega}\abs{\left\langle\bcT\times_{j\in \cS } \left( \cP_{\widehat{\bU}_{j,1}} - \cP_{\bU_j}\right)\times_{k\in \cS^c } \cP_{{\bU}_{k}},\bomega \right\rangle}\\
        &\le \norm{\bcI}_{\ell_1}\norm{\left( \cP_{\widehat{\bU}_{j,1}} - \cP_{\bU_j}\right)\bT_j }_{2,\infty}\prod_{k\in \cS,k\neq j}\norm{\cP_{\widehat{\bU}_{j,1}} - \cP_{\bU_j}}_{2,\infty} \prod_{k\notin \cS} \sqrt{\frac{\mu r_k}{d_k} }
    \end{aligned}
\end{equation}
Applying the results in the proof of Lemma \ref{lemma:l2inf-powerit} to \eqref{eq:Erem-noTdiff}, we have
\begin{equation}\label{eq:Erem-noTdiff-res}
    \begin{aligned}
        \abs{\left\langle\bcT\times_{j\in \cS } \left( \cP_{\widehat{\bU}_{j,1}} - \cP_{\bU_j}\right)\times_{k\in \cS^c } \cP_{{\bU}_{k}}, \bcI \right\rangle} \le  \frac{C C_1^2\sigma^2  }{\lambda_{\min}}\norm{\bcI}_{\ell_1}\frac{ m (2\mu)^{m-1} r_{-j} \dmax  d^*\log \dmax }{n} \cdot\sqrt{\frac{\mu^m r^*}{d^*}}.
    \end{aligned}
\end{equation}
We now focus on the case (2) where the term has $\left(\widehat{\bcT}_\ubs- \bcT\right)$ as one of the differences in factors. In this case, the term can be written as 
\begin{equation*}
                 \left(\widehat{\bcT}_\ubs- \bcT\right)\times_{j\in \cS } \left( \cP_{\widehat{\bU}_{j,1}} - \cP_{\bU_j}\right)\times_{k\in \cS^c } \cP_{{\bU}_{k}}=  \left(\bcE_{\rn} + \bcE_{\init}\right)\times_{j\in \cS } \left( \cP_{\widehat{\bU}_{j,1}} - \cP_{\bU_j}\right)\times_{k\in \cS^c } \cP_{{\bU}_{k}},
\end{equation*}
where the index set  $\cS$ is with $\abs{\cS}\ge 1$. Notice that in this case, we are unable to use the concentration of $\bcE_{\rn}$ and $\bcE_{\init}$ derived under the independent initialization to handle the problem, because the $\bcE_{\rn}$ and $\bcE_{\init}$ depend on the error of singular subspaces $\cP_{\widehat{\bU}_{j,1}} - \cP_{\bU_j}$, where $\cP_{\widehat{\bU}_{j,1}}$ comes from the data themselves. However, Lemma \ref{lemma:l2inf-powerit} indicates that 
\begin{equation}\label{eq:l2-inf-Uj1}
     \norm{\cP_{\widehat{\bU}_{j,1}} - \cP_{\bU_j}}_{2,\infty } \le  \frac{C C_1\sigma  }{\lambda_{\min}}\sqrt{\frac{ m (2\mu)^{m-1} r_{-j} d_j  d^*\log \dmax }{n}}\cdot \sqrt{\frac{\mu r_j}{d_j}} 
\end{equation}
for all $j$ with probability at least $1-3m^2 \dmax^{-3m}$. Moreover, it shows that under statistical optimal SNR condition,  $\widehat{\bU}_{j,1}$ is $2\mu$-incoherent. To study the $\varepsilon$-net of $\widehat{\bU}_{j,1}$ where the difference is measured by $\cP_{\widehat{\bU}_{j,1}} - \cP_{\bU_j}$, we do the rotation on each $\widehat{\bU}_{j,1}$ as stated in Section \ref{apx:sec:proof-PU-to-U} such that $\norm{\widehat{\bU}_{j,1}-{\bU}_{j} }_2$ is minimized. Then, under Lemma \ref{lemma:l2inf-powerit}, according to \eqref{eq:rotation-PU-U}, we can derive that
\begin{equation}
\begin{aligned}
\norm{\widehat{\bU}_{j,1} - \bU_j}_{2,\infty } & \le \sqrt{\norm{\be_{j,i}^\top\left(\cP_{\widetilde{\bU}_{j,0}}- \cP_{\bU_j}\right)}_2^2+ \frac{2\mu r_j}{d_j}\norm{ \cP_{\widetilde{\bU}_{j,0}} - \cP_{\bU_j}}_2^2} \\
    &\le  \frac{C C_1\sigma  }{\lambda_{\min}}\sqrt{\frac{ m (2\mu)^{m-1} r_{-j} d_j  d^*\log \dmax }{n}}\cdot \sqrt{\frac{\mu r_j}{d_j}} 
\end{aligned}
\end{equation}

 For each $j$, it is clear that when $\widehat{\bU}_{j,1}$ is $2\mu$-incoherent, we have
\begin{equation}\label{eq:linf-to-l2inf}
    \norm{\cP_{\widehat{\bU}_{j,1}} - \cP_{\bU_j}}_{\linf }  \le 6\sqrt{\frac{\mu r_j}{d_j}} \norm{\widehat{\bU}_{j,1} - \bU_j}_{2,\infty }.
\end{equation}
We now construct a $\varepsilon$-net using the statement similar to \eqref{eq:cover-Uj} under the case when  Lemma \ref{lemma:l2inf-powerit} holds. Define the set of $\widehat{\bU}_{j,1}$ under the results of Lemma \ref{lemma:l2inf-powerit} after the rotation as $\bbS^1(\bU_j)$, then the net 
\begin{equation*}
    \cN(\bbS^1(\bU_j),\norm{\cP_{\widehat{\bU}_{j,1}} - \cP_{\bU_j}}_{\linf }, \varepsilon) \le \cN(\bbS^2(\bU_j), 6\sqrt{\frac{\mu r_j}{d_j}} \norm{\cdot}_{2,\infty },\frac{1}{2}\varepsilon ),
\end{equation*}
where we define
\begin{equation*}
     \bbS^2(\bU_j)= \left\{\widehat{\bU}_{j,1}\in \O^{d_j\times r_j} :\  \norm{\widehat{\bU}_{j,1} - \bU_j}_{2,\infty }\le \frac{C C_1\sigma  }{\lambda_{\min}}\sqrt{\frac{ m (2\mu)^{m-1} r_{-j} d_j  d^*\log \dmax }{n}}\cdot \sqrt{\frac{\mu r_j}{d_j}}  \right\}.
\end{equation*}
Clearly, $\bbS^1(\bU_j)\subseteq \bbS^2(\bU_j)$.
By relaxing the ball with $\ell_{2,\infty}$ norm like Section \ref{apx:sec:proof-metric-ety}, we have the covering number bound:
\begin{equation}\label{eq:metric-ety-Uj1}
     \cN(\bbS^1(\bU_j),\norm{\cP_{\widehat{\bU}_{j,1}} - \cP_{\bU_j}}_{\linf } ,\varepsilon)\le \left(1+\frac{C C_1\sigma }{ \varepsilon  \lambda_{\min}}\sqrt{\frac{ m (2\mu)^{m-1} r_{-j} d_j  d^*\log \dmax }{n}}\right)^{r_j d_j }.
\end{equation}
Taking the $\varepsilon$-net of $\prod_{j}\bbS^1(\bU_j)$ with measurement $\max_{j}\norm{\cP_{\widehat{\bU}_{j,1}} - \cP_{\bU_j}}_{\linf }$ and $\varepsilon=\dmax^{-Cm}$, the size of such a net can be controlled by $\log \abs{\cN} \le C m \rmax \dmax \log \dmax$. Then, for all the elements in the net,
\begin{equation}\label{eq:Erem-Tdiff-ind-conc}
    \begin{aligned}
        &\abs{\left\langle \left(\bcE_{\rn} + \bcE_{\init}\right)\times_{j\in \cS } \left(  \cP_{\widehat\bU_{j,\net}} - \cP_{\bU_{j} }\right)\times_{k\in \cS^c } \cP_{{\bU}_{k}} , \bcI \right\rangle} \\
        & \le \frac{\sigma d^*\sqrt{m\rmax\dmax } }{\sqrt{n}} \norm{\bcI}_{\ell_1} \frac{C C_1\sigma }{  \lambda_{\min}}\sqrt{\frac{ m (2\mu)^{m-1} r^* \dmax  d^*\log^2 \dmax }{\rmin n}} \cdot\frac{\mu^m r^* }{d^*} \\
        & \le \sigma\norm{\bcI}_{\ell_1} \frac{C C_1\sigma  }{\lambda_{\min}}\sqrt{\frac{ m^2 (2\mu)^{3m-1} r^* \dmax^2  d^*\log^2 \dmax }{n^2}},
    \end{aligned}
\end{equation}
holds uniformly with probability at least $1-\dmax^{-3m}$ using the sub-Gaussian norm of each i.i.d term in $\bcE_{\rn} + \bcE_{\init}$, and the entry-wise argument with the $\linf$ norm bound in Lemma \ref{lemma:l2inf-powerit}.

For the difference term between the element in the net and our objective, we can just use the size bound to control the error. That is to say, we have
\begin{equation}\label{eq:Erem-Tdiff}
\begin{aligned}
      &\abs{ \left\langle \left( \left\langle\bcT^{\vartriangle} ,\bcX_i \right\rangle\bcX_i 
       +\xi_i\bcX_i\right) \times_{j\in \cS } \left( \cP_{\widehat{\bU}_{j,1}} - \cP_{\widehat \bU_{j,\net} }\right)\times_{k\in \cS^c } \cP_{{\bU}_{k}},\bcI\right\rangle} \\
       &\le \left(\abs{\xi_i}+ \abs{\left\langle\bcT^{\vartriangle} ,\bcX_i \right\rangle} \right) \norm{\bcI}_{\ell_1} \frac{C C_1\sigma  }{\lambda_{\min}}\sqrt{\frac{ m (2\mu)^{m-1} r^* \dmax  d^*\log \dmax }{ \dmax^{Cm} n}} \cdot\frac{\mu^m r^* }{d^*},
\end{aligned}
\end{equation}
uniformly for all $i\in[n]$. \eqref{eq:Erem-Tdiff} indicates that 
\begin{equation*}
    \begin{aligned}
        &\abs{\left\langle \left(\bcE_{\rn} + \bcE_{\init}\right)\times_{j\in \cS } \left( \cP_{\widehat{\bU}_{j,1}} - \cP_{\widehat \bU_{j,\net} }\right)\times_{k\in \cS^c } \cP_{{\bU}_{k}} , \bcI \right\rangle} \\
        & \le \frac{d^*}{n} \sum_{i=1}^n \left(\abs{\xi_i}+ \abs{\left\langle\bcT^{\vartriangle} ,\bcX_i \right\rangle} \right) \norm{\bcI}_{\ell_1} \frac{C C_1\sigma  }{\lambda_{\min}}\sqrt{\frac{ m (2\mu)^{m-1} r^* \dmax  d^*\log \dmax }{\dmax^{Cm} n}} \cdot\frac{\mu^m r^* }{d^*} \\
        & \le \sigma\norm{\bcI}_{\ell_1} \frac{C C_1\sigma  }{\lambda_{\min}}\sqrt{\frac{ m^2 (2\mu)^{3m-1} (r^*)^3 \dmax  d^*\log^2 \dmax }{n^2}}
    \end{aligned}
\end{equation*}
 with probability at least $1-\dmax^{-3m}$ using the sub-Gaussian norm. Therefore, we can conclude that
\begin{equation}\label{eq:Erem-Tdiff-res}
    \begin{aligned}
        \abs{\left\langle \left(\bcE_{\rn} + \bcE_{\init}\right)\times_{j\in \cS } \left( \cP_{\widehat{\bU}_{j,1}} - \cP_{\bU_j}\right)\times_{k\in \cS^c } \cP_{{\bU}_{k}} , \bcI \right\rangle} \le C \norm{\bcI}_{\ell_1} \frac{C_1\sigma^2  }{\lambda_{\min}}\sqrt{\frac{ m^2 (2\mu)^{3m-1} r^* \dmax^2  d^*\log^2 \dmax }{n^2}},
    \end{aligned}
\end{equation}
with probability at least $1-(3m^2+2)\dmax^{-3m}$. Combining \eqref{eq:Erem-noTdiff-res} with \eqref{eq:Erem-Tdiff-res}, we have the bound on $\frE_5$:
\begin{equation*}
    \abs{\frE_5} = \abs{\left\langle \bcE_{\rem},\bcI  \right\rangle} \le  \frac{C C_1^2\sigma^2  }{\lambda_{\min}}\norm{\bcI}_{\ell_1}\frac{ m (2\mu)^{\frac{3}{2}m} (r^*)^{\frac{3}{2}} \dmax  \sqrt{d^*}\log \dmax }{\rmin n},
\end{equation*}
with probability at least $1-2^{m+1}(3m^2+2)\dmax^{-3m}$. Combining all the high probability bounds, we know that the desired claim holds with probability at least $1-\dmax^{-2m}$.
\end{proof}

%\subsection{Proof of Lemma \ref{lemma:diff-tangent-proj}}

\subsection{Proof of Lemma \ref{lemma:E-init-rn-2inf}}\label{apx:sec:proof-Einit-rn}
\begin{proof}
    This is equivalent to controlling $\norm{\be_{1,k}^\top \bE_{1,\rn}\left(\otimes_{j\neq 1} \widehat{\mathbf{U}}_{j,0} \right) }_{2}$ and $\norm{\be_{1,k}^\top\bE_{1,\init}\left(\otimes_{j\neq 1} \widehat{\mathbf{U}}_{j,0} \right) }_{2}$ for any $\be_{1,k}$ with $k\in[d_1]$ uniformly. We take $\norm{\be_{1,k}^\top\bE_{1,\init}\left(\otimes_{j\neq 1} \widehat{\mathbf{U}}_{j,0} \right) }_{2}$ for example, and the other term can be handled similarly. The bound of this term can also be given by the matrix Bernstein inequality like \eqref{eq:E-init-Ber-condition}. Notice that, if $\be_{1,k}$ is considered compared with \eqref{eq:E-init-Ber-condition}, the first-order bound remains unchanged but the second-order bound will be  
    \begin{equation}\label{eq:E-init-Ber-2inf-condition}
    \begin{gathered}
        \norm{\E \left\langle \bT^{\vartriangle}_1, \cM_1(\bcX_i)\right\rangle^2 \be_{1,k}^\top\cM_1(\bcX_i)\left(\otimes_{j\neq 1} \cP_{\widehat{\mathbf{U}}_{j,0}} \right)\cM_1(\bcX_i)^\top\be_{1,k} }_2 \le C_1^2 \sigma^2 \frac{\dmax \log \dmax }{n} \frac{(2\mu)^{m-1} r_{-1}}{d_{}^*}  \\
        \norm{\E \left\langle \bT^{\vartriangle}_1, \cM_1(\bcX_i)\right\rangle^2 \left(\otimes_{j\neq 1} \cP_{\widehat{\mathbf{U}}_{j,0}} \right)\cM_1(\bcX_i)^\top\be_{1,k} \be_{1,k}^\top\cM_1(\bcX_i)\left(\otimes_{j\neq 1} \cP_{\widehat{\mathbf{U}}_{j,0}} \right) }_2 \le C_1^2 \sigma^2 \frac{\dmax \log \dmax }{n} \frac{1}{d^*}.
    \end{gathered}
\end{equation}
This leads to the following inequality
\begin{equation}\label{eq:E-init-Ber-2inf}
    \begin{aligned}
        \norm{\be_{1,k}^\top\bE_{1,\init}\left(\otimes_{j\neq 1} \widehat{\mathbf{U}}_{j,0} \right)}_2 &\le C C_1 \sigma \left(\frac{d^*}{n} \sqrt{\frac{\dmax \log \dmax }{n}}\sqrt{\frac{\mu^{m-1} r_{-1}}{d_{-1}}}\cdot m\log \dmax  +  \sqrt{\frac{ m (2\mu)^{m-1} r_{-1} \dmax d^*\log^2 \dmax }{n^2}} \right) \\
        & \le C C_1 \sigma \sqrt{\frac{ m (2\mu)^{m-1} r_{-1} \dmax d^*\log^2 \dmax }{n^2}},
    \end{aligned}
\end{equation}
with probability at least $1-\dmax^{-3 m}$, given that $n\ge m\rmax\dmax \log \dmax$. Using the same technique and checking the conditions similar as \eqref{eq:E-rn-Ber-condition}, we have 
\begin{equation}\label{eq:E-rn-Ber-2inf}
    \begin{aligned}
        \norm{\be_{1,k}^\top\bE_{1,\rn}\left(\otimes_{j\neq 1} \widehat{\mathbf{U}}_{j,0} \right)}_2 &\le C \sigma \left(\frac{d^*}{n}\sqrt{\frac{\mu^{m-1} r_{-1}}{d_{-1}}}\cdot m\log \dmax  +  \sqrt{\frac{ m (2\mu)^{m-1} r_{-1}  d^*\log \dmax }{n}} \right) \\
        & \le C \sigma \sqrt{\frac{ m (2\mu)^{m-1} r_{-1} d^*\log \dmax }{n}},
    \end{aligned}
\end{equation}
with probability at least $1-\dmax^{-3 m}$. Combining all the probability bounds, we have the desired uniform bound.
\end{proof}

% \subsection{Proof of Lemma \ref{lemma:E-init-rn-2inf-dep}} 
% \begin{proof}
    
% \end{proof}

\subsection{Proof of Lemma \ref{lemma:E-rn-Udiff}}\label{apx:sec:proof-E-rn-Udiff}
\begin{proof} Under the independence initialization, it is clear from Lemma \ref{lemma:l2inf-init} that 
    \begin{equation*}
   \norm{ \cP_{\widehat{\bU}_{j,0}} - \cP_{\bU_j} }_{2,\infty} \le \frac{8 C_1\sigma  }{\lambda_{\min}}\sqrt{\frac{\mu r_j}{d_j} }\sqrt{\frac{d^*\dmax \log \dmax }{n}}, \quad \norm{ \cP_{\widehat{\bU}_{j,0}} - \cP_{\bU_j} }_{2} \le \frac{8 C_1 \sigma  }{\lambda_{\min}}\sqrt{\frac{d^*\dmax \log \dmax }{n}}.
\end{equation*}
We now check the first-order and second-order conditions:
\begin{equation}\label{eq:E-rn-Udiff-Ber-condition}
    \begin{gathered}
       \norm{ \norm{\xi_i \cM_j(\bcX_i)\left( \otimes_{k\neq j}\cP_{\widehat{\mathbf{U}}_{k,0}} -\otimes_{k\neq j}\cP_{{\mathbf{U}}_{k}} \right) }_2 }_{\psi_2} \le C \sigma\frac{ C_1\sigma  }{\lambda_{\min}}\sqrt{\frac{d^*\dmax \log \dmax }{n}}\cdot \sqrt{\frac{\mu^{m-1} r_{-j}}{d_{-j}}}  \\
       \norm{\E \xi_i^2\cM_j(\bcX_i)\left( \otimes_{k\neq j}\cP_{\widehat{\mathbf{U}}_{k,0}} -\otimes_{k\neq j}\cP_{{\mathbf{U}}_{k}} \right)^2\cM_j(\bcX_i)^\top   }_2 \le \sigma^2 \frac{(2\mu)^{m-1} r_{-j}}{d_{}^*}\left(\frac{8 C_1 \sigma  }{\lambda_{\min}}\sqrt{\frac{d^*\dmax \log \dmax }{n}}\right)^2   \\
       \norm{\E \xi_i^2\left( \otimes_{k\neq j}\cP_{\widehat{\mathbf{U}}_{k,0}} -\otimes_{k\neq j}\cP_{{\mathbf{U}}_{k}} \right)\cM_j(\bcX_i)^\top\cM_j(\bcX_i)\left( \otimes_{k\neq j}\cP_{\widehat{\mathbf{U}}_{k,0}} -\otimes_{k\neq j}\cP_{{\mathbf{U}}_{k}} \right)}_2 \le  \sigma^2  \frac{1}{d_{-j}} \left(\frac{8 C_1 \sigma  }{\lambda_{\min}}\sqrt{\frac{d^*\dmax \log \dmax }{n}}\right)^2.
    \end{gathered}
\end{equation}
Using the matrix Bernstein inequality, we have 
\begin{equation}\label{eq:E-rn-Udiff-Ber}
    \begin{aligned}
        \norm{\bE_{j,\rn}\left( \otimes_{k\neq j}\cP_{\widehat{\mathbf{U}}_{k,0}} -\otimes_{k\neq j}\cP_{{\mathbf{U}}_{k}} \right)}_2 &\le C \sigma \frac{ C_1\sigma  }{\lambda_{\min}}\sqrt{\frac{d^*\dmax \log \dmax }{n}} \left(\frac{d^*}{n}\sqrt{\frac{\mu^{m-1} r_{-j}}{d_{-j}}}\cdot m\log \dmax  +  \sqrt{\frac{ m (2\mu)^{m-1} d_j  d^*\log \dmax }{n}} \right) \\
        & \le C \sigma \frac{ C_1\sigma  }{\lambda_{\min}}\sqrt{\frac{d^*\dmax \log \dmax }{n}} \sqrt{\frac{ m (2\mu)^{m-1} r^* d_j  d^*\log \dmax }{\rmin r_jn}}.
    \end{aligned}
\end{equation}
For the $\ell_{2,\infty}$ error bound, we only need to re-do the analysis like the proof of Lemma \ref{lemma:E-init-rn-2inf} by checking the conditions of $\norm{\be_{j,k}^\top \bE_{j,\rn} \left( \otimes_{k\neq j}\cP_{\widehat{\mathbf{U}}_{k,0}} -\otimes_{k\neq j}\cP_{{\mathbf{U}}_{k}} \right) }_{2}$, which gives us the following error rate:
        \begin{equation}\label{eq:E-rn-Udiff-l2inf-Ber}
        \begin{aligned}
             \norm{\bE_{j,\rn} \left( \otimes_{k\neq j}\cP_{\widehat{\mathbf{U}}_{k,0}} -\otimes_{k\neq j}\cP_{{\mathbf{U}}_{k}} \right)}_{2,\infty} & \le C \sigma \frac{ C_1\sigma  }{\lambda_{\min}}\sqrt{\frac{d^*\dmax \log \dmax }{n}} \left(\frac{d^*}{n}\sqrt{\frac{\mu^{m-1} r_{-j}}{d_{-j}}}\cdot m\log \dmax  +  \sqrt{\frac{ m (2\mu)^{m-1} r_{-j}  d^*\log \dmax }{n}} \right) \\
        & \le C \sigma \frac{ C_1\sigma  }{\lambda_{\min}}\sqrt{\frac{d^*\dmax \log \dmax }{n}} \sqrt{\frac{ m (2\mu)^{m-1} r_{-j}  d^*\log \dmax }{n}}.
        \end{aligned}
    \end{equation}
\eqref{eq:E-rn-Udiff-Ber} and \eqref{eq:E-rn-Udiff-l2inf-Ber} hold with probability at least $1-\dmax^{-3m}$ for one $j$. Therefore,  such two rates hold for all $j\in [m]$ with probability at least $1-2m\dmax^{-3m}$.

\end{proof}

\subsection{Proof of Lemma \ref{lemma:l2inf-powerit-dep}}\label{apx:sec:proof-l2inf-powerit-dep}
We prove this statement by modifying the proof of Lemma \ref{lemma:l2inf-powerit} under independent initialization, which is shown in Section \ref{apx:sec:proof-U1-l2inf}. We focus on $j=1$. Such a proof relies on the $\ell_{2}$ and $\ell_{2,\infty}$ norm bound of the terms related to $\bE_{1,\rn}$ and $\bE_{1,\init}$, which we have presented in Lemma \ref{lemma:Einit-Ern-dep-l2} and Lemma \ref{lemma:E-init-rn-2inf-dep}.

%%%%% for manuscript only
%%%%%
\begin{equation*}
    \begin{aligned}
        \norm{ \bE_{1,\rn}\left(\otimes_{j\neq 1} \widehat{\mathbf{U}}_{j,0} \right) }_2 &  \le C \sigma\left(1+\frac{C_1\sigma}{\lambda_{\min}}\sqrt{\frac{m^2 d^*\dmax\log\dmax}{n}} + \frac{ C_1 \sigma^2  }{\lambda_{\min}^2}\frac{d^*\dmax^{\frac{3}{2}} \log \dmax }{n} \right) \sqrt{\frac{  m \mu^{m-1} r^* d_1  d^*\log \dmax }{\rmin r_1 n}},
    \end{aligned}
\end{equation*}

\begin{equation*}
    \norm{\bE_{1,\init}\left(\otimes_{j\neq 1} \widehat{\mathbf{U}}_{j,0} \right) }_2 \le C C_1 \sigma \sqrt{\frac{ m (2\mu)^{m-1} \rmax r^* \dmax d^*\log \dmax }{r_1 n}\cdot \frac{d_1\dmax \log \dmax}{n}\wedge 1}.
\end{equation*}

\begin{equation*}
    \begin{gathered}
        \norm{\bE_{1,\init}\left(\otimes_{j\neq 1} \widehat{\mathbf{U}}_{j,0} \right)}_{2,\infty} \le C C_1 \sigma \sqrt{\frac{1}{d_1}} \sqrt{\frac{ m (2\mu)^{m-1}\rmax r^* \dmax d^*\log \dmax }{r_1 n}\cdot \frac{d_1 \dmax  \log \dmax}{n}\wedge 1}\\
        \norm{\bE_{1,\rn}\left(\otimes_{j\neq 1} \widehat{\mathbf{U}}_{j,0} \right)}_{2,\infty} \le C \sigma \sqrt{\frac{ m (2\mu)^{m-1} r_{-1} d^*\log \dmax }{n}} + C \sigma \frac{ C_1\sigma }{\lambda_{\min}}\sqrt{\frac{m^2 d^*\dmax \log \dmax }{n}} \sqrt{\frac{ m (2\mu)^{m-1} \rmax r_{-1} \dmax d^*\log \dmax }{ n}}.\\
    \end{gathered}
\end{equation*}
%%%%%
%%%%% for manuscript only

Applying Lemma \ref{lemma:E-init-rn-2inf-dep} to the series expansions of $\cP_{\widehat{\bU}_{1,1}} - \cP_{\bU_1}$ in Section \ref{apx:sec:proof-U1-l2inf} (e.g., inequalities \eqref{eq:powerit-U-delta11-U} and \eqref{eq:powerit-U-delta11-lam}, etc) with the new $\ell_2$ norm bounds of $\norm{\bE_{1,\init}\left(\otimes_{j\neq 1} \widehat{\mathbf{U}}_{j,0} \right)}_{2} $ and $\norm{\bE_{1,\rn}\left(\otimes_{j\neq 1} \widehat{\mathbf{U}}_{j,0} \right)}_{2} $ in \eqref{eq:E-rn-Ber-dep} and \eqref{eq:dep-E-init-uniform}, we can derive that, when the SNR satisfies 
\begin{equation*}
\begin{aligned}
       \frac{\lambda_{\min}}{\sigma} &\ge C_{\gap}\kappa_0 \left(C_1 \sqrt{\frac{  m (2\mu)^{m} \rmax (r^*) d^*\dmax^{\frac{3}{2}} \log \dmax }{\rmin n}}\right),
\end{aligned}
\end{equation*}
the series are guaranteed to converge with the rate:

\begin{equation*}
\begin{aligned}
        \norm{\cP_{\widehat{\bU}_{1,1}} - \cP_{\bU_1}}_{2} &\le   \frac{C C_1\sigma}{\lambda_{\min}}  \left(1+\frac{\sigma}{\lambda_{\min}}\sqrt{\frac{ m^2 d^*\dmax\log\dmax}{n}} + \frac{\sigma^2  }{\lambda_{\min}^2}\frac{d^*\dmax^{\frac{3}{2}} \log \dmax }{n} \right) \sqrt{\frac{  m (2\mu)^{m-1}\rmax r^* d_1  d^*\log \dmax }{ r_1 n}} \\
        & \le \frac{C C_1 \sigma}{\lambda_{\min} }\sqrt{\frac{  m (2\mu)^{m-1}\rmax r^* d_1  d^*\log \dmax }{ r_1 n}},
\end{aligned}
\end{equation*}
with probability at least $1-m 2^{m+2}\dmax^{-3m}$.
and the corresponding $\ell_{2,\infty}$ norm bound:
\begin{equation*}
\begin{aligned}
        &\norm{\cP_{\widehat{\bU}_{1,1}} - \cP_{\bU_1}}_{2,\infty}  \le \frac{C}{\lambda_{\min} }\left( \sqrt{\frac{\mu r_1 }{d_1}}\norm{(\bE_{1,\rn}+\bE_{1,\init})\left(\otimes_{j\neq 1} \widehat{\mathbf{U}}_{j,0} \right)}_2 + \norm{(\bE_{1,\rn}+\bE_{1,\init})\left(\otimes_{j\neq 1} \widehat{\mathbf{U}}_{j,0} \right)}_{2,\infty}\right) \\
        & \le \frac{CC_1}{\lambda_{\min} } \sqrt{\frac{\mu r_1 }{d_1}} \left( \sigma\sqrt{\frac{  m (2\mu)^{m-1}\rmax r^* d_1  d^*\log \dmax }{ r_1 n}} + \sigma\frac{ \sigma }{\lambda_{\min}}\sqrt{\frac{m^2 d^*\dmax \log \dmax }{n}} \sqrt{\frac{ m (2\mu)^{m-1} \rmax r_{-1} d_1\dmax d^*\log \dmax }{ n}} \right)\\
        & \le CC_1\sqrt{\frac{\mu r_1 }{d_1}} \left(\frac{\sigma}{\lambda_{\min} }\sqrt{\frac{  m (2\mu)^{m-1}\rmax r^* d_1  d^*\log \dmax }{ r_1 n}} + \frac{\sigma^2}{\lambda_{\min}^2 }\frac{  m^{\frac{3}{2}} (2\mu)^{\frac{1}{2}m}\rmax^{\frac{1}{2}} (r^*)^{\frac{1}{2}} d^*\dmax^{\frac{3}{2}} \log \dmax }{ \sqrt{r_1} n} \right)
\end{aligned}
\end{equation*}
with the same probability. The $\ell_{\infty}$ norm bound instantly follows.
% Incoherence implies spikiness $\norm{\bcT}_{\ell_{\infty}}\le\sqrt{\mu^m r^*}\kappa_0\lambda_{\min}/\sqrt{d^*}$

% $\widetilde{\bcT}$

% Covering number in terms of $\norm{\cdot}_{\tF}$ is given in \cite{mu2014square}. Full observation \cite{zhang2018tensor}

% According to the general Davis–Kahan theorem \citep{yu2015useful}.

\subsection{Proof of Lemma \ref{lemma:dep-init-frE-2-5}}\label{apx:sec:proof-lemma:dep-init-frE-2-5}
\begin{proof}
We follow the steps in Section \ref{apx:sec:proof-all-rem-ind} to handle

\noindent \textbf{(i) $\frE_2$ in} \eqref{eq:test-decomp}.
Since $\frE_2=\left\langle\bcE_{\init}\times_{j=1}^m \cP_{\bU_{j} },  \bcI \right\rangle = \left\langle\bcE_{\init}\times_{j=1}^m \cP_{\bU_{j} },  \bcI \times_{j=1}^m \cP_{\bU_{j} } \right\rangle $, we know that 

\begin{equation*}
    \abs{\frE_2} \le \norm{\bcE_{\init}\times_{j=1}^m \cP_{\bU_{j} }}_\tF  \norm{\bcI \times_{j=1}^m \cP_{\bU_{j}}}_\tF  \le \sqrt{\rmin} \norm{ \cP_{\bU_{j'}} \bE_{j',\init}\otimes_{k\neq j' } \cP_{\bU_{k}} }_2 \norm{\cP_{\TT}(\bcI)}_{\tF}
\end{equation*}

% \begin{equation*}
%     \abs{\frE_2} \le \norm{\bcE_{\init}\times_{j=1}^m \cP_{\bU_{j} }}_\tF  \norm{\bcI \times_{j=1}^m \cP_{\bU_{j}}}_\tF  \le \sqrt{\rmin} \norm{ \cP_{\bU_{j'}} \bE_{j',\init}\otimes_{k\neq j' } \cP_{\bU_{k}} }_2 \norm{\bcI}_{\ell_1} \sqrt{\frac{\mu^m r^* }{d^*}}.
% \end{equation*}
An analysis similar to \eqref{eq:dep-E-init-uniform} will give the following bound 
\begin{equation}\label{eq:dep-E-init-full-uniform}
    \norm{ \cP_{\bU_{j'}} \bE_{j',\init}\otimes_{k\neq j' } \cP_{\bU_{k}} }_2 \le  C C_1 \sigma \sqrt{\frac{ m (2\mu)^{m} \rmax r^* \dmax d^*\log \dmax }{ n}\cdot \frac{\dmax \log \dmax}{n}\wedge \frac{1}{\dmin}},
\end{equation}
which is at the order of $\sqrt{\mu r_{j'}/d_{j'}}$ times \eqref{eq:dep-E-init-uniform}, and it holds with probability at least $1-2\dmax^{-3m}$. Therefore, we can bound $\frE_2$  by
% \begin{equation*}
%     \abs{\frE_2} \le C C_1 \sigma \norm{\bcI}_{\ell_1} \sqrt{\frac{ m (2\mu)^{2m} \rmax^2 (r^*)^2 \dmax \log \dmax }{ n}\cdot \frac{\dmax \log \dmax}{n} \wedge \frac{1}{\dmin}}.
% \end{equation*}

\begin{equation*}
    \abs{\frE_2} \le C C_1 \sigma \norm{\cP_{\TT}(\bcI)}_{\tF}\sqrt{\frac{ m (2\mu)^{m} \rmax^2 r^* d^* \dmax \log \dmax }{ n}\cdot \frac{\dmax \log \dmax}{n} \wedge \frac{1}{\dmin}}.
\end{equation*}
On the other hand, we can control the $\frE_2$ by:
\begin{equation*}
    \abs{\frE_2} \le \norm{\bcE_{\init}\times_{j=1}^m \cP_{\bU_{j} }}_\tF  \norm{\bcI \times_{j=1}^m \cP_{\bU_{j}}}_\tF  \le \sqrt{\rmin} \norm{ \cP_{\bU_{j'}} \bE_{j',\init}\otimes_{k\neq j' } \cP_{\bU_{k}} }_2 \norm{\bcI}_{\ell_1} \sqrt{\frac{\mu^m r^* }{d^*}},
\end{equation*}
which leads to the following bound:
\begin{equation*}
    \abs{\frE_2} \le C C_1 \sigma \norm{\bcI}_{\ell_1} \sqrt{\frac{ m (2\mu)^{2m} \rmax^2 (r^*)^2 \dmax \log \dmax }{ n}\cdot \frac{\dmax \log \dmax}{n} \wedge \frac{1}{\dmin}}.
\end{equation*}

\noindent \textbf{(ii) $\frE_3$ in} \eqref{eq:test-decomp}.
We need to revisit $\frF_1$,  $\frF_2$, and  $\frF_3$ in \eqref{eq:frE-3-decomp}. To this end, we need the corresponding bound under dependent initialization:
\begin{Lemma}\label{lemma:E-rn-Udiff-dep}
    Under the same assumptions as Theorem \ref{thm:clt-dep-init}, when the initialization depends on the data $\left\{(\bcX_i,\xi_i)\right\}_{i=1}^n$,   the following inequalities hold with probability at least $1-m^2 2^{m+2}\dmax^{-3m}$:
    \begin{equation*}
        \begin{aligned}
               &\norm{\bE_{j,\rn} \left( \otimes_{k\neq j}\cP_{\widehat{\mathbf{U}}_{k,0}} -\otimes_{k\neq j}\cP_{{\mathbf{U}}_{k}} \right)}_2   \le CC_1 \sigma\left(\frac{\sigma}{\lambda_{\min}}\sqrt{\frac{m^2 d^*\dmax\log\dmax}{n}} + \frac{C_1 \sigma^2  }{\lambda_{\min}^2}\frac{d^*\dmax^{\frac{3}{2}} \log \dmax }{n} \right) \sqrt{\frac{  m \mu^{m-1} r^* d_j  d^*\log \dmax }{\rmin r_j n}} \\
       &\norm{\bE_{j,\rn} \left( \otimes_{k\neq j}\cP_{\widehat{\mathbf{U}}_{k,0}} -\otimes_{k\neq j}\cP_{{\mathbf{U}}_{k}} \right)}_{2,\infty} \le C \sigma \frac{ C_1\sigma  }{\lambda_{\min}}\sqrt{\frac{m^2 d^*\dmax \log \dmax }{ n}} \sqrt{\frac{ m \mu^{m-1}  \rmax  r^* \dmax d^*\log \dmax }{r_j n}}.\\
        \end{aligned}
    \end{equation*}
\end{Lemma}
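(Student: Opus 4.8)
\textbf{Proof proposal for Lemma \ref{lemma:E-rn-Udiff-dep}.} The plan is to mirror the proof of Lemma \ref{lemma:E-rn-Udiff} in Section \ref{apx:sec:proof-E-rn-Udiff}, but to replace every invocation of matrix Bernstein — which there exploited the independence between $\{\cP_{\widehat{\bU}_{k,0}}\}$ and $\{(\bcX_i,\xi_i)\}$ — by the $\varepsilon$-net machinery already developed for the dependent case in the proofs of Lemma \ref{lemma:Einit-Ern-dep-l2} and Lemma \ref{lemma:E-init-rn-2inf-dep}. Fix $j=1$; the general $j$ follows by a union bound over the $m$ modes. First I would expand $\otimes_{k\neq 1}\cP_{\widehat{\bU}_{k,0}} - \otimes_{k\neq 1}\cP_{\bU_k}$ by the telescoping/binomial identity into a first-order part (a sum over $k$ of terms containing a single factor $\cP_{\widehat{\bU}_{k,0}}-\cP_{\bU_k}$) and a higher-order part (a sum over $\cS\subseteq[m]\setminus\{1\}$ with $|\cS|\ge 2$), exactly as in \eqref{eq:dep-B2-rn-decomp} and \eqref{eq:dep-l2inf-B2-rn-decomp}.

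For the first-order part I would peel off the difference factor via the identity $\cM_1(\bcX_i)\otimes_k(\cP_{\widehat{\bU}_{k,0}}-\cP_{\bU_k})(\otimes_{l\ne 1,k}\bU_l)=[\cM_1(\bcX_i)\otimes_k\bI_{d_k}(\otimes_{l\ne1,k}\bU_l)]\cdot[\otimes_k(\cP_{\widehat{\bU}_{k,0}}-\cP_{\bU_k})\otimes_l\bI_{r_l}]$, bound the second factor in spectral norm by the \emph{deterministic} estimate $\|\cP_{\widehat{\bU}_{k,0}}-\cP_{\bU_k}\|_2\lesssim (C_1\sigma/\lambda_{\min})\sqrt{d^*\dmax\log\dmax/n}$ from Lemma \ref{lemma:l2inf-init} (valid on the event \eqref{eq:tensor-init}, hence causing no dependence issue), and handle the remaining sum of i.i.d.\ random matrices $\sum_i\xi_i\cM_1(\bcX_i)\otimes_k\bI_{d_k}(\otimes_{l\ne1,k}\bU_l)$ by ordinary matrix Bernstein \citep{tropp2012user}, checking the first- and second-moment bounds as in \eqref{eq:E-rn-Ber-condition}. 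This reproduces the leading term in both claimed bounds. For the higher-order part I would build the $\varepsilon$-net $\operatorname{Net}(\varepsilon)$ over rank-$\br$ initializations with $\varepsilon=\dmax^{-C_\varepsilon m}$, whose log-cardinality is $O(m\rmax\dmax\log\dmax)$ by Lemma \ref{lemma:cover-init}, so that each $\widehat{\bU}_{k,0}$ lies within $\varepsilon$ of a net element $\widehat{\bU}_{k,\net}$ that is \emph{independent} of $\{(\bcX_i,\xi_i)\}$. Conditioning on the net, $\sum_i\xi_i\cM_1(\bcX_i)\otimes_{k\in\cS}(\cP_{\widehat{\bU}_{k,\net}}-\cP_{\bU_k})(\otimes_l\bU_l)$ is again a sum of i.i.d.\ matrices, controlled by Bernstein with probability $1-\dmax^{-C_\varepsilon m\rmax\dmax-3m}$ per net element, hence uniformly with probability $1-\dmax^{-3m}$; the discretization error is bounded crudely by $\frac{d^*}{n}\sum_i\|\xi_i\be_{1,i}\|_2\cdot\|\Delta_\net\|_2$ with $\|\Delta_\net\|_2\le Cm\varepsilon\cdot\frac{C_1\sigma}{\lambda_{\min}}\sqrt{d^*\dmax\log\dmax/n}$, which is negligible for $C_\varepsilon$ large, as in \eqref{eq:dep-B2-rn-order2-diff}. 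Since $|\cS|\ge 2$, the extra factors $\|\cP_{\widehat{\bU}_{k,0}}-\cP_{\bU_k}\|_2\lesssim 1/C_\gap$ make the higher-order sum dominated by the first-order term under the stated SNR condition, yielding the $\ell_2$ bound.

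For the $\ell_{2,\infty}$ bound I would multiply on the left by $\be_{1,k}^\top$ for each $k\in[d_1]$ and invoke the Bernoulli-thinning argument from the proof of Lemma \ref{lemma:E-init-rn-2inf-dep}: writing $\cM_1(\bcX_i)=\be_{1,k_i}\be_{-1,l_i}^\top$, set $\idc_{k,i}=\idc\{k_i=k\}\sim\operatorname{Ber}(1/d_1)$ and $\chi_k=\sum_i\idc_{k,i}$; conditionally on $\{\idc_{k,i}\}$ the surviving sum is over $\chi_k$ i.i.d.\ terms, to which Bernstein applies with the normalization $d^*/\chi_k$, and $\chi_k\le 2n/d_1$ with probability $1-\dmax^{-3m}$ (given $n\gtrsim m\dmax\log\dmax$) converts the $1/\sqrt{\chi_k}$ into $\sqrt{d_1/n}$ and produces the extra factor $\sqrt{\chi_k/n}$, precisely as in \eqref{eq:dep-l2inf-E-rn-Ber}; combining with the net argument over the higher-order terms gives the claimed $\ell_{2,\infty}$ rate. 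Finally I would take a union bound over $j\in[m]$ and over the $2^m$ subsets $\cS$ to reach the stated probability $1-m^2 2^{m+2}\dmax^{-3m}$.

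The main obstacle I expect is the bookkeeping in the higher-order terms: one must track the product of the net discretization errors against the geometric decay supplied by the extra factors $\|\cP_{\widehat{\bU}_{k,0}}-\cP_{\bU_k}\|_2$, and for the $\ell_{2,\infty}$ estimate the conditioning-on-$\chi_k$ step must be threaded consistently through every term of the expansion so that the single-matrix-size bound (rather than the matrix-variance bound) governs the higher-order contributions. The sample-size and SNR conditions of Theorem \ref{thm:clt-dep-init} are exactly what makes both geometric series summable.
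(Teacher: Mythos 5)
Your proposal is correct and follows essentially the same route as the paper: the paper simply states that Lemma \ref{lemma:E-rn-Udiff-dep} is extracted from the first-order and higher-order term bounds in the proofs of Lemmas \ref{lemma:Einit-Ern-dep-l2} and \ref{lemma:E-init-rn-2inf-dep}, and your plan reproduces exactly those steps (the decomposition \eqref{eq:dep-B2-rn-decomp}/\eqref{eq:dep-l2inf-B2-rn-decomp}, deterministic peeling plus Bernstein for the first-order part as in \eqref{eq:E-rn-init-order-1-dep}, the $\varepsilon$-net treatment of the $|\cS|\ge 2$ terms as in \eqref{eq:dep-B2-rn-order2-diff}--\eqref{eq:dep-B2-rn-order2-net}, and the Bernoulli-thinning argument of \eqref{eq:dep-l2inf-E-rn-Ber} for the $\ell_{2,\infty}$ bound). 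No gaps beyond the bookkeeping you already flag, which is handled identically in the paper.
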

Lemma \ref{lemma:E-rn-Udiff-dep} can be directly extracted from the proof of Lemma \ref{lemma:Einit-Ern-dep-l2} and \ref{lemma:E-init-rn-2inf-dep}, particularly in the bound of first-order and higher-order terms. Thus, we omit the proof here.

% \begin{equation*}
%         \norm{\bE_{1,\init}\left(\otimes_{j\neq 1} \widehat{\mathbf{U}}_{j,0} \right) }_2 \le C C_1 \sigma \sqrt{\frac{ m (2\mu)^{m-1} \rmax r^* \dmax d^*\log \dmax }{r_1 n}\cdot \frac{d_1\dmax \log \dmax}{n}\wedge 1}.
% \end{equation*}

We can then plug in Lemma \ref{lemma:E-init-rn-2inf-dep}, \ref{lemma:E-rn-Udiff-dep} to $\frF_1$ and $\frF_2$, $\frF_3$ in \eqref{eq:frE-3-decomp}. For  $\frF_1$ we can immediately derive the following error rate:
\begin{equation}\label{eq:frF-1-dep}
        \begin{aligned}
        \abs{\frF_1} 
         & = \abs{\left\langle \cP_{\bU_j}^\perp\bE_{j,\rn} \left( \otimes_{k\neq j}\cP_{\widehat{\mathbf{U}}_{k,0}} -\otimes_{k\neq j}\cP_{{\mathbf{U}}_{k}} \right)\left( \otimes_{k\neq j}\bU_{k} \right)\cP_{\bV_j}\left(\otimes_{k\neq j}\bU_k^\top\right) , \sum_{\omega} \left[\bcI\right]_{\omega} \cM_j(\bomega)\otimes_{k\neq j}\cP_{{\mathbf{U}}_{k}}\right\rangle  } \\
       & \le \norm{ \cP_{\bU_j}^\perp\cM_j(\bcI)\otimes_{k\neq j}\cP_{{\mathbf{U}}_{k}} }_{\tF}\sqrt{2r_{-j}} \norm{\bE_{j,\rn} \left( \otimes_{k\neq j}\cP_{\widehat{\mathbf{U}}_{k,0}} -\otimes_{k\neq j}\cP_{{\mathbf{U}}_{k}} \right)}_2 \\
        &\le  C \norm{ \cP_{\bU_j}^\perp\cM_j(\bcI)\otimes_{k\neq j}\cP_{{\mathbf{U}}_{k}} }_{\tF} \cdot C_1 \sigma\left(\frac{\sigma}{\lambda_{\min}}\sqrt{\frac{m^2 d^*\dmax\log\dmax}{n}} + \frac{C_1 \sigma^2  }{\lambda_{\min}^2}\frac{d^*\dmax^{\frac{3}{2}} \log \dmax }{n} \right) \sqrt{\frac{  m \mu^{m-1} (r^*)^2 d_j  d^*\log \dmax }{\rmin r_j^2 n}} \\
        &\le  C C_1\norm{ \cP_{\bU_j}^\perp\cM_j(\bcI)\otimes_{k\neq j}\cP_{{\mathbf{U}}_{k}} }_{\tF} \cdot \frac{\sigma^2}{\lambda_{\min}}\sqrt{\frac{m^2 d^*\dmax\log\dmax}{n}}\sqrt{\frac{  m \mu^{m-1} (r^*)^2 d_j  d^*\log \dmax }{\rmin r_j^2 n}}
    \end{aligned} 
\end{equation}
For  $\frF_2$, \eqref{eq:frF-2} suggests the following bound:
\begin{equation}\label{eq:frF-2-dep-ub}
     \abs{\frF_2} \le \norm{\bcI}_{\ell_1}\sqrt{\frac{\mu^{m-1} r_{-j}}{d_{-j}}} \left( \sqrt{\frac{\mu r_j}{d_j}}\norm{\mathfrak{B}_{j,3} \bU_j\bLambda_j^{-1}\bV_j^\top }_{2} +\norm{\mathfrak{B}_{j,3} \bU_j\bLambda_j^{-1}\bV_j^\top }_{2,\infty}\right).
\end{equation}
In this upper bound, 
% it is not recommended to directly apply the rates in Lemma \ref{lemma:E-init-rn-2inf-dep} or \eqref{eq:E-rn-Ber-dep} and \eqref{eq:dep-E-init-uniform} to control it. Instead, 
we notice that $\mathfrak{B}_{j,3} \bU_j$ is composed of  
\begin{equation*}
    \mathfrak{B}_{j,3} \bU_j = \left(\bE_{j,\init}+\bE_{j,\rn} \right) \left( \otimes_{k\neq j}\cP_{\widehat{\mathbf{U}}_{k,0}} \right)\left(\bE_{j,\init}+\bE_{j,\rn} \right)^\top\bU_j,
\end{equation*}
where $ \mathfrak{B}_{j,3}$ follows the definition of $ \mathfrak{B}_{3}$ in \eqref{eq:svd-U1-decomp}.
% Thus, using the $\varepsilon$-net argument again similar as Lemma \ref{lemma:E-init-rn-2inf-dep}, we have
% \begin{equation}\label{eq:U-Ern-Einit-dep}
%     \begin{gathered}
%         \norm{\bU_j^\top\bE_{j,\init}\left(\otimes_{k\neq j} \widehat{\mathbf{U}}_{k,0} \right)}_{2} \le C C_1 \sigma \sqrt{\frac{ m (2\mu)^{m}\rmax r^* \dmax d^*\log \dmax }{ n}\cdot \frac{\dmax \log \dmax}{n}}\\
%         \norm{\bU_j^\top\bE_{j,\rn}\left(\otimes_{k\neq j} \widehat{\mathbf{U}}_{k,0} \right)}_{2} \le C \sigma \sqrt{\frac{ m (2\mu)^{m} r^* d^*\log \dmax }{n}} + C \sigma \frac{ C_1\sigma }{\lambda_{\min}}\sqrt{\frac{d^*\dmax \log \dmax }{n}} \sqrt{\frac{ m (2\mu)^{m} \rmax r^* \dmax d^*\log \dmax }{ n}},\\
%     \end{gathered}
% \end{equation}
% which holds uniformly for all $j$ with probability at least $1- m^2 2^{m+1}\dmax^{-3m}$. 

Together with \eqref{eq:frF-2-dep-ub} and Lemma \ref{lemma:Einit-Ern-dep-l2}, \ref{lemma:E-init-rn-2inf-dep}, we can control $ \abs{\frF_2}$ by:
\begin{equation}\label{eq:frF-2-dep}
\begin{aligned}
         \abs{\frF_2} & \le \norm{\bcI}_{\ell_1}\sqrt{\frac{\mu^{m-1} r_{-j}}{d_{-j}}} \left( \sqrt{\frac{\mu r_j}{d_j}}\norm{\mathfrak{B}_{j,3} \bU_j\bLambda_j^{-1}\bV_j^\top }_{2} +\norm{\mathfrak{B}_{j,3} \bU_j\bLambda_j^{-1}\bV_j^\top }_{2,\infty}\right)\\
         & \le CC_1 \norm{\bcI}_{\ell_1}\sqrt{\frac{\mu^{m-1} r_{-j}}{d_{-j}}} \sqrt{\frac{\mu r_j }{d_j}} \left(\frac{\sigma}{\lambda_{\min} }\sqrt{\frac{  m (2\mu)^{m-1}\rmax r^* d_j d^*\log \dmax }{ r_j n}} + \left(\frac{\sigma}{\lambda_{\min} }\sqrt{\frac{  m^{\frac{3}{2}} (2\mu)^{m-1}\rmax r^* d^*\dmax^{\frac{3}{2}} \log \dmax }{ r_j n}}\right)^2 \right) \\
         &\quad  \quad \cdot \sigma \sqrt{\frac{  m (2\mu)^{m-1}\rmax r^* d_j  d^*\log \dmax }{ r_j n}} \\
           & \le CC_1 \sigma^2\norm{\bcI}_{\ell_1}\sqrt{\frac{  m (2\mu)^{2m-1}\rmax (r^*)^2 \dmax \log \dmax }{ \rmin n}}  \\ 
           & \quad \cdot \left(\frac{\sigma}{\lambda_{\min} }\sqrt{\frac{  m (2\mu)^{m-1}\rmax r^* \dmax d^*\log \dmax }{ \rmin n}} + \frac{\sigma^2}{\lambda_{\min}^2 }\frac{  m^{\frac{3}{2}} (2\mu)^{m-1}\rmax r^* d^*\dmax^{\frac{3}{2}} \log \dmax }{ \rmin n} \right) 
\end{aligned}
\end{equation}
For  $\frF_3$ in \eqref{eq:frE-3-decomp}, we follow the steps in \eqref{eq:frF-3-bern} and \eqref{eq:frF-3} and use the $\varepsilon$-net to control it by repeating the procedures in the proof of Lemma \ref{lemma:Einit-Ern-dep-l2}. This will lead to the following bound:
\begin{equation}\label{eq:frF-3-dep} 
    \abs{\frF_3} \le C C_1 \sigma \sqrt{\frac{ m^2 (2\mu)^{m-1} r^* \dmax^2 d^* \log^2\dmax }{  r_j n^2}} \norm{\cP_{\bU_j}^\perp \cM_j(\bcI) \left( \otimes_{k\neq j}\bU_{k} \right)\cP_{\bV_j}\left(\otimes_{k\neq j}\bU_k^\top\right)}_{\tF},
\end{equation}
which holds with probability at least $1-\dmax^{-3m}$.

% combining \eqref{eq:frF-3} and Lemma \ref{lemma:Einit-Ern-dep-l2}, \ref{lemma:E-init-rn-2inf-dep}, we have 
% \begin{equation}\label{eq:frF-3-dep}
%  \begin{aligned}
%         \abs{\frF_3} 
%        & \le \norm{\bcI}_{\ell_1}\sqrt{\frac{\mu^{m-1} r_{-j}}{d_{-j}}} \left( \sqrt{\frac{\mu r_j}{d_j}}\norm{ \bE_{j,\init}\left( \otimes_{k\neq j}\cP_{\widehat{\mathbf{U}}_{k,0}} \right) }_{2} +\norm{\bE_{j,\init}\left( \otimes_{k\neq j}\cP_{\widehat{\mathbf{U}}_{k,0}} \right) }_{2,\infty}\right) \\ 
%        & \le C C_1 \sigma \norm{\bcI}_{\ell_1} \sqrt{\frac{ m (2\mu)^{2 m-1}\rmax (r^* )^2\dmax \log \dmax }{ n}\cdot \frac{d_1 \dmax  \log \dmax}{n}}. 
%     \end{aligned}
% \end{equation}
Combining \eqref{eq:frF-1-dep}, \eqref{eq:frF-2-dep}, and \eqref{eq:frF-3-dep}, we have the bound on $\frE_3$:
\begin{equation*}
    \begin{aligned}
       \abs{\frE_3} & \le C C_1\norm{ \cP_{\TT}(\bcI) }_{\tF} \cdot \frac{\sigma^2}{\lambda_{\min}}\sqrt{\frac{m^2 d^*\dmax\log\dmax}{n}}\sqrt{\frac{  m^2 \mu^{m-1} (r^*)^2 \dmax  d^*\log \dmax }{\rmin^3 n}} \\
       & +C C_1 \norm{\bcI}_{\ell_1}\sigma \sqrt{\frac{ m^3 (2\mu)^{2m} (r^*)^2 \dmax \log \dmax }{\rmin^2 n}}\left(\frac{\sigma}{\lambda_{\min} }\sqrt{\frac{  m (2\mu)^{m-1}\rmax r^* \dmax d^*\log \dmax }{ \rmin n}} + \frac{\sigma^2}{\lambda_{\min}^2 }\frac{  m^{\frac{3}{2}} (2\mu)^{m-1}\rmax r^* d^*\dmax^{\frac{3}{2}} \log \dmax }{ \rmin n}\right) \\
       & +  C C_1\norm{ \cP_{\TT}(\bcI) }_{\tF} \cdot \sigma \sqrt{\frac{ m^3 (2\mu)^{m-1} r^* \dmax^2 d^* \log^2\dmax }{  r_j n^2}}.
    \end{aligned}
\end{equation*}

\noindent \textbf{(iii) $\frE_4$ in} \eqref{eq:test-decomp}. With the help of \eqref{eq:frE-4}, we have
\begin{equation}
\begin{aligned}
         \abs{\frE_4}\le \sum_{j=1}^m \norm{\bcI}_{\ell_1}\sqrt{\frac{\mu^{m-1} r_{-j}}{d_{-j}}}\norm{\left(\sum_{l\ge 2}\cS_{\widehat{\bU}_{j,1}}^{(l)}\right)\bT_j}_{2,\infty},
\end{aligned}
\end{equation}
where the expressions of these series $\left\{\cS_{\widehat{\bU}_{j,1}}^{(l)} \right\}_{l\ge 1}$ have been given in Section \ref{apx:sec:proof-U1-l2inf}. Under the dependent initialization, an analysis of the expressions combined with Lemma \ref{lemma:Einit-Ern-dep-l2}, \ref{lemma:E-init-rn-2inf-dep} will give the following bound:
\begin{equation*}
\begin{aligned}
        \norm{\left(\sum_{l\ge 2}\cS_{\widehat{\bU}_{j,1}}^{(l)}\right)\bT_j}_{2,\infty} & \le CC_1\sigma \sqrt{\frac{\mu r_j }{d_j}}\sqrt{\frac{  m (2\mu)^{m-1}\rmax r^* d_j  d^*\log \dmax }{ r_j n}} \\
        &\quad \cdot\left(\frac{\sigma}{\lambda_{\min} }\sqrt{\frac{  m (2\mu)^{m-1}\rmax r^* d_j d^*\log \dmax }{ r_j n}} + \frac{\sigma^2}{\lambda_{\min}^2 }\frac{  m^{\frac{3}{2}} (2\mu)^{\frac{1}{2}m}\rmax^{\frac{1}{2}} (r^*)^{\frac{1}{2}} d^*\dmax^{\frac{3}{2}} \log \dmax }{ \sqrt{r_1} n} \right) 
\end{aligned}
\end{equation*}
with probability at least $1-2m^2 \dmax^{-3m}$ for all $j\in[m]$. Therefore, we can bound $\abs{\frE_4}$ by 
\begin{equation*}
\begin{aligned}
         \abs{\frE_4}\le & C C_1\sigma \norm{\bcI}_{\ell_1}\sqrt{\frac{  m^3 (2\mu)^{2m-1}\rmax (r^*)^2 \dmax  \log \dmax }{ \rmin n}} \\
        &\quad \cdot\left(\frac{\sigma}{\lambda_{\min} }\sqrt{\frac{  m (2\mu)^{m-1}\rmax r^* d_j d^*\log \dmax }{ \rmin n}} + \frac{\sigma^2}{\lambda_{\min}^2 }\frac{  m^{\frac{3}{2}} (2\mu)^{\frac{1}{2}m}\rmax^{\frac{1}{2}} (r^*)^{\frac{1}{2}} d^*\dmax^{\frac{3}{2}} \log \dmax }{ \sqrt{\rmin} n} \right) 
\end{aligned}
\end{equation*}

\noindent \textbf{(iv) $\frE_5$ in} \eqref{eq:test-decomp}. We modify the proof in Section \ref{apx:sec:proof-all-rem-ind} to control $\frE_5$. To this end, we need to control the two types of errors defined in \textbf{(iv)} of Section \ref{apx:sec:proof-all-rem-ind}. For the first type, we consider the case $\abs{\cS}=2$ for brevity:
% \begin{equation}\label{eq:Erem-noTdiff-res-dep}
%     \begin{aligned}
%        & \abs{\left\langle\bcT\times_{j\in \cS } \left( \cP_{\widehat{\bU}_{j,1}} - \cP_{\bU_j}\right)\times_{k\in \cS^c } \cP_{{\bU}_{k}}, \bcI \right\rangle} \\
%        & = \abs{\left\langle \left( \cP_{\widehat{\bU}_{j,1}} - \cP_{\bU_j}\right) \bU_j\bLambda_j \bV_{j}^\top \otimes_{j'\neq j} \bU_{j'}^\top \otimes_{l} \left( \cP_{\widehat{\bU}_{l,1}} - \cP_{\bU_l}\right)\otimes_{k\in \cS^c } \cP_{{\bU}_{k}}, \cM_j(\bcI) \right\rangle}
%     \end{aligned}
% \end{equation}

\begin{equation}\label{eq:Erem-noTdiff-res-dep}
    \begin{aligned}
       & \abs{\left\langle\bcT\times_{j\in \cS } \left( \cP_{\widehat{\bU}_{j,1}} - \cP_{\bU_j}\right)\times_{k\in \cS^c } \cP_{{\bU}_{k}}, \bcI \right\rangle} \le  C C_1^2 \norm{\bcI}_{\ell_1}  \cdot\sqrt{\frac{\mu^m r^*}{d^*}} \\
        & \cdot \frac{\sigma^2 }{\lambda_{\min} }\frac{  m (2\mu)^{m-1}\rmax r^* \dmax  d^*\log \dmax }{ \rmin n} \left(1+ \frac{C_1 \sigma }{\lambda_{\min} }\sqrt{\frac{m d^* \dmax^2 \log\dmax}{n}} \right)^2 \\
        & \le   C C_1^2 \norm{\bcI}_{\ell_1} \frac{\sigma^2 }{\lambda_{\min} }\frac{  m (2\mu)^{\frac{3}{2}m-1}\rmax (r^*)^{\frac{3}{2}} \dmax  \sqrt{d^*} \log \dmax }{ \rmin n} \left( 1 + \frac{C_1^2 \sigma^2 }{\lambda_{\min}^2 }\frac{m d^* \dmax^2 \log\dmax}{n} \right).
    \end{aligned}
\end{equation}
Moreover, following the steps in 
\eqref{eq:metric-ety-Uj1}, \eqref{eq:Erem-Tdiff}, \eqref{eq:Erem-Tdiff-ind-conc}, \eqref{eq:Erem-Tdiff-res}, and substituting the $\ell_{2,\infty}$, $\ell_{\infty}$ bound of $\cP_{\widehat{\bU}_{j,1}} - \cP_{\bU_j}$ with the new order presented in Lemma \eqref{lemma:l2inf-powerit-dep}, we can conclude that the error of the second type can be controlled by:
\begin{equation}\label{eq:Erem-Tdiff-res-dep}
    \begin{aligned}
        & \abs{\left\langle \left(\bcE_{\rn} + \bcE_{\init}\right)\times_{j\in \cS } \left( \cP_{\widehat{\bU}_{j,1}} - \cP_{\bU_j}\right)\times_{k\in \cS^c } \cP_{{\bU}_{k}} , \bcI \right\rangle} \\
        & \le C \norm{\bcI}_{\ell_1} \frac{C_1\sigma^2  }{\lambda_{\min}}\sqrt{\frac{ m^2 (2\mu)^{3m-1} r^* \dmax^2  d^*\log^2 \dmax }{n^2}}\left(1+ \frac{C_1 \sigma }{\lambda_{\min} }\sqrt{\frac{m d^* \dmax^2 \log\dmax}{n}} \right).
    \end{aligned}
\end{equation}
Here the term related to $\bcE_{\rn}$ can be handled by \eqref{eq:Erem-Tdiff-ind-conc}, and the term  related to $\bcE_{\init}$ can be simply controlled by the size bound for $\abs{\cS}=1$:
\begin{equation*}
\begin{aligned}
        & \abs{\langle \bcT^{\vartriangle}, \bcX_i\rangle\left\langle \left(\bcX_i\right)\times_{j\in \cS } \left( \cP_{\widehat{\bU}_{j,1}} - \cP_{\bU_j}\right)\times_{k\in \cS^c } \cP_{{\bU}_{k}} , \bomega \right\rangle }\le C C_1\sigma\sqrt{\frac{\dmax\log\dmax}{n}} \prod_{j\in\cS}\norm{\cP_{\widehat{\bU}_{j,1}} - \cP_{\bU_j}}_{\ell_{\infty}}\prod_{j\in\cS^c}\frac{\mu r_j }{d_j} \\
    & \le CC_1\frac{\mu^{m} r^* }{d^*} \cdot \sigma\sqrt{\frac{\dmax\log\dmax}{n}} \left(\frac{\sigma}{\lambda_{\min} }\sqrt{\frac{  m (2\mu)^{m-1}\rmax r^* d_j  d^*\log \dmax }{ r_j n}} + \frac{C_1\sigma^2}{\lambda_{\min}^2 }\frac{  m^{\frac{3}{2}} (2\mu)^{\frac{1}{2}m}\rmax^{\frac{1}{2}} (r^*)^{\frac{1}{2}} d^*\dmax^{\frac{3}{2}} \log \dmax }{ \sqrt{r_j} n}\right).
\end{aligned}
\end{equation*}
Putting \eqref{eq:Erem-noTdiff-res-dep} and \eqref{eq:Erem-Tdiff-res-dep} together, we can prove the desired bound.
\end{proof}

\subsection{Proof of Lemma \ref{lemma:LS-cct-QT}}
For the unfolding on any mode $j$, it is clear that given a fixed  $\{\widetilde{\bU}_j\}_{j=1}^m$, we have
\begin{equation*}
    \frac{d^*}{n}\cM_j\left( \cQ_{\widetilde{\bU}^\top}\cP_{\Omega}\left( \bcT + \bcZ \right) \right) =  \frac{d^*}{n}\sum_{i=1}^n (\xi_i+\langle  \bcT,\bcX_i \rangle)\widetilde{\bU}_j^\top\cM_j(\bcX_i) \otimes_{k\neq j} \widetilde{\bU}_k. 
\end{equation*}
According to the matrix Bernstein inequality, we need to check the first-order condition:
\begin{equation}\label{eq:LS-cct-QT-1}
    \norm{(\xi_i+\langle  \bcT,\bcX_i \rangle)\widetilde{\bU}_j^\top\cM_j(\bcX_i) \otimes_{k\neq j} \widetilde{\bU}_k }_{\psi_2}\le C(\sigma\vee\norm{\bcT}_{\linf})\sqrt{\mu^m r^*}/\sqrt{d^*}
\end{equation}
and the second order condition:
\begin{equation}\label{eq:LS-cct-QT-2}
    \norm{\E\left[(\xi_i+\langle  \bcT,\bcX_i \rangle)\widetilde{\bU}_j^\top\cM_j(\bcX_i) \otimes_{k\neq j} \widetilde{\bU}_k\right]^2 }_2 \le C(\sigma^2\vee\norm{\bcT}_{\linf}^2)\frac{\mu^m r^*}{d^*}.
\end{equation}
Combining \eqref{eq:LS-cct-QT-1} and \eqref{eq:LS-cct-QT-2}, we can derive that 
\begin{equation*}
    \norm{\frac{d^*}{n}\cM_j\left( \cQ_{\widetilde{\bU}^\top}\cP_{\Omega}\left( \bcT + \bcZ \right) \right) - \cM_j(\cQ_{\widetilde{\bU}^\top} \bcT)}_2 \le C (\sigma\vee\norm{\bcT}_{\linf})\left(\frac{\sqrt{\mu^m r^* d^*} m^2\rmax\dmax\log \dmax }{n} + \sqrt{\frac{\mu^m m^2\rmax r^* d^* \dmax \log \dmax}{n}}\right).
\end{equation*}
Since $n\ge C m^2\rmax\dmax\log \dmax$, we prove the desired bound.
\subsection{Proof of Lemma \ref{lemma:LS-cct-QQT}}
We use the basic  Bernstein inequality to prove the claim. Notice that
\begin{equation*}
    \frac{d^*}{n}\norm{\cP_{\Omega}\left(\cQ_{\widetilde{\bU}}\cQ_{\widetilde{\bU}^\top}\bcT-\bcT \right)}_{\tF}^2 = \frac{d^*}{n}\sum_{i=1}^n \langle\cQ_{\widetilde{\bU}}\cQ_{\widetilde{\bU}^\top}\bcT-\bcT ,\bcX_i \rangle^2,
\end{equation*}
with first-order condition: 
$\abs{\langle\cQ_{\widetilde{\bU}}\cQ_{\widetilde{\bU}^\top}\bcT-\bcT ,\bcX_i \rangle}^2\le \norm{\cQ_{\widetilde{\bU}}\cQ_{\widetilde{\bU}^\top}\bcT - \bcT }_\linf^2\le 4{\mu^{m}r^*}\norm{\bcT}_{\linf}^2 $,
and second-order condition:
\begin{equation*}
\E\langle\cQ_{\widetilde{\bU}}\cQ_{\widetilde{\bU}^\top}\bcT-\bcT ,\bcX_i \rangle^4\le 4{\mu^{m}r^*}\norm{\bcT}_{\linf}^2 \E\langle\cQ_{\widetilde{\bU}}\cQ_{\widetilde{\bU}^\top}\bcT-\bcT ,\bcX_i \rangle^2 \le \frac{ 4{\mu^{m}r^*}\norm{\bcT}_{\linf}^2}{d^*}\norm{\cQ_{\widetilde{\bU}}\cQ_{\widetilde{\bU}^\top}\bcT-\bcT}_{\tF}^2.
\end{equation*}
We thus have
\begin{equation*}
\begin{aligned}
         &\abs{\frac{d^*}{n}\norm{\cP_{\Omega}\left(\cQ_{\widetilde{\bU}}\cQ_{\widetilde{\bU}^\top}\bcT-\bcT \right)}_{\tF}^2 - \norm{\cQ_{\widetilde{\bU}}\cQ_{\widetilde{\bU}^\top}\bcT-\bcT}_{\tF}^2} \\
         &\le C\frac{{m^2\rmax\mu^{m}r^* \dmax d^* \log \dmax }\norm{\bcT}_{\linf}^2}{n} +\norm{\bcT}_{\linf}\norm{\cQ_{\widetilde{\bU}}\cQ_{\widetilde{\bU}^\top}\bcT-\bcT}_{\tF} \sqrt{\frac{ m^2\rmax{\mu^{m}r^*} d^*\dmax\log \dmax  }{n}},
\end{aligned}
\end{equation*}
uniformly for any $\{\widetilde{\bU}_j\}_{j=1}^m$ in a net of $\O_{\bW}$ with cardinality at most $\dmax^{Cm^2\rmax\dmax}$, which proves the desired claim.
\end{document}